\Crefname{ALC@unique}{Line}{Lines}
\crefname{hypothesis}{Hypothesis}{Hypotheses}
\title{Dual-Valued Functions of Dual Matrices with Applications in Causal Emergence
\thanks{Received by the editors November 12, 2024.
\funding{This work is partially supported by the Science and Technology Commission of Shanghai Municipality (No. 23ZR1403000) and the National Natural Science Foundation of China (No. 12471481). Y. Wei is supported by the Science and Technology Commission of Shanghai Municipality under grant 23JC1400501.}
}
}
\author{Tong Wei\thanks{Institute of Science and Technology for Brain-Inspired Intelligence, Fudan University, Shanghai, P.R. China. (\email{20110850008@fudan.edu.cn, weitong0802kun@163.com}).}
\and Weiyang Ding\thanks{Corresponding author, Institute of Science and Technology for Brain-Inspired Intelligence, Fudan University, Shanghai, P.R. China. (\email{dingwy@fudan.edu.cn}).}
\and Yimin Wei\thanks{School of Mathematical Sciences and Key Laboratory of Mathematics for Nonlinear Sciences, Fudan University, Shanghai, P.R. China. 
(\email{ymwei@fudan.edu.cn}).}
}
\begin{document}

\maketitle

\begin{abstract}
Dual continuation, an innovative insight into extending the real-valued functions of real matrices to the dual-valued functions of dual matrices with a foundation of the G\^ateaux derivative, is proposed.
Theoretically, the general forms of dual-valued vector and matrix norms, remaining properties in the real field, are provided.
In particular, we focus on the dual-valued vector $p$-norm $(1\!\leq\! p\!\leq\!\infty)$ and the unitarily invariant dual-valued Ky Fan $p$-$k$-norm $(1\!\leq\! p\!\leq\!\infty)$.
The equivalence between the dual-valued Ky Fan $p$-$k$-norm and the dual-valued vector $p$-norm of the first $k$ singular values of the dual matrix is then demonstrated.
Practically, we define the dual transitional probability matrix (DTPM), as well as its dual-valued effective information (${\rm{EI_d}}$).
Additionally, we elucidate the correlation between the ${\rm{EI_d}}$, the dual-valued Schatten $p$-norm, and the dynamical reversibility of a DTPM.
Through numerical experiments on a dumbbell Markov chain, our findings indicate that the value of $k$, corresponding to the maximum value of the infinitesimal part of the dual-valued Ky Fan $p$-$k$-norm by adjusting $p$ in the interval $[1,2)$, characterizes the optimal classification number of the system for the occurrence of the causal emergence.
\end{abstract}

\begin{keywords}
dual continuation, dual-valued functions, G\^ateaux derivative, dual-valued vector $p$-norm, dual-valued Ky Fan $p$-$k$-norm, causal emergence
\end{keywords}

\begin{MSCcodes}
15A60, 15B33, 30G35, 65C40
\end{MSCcodes}

\section{Introduction}

The dual number, initially proposed by Clifford \cite{clifford1871preliminary} in 1873, was then built upon by Study \cite{study1903geometrie} in 1903 to represent the dual angle.
In light of the powerful role that dual algebra plays in the kinematic analysis of spatial mechanisms, an increasing number of researchers have concentrated on the distinctive attributes of dual algebra, with a particular focus on the dual-valued functions.

The preceding work generalized the differentiable functions in the real field and holomorphic functions in the complex field to dual-real-valued and dual-complex-valued functions, respectively.
Specifically, Kramer (1930) \cite{kramer1930polygenic} defined the polygenic functions of dual real numbers.
Gu and Luh (1987) \cite{gu1987dual} employed the Taylor series expansion to derive the general form for functions of dual numbers.
Pennestr{\`i} and Stefanelli (2007) \cite{pennestri2007linear} outlined Gaussian, polar, and exponential representations of dual real numbers, as well as the trigonometric functions for dual angles.
Furthermore, Messelmi (2015) \cite{messelmi2015dual} developed the holomorphic dual-complex-valued functions and presented the exponential, logarithmic, trigonometric, and hyperbolic functions of dual complex numbers.
G{\"u}ng{\"o}r and Tetik (2019) \cite{gungor2019moivre} advanced the De-Moivre and Euler formulae for dual-complex numbers.

Recently, research into the norms of dual vectors and dual matrices has yielded some promising results.
Qi \textit{et al.} (2022) \cite{qi2022low,total_order_qi} defined the magnitude of dual complex numbers, and formulated the 1-norm, 2-norm, and $\infty$-norm of dual complex vectors, as well as the Frobenius norm of dual complex matrices.
Additionally, Miao and Huang (2023) \cite{miao2023norms} introduced the concept of the $p$-norm of a dual complex vector with $p$ being an arbitrary positive integer, along with the operator $p$-norm of a dual complex matrix induced by this $p$-norm.
Besides, the expressions of the operator 1-norm, 2-norm, and $\infty$-norm for dual complex matrices were derived.

Despite the graceful approach \cite{gu1987dual} for constructing the dual continuation of real-valued functions, it was limited to differentiable real-valued functions of real numbers.
Hence, we aim to investigate how to extend a common real-valued function to a dual-valued one in a ``continuous'' manner, while maintaining the properties established on the real space.
An ingenious idea is to employ the G\^ateaux derivative, proposed by G\^ateaux \cite{Gateaux_derivative} in 1913, as the infinitesimal part.
Thus, the expressions of dual-valued vector and matrix norms are derived and their rationality is demonstrated.

The highlight of the applications of the dual-valued functions lies in the interconnection between the dual-valued Ky Fan $p$-$k$-norm and causal emergence (CE) \cite{hoel2017map,hoel2013quantifying} in dynamical systems.
For the conventional methodologies utilized to quantify the extent of CE, it is necessary to traverse all the coarse-graining methods \cite{yang2024finding,yuan2024emergence} or to identify a clear cut-off in the spectrum of singular values of the transitional probability matrix (TPM) \cite{zhang2024dynamical}.
Nevertheless, our method addresses these two issues. 
Particularly, the infinitesimal part of the dual-valued Ky Fan $p$-$k$-norm plays an essential role in identifying the optimal classification number of a system that occurs CE.

The remainder of this paper is structured as follows. 
\Cref{sec2: prelimi} begins by laying out
some preliminary properties of dual matrices and useful notations.
\Cref{sec: dual continuation} proposes the definition and characteristics of the dual continuation of a real-valued function, derived from the G\^ateaux derivative.
\Cref{sec3: dual vector norms} focuses on one of the core contributions to the general form of dual-valued vector norms and illustrates concrete expressions of dual-valued vector $p$-norms $(1 \!\leq\! p \!\leq\! \infty)$.
The other principal development, the general form of dual-valued matrix norms, is developed in \cref{sec4: dual matrix norms}.
Besides, the specific expressions of unitarily invariant dual-valued matrix norms, dual-valued operator norms, and other dual-valued functions are discussed.
In \cref{sec5: causal emergence}, the dual transitional probability matrix (DTPM) is defined, as well as its dual-valued effective information (${\rm{EI_d}}$) and the concept of dynamical reversibility.
Theoretical consistency between the dynamical reversibility of the DTPM, the maximum point of ${\rm{EI_d}}$, and the maximum point of the dual-valued Schatten $p$-norm $(1\!\leq\! p<2)$ enables the capture of the optimal classification number occurring CE in the classical dumbbell Markov chain. 
In conclusion, \cref{sec7:conclusions} presents an overview of our findings and outlines potential avenues for future research.

\section{Preliminaries}\label{sec2: prelimi}
This section presents the fundamental properties of dual matrices and introduces relevant notations.

Let $\mathbb{DR}$ denote the set of dual (real) numbers. 
A dual number $p$ is denoted by $p=p_{\rm{s}}+p_{\rm{i}}\epsilon$, where $p_{\rm{s}},p_{\rm{i}}\in\mathbb{R}$ and $\epsilon$ is the dual infinitesimal unit satisfying $\epsilon\neq0, 0\epsilon = \epsilon0 = 0, 1\epsilon = \epsilon1 = \epsilon, \epsilon^2 = 0$.
We refer respectively to $p_{\rm{s}}$ as the standard part and $p_{\rm{i}}$ as the infinitesimal part of $p$.
Similarly, $\mathbb{DR}^n$ and $\mathbb{DR}^{m\times n}$ separately denote the set of dual (real) vectors of size $n$-by-1 and dual (real) matrices of size $m$-by-$n$.
In addition, the total order of dual numbers is introduced as follows.

\begin{definition}
[\textbf{Total Order of Dual Numbers} \cite{total_order_qi}]
\label{def: total order}
Given dual numbers $p = p_{\rm{s}} + p_{\rm{i}}\epsilon$ and $q = q_{\rm{s}} + q_{\rm{i}}\epsilon$. 
We say $p < q$, if either $p_{\rm{s}} < q_{\rm{s}}$, or $p_{\rm{s}} = q_{\rm{s}}$ and $p_{\rm{i}} < q_{\rm{i}}$.    
\end{definition}

Given a dual matrix $\bm{A}=\bm{A}_{\rm{s}}+\bm{A}_{\rm{i}}\epsilon\in\mathbb{DR}^{n\times n}$.
Then $\bm{A}$ is nonsingular if $\bm{AB}=\bm{BA}=\bm{I}_n$ for some $\bm{B}\in\mathbb{DR}^{n\times n}$.
Moreover, such $\bm{B}$ is unique and denoted by $\bm{A}^{-1}$ satisfying $\bm{A}^{-1}=\bm{A}_{\rm{s}}^{-1}-\bm{A}_{\rm{s}}^{-1}\bm{A}_{\rm{i}}\bm{A}_{\rm{s}}^{-1}\epsilon$.
In addition, $\bm{A}$ is orthogonal if $\bm{AA}^\top=\bm{A}^\top\bm{A}=\bm{I}_n$, which implies $\bm{A}_{\rm{s}}\bm{A}_{\rm{s}}^\top=\bm{A}_{\rm{s}}^\top\bm{A}_{\rm{s}}=\bm{I}_n$, $\bm{A}_{\rm{s}}\bm{A}_{\rm{i}}^\top+\bm{A}_{\rm{i}}\bm{A}_{\rm{s}}^\top=\bm{O}_n$ and $\bm{A}_{\rm{s}}^\top\bm{A}_{\rm{i}}+\bm{A}_{\rm{i}}^\top\bm{A}_{\rm{s}}=\bm{O}_n$.
Hence, $\bm{A}_{\rm{s}}$ is orthogonal and both $\bm{A}_{\rm{s}}^\top\bm{A}_{\rm{i}}$ and $\bm{A}_{\rm{s}}\bm{A}_{\rm{i}}^\top$ are skew-symmetric.
Besides, a dual matrix $\bm{C}=\bm{C}_{\rm{s}}+\bm{C}_{\rm{i}}\epsilon\in\mathbb{DR}^{m\times n}$ $(m\geq n)$ is called to have orthogonal columns if $\bm{C}^\top\bm{C}=\bm{I}_n$.
That is, $\bm{C}_{\rm{s}}^\top\bm{C}_{\rm{s}}=\bm{I}_n$ and $\bm{C}_{\rm{s}}^\top\bm{C}_{\rm{i}}$ is skew-symmetric. 

For a square matrix $\bm{X}$, ${\rm{sym}}(\bm{X})$ stands for $\frac{1}{2}(\bm{X}+\bm{X}^\top)$, ${\rm{Diag}}(\bm{X})$ for a matrix with only the diagonal elements of $\bm{X}$ and ${\rm{diag}}(\bm{X})$ for a vector whose elements are the diagonal elements of $\bm{X}$.
Besides, ${\rm{diag}}(\bm{X}_1,\cdots,\bm{X}_n)$ represents the block diagonal matrix whose diagonal blocks are square $\bm{X}_1,\cdots,\bm{X}_n$.
Moreover, 0, $\bm{0}$, $\bm{O}$, and $\bm{I}$ denote the number zero, an all-zero vector, an all-zero matrix, and an identity matrix of the appropriate size, respectively.
Operator $\odot$ represents the Hadamard product (element-wise product).
Additionally, the sign function of the real number $x$ is defined as ${\rm{sign}}(x)=1$ if $x>0$, ${\rm{sign}}(x)=-1$ if $x<0$ and ${\rm{sign}}(x)=0$ otherwise.

Note that throughout the paper, the use of lowercase letters, bold lowercase letters, and bold capital letters serves to distinguish between scalars, vectors, and matrices in the real field or dual algebra, respectively. 
Symbols with subscripts ``${\rm{s}}$'' and ``${\rm{i}}$'' are separately regarded as standard parts and infinitesimal parts in dual algebra.

\section{Dual Continuation}\label{sec: dual continuation}
Concerning dual-valued functions of dual numbers, Gu and Luh (1987) \cite{gu1987dual} proposed that the dual continuation $\Tilde{\phi}:\mathbb{DR}\rightarrow\mathbb{DR}$ of a given real-valued function $\phi:\mathbb{R}\rightarrow\mathbb{R}$ is expressed as 
\begin{align}
\Tilde{\phi}(a+b\epsilon)=\phi(a)+b\phi^{'}(a)\epsilon\label{eq:f(a+b ep)=f(a)+f'(a)b ep},   
\end{align}
where $a,b\in\mathbb{R}$ and $\phi^{'}$ represents the first-order derivative of $\phi$.

Nevertheless, the formula \cref{eq:f(a+b ep)=f(a)+f'(a)b ep} applies only to differentiable real-valued functions of real numbers.
This motivates us to investigate an innovative methodology for the dual continuation of common real-valued functions involving real vectors and matrices.
Specifically, our focus is on extending a real-valued function $\phi:\mathcal{X}\rightarrow\mathbb{R}$ on the real space $\mathcal{X}$ (representing $\mathbb{R}$, $\mathbb{R}^n$ or $\mathbb{R}^{m\times n}$) to a dual-valued one $\Tilde{\phi}:\mathbb{D}\mathcal{X}\rightarrow\mathbb{DR}$ in a ``continuous'' manner such that
\begin{align*}
\lim_{b\rightarrow0}\Tilde{\phi}(a+b\epsilon)=\phi(a),
\end{align*}
where $a,b\in\mathcal{X}$, namely $a+b\epsilon\in\mathbb{D}\mathcal{X}$.
To achieve this, the G\^ateaux derivative, as defined below, is employed as the infinitesimal part of the dual continuation.

\begin{definition}[\textbf{G\^ateaux Derivative} \cite{Gateaux_derivative}]
\label{direction_gradient}
Let $X$ and $Y$ be locally convex topological spaces, $U\subset X$ be an open subset, and $\phi:X\rightarrow Y$. 
If the limit 
\begin{align}
{\mathcal{D}}_{u}\phi(x) = \lim_{t\downarrow 0}\frac{\phi(x+tu)-\phi(x)}{t}\label{eq:gateaux}
\end{align}
exists, then ${\mathcal{D}}_{u}\phi(x)$ is called the G\^ateaux derivative of $\phi$ at $x\in U$ along $u\in X$.
\end{definition}

In practical terms, convex functions are more commonly employed and possess more advantageous properties for the G\^ateaux derivative.

\begin{lemma}[\textbf{Max Formula} {\cite[Theorem 3.1.8]{Convex_analysis}}]
\label{lem_dire_gra=max}
If the function $\phi : \mathrm{E} \rightarrow (-\infty,+\infty]$ on the Euclidean space $\mathrm{E}$ is convex, then any point $x$ in core (\text{dom} $\phi$) and any direction $u$ in $\mathrm{E}$ satisfy
\begin{align}
{\mathcal{D}}_{u}\phi(x) =\max\limits_{z\in\partial\phi(x)}\langle z,u\rangle.\label{directional}
\end{align}
\end{lemma}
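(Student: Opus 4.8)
The plan is to build on the standard structure theory of convex functions, reducing the identity to the fact that the directional derivative is a sublinear function whose subdifferential at the origin is exactly $\partial\phi(x)$. First I would verify that the limit in \cref{eq:gateaux} actually exists. For fixed $x$ and $u$, convexity of $\phi$ makes the difference quotient $t \mapsto \bigl(\phi(x+tu)-\phi(x)\bigr)/t$ nondecreasing on $t>0$, so the one-sided limit ${\mathcal{D}}_{u}\phi(x)$ exists and equals $\inf_{t>0}\bigl(\phi(x+tu)-\phi(x)\bigr)/t$. The hypothesis $x\in\mathrm{core}(\mathrm{dom}\,\phi)$ is what guarantees this value is finite: choosing $\delta>0$ with $x\pm\delta u\in\mathrm{dom}\,\phi$ bounds the quotient above (via $x+tu$) and below (via a convex combination involving $x-\delta u$). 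Writing $p(u):={\mathcal{D}}_{u}\phi(x)$, a short computation then shows $p$ is positively homogeneous and subadditive, hence sublinear, with $p(0)=0$.

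Next I would dispatch the easy inequality and identify the subdifferential with the set of linear functionals dominated by $p$. If $z\in\partial\phi(x)$, then $\phi(x+tu)\ge\phi(x)+t\langle z,u\rangle$ for every $t>0$, so dividing by $t$ and passing to the limit gives $\langle z,u\rangle\le p(u)$; taking the supremum yields $\sup_{z\in\partial\phi(x)}\langle z,u\rangle\le{\mathcal{D}}_{u}\phi(x)$. Conversely, any linear functional $z$ with $\langle z,\cdot\rangle\le p(\cdot)$ satisfies $\langle z,y-x\rangle\le p(y-x)\le\phi(y)-\phi(x)$ for all $y$, the last bound coming from evaluating the difference quotient at $t=1$, so such $z$ lies in $\partial\phi(x)$. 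Thus $\partial\phi(x)=\{z:\langle z,\cdot\rangle\le p(\cdot)\}$, and the remaining task is to exhibit, for the prescribed direction $u$, a subgradient attaining equality.

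The crux — and the step I expect to be the main obstacle — is producing $z^{*}\in\partial\phi(x)$ with $\langle z^{*},u\rangle=p(u)$, which is a Hahn--Banach argument. I would define a linear functional on the line $\mathbb{R}u$ by $tu\mapsto t\,p(u)$ and check it is dominated by $p$ there: for $t\ge0$ this is equality by homogeneity, while for $t<0$ it reduces to $p(u)+p(-u)\ge0$, which holds since $p(u)+p(-u)\ge p(0)=0$ by subadditivity. The Hahn--Banach theorem with sublinear majorant $p$ then extends this functional to some $z^{*}\in\mathrm{E}$ with $\langle z^{*},\cdot\rangle\le p(\cdot)$ globally and $\langle z^{*},u\rangle=p(u)$. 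By the characterization above, $z^{*}\in\partial\phi(x)$, so the supremum is attained and equals ${\mathcal{D}}_{u}\phi(x)$, upgrading it to a maximum and completing \cref{directional}. The delicate points to watch are the finiteness of $p$ (which is precisely where the core hypothesis enters) and the legitimacy of the dominated extension; everything else is routine manipulation of convexity inequalities.
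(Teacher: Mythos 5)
The paper offers no proof of this lemma: it is imported verbatim as Theorem 3.1.8 of the cited convex-analysis reference. Your argument is a correct, self-contained proof, and it is in fact the standard textbook proof of that theorem: monotonicity of the difference quotient $t\mapsto\bigl(\phi(x+tu)-\phi(x)\bigr)/t$ gives existence of the limit as an infimum, the core hypothesis gives finiteness (via the convex combination $x=\tfrac{\delta}{t+\delta}(x+tu)+\tfrac{t}{t+\delta}(x-\delta u)$), sublinearity of $p(\cdot)={\mathcal{D}}_{(\cdot)}\phi(x)$ follows from convexity, the inclusion $\partial\phi(x)=\{z:\langle z,\cdot\rangle\le p(\cdot)\}$ uses the $t=1$ evaluation of the quotient, and the attainment of the maximum is the Hahn--Banach dominated-extension step from the line $\mathbb{R}u$ (where domination for $t<0$ reduces, as you note, to $p(u)+p(-u)\ge p(0)=0$). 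The two delicate points you flag -- finiteness of $p$ and legitimacy of the dominated extension -- are exactly the right ones, and both are handled; in the finite-dimensional Euclidean setting the extension needs no topology and the resulting functional is represented by a vector $z^{*}\in\mathrm{E}$, so no gap remains. As a side benefit your argument also shows $\partial\phi(x)\neq\emptyset$ at core points, which the paper implicitly relies on when it invokes the subdifferential in \Cref{theorem:vector norm} and \Cref{the: extension dual matrix norm}.
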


Based on the G\^ateaux derivative, the general form of the dual continuation of real-valued functions on the real space is defined as follows.

\begin{definition}[\textbf{Dual Continuation}]\label{def: dual extension}
Given a function $\phi:\mathcal{X}\rightarrow \mathbb{R}$ on the real space $\mathcal{X}$ (representing $\mathbb{R}$, $\mathbb{R}^{n}$ or $\mathbb{R}^{m\times n}$). 
Define $\tilde{\phi}:\mathbb{D}\mathcal{X}\rightarrow \mathbb{DR}$ satisfying 
\begin{align}
\tilde{\phi}(a + b\epsilon) = \phi(a) + {\mathcal{D}}_{b}\phi(a)\epsilon\label{eq: extension}
\end{align}
for $a,b\in\mathcal{X}$, namely $a+b\epsilon\in\mathbb{D}\mathcal{X}$, as the dual continuation of $\phi$.
\end{definition}

To provide further insight into this definition, the following proposition elucidates the basic characteristics of the dual continuation.

\begin{proposition}\label{pro: properties of dual continuation}
Given real-valued functions $\phi_1$ and $\phi_2$ on the real space $\mathcal{X}$ (representing $\mathbb{R}$, $\mathbb{R}^{n}$ or $\mathbb{R}^{m\times n}$).
Let $\tilde{\phi}_1$ and $\tilde{\phi}_2$ be corresponding dual continuations.
Suppose the G\^ateaux derivatives ${\mathcal{D}}_{b}\phi_1(a)$ and ${\mathcal{D}}_{b}\phi_2(a)$ for $a,b\in\mathcal{X}$ both exist.
Then
\begin{enumerate}
\item[${\rm{(i)}}$]
dual continuations of $\phi_1\pm\phi_2$ satisfy $\widetilde{\phi_1\pm \phi_2}(a+b\epsilon)=\tilde{\phi}_1(a+b\epsilon)\pm\tilde{\phi}_2(a+b\epsilon)$;
\item[${\rm{(ii)}}$] 
dual continuation of $\phi_1\cdot\phi_2$ satisfies $\widetilde{\phi_1\cdot\phi_2}(a+b\epsilon)=\tilde{\phi}_1(a+b\epsilon)\cdot\tilde{\phi}_2(a+b\epsilon)$.
\end{enumerate}
\end{proposition}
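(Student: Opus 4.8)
The plan is to reduce both identities to the algebraic structure of the dual continuation in \cref{eq: extension} together with two elementary properties of the G\^ateaux derivative: linearity in the function and a Leibniz product rule. Writing $\tilde{\phi}_j(a+b\epsilon) = \phi_j(a) + \mathcal{D}_b\phi_j(a)\epsilon$ for $j=1,2$, each claim amounts to showing that the standard and infinitesimal parts on the two sides agree, where the standard parts match by the ordinary arithmetic of $\phi_1$ and $\phi_2$ and the infinitesimal parts are governed entirely by the corresponding derivative identity.

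For part ${\rm(i)}$, I would first apply \cref{eq: extension} to $\phi_1\pm\phi_2$ to obtain $\widetilde{\phi_1\pm\phi_2}(a+b\epsilon) = (\phi_1\pm\phi_2)(a) + \mathcal{D}_b(\phi_1\pm\phi_2)(a)\epsilon$. The standard part splits trivially, so it suffices to verify $\mathcal{D}_b(\phi_1\pm\phi_2)(a) = \mathcal{D}_b\phi_1(a)\pm\mathcal{D}_b\phi_2(a)$ for the infinitesimal part. This follows immediately from the defining limit \cref{eq:gateaux} and the additivity of limits, using the hypothesis that both G\^ateaux derivatives exist. Collecting the standard and infinitesimal parts then matches $\tilde{\phi}_1(a+b\epsilon)\pm\tilde{\phi}_2(a+b\epsilon)$.

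For part ${\rm(ii)}$, I would multiply the two dual numbers $\tilde{\phi}_1(a+b\epsilon)$ and $\tilde{\phi}_2(a+b\epsilon)$ and invoke $\epsilon^2=0$ to discard the second-order term $\mathcal{D}_b\phi_1(a)\mathcal{D}_b\phi_2(a)\epsilon^2$; the product collapses to $\phi_1(a)\phi_2(a) + [\phi_1(a)\mathcal{D}_b\phi_2(a)+\phi_2(a)\mathcal{D}_b\phi_1(a)]\epsilon$. On the other side, \cref{eq: extension} gives $\widetilde{\phi_1\cdot\phi_2}(a+b\epsilon) = (\phi_1\phi_2)(a) + \mathcal{D}_b(\phi_1\phi_2)(a)\epsilon$, so it remains to establish the Leibniz rule $\mathcal{D}_b(\phi_1\phi_2)(a) = \phi_1(a)\mathcal{D}_b\phi_2(a)+\phi_2(a)\mathcal{D}_b\phi_1(a)$. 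I would prove this from \cref{eq:gateaux} by the usual add-and-subtract decomposition of the difference quotient, factoring it as $\phi_1(a+tb)\frac{\phi_2(a+tb)-\phi_2(a)}{t} + \phi_2(a)\frac{\phi_1(a+tb)-\phi_1(a)}{t}$ and letting $t\downarrow 0$.

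The main obstacle, and the only genuinely non-algebraic point, is justifying that $\phi_1(a+tb)\to\phi_1(a)$ as $t\downarrow 0$, so that the first factor above converges to $\phi_1(a)\mathcal{D}_b\phi_2(a)$. This directional continuity is not assumed outright, but it follows from the existence of $\mathcal{D}_b\phi_1(a)$: writing $\phi_1(a+tb)-\phi_1(a) = t\cdot\frac{\phi_1(a+tb)-\phi_1(a)}{t}$, the difference quotient stays bounded since it converges to $\mathcal{D}_b\phi_1(a)$, so the product tends to $0$. Once this is in hand, passing to the limit yields the Leibniz rule and hence the matching of infinitesimal parts, completing both claims.
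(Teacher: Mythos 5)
Your proposal is correct and follows essentially the same route as the paper: splitting the limit for part (i) and an add-and-subtract decomposition plus $\epsilon^2=0$ for part (ii). The one place you go beyond the paper is in explicitly deriving the directional continuity $\phi_1(a+tb)\to\phi_1(a)$ from the existence of $\mathcal{D}_b\phi_1(a)$, a point the paper's proof merely asserts as the existence of $\lim_{t\downarrow0}\phi_2(a+tb)$; this is a welcome tightening rather than a different approach.
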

\begin{proof}
Based on \Cref{def: dual extension}, dual continuations of $\phi_1$ and $\phi_2$ are expressed as $\tilde{\phi}_1(a+b\epsilon)=\phi_1(a)+\mathcal{D}_b\phi_1(a)\epsilon$ and $\tilde{\phi}_2(a+b\epsilon)=\phi_2(a)+\mathcal{D}_b\phi_2(a)\epsilon$.

Denote $f_1(a)=\phi_1(a)+\phi_2(a)$.
Then the dual continuation of $f_1$ takes the form $\tilde{f}_1(a+b\epsilon)=f_1(a)+\mathcal{D}_bf_1(a)\epsilon$.
Besides, it yields that
\begin{align*}
\mathcal{D}_bf_1(a)&=\lim_{t\downarrow0}\frac{f_1(a+tb)-f_1(a)}{t} =\lim_{t\downarrow0}\frac{\big[\phi_1(a+tb)+\phi_2(a+tb)\big]-\big[\phi_1(a)+\phi_2(a)\big]}{t}\\
&\overset{(\mathrm{I})}{=}\lim_{t\downarrow0}\frac{\phi_1(a+tb)-\phi_1(a)}{t}+\lim_{t\downarrow0}\frac{\phi_2(a+tb)-\phi_2(a)}{t}
=\mathcal{D}_b\phi_1(a)+\mathcal{D}_b\phi_2(a),
\end{align*}
where ($\mathrm{I}$) results from the fact that limits $\mathcal{D}_b\phi_1(a)$ and $\mathcal{D}_b\phi_2(a)$ both exist.
Thus, 
\begin{align*}
\tilde{\phi}_1(a+b\epsilon)+\tilde{\phi}_2(a+b\epsilon)&=\phi_1(a)+\phi_2(a)+\big[\mathcal{D}_b\phi_1(a)+\mathcal{D}_b\phi_2(a)\big]\epsilon\\
&=f_1(a)+\mathcal{D}_bf_1(a)\epsilon= \tilde{f}_1(a+b\epsilon).
\end{align*}
Similarly, we infer that $\widetilde{\phi_1-\phi_2}(a+b\epsilon)=\tilde{\phi}_1(a+b\epsilon)-\tilde{\phi}_2(a+b\epsilon)$.

Denote $f_2(a)=\phi_1(a)\cdot\phi_2(a)$.
We obtain that
\begin{align*}
\mathcal{D}_bf_2(a)&=\lim_{t\downarrow0}\frac{f_2(a+tb)-f_2(a)}{t} =\lim_{t\downarrow0}\frac{\phi_1(a+tb)\cdot\phi_2(a+tb)-\phi_1(a)\cdot\phi_2(a)}{t}\\
&=\lim_{t\downarrow0}\frac{\big[\phi_1(a+tb)-\phi_1(a)\big]\cdot\phi_2(a+tb)+\big[\phi_2(a+tb)-\phi_2(a)\big]\cdot\phi_1(a)}{t}\\
&\overset{(\mathrm{II})}{=}\lim_{t\downarrow0}\frac{\phi_1(a+tb)-\phi_1(a)}{t}\cdot\lim_{t\downarrow0}\phi_2(a+tb)+\phi_1(a)\cdot\lim_{t\downarrow0}\frac{\phi_2(a+tb)-\phi_2(a)}{t}\\
&=\mathcal{D}_b\phi_1(a)\cdot\phi_2(a)+\phi_1(a)\cdot\mathcal{D}_b\phi_2(a),
\end{align*}
where the validity of ($\mathrm{II}$) is due to the existence of the limits $\mathcal{D}_b\phi_1(a)$, $\mathcal{D}_b\phi_2(a)$ and $\lim_{t\downarrow0}\phi_2(a+tb)$.
Accordingly, it derives that
\begin{align*}
\tilde{\phi}_1(a+b\epsilon)\cdot\tilde{\phi}_2(a+b\epsilon)&=\phi_1(a)\cdot\phi_2(a)+\big[\phi_1(a)\cdot\mathcal{D}_b\phi_2(a)\epsilon+\mathcal{D}_b\phi_1(a)\cdot\phi_2(a)\big]\epsilon\\
&=f_2(a)+\mathcal{D}_bf_2(a)\epsilon=\tilde{f}_2(a+b\epsilon).
\end{align*}

Consequently, we complete the proof.
\end{proof}

Subsequently, \Cref{direction_gradient,def: dual extension}, and \Cref{lem_dire_gra=max} are drawn upon to develop concrete dual continuations of real-valued functions for real numbers.
It can be readily demonstrated that the dual-valued absolute function and the dual-valued power functions are expressed as
\begin{subequations}
\begin{align}
&|a+b\epsilon| = |a| + {\mathcal{D}}_b|a|\epsilon = \left\{\begin{array}{ll}
|a|+{\rm{sign}}(a)b\epsilon, & a\neq0,\\
|b|\epsilon, & a=0.
\end{array}\right.
\label{eq: absolute}\\
&(a+b\epsilon)^p =\left\{\begin{array}{ll}
b\epsilon, & p=1,a=0,b\in\mathbb{R}, \\
0, & p>1,a=0,b\in\mathbb{R},\\
a^p+pa^{p-1}b\epsilon, & p\geq1,a>0,b\in\mathbb{R}.
\end{array}\right. \; \label{eq: (a+b eps)p}\\
&(a+b\epsilon)^\frac{1}{p} =\begin{array}{ll}
a^\frac{1}{p}+\frac{1}{p}a^{\frac{1}{p}-1}b\epsilon, & \,p\geq1, a>0,b\in\mathbb{R}.
\end{array}\label{eq: (a+b eps)(1/p)}
\end{align}
\end{subequations}
Indeed, \cref{eq: absolute} is consistent with the conclusion proposed by Qi, Ling, and Yan in \cite{total_order_qi}.
Besides, \cref{eq: (a+b eps)p,eq: (a+b eps)(1/p)} extend the results developed by Miao and Huang in \cite{miao2023norms}.

\section{Dual-Valued Vector Norms}\label{sec3: dual vector norms}
The role of a norm on a vector space is to act as a distance measure.
A similar argument can be made for the definition of dual-valued vector norms. 
With this in mind, a key objective of this section is to establish the general form of dual-valued vector norms.
Furthermore, the basic properties of dual-valued vector norms are discussed, accompanied by some concrete examples.

\subsection{Definitions}
Initially, we propose the definition of a dual-valued vector norm in dual algebra, with the size relations adopting the total order of dual numbers, as detailed in \Cref{def: total order}.

\begin{definition}\label{def: dual vector norms}
A dual-valued vector norm on $\mathbb{DR}^n$ is a function $f:\mathbb{DR}^n\rightarrow\mathbb{DR}$ that satisfies the following properties under the total order of dual numbers
\begin{enumerate}
\item[${\rm{(i)}}$] $f(\bm{x})\geq0$ for any $\bm{x}\in\mathbb{DR}^n$ and $f(\bm{x})=0$ if and only if $\bm{x}=\bm{0}$;
\item[${\rm{(ii)}}$] $f(c\bm{x})=|c|f(\bm{x})$ for any $c\in\mathbb{DR}$ and $\bm{x}\in\mathbb{DR}^n$;
\item[${\rm{(iii)}}$] $f(\bm{x}+\bm{y})\leq f(\bm{x})+f(\bm{y})$ for any $\bm{x},\bm{y}\in\mathbb{DR}^n$.
\end{enumerate}
\end{definition}

A function of this nature is designated by a double bar notation $f(\bm{x})=\Vert\bm{x}\Vert$, same as the vector norms. 
Besides, the use of subscripts on the double bar serves to distinguish different dual-valued vector norms.

As defined in \Cref{def: dual extension}, the dual continuation of a vector norm entails the addition of a term involving G\^ateaux derivative as the infinitesimal part, expressed as $\Vert\bm{x}\Vert=\Vert\bm{x}_{\rm{s}}\Vert+{\mathcal{D}}_{\bm{x}_{\rm{i}}}\Vert\bm{x}_{\rm{s}}\Vert\epsilon$ for a given vector norm $\Vert\cdot\Vert$ and a dual vector $\bm{x}=\bm{x}_{\rm{s}}+\bm{x}_{\rm{i}}\epsilon$.
The following theorem claims that this dual continuation is a dual-valued vector norm.

\begin{theorem}\label{theorem:vector norm}
Given a vector norm $\Vert\cdot\Vert:\mathbb{R}^{n}\rightarrow\mathbb{R}$.
Then its dual continuation $\Vert\cdot\Vert:\mathbb{DR}^{n}\rightarrow\mathbb{DR}$ satisfying
\begin{align}
\Vert\bm{x}\Vert = \left\{\begin{array}{ll}
\Vert\bm{x}_{\rm{s}}\Vert + \max\limits_{\bm{y}\in\partial\Vert\bm{x}_{\rm{s}}\Vert}\langle\bm{y},\bm{x}_{\rm{i}}\rangle\epsilon\;,  & \bm{x}_{\rm{s}}\neq\bm{0}, \\
\Vert\bm{x}_{\rm{i}}\Vert\epsilon \;,& \bm{x}_{\rm{s}}=\bm{0},
\end{array}\right.\label{vector norm expansion}
\end{align}
for $\bm{x}=\bm{x}_{\rm{s}}+\bm{x}_{\rm{i}}\epsilon\in\mathbb{DR}^n$ is a dual-valued vector norm.
\end{theorem}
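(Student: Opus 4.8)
The plan is to verify that the function defined by \cref{vector norm expansion} satisfies the three axioms of \Cref{def: dual vector norms} under the total order of \Cref{def: total order}. Before that, I would record that \cref{vector norm expansion} genuinely is the dual continuation of \Cref{def: dual extension}: when $\bm{x}_{\rm{s}}\neq\bm{0}$ the norm $\Vert\cdot\Vert$ is finite and convex, so the Max Formula (\Cref{lem_dire_gra=max}) gives $\mathcal{D}_{\bm{x}_{\rm{i}}}\Vert\bm{x}_{\rm{s}}\Vert=\max_{\bm{y}\in\partial\Vert\bm{x}_{\rm{s}}\Vert}\langle\bm{y},\bm{x}_{\rm{i}}\rangle$; when $\bm{x}_{\rm{s}}=\bm{0}$, positive homogeneity yields $\mathcal{D}_{\bm{x}_{\rm{i}}}\Vert\bm{0}\Vert=\lim_{t\downarrow0}\Vert t\bm{x}_{\rm{i}}\Vert/t=\Vert\bm{x}_{\rm{i}}\Vert$, matching the second branch. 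Axiom (i) is then immediate: if $\bm{x}_{\rm{s}}\neq\bm{0}$ the standard part $\Vert\bm{x}_{\rm{s}}\Vert$ is strictly positive, so $\Vert\bm{x}\Vert>0$ in the total order regardless of its infinitesimal part; if $\bm{x}_{\rm{s}}=\bm{0}$ the standard part vanishes and the infinitesimal part $\Vert\bm{x}_{\rm{i}}\Vert\geq0$ equals zero exactly when $\bm{x}_{\rm{i}}=\bm{0}$, i.e. $\bm{x}=\bm{0}$.

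For axiom (ii) I would expand $c\bm{x}=c_{\rm{s}}\bm{x}_{\rm{s}}+(c_{\rm{s}}\bm{x}_{\rm{i}}+c_{\rm{i}}\bm{x}_{\rm{s}})\epsilon$ and split into cases according to whether $c_{\rm{s}}=0$ and whether $\bm{x}_{\rm{s}}=\bm{0}$, comparing with $|c|\,\Vert\bm{x}\Vert$ where $|c|$ is taken from \cref{eq: absolute}. The two ingredients that make the nontrivial case ($c_{\rm{s}}\neq0$, $\bm{x}_{\rm{s}}\neq\bm{0}$) work are the scaling rule $\partial\Vert c_{\rm{s}}\bm{x}_{\rm{s}}\Vert={\rm{sign}}(c_{\rm{s}})\,\partial\Vert\bm{x}_{\rm{s}}\Vert$ (a consequence of positive homogeneity and evenness of the norm) and the identity $\langle\bm{y},\bm{x}_{\rm{s}}\rangle=\Vert\bm{x}_{\rm{s}}\Vert$ valid for every $\bm{y}\in\partial\Vert\bm{x}_{\rm{s}}\Vert$; together they let the cross term $c_{\rm{i}}\langle\bm{y},\bm{x}_{\rm{s}}\rangle$ be pulled out of the maximum as the constant $c_{\rm{i}}\Vert\bm{x}_{\rm{s}}\Vert$, reproducing exactly the infinitesimal part of $|c|\,\Vert\bm{x}\Vert$. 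The degenerate configurations ($c_{\rm{s}}=0$ or $\bm{x}_{\rm{s}}=\bm{0}$) collapse to the second branch of \cref{vector norm expansion} and are settled directly using $\epsilon^2=0$ and real absolute homogeneity.

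The \emph{crux} is axiom (iii). Writing $\bm{x}+\bm{y}=(\bm{x}_{\rm{s}}+\bm{y}_{\rm{s}})+(\bm{x}_{\rm{i}}+\bm{y}_{\rm{i}})\epsilon$, the real triangle inequality gives $\Vert\bm{x}_{\rm{s}}+\bm{y}_{\rm{s}}\Vert\leq\Vert\bm{x}_{\rm{s}}\Vert+\Vert\bm{y}_{\rm{s}}\Vert$ for the standard parts. Under the total order, whenever this is strict the claim follows at once, so the genuine obstacle is the equality case $\Vert\bm{x}_{\rm{s}}+\bm{y}_{\rm{s}}\Vert=\Vert\bm{x}_{\rm{s}}\Vert+\Vert\bm{y}_{\rm{s}}\Vert$, where the infinitesimal parts must be compared. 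My key lemma here would be the inclusion $\partial\Vert\bm{x}_{\rm{s}}+\bm{y}_{\rm{s}}\Vert\subseteq\partial\Vert\bm{x}_{\rm{s}}\Vert\cap\partial\Vert\bm{y}_{\rm{s}}\Vert$: using the characterization $\partial\Vert\bm{v}\Vert=\{\bm{z}:\Vert\bm{z}\Vert_{*}\leq1,\ \langle\bm{z},\bm{v}\rangle=\Vert\bm{v}\Vert\}$ in terms of the dual norm $\Vert\cdot\Vert_{*}$, any $\bm{z}\in\partial\Vert\bm{x}_{\rm{s}}+\bm{y}_{\rm{s}}\Vert$ satisfies $\langle\bm{z},\bm{x}_{\rm{s}}\rangle+\langle\bm{z},\bm{y}_{\rm{s}}\rangle=\Vert\bm{x}_{\rm{s}}\Vert+\Vert\bm{y}_{\rm{s}}\Vert$ while each summand is bounded by $\Vert\bm{x}_{\rm{s}}\Vert$, resp.\ $\Vert\bm{y}_{\rm{s}}\Vert$, forcing both to be tight and hence $\bm{z}\in\partial\Vert\bm{x}_{\rm{s}}\Vert$ and $\bm{z}\in\partial\Vert\bm{y}_{\rm{s}}\Vert$. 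Then for each such $\bm{z}$, $\langle\bm{z},\bm{x}_{\rm{i}}+\bm{y}_{\rm{i}}\rangle\leq\max_{\bm{u}\in\partial\Vert\bm{x}_{\rm{s}}\Vert}\langle\bm{u},\bm{x}_{\rm{i}}\rangle+\max_{\bm{v}\in\partial\Vert\bm{y}_{\rm{s}}\Vert}\langle\bm{v},\bm{y}_{\rm{i}}\rangle$, and maximizing over $\bm{z}$ yields the desired inequality of infinitesimal parts. Finally I would dispatch the boundary configurations—one or both of $\bm{x}_{\rm{s}},\bm{y}_{\rm{s}}$ zero, or $\bm{x}_{\rm{s}}+\bm{y}_{\rm{s}}=\bm{0}$ with $\bm{x}_{\rm{s}},\bm{y}_{\rm{s}}\neq\bm{0}$—which use either $\Vert\bm{z}\Vert_{*}\leq1$ to bound $\langle\bm{z},\bm{x}_{\rm{i}}\rangle\leq\Vert\bm{x}_{\rm{i}}\Vert$, the real triangle inequality on infinitesimal parts, or strict positivity of the surviving standard part. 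I expect this equality-case subdifferential inclusion to be the main technical hurdle; everything else reduces to bookkeeping with $\epsilon^2=0$ and the total order.
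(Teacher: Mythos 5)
Your proposal is correct, and for axioms (i) and (ii) it matches the paper's argument in substance (the paper likewise reduces the nontrivial homogeneity case to the identity $\langle\bm{y},\bm{x}_{\rm{s}}\rangle=\Vert\bm{x}_{\rm{s}}\Vert$ for $\bm{y}\in\partial\Vert\bm{x}_{\rm{s}}\Vert$, though it extracts the cross term by computing the limit ${\mathcal{D}}_{a\bm{x}_{\rm{i}}+b\bm{x}_{\rm{s}}}\Vert a\bm{x}_{\rm{s}}\Vert$ directly rather than via the scaling rule for subdifferentials). Where you genuinely diverge is the equality case of the triangle inequality. The paper never touches subdifferentials there: it bounds the numerator of the difference quotient by the real triangle inequality, $\Vert\bm{x}_{\rm{s}}+\bm{y}_{\rm{s}}+t(\bm{x}_{\rm{i}}+\bm{y}_{\rm{i}})\Vert\leq\Vert\bm{x}_{\rm{s}}+t\bm{x}_{\rm{i}}\Vert+\Vert\bm{y}_{\rm{s}}+t\bm{y}_{\rm{i}}\Vert$, substitutes $\Vert\bm{x}_{\rm{s}}+\bm{y}_{\rm{s}}\Vert=\Vert\bm{x}_{\rm{s}}\Vert+\Vert\bm{y}_{\rm{s}}\Vert$, and splits the limit to get ${\mathcal{D}}_{\bm{x}_{\rm{i}}+\bm{y}_{\rm{i}}}\Vert\bm{x}_{\rm{s}}+\bm{y}_{\rm{s}}\Vert\leq{\mathcal{D}}_{\bm{x}_{\rm{i}}}\Vert\bm{x}_{\rm{s}}\Vert+{\mathcal{D}}_{\bm{y}_{\rm{i}}}\Vert\bm{y}_{\rm{s}}\Vert$ in one line, with no case distinction on whether $\bm{x}_{\rm{s}}$, $\bm{y}_{\rm{s}}$, or their sum vanishes. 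Your route via the inclusion $\partial\Vert\bm{x}_{\rm{s}}+\bm{y}_{\rm{s}}\Vert\subseteq\partial\Vert\bm{x}_{\rm{s}}\Vert\cap\partial\Vert\bm{y}_{\rm{s}}\Vert$ is sound (the tightness argument with the dual-norm characterization is standard and, as you note, also absorbs the $\bm{x}_{\rm{s}}=\bm{0}$ boundary case since $\partial\Vert\bm{0}\Vert$ is the dual-norm unit ball), and it is arguably more informative about \emph{why} the infinitesimal parts subadd; but it costs you the dual-norm machinery and the extra boundary bookkeeping, whereas the paper's direct limit manipulation is shorter and uniform. Note also that your configuration $\bm{x}_{\rm{s}}+\bm{y}_{\rm{s}}=\bm{0}$ with both summands nonzero is automatically a strict-inequality case of the standard parts, so it needs no separate treatment.
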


\begin{proof}
It follows from \Cref{def: dual extension} that the dual continuation of the vector norm $\Vert\cdot\Vert$ has the form $\Vert\bm{x}\Vert=\Vert\bm{x}_{\rm{s}}\Vert+{\mathcal{D}}_{\bm{x}_{\rm{i}}}\Vert\bm{x}_{\rm{s}}\Vert\epsilon$.
For the case that $\bm{x}_{\rm{s}}=\bm{0}$, we obtain ${\mathcal{D}}_{\bm{x}_{\rm{i}}}\Vert\bm{0}\Vert=\lim_{t\downarrow0}\frac{\Vert\bm{0}+t\bm{x}_{\rm{i}}\Vert-\Vert\bm{0}\Vert}{t}=\Vert\bm{x}_{\rm{i}}\Vert$.
Otherwise, the subdifferential $\partial\Vert\bm{x}_{\rm{s}}\Vert$ is valid, since the vector norm is convex.
Thus, ${\mathcal{D}}_{\bm{x}_{\rm{i}}}\Vert\bm{x}_{\rm{s}}\Vert=\max_{\bm{y}\in\partial\Vert\bm{x}_{\rm{s}}\Vert}\langle\bm{y},\bm{x}_{\rm{i}}\rangle$ for non-zero $\bm{x}_{\rm{s}}$, based on \Cref{lem_dire_gra=max}.
Accordingly, the dual continuation of the vector norm has the expression shown in \cref{vector norm expansion}.
The following objective is to verify that \cref{vector norm expansion} is a dual-valued vector norm under the total order of dual numbers.

\textbf{(i) Non-negativity:} $\Vert\bm{x}\Vert\geq0$ and $\Vert\bm{x}\Vert=0\Leftrightarrow\bm{x}=\bm{0}$.
    
If $\bm{x}_{\rm{s}}\neq\bm{0}$, then $\Vert\bm{x}_{\rm{s}}\Vert>0$. 
Thus, $\Vert\bm{x}\Vert>0$, based on \Cref{def: total order}. 
Else if $\bm{x}_{\rm{s}}=\bm{0}$ but $\bm{x}_{\rm{i}}\neq\bm{0}$, then $\Vert\bm{x}_{\rm{i}}\Vert>0$. 
Thus, $\Vert\bm{x}\Vert = \Vert\bm{x}_{\rm{i}}\Vert\epsilon>0$.
Otherwise, if $\bm{x}_{\rm{s}}=\bm{x}_{\rm{i}}=\bm{0}$, then $\Vert\bm{x}\Vert=\Vert\bm{x}_{\rm{i}}\Vert\epsilon=0$.
Hence, $\Vert\bm{x}\Vert\geq0$ for any $\bm{x}$ and $\bm{x}=\bm{0}$ results in $\Vert\bm{x}\Vert=0$.

On the other hand, if $\Vert\bm{x}\Vert=0$, then $\Vert\bm{x}_{\rm{s}}\Vert=0$ and ${\mathcal{D}}_{\bm{x}_{\rm{i}}}\Vert\bm{x}_{\rm{s}}\Vert=0$, which implies that $\bm{x}_{\rm{s}}=\bm{0}$.
Besides, ${\mathcal{D}}_{\bm{x}_{\rm{i}}}\Vert\bm{0}\Vert = \Vert\bm{x}_{\rm{i}}\Vert=0$ leads to $\bm{x}_{\rm{i}}=\bm{0}$.
Hence, $\bm{x}=\bm{0}$.
    
\textbf{(ii) Homogeneity:}
$\Vert c\bm{x}\Vert=|c|\cdot\Vert\bm{x}\Vert$.

Set $c=a+b\epsilon\in\mathbb{DR}$.
If $a=0$, then $\Vert c\bm{x}\Vert = \Vert b\bm{x}_{\rm{s}}\epsilon\Vert = \Vert b\bm{x}_{\rm{s}}\Vert\epsilon = |b|\cdot\Vert\bm{x}_{\rm{s}}\Vert\epsilon$. 
Besides, $|c|\cdot\Vert\bm{x}\Vert = (|b|\epsilon)(\Vert\bm{x}_{\rm{s}}\Vert + {\mathcal{D}}_{\bm{x}_{\rm{i}}}\Vert\bm{x}_{\rm{s}}\Vert\epsilon) = |b|\cdot\Vert\bm{x}_{\rm{s}}\Vert\epsilon$, as illustrated in \cref{eq: absolute}.
If $\bm{x}_{\rm{s}}=\bm{0}$, then $\Vert c\bm{x}\Vert = \Vert a\bm{x}_{\rm{i}}\epsilon\Vert = \Vert a\bm{x}_{\rm{i}}\Vert\epsilon = |a|\cdot\Vert\bm{x}_{\rm{i}}\Vert\epsilon$. 
In addition, $|a+b\epsilon|\cdot\Vert\bm{x}_{\rm{s}}+\bm{x}_{\rm{i}}\epsilon\Vert = (|a|+{\mathcal{D}}_b|a|\epsilon)(\Vert \bm{x}_{\rm{i}}\Vert\epsilon) = |a|\cdot\Vert \bm{x}_{\rm{i}}\Vert\epsilon$.
Moreover, if $a\neq0$ and $\bm{x}_{\rm{s}}\neq\bm{0}$, then 
\begin{align*}
{\mathcal{D}}_{a\bm{x}_{\rm{i}}+b\bm{x}_{\rm{s}}}\Vert a\bm{x}_{\rm{s}}\Vert&= \lim\limits_{t\downarrow0}\frac{\Vert a\bm{x}_{\rm{s}} + t(a\bm{x}_{\rm{i}}+b\bm{x}_{\rm{s}})\Vert-\Vert a\bm{x}_{\rm{s}}\Vert}{t}
=|a|\cdot{\mathcal{D}}_{\bm{x}_{\rm{i}}+\frac{b}{a}\bm{x}_{\rm{s}}}\Vert\bm{x}_{\rm{s}}\Vert \\
&\overset{{\rm{(I)}}}{=} |a|\max\limits_{\bm{y}\in\partial\Vert\bm{x}_{\rm{s}}\Vert}\langle\bm{y},\bm{x}_{\rm{i}}\rangle+|a|\frac{b}{a}\Vert\bm{x}_{\rm{s}}\Vert
= |a|\cdot{\mathcal{D}}_{\bm{x}_{\rm{i}}}\Vert\bm{x}_{\rm{s}}\Vert+{\rm{sign}}(a)b\cdot\Vert\bm{x}_{\rm{s}}\Vert,
\end{align*}
where ${\rm{(I)}}$ results from the fact that $\langle\bm{y},\bm{x}_{\rm{s}}\rangle = \Vert\bm{x}_{\rm{s}}\Vert$ for any $\bm{y}\in\partial\Vert\bm{x}_{\rm{s}}\Vert$.

Thus, $\Vert c\bm{x}\Vert = \Vert a\bm{x}_{\rm{s}}\Vert + {\mathcal{D}}_{a\bm{x}_{\rm{i}}+b\bm{x}_{\rm{s}}}\Vert a\bm{x}_{\rm{s}}\Vert\epsilon  = |a|\cdot\Vert\bm{x}_{\rm{s}}\Vert + \big(|a|\cdot{\mathcal{D}}_{\bm{x}_{\rm{i}}}\Vert\bm{x}_{\rm{s}}\Vert+{\rm{sign}}(a)b\cdot\Vert\bm{x}_{\rm{s}}\Vert\big)\epsilon$. 
Moreover, $|c|\cdot\Vert\bm{x}\Vert = \big(|a|+{\rm{sign}}(a)b\epsilon\big)\big(\Vert \bm{x}_{\rm{s}}\Vert + {\mathcal{D}}_{\bm{x}_{\rm{i}}}\Vert\bm{x}_{\rm{s}}\Vert\epsilon\big) = |a|\cdot\Vert \bm{x}_{\rm{s}}\Vert+\big( |a|\cdot{\mathcal{D}}_{\bm{x}_{\rm{i}}}\Vert\bm{x}_{\rm{s}}\Vert+{\rm{sign}}(a)b\cdot\Vert\bm{x}_{\rm{s}}\Vert\big)\epsilon$.  
In conclusion, $\Vert c\bm{x}\Vert=|c|\cdot\Vert\bm{x}\Vert$.
    
\textbf{(iii) Triangle inequality:} 
$\Vert\bm{x}+\bm{y}\Vert\leq\Vert\bm{x}\Vert+\Vert\bm{y}\Vert$.

Set $\bm{y}=\bm{y}_{\rm{s}}+\bm{y}_{\rm{i}}\epsilon\in\mathbb{DR}^{n}$.
Note that $\Vert\bm{x}+\bm{y}\Vert = \Vert\bm{x}_{\rm{s}}+\bm{y}_{\rm{s}}\Vert + {\mathcal{D}}_{\bm{x}_{\rm{i}}+\bm{y}_{\rm{i}}}\Vert\bm{x}_{\rm{s}}+\bm{y}_{\rm{s}}\Vert\epsilon$ and $\Vert\bm{x}\Vert+\Vert\bm{y}\Vert = \Vert\bm{x}_{\rm{s}}\Vert+\Vert\bm{y}_{\rm{s}}\Vert+({\mathcal{D}}_{\bm{x}_{\rm{i}}}\Vert\bm{x}_{\rm{s}}\Vert+{\mathcal{D}}_{\bm{y}_{\rm{i}}}\Vert\bm{y}_{\rm{s}}\Vert)\epsilon$.
If $\Vert\bm{x}_{\rm{s}}+\bm{y}_{\rm{s}}\Vert<\Vert\bm{x}_{\rm{s}}\Vert+\Vert\bm{y}_{\rm{s}}\Vert$, then $\Vert\bm{x}+\bm{y}\Vert<\Vert\bm{x}\Vert+\Vert\bm{y}\Vert$.
Otherwise, if $\Vert\bm{x}_{\rm{s}}+\bm{y}_{\rm{s}}\Vert=\Vert\bm{x}_{\rm{s}}\Vert+\Vert\bm{y}_{\rm{s}}\Vert$, then 
\begin{align*}
&{\mathcal{D}}_{\bm{x}_{\rm{i}}+\bm{y}_{\rm{i}}}\Vert\bm{x}_{\rm{s}}+\bm{y}_{\rm{s}}\Vert = \lim\limits_{t\downarrow0}\frac{\Vert\bm{x}_{\rm{s}}+\bm{y}_{\rm{s}}+t(\bm{x}_{\rm{i}}+\bm{y}_{\rm{i}})\Vert-\Vert\bm{x}_{\rm{s}}+\bm{y}_{\rm{s}}\Vert}{t}\\
\leq& \lim\limits_{t\downarrow0}\frac{\Vert\bm{x}_{\rm{s}}+t\bm{x}_{\rm{i}}\Vert-\Vert\bm{x}_{\rm{s}}\Vert+\Vert\bm{y}_{\rm{s}}+t\bm{y}_{\rm{i}}\Vert-\Vert\bm{y}_{\rm{s}}\Vert}{t}
\overset{{\rm{(II)}}}{=} {\mathcal{D}}_{\bm{x}_{\rm{i}}}\Vert\bm{x}_{\rm{s}}\Vert+{\mathcal{D}}_{\bm{y}_{\rm{i}}}\Vert\bm{y}_{\rm{s}}\Vert,
\end{align*}
where (II) follows from the existence of both limits.
Hence, $\Vert\bm{x}+\bm{y}\Vert\leq\Vert\bm{x}\Vert+\Vert\bm{y}\Vert$.

Consequently, the expression \cref{vector norm expansion} is a dual-valued vector norm.
\end{proof}

\Cref{theorem:vector norm} has provided a comprehensive overview of the general form of dual-valued vector norms.
This allows us to focus on the relationship between a vector norm and its dual continuation, similar to the triangle inequality.

\begin{lemma}\label{lem: ||x||<=||xs||+||xi||eps}
Given a vector norm $\Vert\cdot\Vert:\mathbb{R}^n\rightarrow\mathbb{R}$ and its dual continuation $\Vert\cdot\Vert:\mathbb{DR}^n\rightarrow\mathbb{DR}$.
Then for the dual vector $\bm{x}=\bm{x}_{\rm{s}}+\bm{x}_{\rm{i}}\epsilon\in\mathbb{DR}^n$,
\begin{align}
\Vert\bm{x}\Vert \leq\Vert\bm{x}_{\rm{s}}\Vert+\Vert\bm{x}_{\rm{i}}\Vert\epsilon.
\end{align}
\end{lemma}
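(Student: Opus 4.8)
The plan is to invoke the explicit expression \cref{vector norm expansion} from \Cref{theorem:vector norm} and then compare the two sides under the total order of \Cref{def: total order}, which first compares standard parts and, only in the event of a tie, the infinitesimal parts. First I would split the argument according to the two cases appearing in \cref{vector norm expansion}.

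When $\bm{x}_{\rm{s}}=\bm{0}$, the left-hand side equals $\Vert\bm{x}_{\rm{i}}\Vert\epsilon$, while the right-hand side equals $\Vert\bm{0}\Vert+\Vert\bm{x}_{\rm{i}}\Vert\epsilon=\Vert\bm{x}_{\rm{i}}\Vert\epsilon$; the two coincide, so the claimed inequality holds with equality. When $\bm{x}_{\rm{s}}\neq\bm{0}$, both sides share the same standard part $\Vert\bm{x}_{\rm{s}}\Vert$, so by \Cref{def: total order} it remains only to verify the inequality between the infinitesimal parts, namely $\mathcal{D}_{\bm{x}_{\rm{i}}}\Vert\bm{x}_{\rm{s}}\Vert\le\Vert\bm{x}_{\rm{i}}\Vert$. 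The cleanest route here is to return to the defining limit of the G\^ateaux derivative and apply the triangle inequality of the underlying real norm: for every $t>0$ one has $\Vert\bm{x}_{\rm{s}}+t\bm{x}_{\rm{i}}\Vert\le\Vert\bm{x}_{\rm{s}}\Vert+t\Vert\bm{x}_{\rm{i}}\Vert$, so the difference quotient is bounded above by $\Vert\bm{x}_{\rm{i}}\Vert$, and passing to the limit $t\downarrow0$ yields the desired bound. Equivalently, using \Cref{lem_dire_gra=max}, every $\bm{y}\in\partial\Vert\bm{x}_{\rm{s}}\Vert$ has dual norm at most $1$, whence $\langle\bm{y},\bm{x}_{\rm{i}}\rangle\le\Vert\bm{x}_{\rm{i}}\Vert$ by the generalized Cauchy--Schwarz/H\"older inequality, and the same conclusion follows after taking the maximum over the subdifferential.

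There is no serious obstacle here; the only point requiring care is the bookkeeping imposed by the total order, since a comparison of standard parts settles the matter before the infinitesimal parts are consulted, and the decisive infinitesimal inequality is simply a limiting form of the subadditivity already exploited to establish the triangle inequality in \Cref{theorem:vector norm}. I would therefore present the case $\bm{x}_{\rm{s}}=\bm{0}$ as an immediate equality and devote the short main computation to the bound on the G\^ateaux derivative in the case $\bm{x}_{\rm{s}}\neq\bm{0}$.
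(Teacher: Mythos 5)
Your proposal is correct, and your second route is precisely the paper's proof: for $\bm{x}_{\rm{s}}\neq\bm{0}$ the paper notes that every $\bm{y}\in\partial\Vert\bm{x}_{\rm{s}}\Vert$ has dual norm at most $1$, applies H\"older's inequality to get $\langle\bm{y},\bm{x}_{\rm{i}}\rangle\leq\Vert\bm{y}\Vert_d\Vert\bm{x}_{\rm{i}}\Vert\leq\Vert\bm{x}_{\rm{i}}\Vert$, and takes the maximum over the subdifferential, while the case $\bm{x}_{\rm{s}}=\bm{0}$ is dispatched exactly as you do. Your preferred first route --- bounding the difference quotient $\bigl(\Vert\bm{x}_{\rm{s}}+t\bm{x}_{\rm{i}}\Vert-\Vert\bm{x}_{\rm{s}}\Vert\bigr)/t\leq\Vert\bm{x}_{\rm{i}}\Vert$ directly by the triangle inequality and passing to the limit --- is a marginally more elementary variant that avoids invoking the dual-norm characterization of the subdifferential and works verbatim from the definition of the G\^ateaux derivative; either version is complete, and the bookkeeping under the total order is handled correctly in both.
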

\begin{proof}
Suppose that $\bm{x}_{\rm{s}}=\bm{0}$.
Then
$\Vert\bm{x}\Vert =\Vert\bm{x}_{\rm{i}}\Vert\epsilon$ based on \Cref{theorem:vector norm}.
Otherwise, $\Vert\bm{y}\Vert_d\leq1$ for any $\bm{y}\in\partial\Vert\bm{x}_{\rm{s}}\Vert$, where $\Vert\cdot\Vert_d$ represents the dual norm of the vector norm $\Vert\cdot\Vert$.
Besides, H$\ddot{o}$lder's inequality implies that $\langle\bm{y},\bm{x}_{\rm{i}}\rangle\leq\Vert\bm{y}\Vert_d\Vert\bm{x}_{\rm{i}}\Vert\leq\Vert\bm{x}_{\rm{i}}\Vert$.
Thus, 
\begin{align*}
\Vert\bm{x}\Vert = \Vert\bm{x}_{\rm{s}}\Vert+\max\limits_{\bm{y}\in\partial\Vert\bm{x}_{\rm{s}}\Vert}\langle\bm{y},\bm{x}_{\rm{i}}\rangle\epsilon\leq\Vert\bm{x}_{\rm{s}}\Vert+\Vert\bm{x}_{\rm{i}}\Vert\epsilon.
\end{align*}
\end{proof}

\subsection{Dual-Valued Vector \texorpdfstring{$p$}{p}-Norm}
A useful class of vector norms is the vector $p$-norm defined by $\Vert\bm{y}\Vert_p=(|y_1|^p+\cdots+|y_n|^p)^{\frac{1}{p}}$ for $1\leq p<\infty$ and $\Vert\bm{y}\Vert_\infty=\max_{1\leq i\leq n}|y_i|$, where $\bm{y}=[y_1,\cdots,y_n]^\top\in\mathbb{R}^n$.
This section derives the dual-valued vector $p$-norm $(1\leq p\leq\infty)$ and discusses the consistency between the dual-valued vector $p$-norm and the result proposed by Miao and Huang in \cite{miao2023norms}.

\begin{proposition}\label{pro: dual vector p norm}
The dual-valued vector $p$-norm $(1\leq p\leq \infty)$ $\Vert\cdot\Vert_p:\mathbb{DR}^{n}\rightarrow\mathbb{DR}$ of the dual vector $\bm{x}=\bm{x}_{\rm{s}}+\bm{x}_{\rm{i}}\epsilon\in\mathbb{DR}^n$ extending from the vector $p$-norm has the form 
\begin{subequations}
\begin{align}
\Vert\bm{x}\Vert_1 &=\Vert\bm{x}_{\rm{s}}\Vert_1+ \Big(\langle {\rm{sign}}(\bm{x}_{\rm{s}}),\bm{x}_{\rm{i}}\rangle+\sum_{k\in{\rm{supp}}^c(\bm{x}_{\rm{s}})}\big|x_{\rm{i}}^k\big|\Big)\epsilon,\label{dual vector 1-norm}\\
\Vert\bm{x}\Vert_p &=\left\{\begin{array}{ll}
\Vert\bm{x}_{\rm{s}}\Vert_p+ \frac{\langle |\bm{x}_{\rm{s}}|^{p-2}\odot\bm{x}_{\rm{s}},\bm{x}_{\rm{i}}\rangle}{\Vert\bm{x}_{\rm{s}}\Vert_p^{p-1}}\epsilon,  &  \bm{x}_{\rm{s}}\neq\bm{0},\\
\Vert\bm{x}_{\rm{i}}\Vert_p\epsilon, & \bm{x}_{\rm{s}}=\bm{0},
\end{array}\label{eq: dual vector p-norm}\right. (1<p<\infty),\\
\Vert\bm{x}\Vert_\infty &=\left\{\begin{array}{ll}
\Vert\bm{x}_{\rm{s}}\Vert_\infty+ \max\limits_{k\in\mathcal{I}}\big({\rm{sign}}(x_{\rm{s}}^k)x_{\rm{i}}^k\big)\epsilon,  & \bm{x}_{\rm{s}}\neq\bm{0}, \\
\Vert\bm{x}_{\rm{i}}\Vert_\infty\epsilon, &  \bm{x}_{\rm{s}}=\bm{0},
\end{array}\right.
\label{dual vector infty-norm}
\end{align}
\end{subequations}
where $x_{\rm{s}}^k$ and $x_{\rm{i}}^k$ stand for the $k$-th element of $\bm{x}_{\rm{s}}$ and $\bm{x}_{\rm{i}}$.
Besides, ${\rm{supp}}^c(\bm{x}_{\rm{s}})$ represents the set of indices where $\bm{x}_{\rm{s}}$ is zero and $\mathcal{I} = \{k:|x_{\rm{s}}^k|=\Vert\bm{x}_{\rm{s}}\Vert_\infty\}$.
\end{proposition}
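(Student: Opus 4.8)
The plan is to specialize the general expression \cref{vector norm expansion} from \Cref{theorem:vector norm} to the vector $p$-norm, handling the three regimes $p=1$, $1<p<\infty$, and $p=\infty$ separately. In every regime the case $\bm{x}_{\rm{s}}=\bm{0}$ is immediate: \cref{vector norm expansion} returns $\Vert\bm{x}\Vert_p=\Vert\bm{x}_{\rm{i}}\Vert_p\epsilon$ with no further work. Hence all the computation lies in evaluating the infinitesimal part $\mathcal{D}_{\bm{x}_{\rm{i}}}\Vert\bm{x}_{\rm{s}}\Vert_p=\max_{\bm{y}\in\partial\Vert\bm{x}_{\rm{s}}\Vert_p}\langle\bm{y},\bm{x}_{\rm{i}}\rangle$ when $\bm{x}_{\rm{s}}\neq\bm{0}$, either by exhibiting the subdifferential and maximizing the linear functional $\langle\,\cdot\,,\bm{x}_{\rm{i}}\rangle$ over it, or equivalently by computing the one-sided limit in \cref{eq:gateaux} coordinatewise.

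For $1<p<\infty$ I would use that the $p$-norm is differentiable at every $\bm{x}_{\rm{s}}\neq\bm{0}$, so $\partial\Vert\bm{x}_{\rm{s}}\Vert_p$ is the singleton $\{\nabla\Vert\bm{x}_{\rm{s}}\Vert_p\}$ and the maximum collapses to a single inner product. Differentiating $\Vert\bm{x}_{\rm{s}}\Vert_p=\big(\sum_k|x_{\rm{s}}^k|^p\big)^{1/p}$ gives the $k$-th partial ${\rm{sign}}(x_{\rm{s}}^k)|x_{\rm{s}}^k|^{p-1}/\Vert\bm{x}_{\rm{s}}\Vert_p^{p-1}$, that is $\nabla\Vert\bm{x}_{\rm{s}}\Vert_p=(|\bm{x}_{\rm{s}}|^{p-2}\odot\bm{x}_{\rm{s}})/\Vert\bm{x}_{\rm{s}}\Vert_p^{p-1}$, which yields \cref{eq: dual vector p-norm}. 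Here I would flag one subtlety for $1<p<2$: at coordinates with $x_{\rm{s}}^k=0$ the factor $|x_{\rm{s}}^k|^{p-2}$ is singular, but the product $|x_{\rm{s}}^k|^{p-2}x_{\rm{s}}^k={\rm{sign}}(x_{\rm{s}}^k)|x_{\rm{s}}^k|^{p-1}\to0$, so with the convention that this entry equals $0$ both the formula and the differentiability of $\Vert\cdot\Vert_p$ off the origin remain valid.

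For $p=1$ I would compute the Gâteaux derivative directly and split coordinates by support. Writing $\Vert\bm{x}_{\rm{s}}+t\bm{x}_{\rm{i}}\Vert_1-\Vert\bm{x}_{\rm{s}}\Vert_1=\sum_k\big(|x_{\rm{s}}^k+tx_{\rm{i}}^k|-|x_{\rm{s}}^k|\big)$, the difference quotient tends to ${\rm{sign}}(x_{\rm{s}}^k)x_{\rm{i}}^k$ for $x_{\rm{s}}^k\neq0$ and equals $|x_{\rm{i}}^k|$ for $t>0$ when $x_{\rm{s}}^k=0$; summing over $k$ gives \cref{dual vector 1-norm}. Equivalently one may invoke $\partial\Vert\bm{x}_{\rm{s}}\Vert_1=\{\bm{y}:y_k={\rm{sign}}(x_{\rm{s}}^k)\text{ on }{\rm{supp}}(\bm{x}_{\rm{s}}),\ y_k\in[-1,1]\text{ on }{\rm{supp}}^c(\bm{x}_{\rm{s}})\}$ and note $\max_{y_k\in[-1,1]}y_kx_{\rm{i}}^k=|x_{\rm{i}}^k|$. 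For $p=\infty$ I would use that $\Vert\bm{x}_{\rm{s}}\Vert_\infty=\max_k|x_{\rm{s}}^k|$ is a pointwise maximum of the functions $|x_{\rm{s}}^k|$, each differentiable at $\bm{x}_{\rm{s}}$ on the active set (since $x_{\rm{s}}^k\neq0$ for $k\in\mathcal{I}$ because $\Vert\bm{x}_{\rm{s}}\Vert_\infty>0$), so its directional derivative along $\bm{x}_{\rm{i}}$ is the maximum over $\mathcal{I}$ of ${\rm{sign}}(x_{\rm{s}}^k)x_{\rm{i}}^k$, giving \cref{dual vector infty-norm}.

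The main obstacle is the two nonsmooth regimes $p=1$ and $p=\infty$, where $\partial\Vert\bm{x}_{\rm{s}}\Vert_p$ is genuinely set-valued and one must justify the passage from the maximum over the subdifferential to the closed-form expressions—equivalently, that the one-sided limit in \cref{eq:gateaux} may be evaluated term by term over the finitely many inactive and active coordinates. The regime $1<p<\infty$ is routine once differentiability off the origin is established, the only care being the zero-coordinate convention noted above.
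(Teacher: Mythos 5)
Your proposal is correct and follows essentially the same route as the paper: specialize the general formula of \Cref{theorem:vector norm} and evaluate the infinitesimal part ${\mathcal{D}}_{\bm{x}_{\rm{i}}}\Vert\bm{x}_{\rm{s}}\Vert_p$ case by case for $p=1$, $1<p<\infty$, and $p=\infty$. The only difference is cosmetic: for $p=1$ and $p=\infty$ the paper maximizes $\langle\cdot,\bm{x}_{\rm{i}}\rangle$ over the explicit subdifferentials recorded in \Cref{lem: sub of vector p norm}, whereas your primary route evaluates the one-sided limit coordinatewise and via the active-set rule, which yields the same expressions.
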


To demonstrate the expression of dual-valued vector $p$-norm, it is necessary to introduce the subdifferential of the vector $p$-norm initially.

\begin{lemma}\label{lem: sub of vector p norm}
The subdifferential of the vector $p$-norm at a point $\bm{0}\neq\bm{z}\in\mathbb{R}^n$ is
\begin{subequations}
\begin{align}
\partial\Vert\bm{z}\Vert_1 &= \{{\rm{sign}}(\bm{z})+\bm{w}:\bm{w}\in\mathbb{R}^n,{\rm{supp}}(\bm{w})\subseteq{\rm{supp}}^c(\bm{z}),\Vert\bm{w}\Vert_\infty\leq 1\},\label{eq: sub of vector 1}\\
\partial\Vert\bm{z}\Vert_p &= \left\{\frac{|\bm{z}|^{p-2}\odot\bm{z}}{\Vert\bm{z}\Vert_p^{p-1}}\right\}, \;(1<p<\infty),\label{eq: sub of vector p}\\
\partial\Vert\bm{z}\Vert_\infty &= \bigg\{\sum_{k\in\mathcal{I}}\beta_k\hat{\bm{e}}_k:\mathcal{I}=\{k:|z_k|=\Vert\bm{z}\Vert_\infty\}, \sum_{k\in\mathcal{I}}\beta_k=1, \beta_k\geq0\bigg\},\label{eq: sub of vector infty}
\end{align}
\end{subequations}
where ${\rm{supp}}(\bm{w})$ and ${\rm{supp}}^c(\bm{z})$ in \cref{eq: sub of vector 1} denote the sets of indices where $\bm{w}$ is non-zero and $\bm{z}$ is zero, and $|\bm{z}|\!:=\!\big[|z_1|,\cdots,|z_n|\big]^\top$ in \cref{eq: sub of vector p}.
Besides, in \cref{eq: sub of vector infty}, $z_k$ represents the $k$-th element of $\bm{z}$ and $\hat{\bm{e}}_k\!:=\!{\rm{sign}}(z_k)\bm{e}_k$ denotes a vector whose $k$-th element is ${\rm{sign}}(z_k)$ and the others are zero.
\end{lemma}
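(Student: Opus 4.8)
The plan is to reduce all three cases to a single structural fact about norms: for any vector norm $\Vert\cdot\Vert$ with dual norm $\Vert\cdot\Vert_d$, the subdifferential at a nonzero point $\bm{z}$ admits the clean description
\begin{align*}
\partial\Vert\bm{z}\Vert = \{\bm{y}\in\mathbb{R}^n : \Vert\bm{y}\Vert_d\leq1 \ \text{and}\ \langle\bm{y},\bm{z}\rangle=\Vert\bm{z}\Vert\}.
\end{align*}
First I would establish this identity directly from the subgradient inequality $\Vert\bm{w}\Vert\geq\Vert\bm{z}\Vert+\langle\bm{y},\bm{w}-\bm{z}\rangle$. Testing with $\bm{w}=2\bm{z}$ and $\bm{w}=\bm{0}$ forces $\langle\bm{y},\bm{z}\rangle=\Vert\bm{z}\Vert$; substituting this back reduces the inequality to $\langle\bm{y},\bm{w}\rangle\leq\Vert\bm{w}\Vert$ for all $\bm{w}$, i.e.\ $\Vert\bm{y}\Vert_d\leq1$. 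The converse follows immediately from H\"older's inequality $\langle\bm{y},\bm{w}-\bm{z}\rangle\leq\Vert\bm{y}\Vert_d\Vert\bm{w}\Vert-\Vert\bm{z}\Vert\leq\Vert\bm{w}\Vert-\Vert\bm{z}\Vert$. With this in hand, each case amounts to instantiating the dual norm and analyzing when equality is attained.

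For \cref{eq: sub of vector 1}, the dual of $\Vert\cdot\Vert_1$ is $\Vert\cdot\Vert_\infty$, so I would impose $\Vert\bm{y}\Vert_\infty\leq1$ together with $\sum_k y_kz_k=\sum_k|z_k|$. Since $y_kz_k\leq|y_k|\,|z_k|\leq|z_k|$ termwise, the equality is attainable only if $y_kz_k=|z_k|$ for every $k$: on $\operatorname{supp}(\bm{z})$ this pins $y_k=\operatorname{sign}(z_k)$, while on $\operatorname{supp}^c(\bm{z})$ the coordinate $y_k$ is unconstrained apart from $|y_k|\leq1$, yielding the decomposition $\operatorname{sign}(\bm{z})+\bm{w}$ in the statement. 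For \cref{eq: sub of vector infty}, the dual of $\Vert\cdot\Vert_\infty$ is $\Vert\cdot\Vert_1$, and I would run the chain $\sum_ky_kz_k\leq\sum_k|y_k|\,|z_k|\leq\Vert\bm{z}\Vert_\infty\sum_k|y_k|\leq\Vert\bm{z}\Vert_\infty$. Equality throughout demands $\sum_k|y_k|=1$, that $y_k\neq0$ only when $|z_k|=\Vert\bm{z}\Vert_\infty$ (hence $k\in\mathcal{I}$), and that $\operatorname{sign}(y_k)=\operatorname{sign}(z_k)$; writing $\beta_k=|y_k|$ and $\hat{\bm{e}}_k=\operatorname{sign}(z_k)\bm{e}_k$ reproduces the claimed convex-combination form.

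The case \cref{eq: sub of vector p} with $1<p<\infty$ is the mildest: the map $\bm{z}\mapsto\Vert\bm{z}\Vert_p$ is differentiable away from the origin, so the subdifferential collapses to the singleton $\{\nabla\Vert\bm{z}\Vert_p\}$. A routine chain-rule computation of $\partial_{z_k}(\sum_j|z_j|^p)^{1/p}$ gives $|z_k|^{p-2}z_k/\Vert\bm{z}\Vert_p^{p-1}$, which in vector form is $(|\bm{z}|^{p-2}\odot\bm{z})/\Vert\bm{z}\Vert_p^{p-1}$. The main obstacle is not any single computation but the careful tracking of H\"older's equality conditions in the nonsmooth cases $p=1$ and $p=\infty$, where one must separate the behavior on $\operatorname{supp}(\bm{z})$ (respectively $\mathcal{I}$) from its complement to recover exactly the described sets rather than a larger or smaller collection; I would therefore present the termwise inequality chains explicitly and argue the ``only if'' direction coordinate by coordinate.
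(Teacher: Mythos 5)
Your proposal is correct, but it is organized differently from the paper's proof. The paper does not prove \cref{eq: sub of vector 1} or \cref{eq: sub of vector infty} at all --- it simply cites a convex-analysis reference for those two cases --- and for $1<p<\infty$ it starts from the same dual-norm characterization $\partial\Vert\bm{z}\Vert_p=\{\bm{y}:\langle\bm{y},\bm{z}\rangle=\Vert\bm{z}\Vert_p,\ \Vert\bm{y}\Vert_q\leq1\}$ that you derive, verifies by the chain rule that $|\bm{z}|^{p-2}\odot\bm{z}/\Vert\bm{z}\Vert_p^{p-1}$ belongs to the set, and then argues the set is a singleton by writing any other element as that gradient plus an orthogonal perturbation $\bm{w}$ and invoking an additive splitting $\Vert\bm{y}\Vert_q^q=\Vert\bm{g}\Vert_q^q+\Vert\bm{w}\Vert_q^q$. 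Your route is more self-contained and, for the singleton claim in \cref{eq: sub of vector p}, cleaner: you first prove the general identity $\partial\Vert\bm{z}\Vert=\{\bm{y}:\Vert\bm{y}\Vert_d\leq1,\ \langle\bm{y},\bm{z}\rangle=\Vert\bm{z}\Vert\}$ from the subgradient inequality, then extract \cref{eq: sub of vector 1} and \cref{eq: sub of vector infty} by tracking the coordinatewise equality conditions in H\"older's inequality (which correctly separates the behavior on ${\rm{supp}}(\bm{z})$, respectively $\mathcal{I}$, from the complement), and you dispose of $1<p<\infty$ by observing that $\Vert\cdot\Vert_p$ is differentiable away from the origin, so the subdifferential is the singleton gradient --- thereby avoiding the paper's orthogonality argument, whose $q$-norm Pythagorean step does not hold for general $q$ without further justification. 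What the paper's presentation buys is brevity (two cases outsourced to the literature); what yours buys is a uniform, fully verified treatment of all three cases from one structural fact.
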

\begin{proof}
It should be noted that the subdifferential of the vector $p$-norm for $p=1$ and $p=\infty$, shown in \cref{eq: sub of vector 1,eq: sub of vector infty}, is provided in \cite{Nest2018convex}.

Consider the case $1<p<\infty$.
Given that $\partial\Vert\bm{z}\Vert_p = \{\bm{y}|\bm{y}\in\mathbb{R}^{n},\langle\bm{y},\bm{z}\rangle = \Vert\bm{z}\Vert_p,\Vert\bm{y}\Vert_q\leq1\}$, where $\frac{1}{p}+\frac{1}{q}=1$, namely $p+q=pq$. 
Suppose that the $k$-th element of $\bm{z}$ satisfies $z_k\neq0$, we derive that 
\begin{align*}
\frac{\partial\Vert\bm{z}\Vert_p}{\partial z_k} = \frac{1}{p}\Vert\bm{z}\Vert_p^{1-p}\cdot p|z_k|^{p-1} \cdot {\rm{sign}}(z_k) = \frac{|z_k|^{p-2}z_k}{\Vert\bm{z}\Vert_p^{p-1}}.
\end{align*}
Thus, $\frac{|\bm{z}|^{p-2}\odot\bm{z}}{\Vert\bm{z}\Vert_p^{p-1}}\in\partial\Vert\bm{z}\Vert_p$.
Suppose that $\bm{y}\in\partial\Vert\bm{z}\Vert_p$.
It follows that $\bm{y}=\frac{|\bm{z}|^{p-2}\odot\bm{z}}{\Vert\bm{z}\Vert_p^{p-1}}+\bm{w}$ satisfying $\langle\bm{z},\bm{w}\rangle=0$, $\big\langle\frac{|\bm{z}|^{p-2}\odot\bm{z}}{\Vert\bm{z}\Vert_p^{p-1}},\bm{w}\big\rangle=0$ and $\Vert\bm{y}\Vert_q\leq1$.
Hence, it yields that
\begin{align*}
\Vert\bm{y}\Vert_q^q =\left\Vert\frac{|\bm{z}|^{p-2}\odot\bm{z}}{\Vert\bm{z}\Vert_p^{p-1}}\right\Vert_q^q + \Vert\bm{w}\Vert_q^q = 1 + \Vert\bm{w}\Vert_q^q.
\end{align*}
Then we obtain from $\Vert\bm{y}\Vert_q\leq1$ that $\Vert\bm{w}\Vert_q=0$.
That is, $\bm{w}=\bm{0}$.
Consequently, $\partial\Vert\bm{z}\Vert_p$ is a singleton for non-zero $\bm{z}$, and the result shown in \cref{eq: sub of vector p} follows.

Here, we complete the proof.
\end{proof}

Furthermore, the expression of the dual-valued vector $p$-norm $(1\leq p\leq\infty)$ can be derived based on \Cref{theorem:vector norm} and \Cref{lem: sub of vector p norm}.

\begin{proof}[Proof of \Cref{pro: dual vector p norm}]
\textbf{Case 1:} $p=1$.
It can be derived from \cref{eq: sub of vector 1} that
\begin{align*}
&{\mathcal{D}}_{\bm{x}_{\rm{i}}}\Vert\bm{x}_{\rm{s}}\Vert_1 = \max_{\bm{y}\in\partial\Vert\bm{x}_{\rm{s}}\Vert_1}\langle\bm{y},\bm{x}_{\rm{i}}\rangle = \max_{\substack{{\rm{supp}}(\bm{w})\subseteq{\rm{supp}}^c(\bm{x}_{\rm{s}})\\ \Vert\bm{w}\Vert_\infty\leq1}}\langle{\rm{sign}}(\bm{x}_{\rm{s}})+\bm{w},\bm{x}_{\rm{i}}\rangle\\
=& \langle{\rm{sign}}(\bm{x}_{\rm{s}}),\bm{x}_{\rm{i}}\rangle+\max_{\Vert\bm{w}\Vert_\infty=1}\sum_{k\in{\rm{supp}}^c(\bm{x}_{\rm{s}})}w_k x_{\rm{i}}^k
= \langle{\rm{sign}}(\bm{x}_{\rm{s}}),\bm{x}_{\rm{i}}\rangle+\sum_{k\in{\rm{supp}}^c(\bm{x}_{\rm{s}})}\big|x_{\rm{i}}^k\big|,
\end{align*}
where $w_k$ and $x_{\rm{i}}^k$ represent the $k$-th element of $\bm{w}$ and $\bm{x}_{\rm{i}}$, respectively.
Hence, combining \Cref{theorem:vector norm}, the dual-valued vector 1-norm has the form in \cref{dual vector 1-norm}.

\textbf{Case 2:} $1<p<\infty$.
The formula of the dual-valued vector $p$-norm $(1<p<\infty)$ shown in \cref{eq: dual vector p-norm} is captured directly by \Cref{theorem:vector norm} and \Cref{lem: sub of vector p norm} \cref{eq: sub of vector p}. 

\textbf{Case 3:} $p=\infty$.
We obtain from \cref{eq: sub of vector infty} that
\begin{align*}
{\mathcal{D}}_{\bm{x}_{\rm{i}}}\Vert\bm{x}_{\rm{s}}\Vert_\infty &= \max_{\bm{y}\in\partial\Vert\bm{x}_{\rm{s}}\Vert_\infty}\langle\bm{y},\bm{x}_{\rm{i}}\rangle = \max_{\sum_{k\in\mathcal{I}}\beta_k=1,\beta_k\geq0}\Big\langle\sum_{k\in\mathcal{I}}\beta_k\hat{\bm{e}}_k,\bm{x}_{\rm{i}}\Big\rangle\\
&= \max_{\sum_{k\in\mathcal{I}}\beta_k=1,\beta_k\geq0}\sum_{k\in\mathcal{I}}\beta_k{\rm{sign}}\big(x_{\rm{s}}^k\big)x_{\rm{i}}^k= \max_{k\in\mathcal{I}}\big({\rm{sign}}(x_{\rm{s}}^k)x_{\rm{i}}^k\big).
\end{align*}
Thus, the dual-valued vector $\infty$-norm is expressed as \cref{dual vector infty-norm} based on \Cref{theorem:vector norm}.

Consequently, we complete the proof.
\end{proof}

Additionally, the following theorem claims that our proposed dual-valued vector $p$-norm ($1\leq p\leq\infty$) is accordant with the element-wise method, not only limited to positive integer $p$ put forth in \cite{miao2023norms}.

\begin{theorem}
Let $\bm{x}=\bm{x}_{\rm{s}}+\bm{x}_{\rm{i}}\epsilon\in\mathbb{DR}^n$ $(\bm{x}_{\rm{s}}\neq\bm{0})$ be a dual vector and $x_k=x^k_{\rm{s}}+x^k_{\rm{i}}\epsilon$ denote the $k$-th element of $\bm{x}$.
Then the dual-valued vector $p$-norm satisfies
\begin{enumerate}
\item[${\rm{(i)}}$] $\Vert\bm{x}\Vert_1=|x_1|+|x_2|+\cdots+|x_n|$;
\item[${\rm{(i)}}$] $\Vert\bm{x}\Vert_p=\left(\sum_{k=1}^n |x_k|^p\right)^{\frac{1}{p}}$, for $1<p<\infty$;
\item[${\rm{(ii)}}$] $\Vert\bm{x}\Vert_\infty=\max_{1\leq k\leq n}|x_k|$.
\end{enumerate}
\end{theorem}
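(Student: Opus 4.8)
The plan is to compute both sides as dual numbers and match their standard and infinitesimal parts, invoking the closed forms from \Cref{pro: dual vector p norm} for the left-hand sides and the element-wise dual arithmetic---namely the dual absolute value \cref{eq: absolute}, the dual power \cref{eq: (a+b eps)p}, and the dual root \cref{eq: (a+b eps)(1/p)}---for the right-hand sides. Since $\bm{x}_{\rm{s}}\neq\bm{0}$, each component $x_k=x_{\rm{s}}^k+x_{\rm{i}}^k\epsilon$ has $|x_k|=|x_{\rm{s}}^k|+{\rm{sign}}(x_{\rm{s}}^k)x_{\rm{i}}^k\epsilon$ when $x_{\rm{s}}^k\neq0$ and $|x_k|=|x_{\rm{i}}^k|\epsilon$ when $x_{\rm{s}}^k=0$. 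The three cases differ only in how the indices in ${\rm{supp}}^c(\bm{x}_{\rm{s}})$ are handled, which is the recurring point requiring care.

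For $p=1$, I would sum the $|x_k|$ directly. The nonzero-standard-part components contribute $|x_{\rm{s}}^k|+{\rm{sign}}(x_{\rm{s}}^k)x_{\rm{i}}^k\epsilon$, while those with $x_{\rm{s}}^k=0$ contribute $|x_{\rm{i}}^k|\epsilon$. Collecting the standard part gives $\Vert\bm{x}_{\rm{s}}\Vert_1$, and collecting the infinitesimal part gives $\langle{\rm{sign}}(\bm{x}_{\rm{s}}),\bm{x}_{\rm{i}}\rangle+\sum_{k\in{\rm{supp}}^c(\bm{x}_{\rm{s}})}|x_{\rm{i}}^k|$, using that ${\rm{sign}}(x_{\rm{s}}^k)=0$ exactly on ${\rm{supp}}^c(\bm{x}_{\rm{s}})$ so the inner product picks out precisely the support. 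This reproduces \cref{dual vector 1-norm}.

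For $1<p<\infty$, I would first apply the dual power \cref{eq: (a+b eps)p} to each $|x_k|$. For $x_{\rm{s}}^k\neq0$ one gets $|x_k|^p=|x_{\rm{s}}^k|^p+p|x_{\rm{s}}^k|^{p-2}x_{\rm{s}}^k\,x_{\rm{i}}^k\epsilon$ after rewriting $|x_{\rm{s}}^k|^{p-1}{\rm{sign}}(x_{\rm{s}}^k)=|x_{\rm{s}}^k|^{p-2}x_{\rm{s}}^k$; crucially, for $x_{\rm{s}}^k=0$ the standard part of $|x_k|$ is zero and $p>1$, so \cref{eq: (a+b eps)p} forces $|x_k|^p=0$, which is exactly what lets the zero-support indices drop out here, in contrast with $p=1$. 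Summing yields $\sum_k|x_k|^p=\Vert\bm{x}_{\rm{s}}\Vert_p^p+p\langle|\bm{x}_{\rm{s}}|^{p-2}\odot\bm{x}_{\rm{s}},\bm{x}_{\rm{i}}\rangle\epsilon$, whose standard part $\Vert\bm{x}_{\rm{s}}\Vert_p^p$ is positive because $\bm{x}_{\rm{s}}\neq\bm{0}$. Applying the dual root \cref{eq: (a+b eps)(1/p)} and simplifying via $\frac{1}{p}\cdot p=1$ together with the exponent $p(\frac{1}{p}-1)=-(p-1)$ then recovers \cref{eq: dual vector p-norm}.

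For $p=\infty$, I would compare the dual numbers $|x_k|$ under the total order of \Cref{def: total order}. Their standard parts are $|x_{\rm{s}}^k|$, whose largest value is $\Vert\bm{x}_{\rm{s}}\Vert_\infty>0$; no index with $x_{\rm{s}}^k=0$ can attain it, so the maximizers are exactly $\mathcal{I}=\{k:|x_{\rm{s}}^k|=\Vert\bm{x}_{\rm{s}}\Vert_\infty\}$. Breaking ties by the infinitesimal part ${\rm{sign}}(x_{\rm{s}}^k)x_{\rm{i}}^k$ gives $\max_{1\leq k\leq n}|x_k|=\Vert\bm{x}_{\rm{s}}\Vert_\infty+\max_{k\in\mathcal{I}}({\rm{sign}}(x_{\rm{s}}^k)x_{\rm{i}}^k)\epsilon$, matching \cref{dual vector infty-norm}. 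The only genuinely delicate point across all three cases is the consistent treatment of the zero-standard-part components---absorbed into the infinitesimal part for $p=1$, annihilated by the power for $1<p<\infty$, and dominated in the total order for $p=\infty$---so I would make sure each of these three mechanisms is stated explicitly rather than glossed over.
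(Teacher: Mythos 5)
Your proposal is correct and follows essentially the same route as the paper's proof: element-wise dual arithmetic via \cref{eq: absolute,eq: (a+b eps)p,eq: (a+b eps)(1/p)}, matched against the closed forms of \Cref{pro: dual vector p norm}, with the total order handling the $p=\infty$ maximum. Your explicit remark that the ${\rm{supp}}^c(\bm{x}_{\rm{s}})$ terms are annihilated by the power when $1<p<\infty$ is a point the paper glosses over, but the argument is otherwise identical.
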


\begin{proof}
${\rm{(i)}}$ It yields from \cref{eq: absolute,dual vector 1-norm} that 
\begin{align*} 
\sum_{k=1}^n|x_k|
&= \sum\limits_{k=1}^n \big|x_{\rm{s}}^k\big| + \sum\limits_{k\in{\rm{supp}}(\bm{x}_{\rm{s}})}{\rm{sign}}\big(x_{\rm{s}}^k\big)x_{\rm{i}}^k\epsilon +\sum\limits_{k\in{\rm{supp}}^c(\bm{x}_{\rm{s}})}\big|x_{\rm{i}}^k\big|\epsilon\\
&=\Vert\bm{x}_{\rm{s}}\Vert_1+ \Big(\langle{\rm{sign}}(\bm{x}_{\rm{s}}),\bm{x}_{\rm{i}}\rangle+\sum_{k\in{\rm{supp}}^c(\bm{x}_{\rm{s}})}\big|x_{\rm{i}}^k\big|\Big)\epsilon= \Vert\bm{x}\Vert_1.
\end{align*}

${\rm{(ii)}}$ It infers from \cref{eq: (a+b eps)p,eq: (a+b eps)(1/p),eq: dual vector p-norm} that
\begin{align*}
\left(\sum_{k=1}^n|x_k|^p\right)^{\frac{1}{p}} 
&=\left(\sum_{k=1}^n \big|x_{\rm{s}}^k\big|^p +  p\sum_{k=1}^n \big|x_{\rm{s}}^k\big|^{p-1}{\rm{sign}}\big(x_{\rm{s}}^k\big)x_{\rm{i}}^k\epsilon\right)^{\frac{1}{p}}\\
&=\left(\sum_{k=1}^n \big|x_{\rm{s}}^k\big|^p\right)^{\frac{1}{p}} + \frac{1}{p} \left(\sum_{k=1}^n \big|x_{\rm{s}}^k\big|^p\right)^{\frac{1}{p}-1}\cdot p\sum_{k=1}^n \big|x_{\rm{s}}^k\big|^{p-2}x_{\rm{s}}^kx_{\rm{i}}^k\epsilon\\
&= \Vert\bm{x}_{\rm{s}}\Vert_p + \frac{\langle|\bm{x}_{\rm{s}}|^{p-2}\odot\bm{x}_{\rm{s}},\bm{x}_{\rm{i}}\rangle}{\Vert\bm{x}_{\rm{s}}\Vert_p^{p-1}}\epsilon= \Vert\bm{x}\Vert_p.
\end{align*}

${\rm{(iii)}}$ We obtain from \cref{dual vector infty-norm,eq: absolute} that \begin{align*}
\max_{1\leq k\leq n}|x_k|
&= \max_{1\leq k\leq n}\Big(\big|x_{\rm{s}}^k\big| + {\rm{sign}}\big(x_{\rm{s}}^k\big)x_{\rm{i}}^k\epsilon\Big)\\
&\overset{({\rm{I}})}{=} \Vert\bm{x}_{\rm{s}}\Vert_\infty + \max\limits_{k\in\mathcal{{I}}}\big({\rm{sign}}(x_{\rm{s}}^k)x_{\rm{i}}^k\big)\epsilon= \Vert\bm{x}\Vert_\infty,
\end{align*}
where (I) results from the total order of dual numbers, which means that maximizing a dual number is equivalent to first maximizing its standard part and then its infinitesimal part.
Besides, $\mathcal{I}=\{k:|x_{\rm{s}}^k|=\Vert\bm{x}_{\rm{s}}\Vert_\infty\}$.

As a result, the dual-valued vector $p$-norm ($1\leq p\leq \infty$) is equivalent to the element-wise calculation.
\end{proof}

\section{Dual-Valued Matrix Norms}\label{sec4: dual matrix norms}

Matrix norms can be employed to measure the distance on the space of matrices.
Similarly, it is urgent to define dual-valued matrix norms to quantify the distance between two dual matrices.
In this section, we propose the general form of the dual-valued matrix norms and discuss its several fundamental properties.
Moreover, some dual-valued unitarily invariant norms, dual-valued operator norms, and other dual-valued functions are provided.

\subsection{Definitions}
Initially, we propose the definition of a dual-valued matrix norm in dual algebra, under the total order of dual numbers in \Cref{def: total order}.
\begin{definition} 
The function $f:\mathbb{DR}^{m\times n}\rightarrow\mathbb{DR}$ is a dual-valued matrix norm if the following three properties hold under the total order of dual numbers
\begin{enumerate}
\item[${\rm{(i)}}$] $f(\bm{A})\geq0$ for any $\bm{A}\in\mathbb{DR}^{m\times n}$ and $f(\bm{A})=0$ if and only if $\bm{A}=\bm{O}$;
\item[${\rm{(ii)}}$] $f(a\bm{A})=|a|f(\bm{A})$ for any $a\in\mathbb{DR}$ and $\bm{A}\in\mathbb{DR}^{m\times n}$;
\item[${\rm{(iii)}}$] $f(\bm{A}+\bm{B})\leq f(\bm{A})+f(\bm{B})$ for any $\bm{A},\bm{B}\in\mathbb{DR}^{m\times n}$.
\end{enumerate}
\end{definition}
As with matrix norms, we still use a double bar notation with subscripts to designate dual matrix norms, i.e., $\Vert\bm{A}\Vert=f(\bm{A})$.

\Cref{def: dual extension} provides a method to extend a real-valued function to a dual-valued one by adding a term concerning the G\^ateaux derivative as the infinitesimal part.
Specifically, the dual continuation of a matrix norm $\Vert\cdot\Vert$ is $\Vert\bm{A}\Vert=\Vert\bm{A}_{\rm{s}}\Vert+{\mathcal{D}}_{\bm{A}_{\rm{i}}}\Vert\bm{A}_{\rm{s}}\Vert\epsilon$, for the dual matrix $\bm{A}=\bm{A}_{\rm{s}}+\bm{A}_{\rm{i}}\epsilon$.
Given that matrix norms are all convex, this implies that ${\mathcal{D}}_{\bm{A}_{\rm{i}}}\Vert\bm{A}_{\rm{s}}\Vert=\max_{\bm{G}\in\partial\Vert\bm{A}_{\rm{s}}\Vert}\langle\bm{G},\bm{A}_{\rm{i}}\rangle$ for non-zero $\bm{A}_{\rm{s}}$, as outlined in \Cref{lem_dire_gra=max}.
Besides, ${\mathcal{D}}_{\bm{A}_{\rm{i}}}\Vert\bm{O}\Vert=\lim_{t\downarrow0}\frac{\Vert\bm{O}+t\bm{A}_{\rm{i}}\Vert-\Vert\bm{O}\Vert}{t}=\Vert\bm{A}_{\rm{i}}\Vert$.
Furthermore, the following theorem elucidates that this dual continuation is a dual-valued matrix norm.

\begin{theorem}\label{the: extension dual matrix norm}
Given a matrix norm $\Vert\cdot\Vert:\mathbb{R}^{m\times n}\rightarrow\mathbb{R}$. 
Then its dual continuation $\Vert\cdot\Vert:\mathbb{DR}^{m\times n}\rightarrow\mathbb{DR}$ satisfying
\begin{align}
\Vert\bm{A}\Vert = \left\{\begin{array}{ll}
\Vert\bm{A}_{\rm{s}}\Vert + \max\limits_{\bm{G}\in\partial\Vert\bm{A}_{\rm{s}}\Vert}\langle\bm{G},\bm{A}_{\rm{i}}\rangle\epsilon,  & \bm{A}_{\rm{s}}\neq\bm{O}, \\
\Vert\bm{A}_{\rm{i}}\Vert\epsilon,& \bm{A}_{\rm{s}}=\bm{O},
\end{array}\right.\label{matrix norm expansion}
\end{align}
for $\bm{A}=\bm{A}_{\rm{s}}+\bm{A}_{\rm{i}}\epsilon\in\mathbb{DR}^{m\times n}$ is a dual-valued matrix norm.
\end{theorem}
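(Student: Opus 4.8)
The plan is to follow the proof of \Cref{theorem:vector norm} almost verbatim, replacing vectors by matrices and the Euclidean inner product by the Frobenius inner product $\langle\bm{G},\bm{A}\rangle={\rm{tr}}(\bm{G}^\top\bm{A})$ on $\mathbb{R}^{m\times n}$. First I would justify that the dual continuation has the claimed piecewise form \cref{matrix norm expansion}: by \Cref{def: dual extension} it equals $\Vert\bm{A}_{\rm{s}}\Vert+{\mathcal{D}}_{\bm{A}_{\rm{i}}}\Vert\bm{A}_{\rm{s}}\Vert\epsilon$; the case $\bm{A}_{\rm{s}}=\bm{O}$ gives ${\mathcal{D}}_{\bm{A}_{\rm{i}}}\Vert\bm{O}\Vert=\Vert\bm{A}_{\rm{i}}\Vert$ by direct evaluation of the limit, exactly as recorded just before the statement, while for $\bm{A}_{\rm{s}}\neq\bm{O}$ the convexity of every matrix norm lets me invoke the Max Formula (\Cref{lem_dire_gra=max}) to write ${\mathcal{D}}_{\bm{A}_{\rm{i}}}\Vert\bm{A}_{\rm{s}}\Vert=\max_{\bm{G}\in\partial\Vert\bm{A}_{\rm{s}}\Vert}\langle\bm{G},\bm{A}_{\rm{i}}\rangle$. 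It then remains to check the three axioms under the total order of \Cref{def: total order}.

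For non-negativity I would split into $\bm{A}_{\rm{s}}\neq\bm{O}$ (where $\Vert\bm{A}_{\rm{s}}\Vert>0$ makes the standard part decide the sign), $\bm{A}_{\rm{s}}=\bm{O}\neq\bm{A}_{\rm{i}}$ (where $\Vert\bm{A}\Vert=\Vert\bm{A}_{\rm{i}}\Vert\epsilon>0$), and $\bm{A}=\bm{O}$, reading off both directions of $\Vert\bm{A}\Vert=0\Leftrightarrow\bm{A}=\bm{O}$ just as in the vector case. For homogeneity with $c=a+b\epsilon$ I would treat $a=0$ and $\bm{A}_{\rm{s}}=\bm{O}$ using the expression for $|c|$ in \cref{eq: absolute}, and for $a\neq0,\ \bm{A}_{\rm{s}}\neq\bm{O}$ expand ${\mathcal{D}}_{a\bm{A}_{\rm{i}}+b\bm{A}_{\rm{s}}}\Vert a\bm{A}_{\rm{s}}\Vert$ through the defining limit. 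The triangle inequality splits on whether $\Vert\bm{A}_{\rm{s}}+\bm{B}_{\rm{s}}\Vert<\Vert\bm{A}_{\rm{s}}\Vert+\Vert\bm{B}_{\rm{s}}\Vert$ (strict inequality of standard parts settles it) or equality holds, in which case I bound the directional derivative of the sum by the sum of the directional derivatives, using that each defining limit exists.

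The only genuinely matrix-specific points, and hence where I expect the real (though mild) work to lie, are the two structural facts that the vector proof borrowed from $\mathbb{R}^n$ and that I must re-confirm for $\mathbb{R}^{m\times n}$. The first, used in homogeneity in place of step (I), is the identity $\langle\bm{G},\bm{A}_{\rm{s}}\rangle=\Vert\bm{A}_{\rm{s}}\Vert$ for every $\bm{G}\in\partial\Vert\bm{A}_{\rm{s}}\Vert$; this is Euler's relation for a positively homogeneous degree-one convex function and follows by feeding $\bm{B}=2\bm{A}_{\rm{s}}$ and $\bm{B}=\bm{O}$ into the subgradient inequality $\Vert\bm{B}\Vert\geq\Vert\bm{A}_{\rm{s}}\Vert+\langle\bm{G},\bm{B}-\bm{A}_{\rm{s}}\rangle$. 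The second, used in the triangle inequality, is the subadditivity of the directional derivative $\bm{U}\mapsto{\mathcal{D}}_{\bm{U}}\Vert\bm{A}_{\rm{s}}\Vert$, which holds because the directional derivative of any convex function is sublinear in the direction. Since both facts are inner-product-space statements that never invoke the componentwise structure of $\mathbb{R}^n$, the matrix proof then closes with no additional computation.
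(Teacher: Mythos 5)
Your proposal is correct and matches the paper's approach exactly: the paper's own proof of \Cref{the: extension dual matrix norm} is a one-line remark that the argument is identical to that of \Cref{theorem:vector norm} with vectors replaced by matrices, which is precisely the adaptation you carry out (and you usefully make explicit the two transferable facts — the Euler relation $\langle\bm{G},\bm{A}_{\rm{s}}\rangle=\Vert\bm{A}_{\rm{s}}\Vert$ for subgradients and the limit argument for the triangle inequality — that the paper leaves implicit).
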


\begin{proof}
As with proof of the dual-valued vector norm in \Cref{theorem:vector norm}, we induce that \cref{matrix norm expansion} is a dual-valued matrix norm in dual algebra. 
\end{proof}

\subsection{Properties}
In light of the general form of the dual-valued matrix norm, an investigation into its properties is now undertaken.
Our first goal is to validate that the extended dual-valued matrix norm remains the mutually consistent property.

\begin{theorem}\label{the: mutually consistent}
Let $\Vert\cdot\Vert_{m\times n}$ be a matrix norm on $m$-by-$n$ matrices. 
If matrix norms $\Vert\cdot\Vert_{m\times n}$, $\Vert\cdot\Vert_{n\times p}$ and $\Vert\cdot\Vert_{m\times p}$ are mutually consistent, i.e., $\Vert\bm{XY}\Vert_{m\times p}\leq\Vert\bm{X}\Vert_{m\times n}\Vert\bm{Y}\Vert_{n\times p}$, then their corresponding dual continuations are mutually consistent.
\end{theorem}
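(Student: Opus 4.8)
The plan is to write $\bm{X}=\bm{X}_{\rm{s}}+\bm{X}_{\rm{i}}\epsilon$ and $\bm{Y}=\bm{Y}_{\rm{s}}+\bm{Y}_{\rm{i}}\epsilon$, so that $\bm{XY}=\bm{X}_{\rm{s}}\bm{Y}_{\rm{s}}+(\bm{X}_{\rm{s}}\bm{Y}_{\rm{i}}+\bm{X}_{\rm{i}}\bm{Y}_{\rm{s}})\epsilon$, and to expand all three dual-valued norms via the extension formula \cref{matrix norm expansion} of \Cref{the: extension dual matrix norm}. Multiplying out $\Vert\bm{X}\Vert\Vert\bm{Y}\Vert$ under $\epsilon^2=0$ yields standard part $\Vert\bm{X}_{\rm{s}}\Vert\Vert\bm{Y}_{\rm{s}}\Vert$ and infinitesimal part $\Vert\bm{X}_{\rm{s}}\Vert{\mathcal{D}}_{\bm{Y}_{\rm{i}}}\Vert\bm{Y}_{\rm{s}}\Vert+{\mathcal{D}}_{\bm{X}_{\rm{i}}}\Vert\bm{X}_{\rm{s}}\Vert\Vert\bm{Y}_{\rm{s}}\Vert$. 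By \Cref{def: total order}, the target inequality $\Vert\bm{XY}\Vert\leq\Vert\bm{X}\Vert\Vert\bm{Y}\Vert$ then reduces to a comparison of standard parts followed, only in the case of equality, by a comparison of infinitesimal parts.

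The standard-part comparison is immediate, since the mutual consistency of the real norms gives $\Vert\bm{X}_{\rm{s}}\bm{Y}_{\rm{s}}\Vert\leq\Vert\bm{X}_{\rm{s}}\Vert\Vert\bm{Y}_{\rm{s}}\Vert$; if this is strict the total order already delivers the claim. The substance is thus the equality case $\Vert\bm{X}_{\rm{s}}\bm{Y}_{\rm{s}}\Vert=\Vert\bm{X}_{\rm{s}}\Vert\Vert\bm{Y}_{\rm{s}}\Vert$, in which I must show ${\mathcal{D}}_{\bm{X}_{\rm{s}}\bm{Y}_{\rm{i}}+\bm{X}_{\rm{i}}\bm{Y}_{\rm{s}}}\Vert\bm{X}_{\rm{s}}\bm{Y}_{\rm{s}}\Vert\leq\Vert\bm{X}_{\rm{s}}\Vert{\mathcal{D}}_{\bm{Y}_{\rm{i}}}\Vert\bm{Y}_{\rm{s}}\Vert+{\mathcal{D}}_{\bm{X}_{\rm{i}}}\Vert\bm{X}_{\rm{s}}\Vert\Vert\bm{Y}_{\rm{s}}\Vert$. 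The key algebraic observation is $\bm{X}_{\rm{s}}\bm{Y}_{\rm{s}}+t(\bm{X}_{\rm{s}}\bm{Y}_{\rm{i}}+\bm{X}_{\rm{i}}\bm{Y}_{\rm{s}})=(\bm{X}_{\rm{s}}+t\bm{X}_{\rm{i}})(\bm{Y}_{\rm{s}}+t\bm{Y}_{\rm{i}})-t^2\bm{X}_{\rm{i}}\bm{Y}_{\rm{i}}$. Applying the triangle inequality, then the mutual consistency of the real norms to $(\bm{X}_{\rm{s}}+t\bm{X}_{\rm{i}})(\bm{Y}_{\rm{s}}+t\bm{Y}_{\rm{i}})$, then subtracting $\Vert\bm{X}_{\rm{s}}\bm{Y}_{\rm{s}}\Vert=\Vert\bm{X}_{\rm{s}}\Vert\Vert\bm{Y}_{\rm{s}}\Vert$ and dividing by $t>0$, bounds the difference quotient defining the left-hand directional derivative by that of $\Vert\bm{X}_{\rm{s}}+t\bm{X}_{\rm{i}}\Vert\Vert\bm{Y}_{\rm{s}}+t\bm{Y}_{\rm{i}}\Vert$ plus a remainder $t\Vert\bm{X}_{\rm{i}}\bm{Y}_{\rm{i}}\Vert$. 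Letting $t\downarrow0$, the remainder vanishes and a product-rule limit, using continuity of the norm together with the one-sided derivatives ${\mathcal{D}}_{\bm{X}_{\rm{i}}}\Vert\bm{X}_{\rm{s}}\Vert$ and ${\mathcal{D}}_{\bm{Y}_{\rm{i}}}\Vert\bm{Y}_{\rm{s}}\Vert$ supplied by \Cref{direction_gradient}, yields exactly the required bound on the infinitesimal parts.

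Finally, I would dispose of the degenerate cases separately. When $\bm{X}_{\rm{s}}\bm{Y}_{\rm{s}}\neq\bm{O}$ the argument above applies. When $\bm{X}_{\rm{s}}\neq\bm{O}$ and $\bm{Y}_{\rm{s}}\neq\bm{O}$ but $\bm{X}_{\rm{s}}\bm{Y}_{\rm{s}}=\bm{O}$, the left-hand standard part is $0$ while the right-hand one is positive, so the total order gives the inequality at once. When $\bm{X}_{\rm{s}}=\bm{O}$ (or symmetrically $\bm{Y}_{\rm{s}}=\bm{O}$), both sides are purely infinitesimal and the claim reduces, via $\epsilon^2=0$ and \cref{matrix norm expansion}, to $\Vert\bm{X}_{\rm{i}}\bm{Y}_{\rm{s}}\Vert\leq\Vert\bm{X}_{\rm{i}}\Vert\Vert\bm{Y}_{\rm{s}}\Vert$, again real mutual consistency. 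I expect the equality case to be the main obstacle: the difference quotient for ${\mathcal{D}}_{\bm{X}_{\rm{s}}\bm{Y}_{\rm{i}}+\bm{X}_{\rm{i}}\bm{Y}_{\rm{s}}}\Vert\bm{X}_{\rm{s}}\bm{Y}_{\rm{s}}\Vert$ does not factor on its own, and the $-t^2\bm{X}_{\rm{i}}\bm{Y}_{\rm{i}}$ correction is precisely what converts the perturbation of the product into a product of perturbations while keeping the error of order $t^2$, hence negligible after dividing by $t$.
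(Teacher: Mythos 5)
Your proposal is correct and follows the same overall skeleton as the paper's proof: expand both sides via \Cref{the: extension dual matrix norm}, dispose of the strict case $\Vert\bm{X}_{\rm{s}}\bm{Y}_{\rm{s}}\Vert<\Vert\bm{X}_{\rm{s}}\Vert\Vert\bm{Y}_{\rm{s}}\Vert$ by the total order, and in the equality case bound the G\^ateaux derivative of the product by the product-rule expression. The one genuine difference is the algebraic identity used to make the difference quotient tractable. You write the perturbed product as $(\bm{X}_{\rm{s}}+t\bm{X}_{\rm{i}})(\bm{Y}_{\rm{s}}+t\bm{Y}_{\rm{i}})-t^2\bm{X}_{\rm{i}}\bm{Y}_{\rm{i}}$, apply the triangle inequality and real mutual consistency to the single product $(\bm{X}_{\rm{s}}+t\bm{X}_{\rm{i}})(\bm{Y}_{\rm{s}}+t\bm{Y}_{\rm{i}})$, and absorb the $O(t^2)$ correction into the limit. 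The paper instead splits $\bm{A}_{\rm{s}}\bm{B}_{\rm{s}}+t(\bm{A}_{\rm{s}}\bm{B}_{\rm{i}}+\bm{A}_{\rm{i}}\bm{B}_{\rm{s}})$ exactly as $\bm{A}_{\rm{s}}(\tfrac{1}{2}\bm{B}_{\rm{s}}+t\bm{B}_{\rm{i}})+(\tfrac{1}{2}\bm{A}_{\rm{s}}+t\bm{A}_{\rm{i}})\bm{B}_{\rm{s}}$, applies the triangle inequality and consistency to each summand, and recognizes the two resulting difference quotients (with step $2t$) as the directional derivatives; this avoids any remainder term and any appeal to a product-rule limit, at the cost of a less transparent splitting. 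Your version needs, and you correctly invoke, continuity of the norm so that $\Vert\bm{Y}_{\rm{s}}+t\bm{Y}_{\rm{i}}\Vert\to\Vert\bm{Y}_{\rm{s}}\Vert$ when factoring the product difference quotient; both arguments rely on the existence of the one-sided directional derivatives, which is guaranteed by convexity. Your explicit treatment of the degenerate cases ($\bm{X}_{\rm{s}}\bm{Y}_{\rm{s}}=\bm{O}$ with nonzero factors, or $\bm{X}_{\rm{s}}=\bm{O}$) is sound, though strictly redundant: since ${\mathcal{D}}_{\bm{A}_{\rm{i}}}\Vert\bm{O}\Vert=\Vert\bm{A}_{\rm{i}}\Vert$, the uniform formula $\Vert\bm{A}\Vert=\Vert\bm{A}_{\rm{s}}\Vert+{\mathcal{D}}_{\bm{A}_{\rm{i}}}\Vert\bm{A}_{\rm{s}}\Vert\epsilon$ lets the main argument cover them, which is why the paper does not separate them.
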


\begin{proof}
Given dual matrices $\bm{A}=\bm{A}_{\rm{s}}+\bm{A}_{\rm{i}}\epsilon\in\mathbb{DR}^{m\times n}$ and $\bm{B}=\bm{B}_{\rm{s}}+\bm{B}_{\rm{i}}\epsilon\in\mathbb{DR}^{n\times p}$.
\Cref{the: extension dual matrix norm} yields that $\Vert\bm{AB}\Vert_{m\times p} =\Vert\bm{A}_{\rm{s}}\bm{B}_{\rm{s}}\Vert_{m\times p} + {\mathcal{D}}_{\bm{A}_{\rm{s}}\bm{B}_{\rm{i}}+\bm{A}_{\rm{i}}\bm{B}_{\rm{s}}}\Vert\bm{A}_{\rm{s}}\bm{B}_{\rm{s}}\Vert_{m\times p}\epsilon$ and 
\begin{align*}
&\Vert\bm{A}\Vert_{m\times n}\Vert\bm{B}\Vert_{n\times p}
=\big(\Vert\bm{A}_{\rm{s}}\Vert_{m\times n}+ {\mathcal{D}}_{\bm{A}_{\rm{i}}}\Vert\bm{A}_{\rm{s}}\Vert_{m\times n}\epsilon\big)\big(\Vert\bm{B}_{\rm{s}}\Vert_{n\times p}+ {\mathcal{D}}_{\bm{B}_{\rm{i}}}\Vert\bm{B}_{\rm{s}}\Vert_{n\times p}\epsilon\big)\\
=& \Vert\bm{A}_{\rm{s}}\Vert_{m\times n}\Vert\bm{B}_{\rm{s}}\Vert_{n\times p} + \big(\Vert\bm{A}_{\rm{s}}\Vert_{m\times n}\cdot{\mathcal{D}}_{\bm{B}_{\rm{i}}}\Vert\bm{B}_{\rm{s}}\Vert_{n\times p}+{\mathcal{D}}_{\bm{A}_{\rm{i}}}\Vert\bm{A}_{\rm{s}}\Vert_{m\times n}\cdot\Vert\bm{B}_{\rm{s}}\Vert_{n\times p}\big)\epsilon.
\end{align*}

If $\Vert\bm{A}_{\rm{s}}\bm{B}_{\rm{s}}\Vert_{m\times p}<\Vert\bm{A}_{\rm{s}}\Vert_{m\times n}\Vert\bm{B}_{\rm{s}}\Vert_{n\times p}$, then $\Vert\bm{A}\bm{B}\Vert_{m\times p}<\Vert\bm{A}\Vert_{m\times n}\Vert\bm{B}\Vert_{n\times p}$. 
Otherwise, if $\Vert\bm{A}_{\rm{s}}\bm{B}_{\rm{s}}\Vert_{m\times p}=\Vert\bm{A}_{\rm{s}}\Vert_{m\times n}\Vert\bm{B}_{\rm{s}}\Vert_{n\times p}$, we obtain
\begin{align*}
&{\mathcal{D}}_{\bm{A}_{\rm{s}}\bm{B}_{\rm{i}}+\bm{A}_{\rm{i}}\bm{B}_{\rm{s}}}\Vert\bm{A}_{\rm{s}}\bm{B}_{\rm{s}}\Vert_{m\times p}
=\lim\limits_{t\downarrow0}\frac{\Vert\bm{A}_{\rm{s}}\bm{B}_{\rm{s}}+t(\bm{A}_{\rm{s}}\bm{B}_{\rm{i}}+\bm{A}_{\rm{i}}\bm{B}_{\rm{s}})\Vert_{m\times p}-\Vert\bm{A}_{\rm{s}}\bm{B}_{\rm{s}}\Vert_{m\times p}}{t}\\
=&\lim\limits_{t\downarrow0}\frac{\Vert\bm{A}_{\rm{s}}(\frac{1}{2}\bm{B}_{\rm{s}}+t\bm{B}_{\rm{i}})+(\frac{1}{2}\bm{A}_{\rm{s}}+t\bm{A}_{\rm{i}})\bm{B}_{\rm{s}})\Vert_{m\times p}-\Vert\bm{A}_{\rm{s}}\Vert_{m\times n}\Vert\bm{B}_{\rm{s}}\Vert_{n\times p}}{t}\\
\overset{{\rm{(I)}}}{\leq}& \Vert\bm{A}_{\rm{s}}\Vert_{m\!\times\! n}\lim\limits_{t\downarrow0}\frac{\Vert\bm{B}_{\rm{s}}+2t\bm{B}_{\rm{i}}\Vert_{n\!\times\! p}\!-\!\Vert\bm{B}_{\rm{s}}\Vert_{n\!\times\! p}}{2t}+\lim\limits_{t\downarrow0}\frac{\Vert\bm{A}_{\rm{s}}+2t\bm{A}_{\rm{i}}\Vert_{m\!\times\! n}\!-\!\Vert\bm{A}_{\rm{s}}\Vert_{m\!\times\! n}}{2t}\Vert\bm{B}_{\rm{s}}\Vert_{n\!\times\! p}\\
=&\Vert\bm{A}_{\rm{s}}\Vert_{m\times n}\cdot{\mathcal{D}}_{\bm{B}_{\rm{i}}}\Vert\bm{B}_{\rm{s}}\Vert_{n\times p}+{\mathcal{D}}_{\bm{A}_{\rm{i}}}\Vert\bm{A}_{\rm{s}}\Vert_{m\times n}\cdot\Vert\bm{B}_{\rm{s}}\Vert_{n\times p},
\end{align*}
where (I) applies the triangle inequality of the matrix norm $\Vert\cdot\Vert_{m\times p}$ and the mutual consistency of given matrix norms.
Consequently, $\Vert\bm{A}\bm{B}\Vert_{m\times p} 
\leq \Vert\bm{A}\Vert_{m\times n}\Vert\bm{B}\Vert_{n\times p}$.
\end{proof}

Subsequently, we intend to demonstrate the equivalence between the dual continuation of a matrix operator norm and the operator norm induced by two dual-valued vector norms.

\begin{theorem}\label{the: dual matrix operator alpha beta norm}
The dual continuation of the matrix operator norm $\Vert\cdot\Vert_{\alpha,\beta}:\mathbb{R}^{m\times n}\rightarrow\mathbb{R}$, namely $\Vert\cdot\Vert_{\alpha,\beta}:\mathbb{DR}^{m\times n}\rightarrow\mathbb{DR}$, is equivalent to the operator norm induced by two dual-valued vector norms $\Vert\cdot\Vert_\alpha:\mathbb{DR}^m\rightarrow\mathbb{DR}$ and $\Vert\cdot\Vert_\beta:\mathbb{DR}^n\rightarrow\mathbb{DR}$.
Specifically, for a dual matrix $\bm{A}=\bm{A}_{\rm{s}}+\bm{A}_{\rm{i}}\epsilon\in\mathbb{DR}^{m\times n}$,
\begin{align}
\Vert\bm{A}\Vert_{\alpha,\beta} = \max\limits_{\substack{\bm{x}\in\mathbb{DR}^{n}\\\bm{x}\neq\bm{0}}}\frac{\Vert\bm{Ax}\Vert_\alpha}{\Vert\bm{x}\Vert_\beta}.\label{eq: dual matrix operator alpha,beta norm}
\end{align}
\end{theorem}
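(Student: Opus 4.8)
The plan is to prove the operator-norm identity of \Cref{the: dual matrix operator alpha beta norm} by establishing two total-order inequalities: that $\Vert\bm A\Vert_{\alpha,\beta}$, the dual continuation computed from \Cref{the: extension dual matrix norm}, is an upper bound for the dual Rayleigh quotient $\Vert\bm{Ax}\Vert_\alpha/\Vert\bm x\Vert_\beta$ over all $\bm 0\neq\bm x\in\mathbb{DR}^n$, and that this bound is attained. Throughout I treat the generic case $\bm A_{\rm s}\neq\bm O$; the degenerate case $\bm A_{\rm s}=\bm O$ is elementary, since then $\bm{Ax}=\bm A_{\rm i}\bm x_{\rm s}\epsilon$, so each quotient is the pure infinitesimal $\big(\Vert\bm A_{\rm i}\bm x_{\rm s}\Vert_\alpha/\Vert\bm x_{\rm s}\Vert_\beta\big)\epsilon$, whose maximum over $\bm x_{\rm s}\neq\bm 0$ is $\Vert\bm A_{\rm i}\Vert_{\alpha,\beta}\epsilon$, exactly the left-hand side in that case.

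For the upper bound, the key observation is that a dual vector may be viewed as a dual matrix with a single column, and that the dual continuation of $\Vert\cdot\Vert_\alpha$ on $\mathbb{DR}^m$ coincides with its continuation as a norm on $\mathbb{DR}^{m\times 1}$, both being built from the same G\^ateaux derivative via \Cref{def: dual extension}. In the real field the operator norm is mutually consistent with the two vector norms, i.e. $\Vert\bm M\bm z\Vert_\alpha\leq\Vert\bm M\Vert_{\alpha,\beta}\Vert\bm z\Vert_\beta$, so \Cref{the: mutually consistent} (taking the last dimension equal to $1$) yields $\Vert\bm{Ax}\Vert_\alpha\leq\Vert\bm A\Vert_{\alpha,\beta}\Vert\bm x\Vert_\beta$ as dual numbers under the total order. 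Since $\Vert\bm x\Vert_\beta$ has positive standard part whenever $\bm x_{\rm s}\neq\bm 0$, dividing preserves the order and gives $\Vert\bm{Ax}\Vert_\alpha/\Vert\bm x\Vert_\beta\leq\Vert\bm A\Vert_{\alpha,\beta}$; the subcase $\bm x_{\rm s}=\bm 0$ produces a purely real quotient bounded by $\Vert\bm A_{\rm s}\Vert_{\alpha,\beta}$. Hence the dual supremum is at most $\Vert\bm A\Vert_{\alpha,\beta}$.

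For attainment it suffices, by the upper bound just proved, to exhibit a single \emph{real} test vector $\bm x=\bm z\in\mathbb R^n$ realizing equality. For real $\bm z$ we have $\bm{Az}=\bm A_{\rm s}\bm z+\bm A_{\rm i}\bm z\,\epsilon$ and $\Vert\bm z\Vert_\beta\in\mathbb R$, so by \Cref{theorem:vector norm} the quotient collapses to
\begin{align*}
\frac{\Vert\bm{Az}\Vert_\alpha}{\Vert\bm z\Vert_\beta}=\frac{\Vert\bm A_{\rm s}\bm z\Vert_\alpha}{\Vert\bm z\Vert_\beta}+\frac{{\mathcal D}_{\bm A_{\rm i}\bm z}\Vert\bm A_{\rm s}\bm z\Vert_\alpha}{\Vert\bm z\Vert_\beta}\,\epsilon.
\end{align*}
Lexicographic maximization under \Cref{def: total order} first maximizes the standard part, whose supremum over $\bm z\neq\bm 0$ is precisely $\Vert\bm A_{\rm s}\Vert_{\alpha,\beta}$, attained on the nonempty set $\mathcal Z$ of optimal vectors (compact after normalization on the $\beta$-unit sphere). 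Restricting to $\mathcal Z$ and maximizing the infinitesimal part, the claim reduces to the identity
\begin{align*}
\max_{\bm z\in\mathcal Z}\frac{{\mathcal D}_{\bm A_{\rm i}\bm z}\Vert\bm A_{\rm s}\bm z\Vert_\alpha}{\Vert\bm z\Vert_\beta}={\mathcal D}_{\bm A_{\rm i}}\Vert\bm A_{\rm s}\Vert_{\alpha,\beta},
\end{align*}
namely that the one-sided directional derivative of the real operator norm in direction $\bm A_{\rm i}$ equals the maximum over maximizing vectors of the directional derivative of $\Vert\bm A_{\rm s}\bm z\Vert_\alpha$ in direction $\bm A_{\rm i}\bm z$.

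The main obstacle is this last identity, a Danskin/envelope statement for $\Vert\bm A_{\rm s}\Vert_{\alpha,\beta}=\max_{\Vert\bm z\Vert_\beta=1}\Vert\bm A_{\rm s}\bm z\Vert_\alpha$. The inequality ``$\leq$'' is already delivered by the upper bound of the previous step, since each left-hand term is the infinitesimal part of an admissible quotient. The inequality ``$\geq$'' is the delicate direction: for a fixed maximizer $\bm z$ one has $h(t):=\Vert\bm A_{\rm s}+t\bm A_{\rm i}\Vert_{\alpha,\beta}\geq\Vert(\bm A_{\rm s}+t\bm A_{\rm i})\bm z\Vert_\alpha$ with equality at $t=0$, which only gives the lower bound $h'(0^+)\geq{\mathcal D}_{\bm A_{\rm i}\bm z}\Vert\bm A_{\rm s}\bm z\Vert_\alpha$. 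Upgrading this to equality for some maximizer requires compactness of the $\beta$-unit sphere together with the equi-Lipschitz continuity, uniform in $\bm z$, of the difference quotients $t\mapsto t^{-1}\big(\Vert(\bm A_{\rm s}+t\bm A_{\rm i})\bm z\Vert_\alpha-\Vert\bm A_{\rm s}\bm z\Vert_\alpha\big)$, so that the maximizers of $h(t)$ cluster in $\mathcal Z$ as $t\downarrow0$. Establishing this uniform convergence, the classical hard half of Danskin's theorem, is where the technical work lies; once it is in place, combining the two directions gives equality, hence attainment, and together with the upper bound completes the proof.
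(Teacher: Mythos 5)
Your overall architecture coincides with the paper's: the upper bound comes from \Cref{the: mutually consistent} applied to the mutually consistent triple $(\Vert\cdot\Vert_{\alpha,\beta},\Vert\cdot\Vert_\alpha,\Vert\cdot\Vert_\beta)$, and attainment is sought at a purely real test vector $\bm{x}=\bm{x}_{\rm{s}}$ maximizing the standard part $\Vert\bm{A}_{\rm{s}}\bm{x}_{\rm{s}}\Vert_\alpha$ on the $\beta$-unit sphere, after which the problem reduces, exactly as you say, to the envelope identity
\begin{align*}
\max_{\bm{z}\in\mathcal{Z}}\,\mathcal{D}_{\bm{A}_{\rm{i}}\bm{z}}\Vert\bm{A}_{\rm{s}}\bm{z}\Vert_\alpha=\mathcal{D}_{\bm{A}_{\rm{i}}}\Vert\bm{A}_{\rm{s}}\Vert_{\alpha,\beta}.
\end{align*}
You correctly observe that the inequality $\leq$ is free from the upper bound and that $\geq$ is the substantive half. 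The gap is that you stop there: you describe the compactness and uniform-convergence argument that would prove the hard half of this Danskin-type statement, but you do not carry it out, so as written the attainment step is incomplete. The paper closes precisely this gap by invoking Watson's characterization of the G\^ateaux derivative of an operator norm (\Cref{lem: subdiff of operator norm}), which gives ${\mathcal{D}}_{\bm{B}}\Vert\bm{A}\Vert_{\alpha,\beta}=\max_{(\bm{v},\bm{w})\in\Phi(\bm{A})}\bm{w}^\top\bm{B}\bm{v}$ with the maximum running over maximizing vectors $\bm{v}$ and subgradients $\bm{w}\in\partial\Vert\bm{u}\Vert_\alpha$; combined with \Cref{theorem:vector norm} this is exactly your identity, so the missing step is closable by citation (or by reproving Watson's theorem along the lines you sketch) rather than requiring a new idea. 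One merit of your formulation is that it makes explicit a point the paper glosses over: the test vector $\bm{x}_{\rm{s}}$ must be a maximizer $\bm{v}$ at which the outer maximum over $\Phi(\bm{A}_{\rm{s}})$ is attained, not an arbitrary element of $\mathcal{Z}$.
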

To prove this, we initially introduce the G\^ateaux derivative of the matrix operator norm, provided by Watson in 1992.
\begin{lemma}\cite[Theorem 3]{Watson1992CharacterizationOT}\label{lem: subdiff of operator norm}
Let $\Vert\cdot\Vert_{\alpha}$ and $\Vert\cdot\Vert_{\beta}$ be vector norms on $\mathbb{R}^m$ and $\mathbb{R}^n$, respectively.
Given the operator norm on $m$-by-$n$ matrices defined by $\Vert\bm{A}\Vert_{\alpha,\beta}=\max_{\Vert\bm{x}\Vert_{\beta}=1}\Vert\bm{Ax}\Vert_{\alpha}$.
Then the G\^ateaux derivative of $\Vert\cdot\Vert_{\alpha,\beta}$ at $\bm{A}$ along $\bm{B}\in\mathbb{R}^{m\times n}$ is
\begin{align}
{\mathcal{D}}_{\bm{B}}\Vert\bm{A}\Vert_{\alpha,\beta}=\max\limits_{(\bm{v},\bm{w})\in\Phi(\bm{A})}\bm{w}^\top\bm{B}\bm{v},
\end{align}
where $\Phi(\bm{A})\!=\!\{\bm{v}\in\mathbb{R}^n,\bm{w}\in\mathbb{R}^m\!:\Vert\bm{v}\Vert_{\beta}\!=\!1,\;\bm{Av}\!=\!\Vert\bm{A}\Vert_{\alpha,\beta}\bm{u},\;\Vert\bm{u}\Vert_{\alpha}\!=\!1,\;\bm{w}\in\partial\Vert\bm{u}\Vert_{\alpha}\}$.
\end{lemma}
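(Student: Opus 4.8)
The plan is to derive this Gâteaux-derivative formula directly as an instance of the Max Formula (\Cref{lem_dire_gra=max}) applied to the operator norm, so that the crux reduces to computing the subdifferential of $\Vert\cdot\Vert_{\alpha,\beta}$ and then maximizing $\langle\cdot,\bm{B}\rangle$ over it, where $\langle\bm{X},\bm{Y}\rangle={\rm{tr}}(\bm{X}^\top\bm{Y})$ is the trace inner product. Write $N(\bm{A}):=\Vert\bm{A}\Vert_{\alpha,\beta}$; being an operator norm, $N$ is a convex matrix norm that is finite on all of $\mathbb{R}^{m\times n}$, so every $\bm{A}$ lies in the core of its domain and \Cref{lem_dire_gra=max} gives ${\mathcal{D}}_{\bm{B}}N(\bm{A})=\max_{\bm{G}\in\partial N(\bm{A})}\langle\bm{G},\bm{B}\rangle$. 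Throughout I assume $\bm{A}\neq\bm{O}$, consistent with the definition of $\Phi(\bm{A})$, which presumes $N(\bm{A})>0$.

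First I would realize $N$ as a support function. Using the dual norm $\Vert\cdot\Vert_{\alpha,d}$ of $\Vert\cdot\Vert_\alpha$, one has $\Vert\bm{z}\Vert_\alpha=\max_{\Vert\bm{w}\Vert_{\alpha,d}\leq1}\bm{w}^\top\bm{z}$, and since $\bm{w}^\top\bm{A}\bm{v}=\langle\bm{w}\bm{v}^\top,\bm{A}\rangle$ it follows that
\begin{align*}
N(\bm{A})=\max_{\Vert\bm{v}\Vert_\beta=1}\ \max_{\Vert\bm{w}\Vert_{\alpha,d}\leq1}\bm{w}^\top\bm{A}\bm{v}=\max_{(\bm{v},\bm{w})\in S}\langle\bm{w}\bm{v}^\top,\bm{A}\rangle,
\end{align*}
where $S=\{(\bm{v},\bm{w}):\Vert\bm{v}\Vert_\beta=1,\ \Vert\bm{w}\Vert_{\alpha,d}\leq1\}$ is compact. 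Thus $N$ is the support function of the compact set $\mathcal{G}=\{\bm{w}\bm{v}^\top:(\bm{v},\bm{w})\in S\}$, and the subdifferential of a support function at $\bm{A}$ is the exposed face $\partial N(\bm{A})=\{\bm{G}\in\overline{{\rm{conv}}}\,\mathcal{G}:\langle\bm{G},\bm{A}\rangle=N(\bm{A})\}$. For compact $\mathcal{G}$ in finite dimensions, Carathéodory makes ${\rm{conv}}\,\mathcal{G}$ compact, so this face equals ${\rm{conv}}\{\bm{G}\in\mathcal{G}:\langle\bm{G},\bm{A}\rangle=N(\bm{A})\}$ with no closure gap: any active $\bm{G}=\sum_i\lambda_i\bm{G}_i$ forces every generator with $\lambda_i>0$ to be active as well.

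Next I would identify the active generators. For $(\bm{v},\bm{w})\in S$, the chain $\bm{w}^\top\bm{A}\bm{v}\leq\Vert\bm{w}\Vert_{\alpha,d}\Vert\bm{A}\bm{v}\Vert_\alpha\leq\Vert\bm{A}\bm{v}\Vert_\alpha\leq N(\bm{A})$ shows that $\langle\bm{w}\bm{v}^\top,\bm{A}\rangle=N(\bm{A})$ forces equality throughout. Hence $\Vert\bm{A}\bm{v}\Vert_\alpha=N(\bm{A})$, so $\bm{v}$ attains the operator norm and $\bm{u}:=\bm{A}\bm{v}/N(\bm{A})$ satisfies $\Vert\bm{u}\Vert_\alpha=1$ and $\bm{A}\bm{v}=N(\bm{A})\bm{u}$; moreover $\bm{w}^\top\bm{u}=1=\Vert\bm{w}\Vert_{\alpha,d}$, i.e.\ $\bm{w}\in\partial\Vert\bm{u}\Vert_\alpha$. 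Conversely every such pair is active. Thus the active generators are exactly $\{\bm{w}\bm{v}^\top:(\bm{v},\bm{w})\in\Phi(\bm{A})\}$, and $\partial N(\bm{A})={\rm{conv}}\{\bm{w}\bm{v}^\top:(\bm{v},\bm{w})\in\Phi(\bm{A})\}$. Substituting into the Max Formula and using that a linear functional attains its maximum over a convex hull at a generator (attainment guaranteed by compactness of $\Phi(\bm{A})$),
\begin{align*}
{\mathcal{D}}_{\bm{B}}N(\bm{A})=\max_{\bm{G}\in\partial N(\bm{A})}\langle\bm{G},\bm{B}\rangle=\max_{(\bm{v},\bm{w})\in\Phi(\bm{A})}\langle\bm{w}\bm{v}^\top,\bm{B}\rangle=\max_{(\bm{v},\bm{w})\in\Phi(\bm{A})}\bm{w}^\top\bm{B}\bm{v},
\end{align*}
which is the claim.

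The main obstacle is the subdifferential computation, specifically justifying rigorously that $\partial N(\bm{A})$ is precisely the convex hull of the \emph{active} rank-one generators and that no spurious limit points enlarge it. This rests on the support-function/exposed-face machinery used above (equivalently, on the Valadier--Danskin formula for the subdifferential of a supremum of the convex functions $\bm{A}\mapsto\Vert\bm{A}\bm{v}\Vert_\alpha$ over the compact index set $\Vert\bm{v}\Vert_\beta=1$), where compactness of $S$ together with upper semicontinuity of the active-set map is exactly what controls the closure and prevents the subdifferential from being larger than $\Phi(\bm{A})$ predicts.
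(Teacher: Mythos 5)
The paper offers no proof of this lemma at all: it is imported verbatim from Watson \cite[Theorem 3]{Watson1992CharacterizationOT}, so your proposal is not competing with an in-paper argument but supplies a self-contained replacement, and it is correct. Your route—realizing $\Vert\cdot\Vert_{\alpha,\beta}$ as the support function of the compact rank-one set $\mathcal{G}=\{\bm{w}\bm{v}^\top:\Vert\bm{v}\Vert_\beta=1,\ \Vert\bm{w}\Vert_{\alpha,d}\leq1\}$, identifying $\partial\Vert\bm{A}\Vert_{\alpha,\beta}$ with the exposed face ${\rm{conv}}\{\bm{w}\bm{v}^\top:(\bm{v},\bm{w})\in\Phi(\bm{A})\}$ via the equality chain $\bm{w}^\top\bm{A}\bm{v}\leq\Vert\bm{w}\Vert_{\alpha,d}\Vert\bm{A}\bm{v}\Vert_\alpha\leq\Vert\bm{A}\bm{v}\Vert_\alpha\leq\Vert\bm{A}\Vert_{\alpha,\beta}$, and then invoking \Cref{lem_dire_gra=max}—is a clean Danskin/Valadier-style derivation, and the details hold up: Carath\'eodory removes the closure gap in finite dimensions (any active convex combination forces each generator with positive weight to be active), the active-pair analysis correctly forces $\Vert\bm{w}\Vert_{\alpha,d}=1$ and $\bm{w}\in\partial\Vert\bm{u}\Vert_\alpha$ (matching the normalization implicit in $\Phi(\bm{A})$, since $\bm{w}\in\partial\Vert\bm{u}\Vert_\alpha$ with $\Vert\bm{u}\Vert_\alpha=1$ entails $\bm{w}^\top\bm{u}=1=\Vert\bm{w}\Vert_{\alpha,d}$), and compactness of $\Phi(\bm{A})$—a closed subset of the compact $S$, by closedness of the subdifferential graph—secures attainment of the maximum at a generator. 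This differs from Watson's original argument, which obtains the subdifferential characterization by a direct limiting analysis of maximizing sequences for the difference quotient rather than through support-function calculus; what your version buys is that the lemma becomes provable entirely with the machinery the paper already carries (\Cref{lem_dire_gra=max} plus elementary norm duality), removing the external dependence. Your restriction to $\bm{A}\neq\bm{O}$ is harmless and properly flagged: the paper only applies the lemma with $\bm{A}_{\rm{s}}\neq\bm{O}$ (the proof of \Cref{the: dual matrix operator alpha beta norm} treats $\bm{A}_{\rm{s}}=\bm{O}$ separately), and in any case at $\bm{A}=\bm{O}$ every unit $\bm{u}$ becomes admissible in $\Phi(\bm{O})$, so your support-function description still returns ${\mathcal{D}}_{\bm{B}}\Vert\bm{O}\Vert_{\alpha,\beta}=\Vert\bm{B}\Vert_{\alpha,\beta}$, consistent with the general formula.
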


\begin{proof}[Proof of \Cref{the: dual matrix operator alpha beta norm}]
Since norms $\Vert\cdot\Vert_{\alpha,\beta}$, $\Vert\cdot\Vert_{\alpha}$ and $\Vert\cdot\Vert_{\beta}$ on $\mathbb{R}^{m\times n}$, $\mathbb{R}^m$ and $\mathbb{R}^n$ are mutually consistent, we deduce from \Cref{the: mutually consistent} that their dual continuations are still mutually consistent.
That is, for the given dual matrix $\bm{A}=\bm{A}_{\rm{s}}+\bm{A}_{\rm{i}}\epsilon\in\mathbb{DR}^{m\times n}$, $\Vert\bm{Ax}\Vert_{\alpha}\leq\Vert\bm{A}\Vert_{\alpha,\beta}\Vert\bm{x}\Vert_{\beta}$ holds for any $\bm{x}=\bm{x}_{\rm{s}}+\bm{x}_{\rm{i}}\epsilon\in\mathbb{DR}^n$.
Besides, if $\bm{x}\neq\bm{0}$, then $\Vert\bm{x}\Vert_{\beta}>0$ based on the non-negativity of dual-valued vector norms. 
Thus, 
\begin{align*}
\Vert\bm{A}\Vert_{\alpha,\beta}\geq\max\limits_{\substack{\bm{x}\in\mathbb{DR}^{n}\\\bm{x}\neq\bm{0}}}\frac{\Vert\bm{Ax}\Vert_{\alpha}}{\Vert\bm{x}\Vert_\beta}.
\end{align*}

Next, we proceed to demonstrate that the equality sign of the inequality mentioned above can be reached. 
For the case that $\bm{A}_{\rm{s}}=\bm{O}$, let $\bm{x}_{\rm{i}}=\bm{0}$ and $\bm{x}_{\rm{s}}$ satisfy $\Vert\bm{x}_{\rm{s}}\Vert_{\beta}=1$ and $\Vert\bm{A}_{\rm{i}}\bm{x}_{\rm{s}}\Vert_{\alpha}=\Vert\bm{A}_{\rm{i}}\Vert_{\alpha,\beta}$.
Then it is obvious that 
\begin{align*}
\frac{\Vert\bm{Ax}\Vert_{\alpha}}{\Vert\bm{x}\Vert_{\beta}}=\frac{\Vert\bm{A}_{\rm{i}}\bm{x}_{\rm{s}}\Vert_{\alpha}\epsilon}{\Vert\bm{x}_{\rm{s}}\Vert_{\beta}}=\Vert\bm{A}_{\rm{i}}\Vert_{\alpha,\beta}\epsilon=\Vert\bm{A}\Vert_{\alpha,\beta}.    
\end{align*}

Consider the other case that $\bm{A}_{\rm{s}}\neq\bm{O}$.
Let $\bm{x}_{\rm{i}}=\bm{0}$ and $\bm{x}_{\rm{s}}$ satisfy $\Vert\bm{x}_{\rm{s}}\Vert_\beta=1$ and $\Vert\bm{A}_{\rm{s}}\bm{x}_{\rm{s}}\Vert_{\alpha}=\Vert\bm{A}_{\rm{s}}\Vert_{\alpha,\beta}:=\gamma$ $(\gamma\neq0)$.
Denote $\bm{A}_{\rm{s}}\bm{x}_{\rm{s}}:=\gamma\bm{u}$ with $\Vert\bm{u}\Vert_{\alpha}=1$.
Thus,
\begin{align*}
\frac{\Vert\bm{Ax}\Vert_{\alpha}}{\Vert\bm{x}\Vert_\beta}&=\frac{\Vert\bm{A}_{\rm{s}}\bm{x}_{\rm{s}}\Vert_{\alpha}\!+\!{\mathcal{D}}_{\bm{A}_{\rm{i}}\bm{x}_{\rm{s}}}\Vert\bm{A}_{\rm{s}}\bm{x}_{\rm{s}}\Vert_{\alpha}\epsilon}{\Vert\bm{x}_{\rm{s}}\Vert_\beta}=\Vert\bm{A}_{\rm{s}}\Vert_{\alpha,\beta}+\lim_{t\downarrow0}\frac{\Vert\bm{A}_{\rm{s}}\bm{x}_{\rm{s}}\!+\!t\bm{A}_{\rm{i}}\bm{x}_{\rm{s}}\Vert_{\alpha}\!-\!\Vert\bm{A}_{\rm{s}}\bm{x}_{\rm{s}}\Vert_{\alpha}}{t}\epsilon\\
&=\Vert\bm{A}_{\rm{s}}\Vert_{\alpha,\beta}+\gamma\lim_{t\downarrow0}\frac{\Vert\bm{u}+t\frac{1}{\gamma}\bm{A}_{\rm{i}}\bm{x}_{\rm{s}}\Vert_{\alpha} -\Vert\bm{u}\Vert_{\alpha}}{t}\epsilon= \Vert\bm{A}_{\rm{s}}\Vert_{\alpha,\beta}+\gamma{\mathcal{D}}_{\frac{1}{\gamma}\bm{A}_{\rm{i}}\bm{x}_{\rm{s}}}\Vert\bm{u}\Vert_{\alpha}\epsilon\\
&=\Vert\bm{A}_{\rm{s}}\Vert_{\alpha,\beta}+\max\limits_{\bm{w}\in\partial\Vert\bm{u}\Vert_{\alpha}}\big\langle\bm{w},\bm{A}_{\rm{i}}\bm{x}_{\rm{s}}\big\rangle\epsilon\overset{\rm{(I)}}{=}\Vert\bm{A}_{\rm{s}}\Vert_{\alpha,\beta}+{\mathcal{D}}_{\bm{A}_{\rm{i}}}\Vert\bm{A}_{\rm{s}}\Vert_{\alpha,\beta}\epsilon=\Vert\bm{A}\Vert_{\alpha,\beta},
\end{align*}
where (I) is established by \Cref{lem: subdiff of operator norm}.
Therefore, the result follows.
\end{proof}

Moreover, we aim to verify that the dual continuation of a unitarily invariant matrix norm is still unitarily invariant in dual algebra.
In particular, given a dual-valued matrix norm $\Vert\cdot\Vert:\mathbb{DR}^{m\times n}\rightarrow\mathbb{DR}$.
If $\Vert\bm{PXQ}\Vert=\Vert\bm{X}\Vert$ for any $\bm{X}\in\mathbb{DR}^{m\times n}$, as well as any orthogonal $\bm{P}\in\mathbb{DR}^{m\times m}$ and $\bm{Q}\in\mathbb{DR}^{n\times n}$, then the dual-valued matrix norm is said to be unitarily invariant in dual algebra.

\begin{theorem}\label{the: unitarily invariant dual matrix norm}
The dual continuation of a unitarily invariant matrix norm is still unitarily invariant in dual algebra.
\end{theorem}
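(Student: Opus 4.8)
The plan is to verify the two defining ingredients of the dual continuation \cref{matrix norm expansion}---its standard part and its infinitesimal part---separately. Write $\bm{P}=\bm{P}_{\rm{s}}+\bm{P}_{\rm{i}}\epsilon$, $\bm{Q}=\bm{Q}_{\rm{s}}+\bm{Q}_{\rm{i}}\epsilon$ and $\bm{X}=\bm{X}_{\rm{s}}+\bm{X}_{\rm{i}}\epsilon$. The orthogonality of $\bm{P}$ and $\bm{Q}$, recalled in \cref{sec2: prelimi}, guarantees that $\bm{P}_{\rm{s}},\bm{Q}_{\rm{s}}$ are real orthogonal and that $\bm{K}:=\bm{P}_{\rm{s}}^\top\bm{P}_{\rm{i}}$ and $\bm{N}:=\bm{Q}_{\rm{i}}\bm{Q}_{\rm{s}}^\top$ are skew-symmetric. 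Since $(\bm{PXQ})_{\rm{s}}=\bm{P}_{\rm{s}}\bm{X}_{\rm{s}}\bm{Q}_{\rm{s}}$, the unitary invariance of the underlying real norm immediately gives $\Vert(\bm{PXQ})_{\rm{s}}\Vert=\Vert\bm{X}_{\rm{s}}\Vert$; this settles the standard part and simultaneously disposes of the case $\bm{X}_{\rm{s}}=\bm{O}$, where the infinitesimal branch of \cref{matrix norm expansion} reduces to $\Vert\bm{P}_{\rm{s}}\bm{X}_{\rm{i}}\bm{Q}_{\rm{s}}\Vert\epsilon=\Vert\bm{X}_{\rm{i}}\Vert\epsilon$.

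For $\bm{X}_{\rm{s}}\neq\bm{O}$ the crux is the infinitesimal part. I would first collect the three cross terms of $(\bm{PXQ})_{\rm{i}}$ and factor them, using $\bm{P}_{\rm{s}}\bm{P}_{\rm{s}}^\top=\bm{I}$ and $\bm{Q}_{\rm{s}}^\top\bm{Q}_{\rm{s}}=\bm{I}$, as
\begin{align*}
(\bm{PXQ})_{\rm{i}}=\bm{P}_{\rm{s}}\bm{X}_{\rm{i}}\bm{Q}_{\rm{s}}+\bm{P}_{\rm{s}}\bm{X}_{\rm{s}}\bm{Q}_{\rm{i}}+\bm{P}_{\rm{i}}\bm{X}_{\rm{s}}\bm{Q}_{\rm{s}}=\bm{P}_{\rm{s}}\bm{Z}\bm{Q}_{\rm{s}},\qquad \bm{Z}:=\bm{X}_{\rm{i}}+\bm{K}\bm{X}_{\rm{s}}+\bm{X}_{\rm{s}}\bm{N}.
\end{align*}
Because the G\^ateaux-derivative term is a limit of differences of the real norm, unitary invariance again lets me pull $\bm{P}_{\rm{s}}$ and $\bm{Q}_{\rm{s}}$ out:
\begin{align*}
{\mathcal{D}}_{(\bm{PXQ})_{\rm{i}}}\Vert(\bm{PXQ})_{\rm{s}}\Vert=\lim_{t\downarrow0}\frac{\Vert\bm{P}_{\rm{s}}(\bm{X}_{\rm{s}}+t\bm{Z})\bm{Q}_{\rm{s}}\Vert-\Vert\bm{P}_{\rm{s}}\bm{X}_{\rm{s}}\bm{Q}_{\rm{s}}\Vert}{t}={\mathcal{D}}_{\bm{Z}}\Vert\bm{X}_{\rm{s}}\Vert.
\end{align*}
It therefore remains to show ${\mathcal{D}}_{\bm{Z}}\Vert\bm{X}_{\rm{s}}\Vert={\mathcal{D}}_{\bm{X}_{\rm{i}}}\Vert\bm{X}_{\rm{s}}\Vert$, i.e.\ that the skew-induced directions $\bm{K}\bm{X}_{\rm{s}}$ and $\bm{X}_{\rm{s}}\bm{N}$ are invisible to the derivative.

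By the Max Formula \cref{directional}, ${\mathcal{D}}_{\bm{Z}}\Vert\bm{X}_{\rm{s}}\Vert=\max_{\bm{H}\in\partial\Vert\bm{X}_{\rm{s}}\Vert}\langle\bm{H},\bm{Z}\rangle$, so it suffices to prove that $\langle\bm{H},\bm{K}\bm{X}_{\rm{s}}\rangle=0$ and $\langle\bm{H},\bm{X}_{\rm{s}}\bm{N}\rangle=0$ for every subgradient $\bm{H}\in\partial\Vert\bm{X}_{\rm{s}}\Vert$, for then $\langle\bm{H},\bm{Z}\rangle=\langle\bm{H},\bm{X}_{\rm{i}}\rangle$ and the maximum is unchanged. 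The key structural input, which I expect to be the main obstacle, is the claim that for a unitarily invariant norm every such $\bm{H}$ makes both $\bm{X}_{\rm{s}}\bm{H}^\top$ and $\bm{H}^\top\bm{X}_{\rm{s}}$ symmetric. This rests on the fact that $\bm{H}$ and $\bm{X}_{\rm{s}}$ admit a simultaneous singular value decomposition $\bm{X}_{\rm{s}}=\bm{U}\bm{\Sigma}\bm{V}^\top$, $\bm{H}=\bm{U}\bm{D}\bm{V}^\top$ with $\bm{D}$ (rectangular) diagonal, so that $\bm{X}_{\rm{s}}\bm{H}^\top=\bm{U}(\bm{\Sigma}\bm{D}^\top)\bm{U}^\top$ and $\bm{H}^\top\bm{X}_{\rm{s}}=\bm{V}(\bm{D}^\top\bm{\Sigma})\bm{V}^\top$ are visibly symmetric.

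Granting this symmetry, cyclicity of the trace gives
\begin{align*}
\langle\bm{H},\bm{K}\bm{X}_{\rm{s}}\rangle={\rm{tr}}\big(\bm{K}\,\bm{X}_{\rm{s}}\bm{H}^\top\big)=0,\qquad \langle\bm{H},\bm{X}_{\rm{s}}\bm{N}\rangle={\rm{tr}}\big((\bm{H}^\top\bm{X}_{\rm{s}})\,\bm{N}\big)=0,
\end{align*}
since the trace of the product of a skew-symmetric matrix and a symmetric matrix vanishes. Assembling the three steps yields $\Vert\bm{PXQ}\Vert=\Vert\bm{X}_{\rm{s}}\Vert+{\mathcal{D}}_{\bm{X}_{\rm{i}}}\Vert\bm{X}_{\rm{s}}\Vert\epsilon=\Vert\bm{X}\Vert$, which is the assertion. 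The one point demanding genuine care is the simultaneous-SVD characterisation of $\partial\Vert\bm{X}_{\rm{s}}\Vert$: it must be applied per subgradient (the orthogonal factors $\bm{U},\bm{V}$ may depend on $\bm{H}$, which is harmless here because each inner product couples $\bm{H}$ only to $\bm{X}_{\rm{s}}$), and it becomes delicate when $\bm{X}_{\rm{s}}$ has repeated singular values. I would either cite this from the unitarily invariant norm literature or derive it as the equality case of von Neumann's trace inequality in the defining conditions $\langle\bm{H},\bm{X}_{\rm{s}}\rangle=\Vert\bm{X}_{\rm{s}}\Vert$ and $\bm{H}$ lying in the dual-norm unit ball.
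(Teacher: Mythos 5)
Your proof is correct and follows essentially the same route as the paper's: the same reduction of the standard part and the $\bm{X}_{\rm{s}}=\bm{O}$ case, the same factorization $(\bm{PXQ})_{\rm{i}}=\bm{P}_{\rm{s}}\big(\bm{X}_{\rm{i}}+\bm{P}_{\rm{s}}^\top\bm{P}_{\rm{i}}\bm{X}_{\rm{s}}+\bm{X}_{\rm{s}}\bm{Q}_{\rm{i}}\bm{Q}_{\rm{s}}^\top\big)\bm{Q}_{\rm{s}}$, and the same cancellation of the two skew-induced directions. The only cosmetic difference is that you invoke the max formula together with Watson's subdifferential characterization (\Cref{lem:partial of unitarily invariant matrix norm}) and kill the extra terms via the trace of a skew-symmetric times a symmetric matrix, whereas the paper applies Watson's G\^ateaux-derivative formula (\Cref{lemma: unitarily matrix norm}) directly and cancels via $\bm{u}_k^\top\bm{S}\bm{u}_k=0$ for skew-symmetric $\bm{S}$ --- the same fact in two guises.
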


An essential lemma is introduced to prove this.

\begin{lemma}\label{lemma: unitarily matrix norm} \cite[Theorem 1]{Watson1992CharacterizationOT}
Let $\interleave\cdot\interleave:\mathbb{R}^{m\times n}\rightarrow\mathbb{R}$ be a unitarily invariant norm, and $\phi:\mathbb{R}^p\rightarrow\mathbb{R}$ $(p=\min\{m,n\})$ be the corresponding symmetric gauge function. 
Then the G\^ateaux derivative of the norm at $\bm{X}\in\mathbb{R}^{m\times n}$ along $\bm{Y}\in\mathbb{R}^{m\times n}$ is
\begin{align}
\mathcal{D}_{\bm{Y}}\interleave\bm{X}\interleave= \max_{\bm{d}\in\partial\phi(\bm{\sigma}(\bm{X}))}\sum_{k=1}^p d_k\bm{u}_k^\top\bm{Y}\bm{v}_k,
\end{align}
where $\bm{X} = \bm{U\Sigma V}^\top$ is a full SVD satisfying $\bm{U} = [\bm{u}_1,\cdots,\bm{u}_m]$ and $\bm{V} = [\bm{v}_1,\cdots,\bm{v}_n]$.
Besides, $\bm{\sigma}(\bm{X}) = [\sigma_1,\cdots,\sigma_p]^\top$ consists of the singular values of $\bm{X}$.
\end{lemma}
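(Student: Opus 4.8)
The plan is to reduce the matrix problem to a vector one through von Neumann's characterization of unitarily invariant norms, and then invoke the Max Formula already recorded as \Cref{lem_dire_gra=max}. First I would recall that every unitarily invariant norm factors through the singular values as $\interleave\bm{X}\interleave=\phi(\bm{\sigma}(\bm{X}))$, where $\phi$ is the associated symmetric gauge function. Since symmetric gauge functions are genuine norms on $\mathbb{R}^p$, both $\phi$ and $\interleave\cdot\interleave$ are convex, so the G\^ateaux derivative exists and, by \Cref{lem_dire_gra=max}, equals the support-function value
\begin{align*}
\mathcal{D}_{\bm{Y}}\interleave\bm{X}\interleave=\max_{\bm{G}\in\partial\interleave\bm{X}\interleave}\langle\bm{G},\bm{Y}\rangle.
\end{align*}
The whole proof then rests on identifying the subdifferential $\partial\interleave\bm{X}\interleave$ explicitly.

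To do this I would pass to the dual norm. The dual of a unitarily invariant norm is again unitarily invariant, with symmetric gauge function $\phi^{\ast}$, and $\interleave\bm{X}\interleave=\max_{\interleave\bm{Z}\interleave^{\ast}\leq1}\langle\bm{Z},\bm{X}\rangle$. Hence $\bm{G}\in\partial\interleave\bm{X}\interleave$ if and only if $\interleave\bm{G}\interleave^{\ast}\leq1$ and $\langle\bm{G},\bm{X}\rangle=\interleave\bm{X}\interleave$. The key tool is von Neumann's trace inequality $\langle\bm{G},\bm{X}\rangle=\mathrm{tr}(\bm{G}^\top\bm{X})\leq\langle\bm{\sigma}(\bm{G}),\bm{\sigma}(\bm{X})\rangle$, whose equality case forces $\bm{G}$ to admit a simultaneous singular value decomposition with $\bm{X}$; that is, $\bm{G}=\sum_{k=1}^p d_k\,\bm{u}_k\bm{v}_k^\top$ with the same singular vectors $\bm{u}_k,\bm{v}_k$ appearing in $\bm{X}=\bm{U}\bm{\Sigma}\bm{V}^\top$ and $\bm{d}=\bm{\sigma}(\bm{G})$. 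Feeding this back through the dual-norm constraint $\interleave\bm{G}\interleave^{\ast}\leq1$ together with the normalization $\langle\bm{d},\bm{\sigma}(\bm{X})\rangle=\phi(\bm{\sigma}(\bm{X}))$ shows precisely that $\bm{d}\in\partial\phi(\bm{\sigma}(\bm{X}))$. Substituting $\bm{G}=\sum_k d_k\bm{u}_k\bm{v}_k^\top$ into the support-function expression and computing $\langle\bm{G},\bm{Y}\rangle=\sum_k d_k\,\bm{u}_k^\top\bm{Y}\bm{v}_k$ yields the claimed formula after taking the maximum over $\bm{d}\in\partial\phi(\bm{\sigma}(\bm{X}))$.

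The hard part will be making the subdifferential identification rigorous when $\bm{X}$ has repeated or zero singular values. In that degenerate situation the SVD of $\bm{X}$ is not unique, the equality case of von Neumann's trace inequality pins down $\bm{G}$ only up to the block-orthogonal freedom inside the shared singular subspaces, and one must check that the resulting family $\{\sum_k d_k\bm{u}_k\bm{v}_k^\top:\bm{d}\in\partial\phi(\bm{\sigma}(\bm{X}))\}$ is independent of the chosen SVD and exhausts the full subdifferential rather than a proper subset. I would resolve this by exploiting the permutation- and sign-symmetry of $\phi$, so that $\partial\phi(\bm{\sigma}(\bm{X}))$ is invariant under the relevant coordinate symmetries and thereby absorbs the orthogonal ambiguity, and by a direct two-way inclusion verifying that each candidate $\bm{G}$ satisfies both the dual-norm bound and the equality condition. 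Once the subdifferential is correctly described, the concluding identity $\langle\bm{G},\bm{Y}\rangle=\sum_k d_k\bm{u}_k^\top\bm{Y}\bm{v}_k$ is immediate from the trace representation of the inner product and its bilinearity.
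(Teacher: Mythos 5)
You should first note that the paper contains no proof of this lemma at all: it is quoted verbatim from Watson \cite[Theorem 1]{Watson1992CharacterizationOT}, whose own argument runs through first-order perturbation theory of singular values, not through the convex-duality route you propose. Your opening steps are sound: $\interleave\bm{X}\interleave=\phi(\bm{\sigma}(\bm{X}))$ is convex, so \Cref{lem_dire_gra=max} gives $\mathcal{D}_{\bm{Y}}\interleave\bm{X}\interleave=\max_{\bm{G}\in\partial\interleave\bm{X}\interleave}\langle\bm{G},\bm{Y}\rangle$, and the equality case of von Neumann's trace inequality shows every subgradient $\bm{G}$ admits a simultaneous SVD with $\bm{X}$. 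The genuine gap is precisely at the point you flagged, and your proposed repair fails: the simultaneous SVD shares \emph{some} SVD of $\bm{X}$ with $\bm{G}$, not the fixed one in the statement, and within a block of repeated singular values the residual freedom is a \emph{continuous} orthogonal conjugation, which the discrete permutation-and-sign symmetry of $\partial\phi(\bm{\sigma})$ cannot absorb. The fixed-SVD family $\{\sum_k d_k\bm{u}_k\bm{v}_k^\top:\bm{d}\in\partial\phi(\bm{\sigma}(\bm{X}))\}$ is in general a \emph{proper} subset of $\partial\interleave\bm{X}\interleave$, and the quantity $\max_{\bm{d}}\sum_k d_k\bm{u}_k^\top\bm{Y}\bm{v}_k$ genuinely depends on which SVD is chosen. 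Concretely, take the spectral norm ($\phi=\Vert\cdot\Vert_\infty$), $\bm{X}=\bm{I}_2$, $\bm{Y}=\bm{e}_1\bm{e}_2^\top+\bm{e}_2\bm{e}_1^\top$. With the SVD $\bm{U}=\bm{V}=\bm{I}$ your formula gives
\begin{align*}
\max_{d_1+d_2=1,\;d_i\geq0}\big(d_1Y_{11}+d_2Y_{22}\big)=0,
\end{align*}
whereas $\Vert\bm{I}+t\bm{Y}\Vert_2=1+t$ yields the true one-sided derivative $1=\lambda_{\max}\big({\rm{sym}}(\bm{Y})\big)$ --- exactly the value the paper itself records in \Cref{pro: dual matrix spectral norm}; choosing instead the Hadamard SVD of $\bm{I}_2$ recovers $1$. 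So your claim that the family ``is independent of the chosen SVD and exhausts the full subdifferential'' is false in the degenerate case, and the two-way inclusion you sketch cannot be completed.

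A correct proof must handle the degeneracy in one of two ways. Either (a) follow Watson and expand $\sigma_i(\bm{X}+t\bm{Y})=\sigma_i+t\gamma_i+o(t)$, where within each block of equal positive singular values the $\gamma_i$ are the eigenvalues of ${\rm{sym}}(\bm{U}_2^\top\bm{Y}\bm{V}_2)$ (and singular values of the corresponding block for the zero singular value), then apply the max formula for the gauge $\phi$; this proves the lemma for an SVD \emph{adapted to the direction} $\bm{Y}$ (equivalently, a limit of SVDs of $\bm{X}+t\bm{Y}$ as $t\downarrow0$), which is how the quoted statement must be read. Or (b) keep the maximum over the full subdifferential, which by the paper's own \Cref{lem:partial of unitarily invariant matrix norm} (Watson's Theorem 2) contains the block-orthogonal matrices $\bm{P}^\ell,\bm{Q}^\ell$; maximizing over that extra freedom is exactly what produces the $\sum_{\ell=1}^t\lambda_\ell(\bm{M})$ and $\lambda_{\max}(\bm{R})$ terms in \Cref{pro: dual ky fan p-k-norm,pro: dual matrix spectral norm}. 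Your outline, stopping at the fixed-SVD family, would reproduce neither.
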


\begin{proof}[Proof of \Cref{the: unitarily invariant dual matrix norm}]
Given orthogonal dual matrices $\bm{P}=\bm{P}_{\rm{s}}+\bm{P}_{\rm{i}}\epsilon\in\mathbb{DR}^{m\times m}$ and $\bm{Q}=\bm{Q}_{\rm{s}}+\bm{Q}_{\rm{i}}\epsilon\in\mathbb{DR}^{n\times n}$.
Then $\bm{P}_{\rm{s}}$ and $\bm{Q}_{\rm{s}}$ are orthogonal matrices, and both $\bm{P}_{\rm{s}}^\top\bm{P}_{\rm{i}}$ and $\bm{Q}_{\rm{s}}\bm{Q}_{\rm{i}}^\top$ are skew-symmetric. 
Let $\interleave\cdot\interleave:\mathbb{R}^{m\times n}\rightarrow\mathbb{R}$ be a unitarily invariant matrix norm and $\interleave\cdot\interleave:\mathbb{DR}^{m\times n}\rightarrow\mathbb{DR}$ be its dual continuation, as outlined in \Cref{the: extension dual matrix norm}.
Given $\bm{X}=\bm{X}_{\rm{s}}+\bm{X}_{\rm{i}}\epsilon\in\mathbb{DR}^{m\times n}$.

On the one hand, if $\bm{X}_{\rm{s}}=\bm{O}$, then $\interleave\bm{X}\interleave = \interleave\bm{X}_{\rm{i}}\epsilon\interleave=\interleave\bm{X}_{\rm{i}}\interleave\epsilon$ and $\interleave\bm{PXQ}\interleave = \interleave\bm{P}_{\rm{s}}\bm{X}_{\rm{i}}\bm{Q}_{\rm{s}}\epsilon\interleave = \interleave\bm{P}_{\rm{s}}\bm{X}_{\rm{i}}\bm{Q}_{\rm{s}}\interleave\epsilon=\interleave\bm{X}_{\rm{i}}\interleave\epsilon$. Hence, $\interleave\bm{PXQ}\interleave=\interleave\bm{X}\interleave$.

Conversely, consider the case that $\bm{X}_{\rm{s}}\neq\bm{O}$. 
Let $\bm{X}_{\rm{s}} = \bm{U\Sigma V}^\top$ be a full SVD with $\bm{U} = [\bm{u}_1,\cdots,\bm{u}_m]$ and $\bm{V} = [\bm{v}_1,\cdots,\bm{v}_n]$.
Besides, $\bm{\sigma}(\bm{X}_{\rm{s}}) = [\sigma_1,\cdots,\sigma_p]^\top$ $(p=\min\{m,n\})$ represents the singular values of $\bm{X}_{\rm{s}}$. 
Then it yields from \Cref{lemma: unitarily matrix norm} that
\begin{align*}
&{\mathcal{D}}_{\bm{P}_{\rm{i}}\bm{X}_{\rm{s}}\bm{Q}_{\rm{s}}+\bm{P}_{\rm{s}}\bm{X}_{\rm{i}}\bm{Q}_{\rm{s}}+\bm{P}_{\rm{s}}\bm{X}_{\rm{s}}\bm{Q}_{\rm{i}}}\interleave\bm{P}_{\rm{s}}\bm{X}_{\rm{s}}\bm{Q}_{\rm{s}}\interleave={\mathcal{D}}_{\bm{P}_{\rm{s}}^\top\bm{P}_{\rm{i}}\bm{X}_{\rm{s}}+\bm{X}_{\rm{i}}+\bm{X}_{\rm{s}}\bm{Q}_{\rm{i}}\bm{Q}_{\rm{s}}^\top}\interleave\bm{X}_{\rm{s}}\interleave\\
=& \max\limits_{\bm{d}\in\partial\phi(\bm{\sigma}(\bm{X}_{\rm{s}}))}\sum_{k=1}^pd_k\bm{u}_k^\top\big(\bm{P}_{\rm{s}}^\top\bm{P}_{\rm{i}}\bm{X}_{\rm{s}}+\bm{X}_{\rm{i}}+\bm{X}_{\rm{s}}\bm{Q}_{\rm{i}}\bm{Q}_{\rm{s}}^\top\big)\bm{v}_k\\
=& \max\limits_{\bm{d}\in\partial\phi(\bm{\sigma}(\bm{X}_{\rm{s}}))}\sum_{k=1}^p\Big(d_k\sigma_k\bm{u}_k^\top\bm{P}_{\rm{s}}^\top\bm{P}_{\rm{i}}\bm{u}_k + d_k\bm{u}_k^\top\bm{X}_{\rm{i}}\bm{v}_k+d_k\sigma_k\bm{v}_k^\top\bm{Q}_{\rm{i}}\bm{Q}_{\rm{s}}^\top\bm{v}_k\Big)\\
\overset{\rm{(I)}}{=}& \max\limits_{\bm{d}\in\partial\phi(\bm{\sigma}(\bm{X}_{\rm{s}}))}\sum_{k=1}^pd_k\bm{u}_k^\top\bm{X}_{\rm{i}}\bm{v}_k={\mathcal{D}}_{\bm{X}_{\rm{i}}}\interleave\bm{X}_{\rm{s}}\interleave,
\end{align*}
where (I) results from skew-symmetric matrices $\bm{P}_{\rm{s}}^\top\bm{P}_{\rm{i}}$ and  $\bm{Q}_{\rm{i}}\bm{Q}_{\rm{s}}^\top$.
In conclusion, we deduce that $\interleave\bm{PXQ}\interleave = \interleave\bm{P}_{\rm{s}}\bm{X}_{\rm{s}}\bm{Q}_{\rm{s}}\interleave+{\mathcal{D}}_{\bm{P}_{\rm{i}}\bm{X}_{\rm{s}}\bm{Q}_{\rm{s}}+\bm{P}_{\rm{s}}\bm{X}_{\rm{i}}\bm{Q}_{\rm{s}}+\bm{P}_{\rm{s}}\bm{X}_{\rm{s}}\bm{Q}_{\rm{i}}}\interleave\bm{P}_{\rm{s}}\bm{X}_{\rm{s}}\bm{Q}_{\rm{s}}\interleave\epsilon = \interleave\bm{X}_{\rm{s}}\interleave+{\mathcal{D}}_{\bm{X}_{\rm{i}}}\interleave\bm{X}_{\rm{s}}\interleave\epsilon = \interleave\bm{X}\interleave$.
\end{proof}

\subsection{Unitarily Invariant Dual-Valued Matrix Norms}

Symmetric gauge functions lead to several examples of unitarily invariant matrix norms. 
Ky Fan $p$-$k$-norm is one of the most significant unitarily invariant norms.
Given a matrix $\bm{X}\in\mathbb{R}^{m\times n}$ $(m\!\geq\!n)$ and its singular values $\sigma_1\geq\cdots\geq\sigma_n$.
Then the Ky Fan $p$-$k$-norm $(1\!\leq\! k\!\leq\! n)$ is defined as $\Vert\bm{X}\Vert_{(k,p)} \!=\! \big(\sum_{i=1}^k\sigma_i^p\big)^{\frac{1}{p}}$ for $1\leq p<\infty$ and $\Vert\bm{X}\Vert_{(k,\infty)} \!=\! \sigma_1$.

The first particular case of the Ky Fan $p$-$k$-norm with $k=n$ is known as the Schatten $p$-norm, expressed as $\Vert\bm{X}\Vert_{S_p}\! =\! \left(\sum_{i=1}^n\sigma_i^p\right)^{\frac{1}{p}}$ for $1\!\leq\! p\!<\!\infty$ and $\Vert\bm{X}\Vert_{S_{\infty}} \!=\! \sigma_1$.
The second one with $p\!=\!1$, namely Ky Fan $k$-norm, is denoted as $\Vert\bm{X}\Vert_{(k)}\!=\!\sum_{i=1}^k\!\sigma_i$.

It is worth mentioning that the Ky Fan $k$-norm becomes the matrix spectral norm for $k=1$ and the nuclear norm for $k=n$.
Besides, when $p=1,2,\infty$, the Schatten $p$-norm becomes the nuclear norm, the Frobenius norm and the spectral norm, respectively.
Specifically, $\Vert\bm{X}\Vert_{S_1}=\Vert\bm{X}\Vert_{(n)}=\Vert\bm{X}\Vert_*$, $\Vert\bm{X}\Vert_{S_\infty}=\Vert\bm{X}\Vert_{(1)}=\Vert\bm{X}\Vert_2$, $\Vert\bm{X}\Vert_{S_2}=\Vert\bm{X}\Vert_F$, $\Vert\bm{X}\Vert_{(n,p)}=\Vert\bm{X}\Vert_{S_p}$, and $\Vert\bm{X}\Vert_{(k,1)}=\Vert\bm{X}\Vert_{(k)}$.

Accordingly, the following part establishes the dual-valued Ky Fan $p$-$k$-norm, which is unitarily invariant based on \Cref{the: unitarily invariant dual matrix norm}.
Furthermore, it is demonstrated that the dual-valued Ky Fan $p$-$k$-norm corresponds to the dual-valued vector $p$-norm of the first $k$ singular values of the dual matrix.

The following propositions separately present the dual-valued Ky Fan $p$-$k$-norms with $1\!<\!p\!<\!\infty$, $p\!=\!1$ and $p\!=\!\infty$, denoted as $\Vert\cdot\Vert_{(k,p)}$, $\Vert\cdot\Vert_{(k)}$ and $\Vert\cdot\Vert_2$, where the last two are referred to as dual-valued Ky Fan $k$-norm and dual-valued spectral norm.

\begin{proposition}
\label{pro: dual ky fan p-k-norm}
Given a dual matrix $\bm{A} = \bm{A}_{\rm{s}}+\bm{A}_{\rm{i}}\epsilon\in\mathbb{DR}^{m\times n}$ $(m\geq n)$.
Then the dual-valued Ky Fan $p$-$k$-norm $(1< p< \infty)$ $(1\leq k\leq n)$ $\Vert\cdot\Vert_{(k,p)}:\mathbb{DR}^{m\times n}\rightarrow\mathbb{DR}$ of $\bm{A}$ extending from the Ky Fan $p$-$k$-norm $\Vert\cdot\Vert_{(k,p)}:\mathbb{R}^{m\times n}\rightarrow\mathbb{R}$ is expressed as
\begin{align}
\Vert\bm{A}\Vert_{(k,p)}\!=\!\left\{\!\!\!\begin{array}{ll}
\Vert\bm{A}_{\rm{s}}\Vert_{(k,p)}\!+\!\frac{1}{\Vert\bm{A}_{\rm{s}}\Vert_{(k,p)}^{p-1}}\Big(\big\langle\bm{U}_1\bm{\Sigma}_1^{p-1}\bm{V}_1^\top,\bm{A}_{\rm{i}}\big\rangle\!+\!\sigma_k^{p-1}\sum_{\ell=1}^t\lambda_\ell(\bm{M})\Big)\epsilon, &\!\! \bm{A}_{\rm{s}}\!\neq\!\bm{O},\\
\Vert\bm{A}_{\rm{i}}\Vert_{(k,p)}\epsilon, &\!\! \bm{A}_{\rm{s}}\!=\!\bm{O},
\end{array}\right.\label{eq: ky fan k-p-norm}
\end{align}
where the non-zero $\bm{A}_{\rm{s}}$ is such that the singular values satisfy 
\begin{align}
\sigma_1\geq\cdots\geq\sigma_{k-t}>\sigma_{k-t+1}=\cdots=\sigma_k=\cdots=\sigma_{k+s}>\sigma_{k+s+1}\geq\cdots\geq\sigma_{n}.\label{eq: singular values of As for k-p-norm}
\end{align}
Let $\bm{A}_{\rm{s}}=\bm{U\Sigma V}^\top$ be a full SVD with $\bm{U}_1=\bm{U}(:,1:k-t)$, $\bm{U}_2=\bm{U}(:,k-t+1:k+s)$, $\bm{V}_1=\bm{V}(:,1:k-t)$, $\bm{V}_2=\bm{V}(:,k-t+1:k+s)$ and $\bm{\Sigma}_1=\bm{\Sigma}(1:k-t,1:k-t)$.
Moreover, $\bm{M}={\rm{sym}}(\bm{U}_2^\top\bm{A}_{\rm{i}}\bm{V}_2)$ and $\lambda_1(\bm{M})\geq\cdots\geq\lambda_{t+s}(\bm{M})$ are eigenvalues of $\bm{M}$.
\end{proposition}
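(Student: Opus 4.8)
The plan is to invoke the general representation from \Cref{the: extension dual matrix norm}, which reduces the claim to identifying the standard part $\Vert\bm{A}_{\rm{s}}\Vert_{(k,p)}$ (immediate) and the infinitesimal part, namely the G\^ateaux derivative $\mathcal{D}_{\bm{A}_{\rm{i}}}\Vert\bm{A}_{\rm{s}}\Vert_{(k,p)}$. When $\bm{A}_{\rm{s}}=\bm{O}$ the second branch of \cref{matrix norm expansion} gives $\Vert\bm{A}_{\rm{i}}\Vert_{(k,p)}\epsilon$ directly, so the entire effort concentrates on $\bm{A}_{\rm{s}}\neq\bm{O}$. Since the Ky Fan $p$-$k$-norm is unitarily invariant with symmetric gauge function $\phi(\bm{z})=\big(\sum_{i=1}^k z_{[i]}^p\big)^{1/p}$ (the vector $p$-norm of the $k$ largest magnitudes), \Cref{lemma: unitarily matrix norm} applies and yields $\mathcal{D}_{\bm{A}_{\rm{i}}}\Vert\bm{A}_{\rm{s}}\Vert_{(k,p)}=\max_{\bm{d}\in\partial\phi(\bm{\sigma}(\bm{A}_{\rm{s}}))}\sum_{i=1}^{n}d_i\,\bm{u}_i^\top\bm{A}_{\rm{i}}\bm{v}_i$, where $\bm{A}_{\rm{s}}=\bm{U\Sigma V}^\top$ is the chosen full SVD (here $n=\min\{m,n\}$ plays the role of the index bound in \Cref{lemma: unitarily matrix norm}).

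The next step is to determine $\partial\phi(\bm{\sigma}(\bm{A}_{\rm{s}}))$ at the tied configuration \cref{eq: singular values of As for k-p-norm}. Writing $\phi=g^{1/p}$ with $g(\bm{z})=\sum_{i=1}^k z_{[i]}^p$ and applying the chain rule together with the subdifferential of the vector $p$-norm from \cref{eq: sub of vector p}, one finds that a subgradient $\bm{d}$ is forced to equal $\sigma_i^{p-1}/\Vert\bm{A}_{\rm{s}}\Vert_{(k,p)}^{p-1}$ for the strictly larger indices $i\le k-t$, must vanish for $i>k+s$, and on the degenerate block $k-t< i\le k+s$ takes the form $\mu_i\,\sigma_k^{p-1}/\Vert\bm{A}_{\rm{s}}\Vert_{(k,p)}^{p-1}$ with $\mu_i\in[0,1]$ and $\sum_i\mu_i=t$ (reflecting that exactly $t$ of the $t+s$ equal singular values are retained in the top $k$). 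Substituting into Watson's formula and summing the fixed indices $i\le k-t$ assembles, via $\sum_{i\le k-t}\sigma_i^{p-1}\bm{u}_i^\top\bm{A}_{\rm{i}}\bm{v}_i=\langle\bm{U}_1\bm{\Sigma}_1^{p-1}\bm{V}_1^\top,\bm{A}_{\rm{i}}\rangle$, into the first term of \cref{eq: ky fan k-p-norm}; this term is well defined despite possible further ties among the $\sigma_i$ with $i\le k-t$, since $\bm{U}_1\bm{\Sigma}_1^{p-1}\bm{V}_1^\top$ is invariant under the residual block rotations (the relevant $\bm{\Sigma}_1^{p-1}$ is scalar on each such block).

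The crux, and the step I expect to be the main obstacle, is the degenerate block. The SVD is not unique there: $\bm{U}_2,\bm{V}_2$ are determined only up to a common orthogonal factor $\bm{W}$ (the map $\bm{U}_2\mapsto\bm{U}_2\bm{W}$, $\bm{V}_2\mapsto\bm{V}_2\bm{W}$ preserves $\bm{A}_{\rm{s}}$ because $\bm{\Sigma}$ is scalar on this block), so the well-defined G\^ateaux derivative must be recovered by maximizing the block contribution over both the simplex coordinates $\mu_i$ and this orthogonal freedom. Using $\bm{u}_i^\top\bm{A}_{\rm{i}}\bm{v}_i=\big(\bm{W}^\top\bm{U}_2^\top\bm{A}_{\rm{i}}\bm{V}_2\bm{W}\big)_{ii}$ and the fact that a diagonal entry depends only on the symmetric part of a matrix, the block term becomes $\frac{\sigma_k^{p-1}}{\Vert\bm{A}_{\rm{s}}\Vert_{(k,p)}^{p-1}}\max_{\mu,\bm{W}}\sum_i\mu_i\big(\bm{W}^\top\bm{M}\bm{W}\big)_{ii}$ with $\bm{M}={\rm{sym}}(\bm{U}_2^\top\bm{A}_{\rm{i}}\bm{V}_2)$. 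By the Ky Fan maximum principle (equivalently Schur--Horn majorization), maximizing the sum of $t$ diagonal entries of $\bm{W}^\top\bm{M}\bm{W}$ over orthogonal $\bm{W}$ returns $\sum_{\ell=1}^t\lambda_\ell(\bm{M})$, the sum of the $t$ largest eigenvalues, which is precisely the second term of \cref{eq: ky fan k-p-norm}. Combining the fixed and degenerate contributions and appending the standard part completes the proof, the genuinely delicate point throughout being the correct bookkeeping of the tie multiplicities $t$ and $s$ together with the non-uniqueness of the SVD on the degenerate block.
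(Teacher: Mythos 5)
Your proposal is correct and follows essentially the same route as the paper: reduce via \Cref{the: extension dual matrix norm} to the G\^ateaux derivative, compute $\partial\phi(\bm{\sigma})$ exactly as in \Cref{lem: subdiff of phi(sigma) 1<p<infty}, and resolve the degenerate block by maximizing $\sum_i\mu_i(\bm{W}^\top\bm{M}\bm{W})_{ii}$ over the simplex and the orthogonal freedom, which is the same optimization the paper writes as $\max_{\bm{T}\succcurlyeq0,\Vert\bm{T}\Vert_2\leq1,\Vert\bm{T}\Vert_*=t}\langle\bm{T},\bm{M}\rangle$ with $\bm{T}=\bm{W}\,{\rm{diag}}(\bm{\mu})\,\bm{W}^\top$, settled by the same Ky Fan/majorization principle. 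The only cosmetic difference is that you justify including the orthogonal factor $\bm{W}$ by the non-uniqueness of the SVD, whereas the paper obtains it from the convex-hull characterization of the subdifferential in \Cref{lem:partial of unitarily invariant matrix norm}; both are sound and you correctly identified this as the delicate step.
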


\begin{proposition}
\label{pro: dual ky fan}
Given a dual matrix $\bm{A} = \bm{A}_{\rm{s}}+\bm{A}_{\rm{i}}\epsilon\in\mathbb{DR}^{m\times n}$ $(m\geq n)$.
Then the dual-valued Ky Fan $k$-norm $(1\leq k\leq n)$ $\Vert\cdot\Vert_{(k)}:\mathbb{DR}^{m\times n}\rightarrow\mathbb{DR}$ extending from the Ky Fan $k$-norm $\Vert\cdot\Vert_{(k)}:\mathbb{R}^{m\times n}\rightarrow\mathbb{R}$ is expressed as
\begin{align}
\Vert\bm{A}\Vert_{(k)}=\left\{\begin{array}{ll}
\Vert\bm{A}_{\rm{s}}\Vert_{(k)}\!+\!\left(\!\langle\bm{U}_1\bm{V}_1^\top,\bm{A}_{\rm{i}}\rangle+\sum_{\ell=1}^t\lambda_\ell(\bm{M})\!\right)\epsilon, & \bm{A}_{\rm{s}}\neq\bm{O},\sigma_k\neq0, \\
\Vert\bm{A}_{\rm{s}}\Vert_{(k)}\!+\!\left(\!\langle\bm{U}_1\bm{V}_1^\top,\bm{A}_{\rm{i}}\rangle+\sum_{\ell=1}^t\sigma_\ell(\bm{N})\!\right)\epsilon, & \bm{A}_{\rm{s}}\neq\bm{O},\sigma_k=0,\\
\Vert\bm{A}_{\rm{i}}\Vert_{(k)}\epsilon, & \bm{A}_{\rm{s}}=\bm{O},
\end{array}\right.\label{eq: ky fan k-norm}
\end{align}
where the non-zero $\bm{A}_{\rm{s}}$ has the same properties as in \Cref{pro: dual ky fan p-k-norm}, $\widetilde{\bm{U}}_1\!=\!\bm{U}(:,k-t+1\!:\!m)$, $\bm{N}\!=\!\widetilde{\bm{U}}_1^\top\bm{A}_{\rm{i}}\bm{V}_2$, and $\sigma_1(\bm{N})\!\geq\!\cdots\!\geq\!\sigma_{t+s}(\bm{N})$ are singular values of $\bm{N}$.
\end{proposition}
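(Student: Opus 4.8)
The plan is to reuse the machinery behind \Cref{pro: dual ky fan p-k-norm}, now with the symmetric gauge function $\phi(\bm{x})=\sum_{i=1}^k x_{[i]}$ of the Ky Fan $k$-norm, where $x_{[1]}\geq\cdots\geq x_{[n]}$ are the sorted coordinates. The branch $\bm{A}_{\rm{s}}=\bm{O}$ is immediate from \Cref{the: extension dual matrix norm}, giving $\Vert\bm{A}\Vert_{(k)}=\Vert\bm{A}_{\rm{i}}\Vert_{(k)}\epsilon$. For $\bm{A}_{\rm{s}}\neq\bm{O}$ the same theorem reduces everything to the G\^ateaux derivative $\mathcal{D}_{\bm{A}_{\rm{i}}}\Vert\bm{A}_{\rm{s}}\Vert_{(k)}$, which I would evaluate through \Cref{lemma: unitarily matrix norm} as $\max_{\bm{d}\in\partial\phi(\bm{\sigma}(\bm{A}_{\rm{s}}))}\sum_{i=1}^n d_i\bm{u}_i^\top\bm{A}_{\rm{i}}\bm{v}_i$. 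When $\sigma_k\neq0$ this is exactly the $p=1$ instance of \Cref{pro: dual ky fan p-k-norm} (take $\bm{\Sigma}_1^{p-1}=\bm{I}$ and $\sigma_k^{p-1}=1$), so that branch follows verbatim; the genuinely new work lies in the degenerate branch $\sigma_k=0$.

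In either branch I would first compute the gauge subdifferential at the tie pattern \cref{eq: singular values of As for k-p-norm}. Writing $\phi(\bm{x})=\max_{|S|=k}\sum_{i\in S}x_i$ as a finite maximum of linear functions, $\partial\phi(\bm{\sigma}(\bm{A}_{\rm{s}}))$ is the convex hull of the active index-set indicators: $d_i=1$ for $i\leq k-t$, $d_i=0$ for $i\geq k+s+1$, and the tied coordinates $k-t+1\leq i\leq k+s$ vary over $\{d_i\in[0,1]:\sum d_i=t\}$. The frozen coordinates contribute the constant $\sum_{i=1}^{k-t}\bm{u}_i^\top\bm{A}_{\rm{i}}\bm{v}_i=\langle\bm{U}_1\bm{V}_1^\top,\bm{A}_{\rm{i}}\rangle$, leaving a maximization over the tied block that must also sweep the rotational freedom of the SVD on the degenerate singular subspace.

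The crux, and where the two branches diverge, is this tied-block maximization. If $\sigma_k\neq0$, the repeated positive singular value couples the left and right rotations as $\bm{U}_2\mapsto\bm{U}_2\bm{Q}$, $\bm{V}_2\mapsto\bm{V}_2\bm{Q}$; the tied diagonal terms become $\bm{q}_i^\top\bm{M}\bm{q}_i$ with $\bm{M}={\rm{sym}}(\bm{U}_2^\top\bm{A}_{\rm{i}}\bm{V}_2)$, since the skew part drops from the quadratic form, and Ky Fan's maximum principle for symmetric matrices returns $\sum_{\ell=1}^t\lambda_\ell(\bm{M})$. If $\sigma_k=0$, the tie sits at the zero singular value, so the left and right directions decouple and the surplus left null vectors $\bm{u}_{n+1},\dots,\bm{u}_m$ become available; gathering them into $\widetilde{\bm{U}}_1=\bm{U}(:,k-t+1\!:\!m)$ and setting $\bm{N}=\widetilde{\bm{U}}_1^\top\bm{A}_{\rm{i}}\bm{V}_2$, the bilinear version of Ky Fan's principle returns $\sum_{\ell=1}^t\sigma_\ell(\bm{N})$. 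I expect the main obstacle to be rigorously justifying this degenerate maximization: establishing that the zero singular value really does decouple the two rotations and admits the enlarged left block $\widetilde{\bm{U}}_1$, and thereby selecting the singular-value form of Ky Fan's principle rather than the eigenvalue form—equivalently, translating the first-order perturbation of a \emph{zero} singular value, as opposed to a repeated \emph{positive} one, into the subdifferential language of \Cref{lemma: unitarily matrix norm}.
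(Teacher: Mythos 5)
Your route is genuinely different from the paper's. The paper's entire proof of this proposition is a one-line citation: it invokes Watson's 1993 formula for the G\^ateaux derivative of the Ky Fan $k$-norm (\Cref{lem: subdiff of ky fan}), which already contains both the $\sigma_k\neq0$ and $\sigma_k=0$ branches, and combines it with \Cref{the: extension dual matrix norm}. You instead attempt a from-scratch derivation through the gauge-function subdifferential, parallel to the paper's detailed treatment of the $1<p<\infty$ case in \Cref{pro: dual ky fan p-k-norm}. Your $\sigma_k\neq0$ branch is correct and is indeed the $p=1$ instance of that argument (the tied-block maximization over coupled rotations reduces to the Ky Fan eigenvalue maximum principle for $\bm{M}$). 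What your approach buys is self-containedness and a clear explanation of \emph{why} the two branches differ; what the paper's citation buys is brevity and a rigorous handling of the degenerate case for free.

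The one place your proposal stops short of a proof is exactly the place you flag: the $\sigma_k=0$ branch. As written it is a correct heuristic, not an argument. To close it you do not actually need new machinery: the decoupling you describe is already encoded in the paper's \Cref{lem:partial of unitarily invariant matrix norm}, where the blocks $\bm{P}_i$ attached to repeated \emph{nonzero} singular values appear identically on both sides of $\bm{D}^\ell$, whereas the zero-singular-value block carries \emph{independent} orthogonal factors $\bm{P}_0\in\mathbb{R}^{(m-r)\times(m-r)}$ and $\bm{Q}_0\in\mathbb{R}^{(n-r)\times(n-r)}$ (note that when $\sigma_k=0$ the tie structure forces $k+s=n$ and $r=k-t$, so these blocks are precisely $\widetilde{\bm{U}}_1$ and $\bm{V}_2$). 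Maximizing $\langle\bm{P}_0\bm{D}_0\bm{Q}_0^\top,\bm{N}\rangle$ over independent orthogonal $\bm{P}_0,\bm{Q}_0$ and admissible diagonal $\bm{D}_0$ then yields $\sum_{\ell=1}^t\sigma_\ell(\bm{N})$ by von Neumann's trace inequality (the bilinear form of Ky Fan's principle you allude to). Either supply that computation or, as the paper does, cite Watson's Theorem~5 directly; without one of the two, the degenerate branch remains an assertion.
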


\begin{proposition}\label{pro: dual matrix spectral norm}
Given a dual matrix $\bm{A} = \bm{A}_{\rm{s}}+\bm{A}_{\rm{i}}\epsilon\in\mathbb{DR}^{m\times n}$ $(m\geq n)$.
Then the dual-valued spectral norm $\Vert\cdot\Vert_2:\mathbb{DR}^{m\times n}\rightarrow \mathbb{DR}$ of $\bm{A}$ extending from the spectral norm $\Vert\cdot\Vert_2:\mathbb{R}^{m\times n}\rightarrow \mathbb{R}$ is expressed as
\begin{align}
\Vert\bm{A}_{\rm{s}}+\bm{A}_{\rm{i}}\epsilon\Vert_2 = \left\{\begin{array}{ll}
\Vert \bm{A}_{\rm{s}} \Vert_2 + \lambda_{\rm{max}}(\bm{R})\epsilon,  & \bm{A}_{\rm{s}}\neq \bm{O}, \\
\Vert \bm{A}_{\rm{i}} \Vert_2\epsilon, & \bm{A}_{\rm{s}}=\bm{O},
\end{array}\right.\label{dual spectral norm}
\end{align}
where $\bm{R}={\rm{sym}}(\bm{U}_{r_1}^\top\bm{A}_{\rm{i}}\bm{V}_{r_1})$ and $\lambda_{\rm{max}}(\bm{R})$ represents the maximum eigenvalue of $\bm{R}$. 
Besides, $\bm{U}_{r_1}\in\mathbb{R}^{m\times r_1}$ and $\bm{V}_{r_1}\in\mathbb{R}^{n\times r_1}$ are the left and right singular vectors of $\bm{A}_{\rm{s}}$ corresponding to the largest singular value with multiplicity $r_1$, respectively. 
\end{proposition}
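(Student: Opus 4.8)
The plan is to read off the two branches from \Cref{the: extension dual matrix norm}, which writes the dual continuation of the spectral norm as $\Vert\bm{A}\Vert_2=\Vert\bm{A}_{\rm{s}}\Vert_2+{\mathcal{D}}_{\bm{A}_{\rm{i}}}\Vert\bm{A}_{\rm{s}}\Vert_2\,\epsilon$. The lower branch $\bm{A}_{\rm{s}}=\bm{O}$ is then immediate, since that theorem gives $\Vert\bm{A}\Vert_2=\Vert\bm{A}_{\rm{i}}\Vert_2\epsilon$ directly. Hence the entire content of the statement is the identity ${\mathcal{D}}_{\bm{A}_{\rm{i}}}\Vert\bm{A}_{\rm{s}}\Vert_2=\lambda_{\rm{max}}(\bm{R})$ for $\bm{A}_{\rm{s}}\neq\bm{O}$, and I would devote the proof to computing this G\^ateaux derivative.

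Since the spectral norm is unitarily invariant with symmetric gauge function $\phi=\Vert\cdot\Vert_\infty$, I would invoke \Cref{lemma: unitarily matrix norm} with $\bm{X}=\bm{A}_{\rm{s}}$ and $\bm{Y}=\bm{A}_{\rm{i}}$. The needed subdifferential $\partial\phi(\bm{\sigma}(\bm{A}_{\rm{s}}))$ is supplied by \cref{eq: sub of vector infty}: because $\sigma_1=\cdots=\sigma_{r_1}=\Vert\bm{A}_{\rm{s}}\Vert_2>\sigma_{r_1+1}$ and $\sigma_1>0$, every active sign is $+1$, so it is the probability simplex $\{\bm{d}:d_k\geq0,\ \sum_{k=1}^{r_1}d_k=1,\ d_k=0\text{ for }k>r_1\}$ carried by the block of the largest singular value. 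Substituting into Watson's formula turns the derivative into $\max_{\bm{d}}\sum_{k=1}^{r_1}d_k\,\bm{u}_k^\top\bm{A}_{\rm{i}}\bm{v}_k$.

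The decisive step is to account for the non-uniqueness of the singular vectors: $\bm{u}_1,\dots,\bm{u}_{r_1}$ and $\bm{v}_1,\dots,\bm{v}_{r_1}$ are pinned down only up to a common orthogonal rotation $\bm{W}\in\mathbb{R}^{r_1\times r_1}$, since a rotation of $\bm{V}_{r_1}$ forces the same rotation of $\bm{U}_{r_1}$ through $\bm{A}_{\rm{s}}\bm{V}_{r_1}=\Vert\bm{A}_{\rm{s}}\Vert_2\,\bm{U}_{r_1}$. Maximizing over both $\bm{d}$ and $\bm{W}$ is equivalent to letting the subgradient range over $\{\bm{U}_{r_1}\bm{\Theta}\bm{V}_{r_1}^\top:\bm{\Theta}\ \text{symmetric positive semidefinite},\ \mathrm{tr}(\bm{\Theta})=1\}$. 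Using $\langle\bm{U}_{r_1}\bm{\Theta}\bm{V}_{r_1}^\top,\bm{A}_{\rm{i}}\rangle=\langle\bm{\Theta},\bm{U}_{r_1}^\top\bm{A}_{\rm{i}}\bm{V}_{r_1}\rangle=\langle\bm{\Theta},\bm{R}\rangle$ — the skew part of $\bm{U}_{r_1}^\top\bm{A}_{\rm{i}}\bm{V}_{r_1}$ drops against the symmetric $\bm{\Theta}$, leaving $\bm{R}={\rm{sym}}(\bm{U}_{r_1}^\top\bm{A}_{\rm{i}}\bm{V}_{r_1})$ — the optimization collapses to $\max_{\bm{\Theta}}\langle\bm{\Theta},\bm{R}\rangle=\lambda_{\rm{max}}(\bm{R})$ by the Rayleigh variational characterization of the largest eigenvalue, which is exactly the claimed infinitesimal part.

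I expect the multiplicity bookkeeping to be the main obstacle: evaluating \Cref{lemma: unitarily matrix norm} on a single fixed SVD yields only $\max_{1\leq k\leq r_1}\bm{u}_k^\top\bm{A}_{\rm{i}}\bm{v}_k$, a quantity that is not invariant under the choice of singular vectors and generically underestimates the true derivative; the value $\lambda_{\rm{max}}(\bm{R})$ surfaces only after the rotation $\bm{W}$ is optimized, which is precisely the mechanism promoting a diagonal maximum to the top eigenvalue of the symmetrized compression. As a consistency check, this result is the $k=1$ specialization of \Cref{pro: dual ky fan p-k-norm}, and it can alternatively be obtained from the Jordan--Wielandt dilation $\left[\begin{smallmatrix}\bm{O}&\bm{A}_{\rm{s}}\\\bm{A}_{\rm{s}}^\top&\bm{O}\end{smallmatrix}\right]$, whose top eigenvalue is $\Vert\bm{A}_{\rm{s}}\Vert_2$ with eigenspace spanned by $\tfrac{1}{\sqrt2}\left[\begin{smallmatrix}\bm{u}_k\\\bm{v}_k\end{smallmatrix}\right]$ and which reproduces the same matrix $\bm{R}$ upon compressing the perturbation $\bm{A}_{\rm{i}}$.
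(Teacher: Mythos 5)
Your proof is correct and takes essentially the same route as the paper: the paper's proof starts from Watson's characterization $\partial\Vert\bm{A}_{\rm{s}}\Vert_2=\{\bm{U}_{r_1}\bm{H}\bm{V}_{r_1}^\top:\bm{H}\succcurlyeq0,\ \mathrm{tr}(\bm{H})=1\}$ (\Cref{lem: spectral subdiff}) and evaluates $\max_{\bm{H}}\langle\bm{H},\bm{R}\rangle=\lambda_{\max}(\bm{R})$ by writing $\bm{H}=\bm{P}\bm{P}^\top$ and diagonalizing $\bm{R}$, which is your density-matrix maximization carried out by hand. The only difference is that you re-derive this subdifferential from \Cref{lemma: unitarily matrix norm} together with the rotation freedom of the degenerate singular block --- a correct and worthwhile clarification of why a single fixed SVD would underestimate the derivative --- whereas the paper simply cites it.
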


To obtain the proofs of the three propositions mentioned above, it is necessary to develop the subdifferential of the Ky Fan $p$-$k$-norm $(1\!\leq\! p\!\leq\!\infty)$.
Note that the Ky Fan $p$-$k$-norm is a unitarily invariant norm and its corresponding symmetric gauge function is the $p$-norm of the first $k$ singular values.
Hence, we first introduce the subdifferential of a unitarily invariant norm.

\begin{lemma}\label{lem:partial of unitarily invariant matrix norm}\cite[Theorem 2]{Watson1992CharacterizationOT}\cite[Corollary 3.3, Theorem 8.3]{zhang2015singular}
Given a unitarily invariant norm $\interleave\cdot\interleave$ on $\mathbb{R}^{m\times n}$, and its corresponding symmetric gauge function $\phi$.
Let $\bm{X}\in\mathbb{R}^{m\times n}$ have a full SVD $\bm{X}=\bm{U}\bm{\Sigma}\bm{V}^\top$ with ${\rm{Rank}}(\bm{X})=r$ and $\bm{\sigma} = {\rm{diag}}(\bm{\Sigma})$.
Suppose that there are $\rho$ distinct values among the nonzero singular values with respective multiplicities $r_i$ (satisfying $\sum_{i=1}^\rho r_i=r$).
Then
\begin{align*}
\partial\interleave\bm{X}\interleave &={\rm{conv}}\big\{\bm{UDV}^\top:
\bm{d}\in\partial\phi(\bm{\sigma}),\bm{D} = {\rm{diag}}(\bm{d})\big\}\\
&= \left\{\sum_{\ell}\beta_{\ell}\bm{U}\bm{P}^\ell\bm{D}^\ell(\bm{Q}^\ell)^\top\bm{V}^\top:\substack{\beta_\ell\geq0,\sum_{\ell}\beta_\ell=1,\bm{d}_\ell\in\partial\phi(\bm{\sigma}),\bm{D}^\ell={\rm{diag}}(\bm{d}_\ell)\\
\bm{P}^\ell = {\rm{diag}}(\bm{P}_1,\cdots,\bm{P}_\rho,\bm{P}_0),\bm{Q}^\ell = {\rm{diag}}(\bm{P}_1,\cdots,\bm{P}_\rho,\bm{Q}_0)}\right\},
\end{align*}
where ${\rm{conv}}\{\cdot\}$ signifies the convex hull of a set. 
Besides, the symbols in the second equality satisfy $\bm{P}_i\!\in\!\mathbb{R}^{r_i\times r_i}$ $(i\!=\!1,\cdots,\rho)$, $\bm{P}_0\!\in\!\mathbb{R}^{(m-r)\times(m-r)}$ and $\bm{Q}_0\!\in\!\mathbb{R}^{(n-r)\times(n-r)}$ are any orthogonal matrices.
Obviously, $\bm{P}^\ell\bm{\Sigma}(\bm{Q}^\ell)^\top=\bm{\Sigma}$ and $(\bm{P}^\ell)^\top\bm{\Sigma}\bm{Q}^\ell=\bm{\Sigma}$ hold.
\end{lemma}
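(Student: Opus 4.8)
The plan is to combine von Neumann's theory of unitarily invariant norms with the standard dual-norm characterization of the subdifferential of a norm. By von Neumann's theorem the norm satisfies $\interleave\bm{X}\interleave=\phi(\bm{\sigma}(\bm{X}))$, and its dual norm is the unitarily invariant norm generated by the dual gauge $\phi^*$, so that $\interleave\bm{G}\interleave^*=\phi^*(\bm{\sigma}(\bm{G}))$. Since $\interleave\cdot\interleave$ is a norm, I would invoke the elementary identity $\partial\interleave\bm{X}\interleave=\{\bm{G}:\interleave\bm{G}\interleave^*\leq1,\ \langle\bm{G},\bm{X}\rangle=\interleave\bm{X}\interleave\}$, which reduces the whole problem to identifying the matrices $\bm{G}$ meeting these two conditions, and then translating them back through the gauge function.

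For the inclusion $\supseteq$, I would take any $\bm{d}\in\partial\phi(\bm{\sigma})$ and set $\bm{G}=\bm{U}\bm{D}\bm{V}^\top$ with $\bm{D}=\diag(\bm{d})$. A direct trace computation gives $\langle\bm{G},\bm{X}\rangle=\langle\bm{d},\bm{\sigma}\rangle=\phi(\bm{\sigma})=\interleave\bm{X}\interleave$, the middle equality being the gauge-duality identity valid on $\partial\phi(\bm{\sigma})$. Moreover $\bm{\sigma}(\bm{G})$ is the nonincreasing rearrangement of $|\bm{d}|$, so that $\interleave\bm{G}\interleave^*=\phi^*(\bm{d})\leq1$ because $\bm{d}\in\partial\phi(\bm{\sigma})$ and $\phi^*$ is a symmetric gauge. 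Both subdifferential conditions hold, giving $\bm{G}\in\partial\interleave\bm{X}\interleave$; convexity of the subdifferential then absorbs the convex hull in the first displayed equality.

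The reverse inclusion $\subseteq$ is the crux. Given $\bm{G}\in\partial\interleave\bm{X}\interleave$, the equality $\langle\bm{G},\bm{X}\rangle=\interleave\bm{X}\interleave=\phi(\bm{\sigma})$ combined with $\phi^*(\bm{\sigma}(\bm{G}))\leq1$ forces, through von Neumann's trace inequality $\langle\bm{G},\bm{X}\rangle\leq\langle\bm{\sigma}(\bm{G}),\bm{\sigma}\rangle$, the two facts that the trace bound is tight and that $\bm{\sigma}(\bm{G})\in\partial\phi(\bm{\sigma})$. The equality case of von Neumann's inequality is precisely where the block structure emerges: tightness means $\bm{G}$ admits an SVD sharing singular subspaces with $\bm{X}$, hence $\bm{G}=\bm{U}\bm{P}^\ell\bm{D}^\ell(\bm{Q}^\ell)^\top\bm{V}^\top$ for some $\bm{d}_\ell\in\partial\phi(\bm{\sigma})$ and orthogonal factors rotating only within the repeated-singular-value subspaces. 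I would make this precise by observing that within each nonzero multiplicity block the left and right rotations must coincide (equal singular values forcing the same $\bm{P}_i$), whereas on the kernel the factors $\bm{P}_0,\bm{Q}_0$ are independent; these are exactly the constraints encoded by $\bm{P}^\ell\bm{\Sigma}(\bm{Q}^\ell)^\top=\bm{\Sigma}$, and expressing a general such $\bm{G}$ as a convex combination yields the explicit second displayed equality.

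The main obstacle I anticipate is rigorously extracting the block-diagonal rotation structure from the equality case of von Neumann's trace inequality when singular values are repeated: one must show that equality is attained only when $\bm{G}$ and $\bm{X}$ are simultaneously ``diagonalized'' by a common pair of orthogonal factors, and that the residual freedom is exactly a shared rotation $\bm{P}_i$ per nonzero multiplicity block together with independent rotations $\bm{P}_0,\bm{Q}_0$ on the complementary null spaces. This is the step for which the cited characterizations are indispensable, and I would lean on their equality analysis rather than reprove the subspace matching from scratch.
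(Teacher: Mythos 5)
The paper offers no proof of this lemma: it is imported verbatim from Watson \cite[Theorem 2]{Watson1992CharacterizationOT} and Zhang \cite[Corollary 3.3, Theorem 8.3]{zhang2015singular}, so there is no in-paper argument to compare against. Your outline reconstructs the standard proof underlying those references --- von Neumann's identity $\interleave\bm{X}\interleave=\phi(\bm{\sigma}(\bm{X}))$, the dual characterization $\partial\interleave\bm{X}\interleave=\{\bm{G}:\interleave\bm{G}\interleave^{*}\leq1,\ \langle\bm{G},\bm{X}\rangle=\interleave\bm{X}\interleave\}$, and the equality case of von Neumann's trace inequality to extract the block-rotation structure --- and it is essentially sound, with the genuinely hard step (equality forces a simultaneous ordered SVD, with left and right rotations coinciding on each nonzero multiplicity block and decoupling on the kernel) explicitly delegated to the cited equality analysis, which is consistent with how the paper itself treats the result.

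Two points deserve tightening. First, your $\supseteq$ argument only certifies membership for terms built from the fixed pair $(\bm{U},\bm{V})$; to cover a general term $\bm{U}\bm{P}^{\ell}\bm{D}^{\ell}(\bm{Q}^{\ell})^{\top}\bm{V}^{\top}$ you need the one-line observation that $\bm{P}^{\ell}\bm{\Sigma}(\bm{Q}^{\ell})^{\top}=\bm{\Sigma}$ makes $(\bm{U}\bm{P}^{\ell},\bm{V}\bm{Q}^{\ell})$ another valid SVD pair for $\bm{X}$, so the same trace computation applies. This matters because the first displayed equality is correct only when the convex hull is understood to range over all SVDs of $\bm{X}$: with a fixed $(\bm{U},\bm{V})$ and repeated or zero singular values the set $\{\bm{U}\bm{D}\bm{V}^{\top}:\bm{d}\in\partial\phi(\bm{\sigma})\}$ is already convex yet strictly too small (for the spectral norm at $\bm{I}_{2}$ it gives only diagonal trace-one positive semidefinite matrices, whereas the subdifferential is all of them), so ``convexity of the subdifferential absorbs the convex hull'' does not by itself close the gap. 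Second, your reverse inclusion in fact shows that every subgradient is a \emph{single} term of the displayed form (with $\bm{d}_{\ell}=\bm{\sigma}(\bm{G})$ suitably ordered), so the convex-combination wrapper serves only to reconcile the two displayed descriptions rather than to capture additional elements; stating this explicitly would make the derivation of the second equality cleaner than ``expressing a general such $\bm{G}$ as a convex combination.''
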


The objective here is to characterize the subdifferential of $\phi(\bm{\sigma})=(\sum_{i=1}^k\sigma_i^p)^{\frac{1}{p}}$ for $1<p<\infty$, which draws inspiration from the subdifferential of the vector 2-norm of the first $k$ singular values, as proposed in reference \cite[Lemma 2.8]{Doan2016finding}.

\begin{lemma}
\label{lem: subdiff of phi(sigma) 1<p<infty}
Let non-zero $\bm{\sigma}=[\sigma_1,\cdots,\sigma_n]^\top\in\mathbb{R}^n$ satisfy the formula \cref{eq: singular values of As for k-p-norm}.
Suppose that $\phi(\bm{\sigma})=(\sum_{i=1}^k\sigma_i^p)^{\frac{1}{p}}$ with $1\leq k\leq n$ and $1<p<\infty$.
Then $\bm{v}\in\partial\phi(\bm{\sigma})$ if and only if $\bm{v}$ satisfies the following conditions
\begin{enumerate}
\item[${\rm{(i)}}$] $v_i = \frac{\sigma_i^{p-1}}{\phi^{p-1}(\bm{\sigma})}$ for $i=1,\cdots,k-t$;
\item[${\rm{(ii)}}$] $v_i = \alpha_i\frac{\sigma_k^{p-1}}{\phi^{p-1}(\bm{\sigma})}$ for $i=k-t+1,\cdots,k+s$ with $0\leq\alpha_i\leq1$ and $\sum\limits_{i=k-t+1}^{k+s}\alpha_i=t$;
\item[${\rm{(iii)}}$] $v_i = 0$ for $i=k+s+1,\cdots,n$.
\end{enumerate}
\end{lemma}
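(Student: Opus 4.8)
The plan is to strip off the outer power and reduce everything to the subdifferential of the ``unnormalized'' function $g(\bm{x})=\sum_{i=1}^k|x|_{[i]}^p$, the sum of the $p$-th powers of the $k$ largest magnitudes of $\bm{x}$, since $\phi=g^{1/p}$. Because $\bm{\sigma}\neq\bm{0}$ is sorted and nonnegative we have $g(\bm{\sigma})=\phi(\bm{\sigma})^p>0$, so the scalar map $u\mapsto u^{1/p}$ is differentiable and increasing at $u_0=g(\bm{\sigma})$ with derivative $\tfrac{1}{p}u_0^{1/p-1}$. The chain rule for convex functions composed with a differentiable increasing outer map then gives
\begin{align*}
\partial\phi(\bm{\sigma})=\tfrac{1}{p}\,\phi(\bm{\sigma})^{1-p}\,\partial g(\bm{\sigma}),
\end{align*}
so it suffices to compute $\partial g(\bm{\sigma})$ and rescale.

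To compute $\partial g(\bm{\sigma})$ I would write $g$ as a maximum of finitely many smooth convex pieces,
\begin{align*}
g(\bm{x})=\max_{\substack{S\subseteq\{1,\dots,n\}\\|S|=k}}\ \sum_{i\in S}|x_i|^p,
\end{align*}
which is valid because $u\mapsto u^p$ is increasing on $[0,\infty)$, so selecting the $k$ largest values of $|x_i|^p$ coincides with selecting the $k$ largest $|x_i|$. Each piece $f_S(\bm{x})=\sum_{i\in S}|x_i|^p$ is convex, and the standard formula $\partial(\max_S f_S)(\bm{\sigma})=\mathrm{conv}\{\nabla f_S(\bm{\sigma}):S\text{ active}\}$ applies once I check that every \emph{active} piece is differentiable at $\bm{\sigma}$. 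The active sets are exactly those attaining the maximum: in view of the ordering \cref{eq: singular values of As for k-p-norm}, an active $S$ must contain the strictly larger indices $\{1,\dots,k-t\}$, must avoid the strictly smaller indices $\{k+s+1,\dots,n\}$, and must select exactly $t$ indices from the tied block $\{k-t+1,\dots,k+s\}$ (of cardinality $s+t$) to fill the quota $k-(k-t)=t$. Since every index of every active $S$ carries $\sigma_i\geq\sigma_k=c>0$, each such $f_S$ is smooth at $\bm{\sigma}$, with $\nabla f_S(\bm{\sigma})$ having $i$-th entry $p\sigma_i^{p-1}$ for $i\in S$ and $0$ otherwise.

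Taking the convex hull then gives $\partial g(\bm{\sigma})$ componentwise: entries $1,\dots,k-t$ are fixed at $p\sigma_i^{p-1}$ (present in every active $S$); entries $k+s+1,\dots,n$ are fixed at $0$ (absent from every active $S$); and on the tied block, since each active indicator has exactly $t$ ones and $\sigma_i=\sigma_k$ there, the hull is $\{p\sigma_k^{p-1}\alpha_i:0\le\alpha_i\le1,\ \sum\alpha_i=t\}$. Rescaling by $\tfrac{1}{p}\phi(\bm{\sigma})^{1-p}$ turns these three blocks into conditions (i), (ii), and (iii), completing both directions simultaneously. The main obstacle I anticipate is the bookkeeping around the tied block: one must argue carefully that an active set takes \emph{exactly} $t$ of the tied indices (neither reaching down into the strictly smaller singular values nor dropping a strictly larger one), and invoke the fact that the convex hull of $\{0,1\}$-vectors with exactly $t$ ones is precisely the slice $\{\alpha\in[0,1]^{s+t}:\sum_i\alpha_i=t\}$; everything else is routine differentiation justified by $\sigma_k>0$.
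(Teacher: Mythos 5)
Your argument follows the same skeleton as the paper's proof: write the function as a finite maximum over $k$-element index sets, identify the active sets forced by the ordering \cref{eq: singular values of As for k-p-norm} (contain $\{1,\dots,k-t\}$, avoid $\{k+s+1,\dots,n\}$, pick exactly $t$ from the tied block), take the convex hull of the (sub)gradients of the active pieces, and observe that the convex hull of $\{0,1\}$-indicators with exactly $t$ ones on the tied block is the slice $\{\alpha\in[0,1]^{t+s}:\sum_i\alpha_i=t\}$. The one genuine difference is where you put the outer $1/p$ power: the paper keeps it inside each piece, writing $\phi=\max_N f_N$ with $f_N(\bm{\sigma})=(\sum_{i\in N}\sigma_i^p)^{1/p}$ a vector $p$-norm of a subvector, and then reuses its Lemma \ref{lem: sub of vector p norm} for $\partial f_N$; you factor the power out and work with the smooth pieces $\sum_{i\in S}|x_i|^p$, rescaling at the end. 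Your version makes the pieces smooth (so the max-formula needs only gradients), but it forces you through a composition step, and the rule you cite is not quite the standard one: the convex chain rule $\partial(h\circ g)=h'(g)\,\partial g$ is usually stated for $h$ convex and nondecreasing, whereas $h(u)=u^{1/p}$ is concave, so $h\circ g$ being convex is not automatic and the formula does not follow from that theorem as quoted. The identity is nonetheless true here: $\phi$ is convex (being the paper's $\max_N f_N$), the one-sided directional derivatives satisfy $\mathcal{D}_u\phi(\bm{\sigma})=h'(g(\bm{\sigma}))\,\mathcal{D}_u g(\bm{\sigma})$ by the ordinary chain rule at the point $g(\bm{\sigma})>0$, and since $h'(g(\bm{\sigma}))>0$ the two compact convex sets $\partial\phi(\bm{\sigma})$ and $h'(g(\bm{\sigma}))\,\partial g(\bm{\sigma})$ have the same support function, hence coincide. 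You should either supply that support-function argument or, more simply, keep the $1/p$ inside each piece as the paper does, which removes the composition entirely. (Two minor points: your assertion $\sigma_k>0$ is not guaranteed, but is harmless since $|u|^p$ is still $C^1$ at $0$ for $p>1$ and formula (ii) then correctly degenerates to $v_i=0$; and $g(\bm{\sigma})>0$ does hold because $\bm{\sigma}\neq\bm{0}$ is sorted nonnegative, so $\sigma_1>0$.)
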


\begin{proof}
Set $[n]:=\{1,2,\cdots,n\}$.
Denote the collection of all subsets with exactly $k$ elements of $[n]$ as $\mathcal{N}_k$.
Then we obtain
\begin{align*}
\phi(\bm{\sigma})=\max_{N\in\mathcal{N}_k}f_N(\bm{\sigma}),
\end{align*}
where $f_N(\bm{\sigma}):=(\sum_{i\in N}\sigma_i^p)^{\frac{1}{p}}$ for any $N\in\mathcal{N}_k$.
Since $f_N(\bm{\sigma})$ is closed and convex, we derive from the reference \cite[Lemma 3.1.13]{Nest2018convex} that the subdifferential of $\phi(\bm{\sigma})$ is the convex hull of the
subdifferentials of $f_N(\bm{\sigma})$ satisfying $f_N(\bm{\sigma})=\phi(\bm{\sigma})$, i.e.,
\begin{align*}
\partial\phi(\bm{\sigma})={\rm{conv}}\{\partial f_N(\bm{\sigma}):N\in\mathcal{N}_k,f_N(\bm{\sigma})=\phi(\bm{\sigma})\}.
\end{align*}
In consideration of the structure of $\bm{\sigma}$ in \cref{eq: singular values of As for k-p-norm}, it can be inferred that the first $k-t$ elements of the set $N$ for which $f_N(\bm{\sigma})=\phi(\bm{\sigma})$ must be $[k-t]$, while the remaining $t$ elements are selected from the set $\Omega:=\{k-t+1,\cdots,k+s\}$ of size $t+s$.

Furthermore, the subdifferential of the vector $p$-norm, as derived in \Cref{lem: sub of vector p norm} \cref{eq: sub of vector p}, allows us to conclude that
$\frac{\partial f_N(\bm{\sigma})}{\partial\sigma_i}$ equals $\frac{\sigma_i^{p-1}}{\big(f_N(\bm{\sigma})\big)^{p-1}}$ if $i\in N$ and 0 otherwise.
Thus, given $\bm{v}\in\partial\phi(\bm{\sigma})$, results (i) and (iii) in \Cref{lem: subdiff of phi(sigma) 1<p<infty} are derived.

Ultimately, an enumeration of $t$ elements from the set $\Omega$ to construct the set $N$ such that $f_N(\bm{\sigma})=\phi(\bm{\sigma})$ enables the characterization of $v_i$ for $i\in\Omega$ as $v_i=\alpha_i\frac{\sigma_i^{p-1}}{\big(f_N(\bm{\sigma})\big)^{p-1}}$, where $0\leq\alpha_i\leq1$ and $\sum_{i=k-t+1}^{k+s}\alpha_i=t$.

Consequently, we complete the proof.
\end{proof}

We then propose the subdifferential of Ky Fan $p$-$k$-norm for $1<p<\infty$ based on \Cref{lem:partial of unitarily invariant matrix norm,lem: subdiff of phi(sigma) 1<p<infty}.

\begin{lemma}
\label{lem: subdiff of ky fan p-k}
Let $\bm{X}\!\in\!\mathbb{R}^{m\times n}$ $(m\!\geq\! n)$ be a non-zero matrix with singular values $\bm{\sigma}(\bm{X})\!=\![\sigma_1\!,\!\cdots\!,\!\sigma_n]^\top$ satisfying the formula \cref{eq: singular values of As for k-p-norm}.
Let $\bm{X}\!=\!\bm{U\Sigma V}^\top$ be a full SVD, where $\bm{U}_1\!=\!\bm{U}(:,1\!:\!k-t)$, $\bm{U}_2\!=\!\bm{U}(:,k-t+1\!:\!k+s)$, $\bm{V}_1\!=\!\bm{V}(:,1\!:\!k-t)$, $\bm{V}_2\!=\!\bm{V}(:,k\!-\!t\!+\!1\!:\!k\!+\!s)$ and $\bm{\Sigma}_1\!=\!\bm{\Sigma}(1\!:\!k-t,1\!:\!k-t)$.
Then $\bm{G}\in\partial\Vert\bm{X}\Vert_{(k,p)}$ if there exists a symmetric positive semidefinite $\bm{T}\!\in\!\mathbb{R}^{(t+s)\times(t+s)}$ satisfying $\Vert\bm{T}\Vert_2\!\leq\!1$ and $\Vert\bm{T}\Vert_*\!=\!t$ such that 
\begin{align}
\bm{G}=\frac{1}{\Vert\bm{X}\Vert_{(k,p)}^{p-1}}\big(\bm{U}_1\bm{\Sigma}_1^{p-1}\bm{V}_1^\top+\sigma_k^{p-1}\bm{U}_2\bm{T}\bm{V}_2^\top\big).\label{eq: subdiff of ky fan k-p 1<p<infty}
\end{align}
\end{lemma}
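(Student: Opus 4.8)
The plan is to combine the two subdifferential results already available: \Cref{lem:partial of unitarily invariant matrix norm}, which expresses $\partial\Vert\bm{X}\Vert_{(k,p)}$ through the orthogonal factors of a full SVD together with the subdifferential of the corresponding symmetric gauge function, and \Cref{lem: subdiff of phi(sigma) 1<p<infty}, which characterizes $\partial\phi(\bm{\sigma})$ for $\phi(\bm{\sigma})=(\sum_{i=1}^k\sigma_i^p)^{\frac{1}{p}}$. Since $\phi(\bm{\sigma}(\bm{X}))=\Vert\bm{X}\Vert_{(k,p)}$ and hence $\phi^{p-1}(\bm{\sigma})=\Vert\bm{X}\Vert_{(k,p)}^{p-1}$, the whole task reduces to translating the componentwise description of $\bm{v}\in\partial\phi(\bm{\sigma})$ into the matrix generator $\bm{U}\bm{P}^\ell\bm{D}^\ell(\bm{Q}^\ell)^\top\bm{V}^\top$ and matching it with \cref{eq: subdiff of ky fan k-p 1<p<infty}. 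I would prove the full set equality; the stated ``if'' inclusion is then immediate, and the reverse inclusion (every subgradient has the displayed form) is what is actually needed later to evaluate the G\^ateaux derivative as a maximum over $\partial\Vert\bm{X}\Vert_{(k,p)}$.

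First I would analyze a single generator $\bm{U}\bm{P}^\ell\bm{D}^\ell(\bm{Q}^\ell)^\top\bm{V}^\top$ by splitting the diagonal $\bm{d}_\ell$ according to the three index ranges dictated by \Cref{lem: subdiff of phi(sigma) 1<p<infty} and the block structure \cref{eq: singular values of As for k-p-norm}. On the indices $1,\dots,k-t$ the entries are the fixed values $\sigma_i^{p-1}/\phi^{p-1}$, so this block contributes $\Vert\bm{X}\Vert_{(k,p)}^{1-p}\bm{U}_1\bm{\Sigma}_1^{p-1}\bm{V}_1^\top$; crucially, on each sub-block of equal singular values here $\bm{D}^\ell$ is a scalar multiple of the identity, so the orthogonal rotations $\bm{P}_i$ act trivially and this contribution is independent of $\ell$. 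On the indices $k+s+1,\dots,n$ and on the zero-singular-value block the entries vanish, so they drop out. The only active degenerate block is that of the repeated value $\sigma_k$ (multiplicity $t+s$, indices $k-t+1,\dots,k+s$), where the shared orthogonal factor $\bm{P}_j$ appears on both sides and produces $\Vert\bm{X}\Vert_{(k,p)}^{1-p}\sigma_k^{p-1}\bm{U}_2\bm{T}_\ell\bm{V}_2^\top$ with $\bm{T}_\ell:=\bm{P}_j\,\diag(\bm{\alpha})\,\bm{P}_j^\top$.

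Next I would read off the constraints on $\bm{T}_\ell$. Since $\bm{T}_\ell$ is orthogonally similar to $\diag(\bm{\alpha})$ with $0\le\alpha_i\le1$ and $\sum_i\alpha_i=t$, it is symmetric positive semidefinite, its eigenvalues lie in $[0,1]$ so $\Vert\bm{T}_\ell\Vert_2\le1$, and $\Vert\bm{T}_\ell\Vert_*=\operatorname{tr}(\bm{T}_\ell)=\sum_i\alpha_i=t$. Forming the convex combination $\sum_\ell\beta_\ell(\cdot)$ from \Cref{lem:partial of unitarily invariant matrix norm}, the $\ell$-independent first term survives unchanged while the second becomes $\Vert\bm{X}\Vert_{(k,p)}^{1-p}\sigma_k^{p-1}\bm{U}_2\bm{T}\bm{V}_2^\top$ with $\bm{T}:=\sum_\ell\beta_\ell\bm{T}_\ell$; this $\bm{T}$ is again symmetric positive semidefinite, satisfies $\Vert\bm{T}\Vert_2\le\sum_\ell\beta_\ell\Vert\bm{T}_\ell\Vert_2\le1$, and has $\Vert\bm{T}\Vert_*=\operatorname{tr}(\bm{T})=\sum_\ell\beta_\ell t=t$ because the trace is linear and coincides with the nuclear norm on the positive semidefinite cone. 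This yields exactly \cref{eq: subdiff of ky fan k-p 1<p<infty}. For the stated ``if'' direction I would run the argument backward: given any admissible $\bm{T}$, diagonalize $\bm{T}=\bm{W}\bm{\Lambda}\bm{W}^\top$, use its eigenvalues $\lambda_i\in[0,1]$ with $\sum_i\lambda_i=t$ as the $\alpha_i$ of a valid $\bm{v}\in\partial\phi(\bm{\sigma})$ and $\bm{W}$ as the rotation $\bm{P}_j$, so that $\bm{G}$ is a single generator in $\partial\Vert\bm{X}\Vert_{(k,p)}$.

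The main obstacle I anticipate is the bookkeeping on the degenerate $\sigma_k$-subspace: one must verify that the same orthogonal block $\bm{P}_j$ genuinely appears on both the left and the right (so that $\diag(\bm{\alpha})$ is conjugated into a symmetric $\bm{T}$ rather than an arbitrary matrix), and that the two scalar constraints $\Vert\bm{T}\Vert_2\le1$ and $\Vert\bm{T}\Vert_*=t$ are equivalent to $0\le\alpha_i\le1$ and $\sum_i\alpha_i=t$ — the latter relying on the positive semidefinite identity $\Vert\bm{T}\Vert_*=\operatorname{tr}(\bm{T})$. The trivial action of the rotations on the top $k-t$ block, needed to keep the first term fixed under convex combination, also requires the scalar-on-equal-blocks observation to be stated carefully.
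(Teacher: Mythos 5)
Your proposal follows essentially the same route as the paper's proof: it combines \Cref{lem:partial of unitarily invariant matrix norm} with \Cref{lem: subdiff of phi(sigma) 1<p<infty}, splits each generator $\bm{U}\bm{P}^\ell\bm{D}^\ell(\bm{Q}^\ell)^\top\bm{V}^\top$ over the three index ranges, uses the scalar-block observation to keep $\bm{U}_1\bm{\Sigma}_1^{p-1}\bm{V}_1^\top$ fixed, and packages the degenerate $\sigma_k$-block as $\bm{T}=\bm{P}_{22}^\ell\,{\rm{diag}}(\bm{\alpha})\,(\bm{P}_{22}^\ell)^\top$ with $\Vert\bm{T}\Vert_2\leq1$ and $\Vert\bm{T}\Vert_*={\rm{tr}}(\bm{T})=t$; your added care with the convex combination $\sum_\ell\beta_\ell\bm{T}_\ell$ and with establishing the full set equality only tightens the paper's argument (which, despite the ``if'' in the statement, actually proves the ``only if'' inclusion needed later). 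The one detail you leave implicit is the case $\sigma_k=0$, which the paper treats separately but which degenerates gracefully since the factor $\sigma_k^{p-1}$ annihilates the second term and $\partial\Vert\bm{X}\Vert_{(k,p)}$ becomes a singleton.
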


\begin{proof}
Given that $\phi(\bm{\sigma})=(\sum_{i=1}^k\sigma_i^p)^{\frac{1}{p}}=\Vert\bm{X}\Vert_{(k,p)}$.
According to \Cref{lem:partial of unitarily invariant matrix norm,lem: subdiff of phi(sigma) 1<p<infty}, if $\bm{G}\in\partial\Vert\bm{X}\Vert_{(k,p)}$ and $\sigma_k\neq0$, then it yields that 
\begin{small}
\begin{align*}
\bm{G} &=\sum_{\ell}\beta_{\ell}\bm{U}^{\ell}{\rm{diag}}\left(\frac{\sigma_1^{p-1}}{\Vert\bm{X}\Vert_{(k,p)}^{p-1}},\cdots,\frac{\sigma_{k-t}^{p-1}}{\Vert\bm{X}\Vert_{(k,p)}^{p-1}},\frac{\alpha_{k-t+1}\sigma_{k}^{p-1}}{\Vert\bm{X}\Vert_{(k,p)}^{p-1}},\cdots,\frac{\alpha_{k+s}\sigma_{k}^{p-1}}{\Vert\bm{X}\Vert_{(k,p)}^{p-1}},0,\cdots,0\right)
(\bm{V}^{\ell})^\top\\
&=\frac{1}{\Vert\bm{X}\Vert_{(k,p)}^{p-1}} \sum_{\ell}\!\beta_{\ell}\bm{U}_1\bm{P}_{11}^{\ell}\bm{\Sigma}_1^{p-1}(\!\bm{P}_{11}^{\ell}\!)\!^\top\bm{V}_1\!^\top\! + \!\beta_{\ell}\sigma_k^{p-1}\bm{U}_2\bm{P}_{22}^{\ell}\Big[{\rm{diag}}(\alpha_{k-t+1}\!,\!\cdots\!,\!\alpha_{k+s})\Big](\!\bm{P}_{22}^{\ell}\!)\!^\top\bm{V}_2\!^\top\\
&=\frac{1}{\Vert\bm{X}\Vert_{(k,p)}^{p-1}}\left(\bm{U}_1\bm{\Sigma}_1^{p-1}\bm{V}_1^\top+\sigma_k^{p-1}\bm{U}_2\bm{T}\bm{V}_2^\top\right),
\end{align*}
\end{small}
where $\bm{P}_{11}^\ell \!=\! \bm{P}^\ell(1\!:\!k-t,1\!:\!k-t)\!=\!\bm{Q}^\ell(1\!:\!k-t,1\!:\!k-t)$, $\bm{P}_{22}^\ell\! =\! \bm{P}^\ell(k-t+1\!:\!k+s,k-t+1\!:\!k+s)\!=\!\bm{Q}^\ell(k-t+1\!:\!k+s,k-t+1\!:\!k+s)$ and $\bm{P}_{11}^{\ell}\bm{\Sigma}_1^{p-1}(\bm{P}_{11}^{\ell})^\top=\bm{\Sigma}_1^{p-1}$, as outlined in \Cref{lem:partial of unitarily invariant matrix norm}.
Besides, denote $\bm{T}\!=\!\bm{P}_{22}^{\ell}\big[{\rm{diag}}(\alpha_{k-t+1},\cdots,\alpha_{k+s})\big](\!\bm{P}_{22}^{\ell}\!)\!^\top$.
Since $0\!\leq\!\alpha_i\!\leq\!1$ for $i=k-t+1,\cdots,k+s$ and $\sum_{i=k-t+1}^{k+s}\alpha_i=t$ based on \Cref{lem: subdiff of phi(sigma) 1<p<infty} ${\rm{(ii)}}$, we derive that $\bm{T}$ is symmetric positive semidefinite and these $\alpha_i$ are eigenvalues and also singular values.
Thus $\Vert\bm{T}\Vert_2=\max\{\alpha_{k-t+1},\cdots,\alpha_{k+s}\}\leq1$ and $\Vert\bm{T}\Vert_*=\sum_{i=k-t+1}^{k+s}\alpha_{i}=t$.

For the other case that $\sigma_k=0$.
Then $\partial\phi(\bm{\sigma}) = \big\{\big[\frac{\sigma_1^{p-1}}{\Vert\bm{X}\Vert_{(k,p)}^{p-1}},\cdots,\frac{\sigma_{k-t}^{p-1}}{\Vert\bm{X}\Vert_{(k,p)}^{p-1}},0,\cdots,0\big]\big\}$ is a singleton.
Similar to the proof above, we obtain $\partial\Vert\bm{X}\Vert_{(k,p)} = \big\{\frac{\bm{U}_1\bm{\Sigma}_1^{p-1}\bm{V}_1^\top}{\Vert\bm{X}\Vert_{(k,p)}^{p-1}}\big\}$.

In conclusion, if $\bm{G}\in\partial\Vert\bm{X}\Vert_{(k,p)}$, then $\bm{G}$ is expressed as the formula \cref{eq: subdiff of ky fan k-p 1<p<infty} for a special symmetric positive semidefinite $\bm{T}$ with $\Vert\bm{T}\Vert_2\leq1$ and $\Vert\bm{T}\Vert_*=t$.
\end{proof}

We can now demonstrate \Cref{pro: dual ky fan p-k-norm} based on \Cref{the: extension dual matrix norm} and \Cref{lem: subdiff of ky fan p-k}.

\begin{proof}[Proof of \Cref{pro: dual ky fan p-k-norm}]
For the case that $\bm{A}_{\rm{s}}\neq\bm{O}$, \Cref{lem: subdiff of ky fan p-k} implies that
\begin{align*}
\max\limits_{\bm{G}\in\partial\Vert\bm{A}_{\rm{s}}\Vert_{(k,p)}}\langle\bm{G},\bm{A}_{\rm{i}}\rangle
=\frac{1}{\Vert\bm{A}_{\rm{s}}\Vert_{(k,p)}^{p-1}}\big\langle\bm{U}_1\bm{\Sigma}_1^{p-1}\bm{V}_1^\top,\bm{A}_{\rm{i}}\big\rangle
+ \frac{\sigma_k^{p-1}}{\Vert\bm{A}_{\rm{s}}\Vert_{(k,p)}^{p-1}}\max\limits_{\substack{\bm{T}\succcurlyeq0\\\Vert\bm{T}\Vert_2\leq1\\\Vert\bm{T}\Vert_*=t}}\big\langle\bm{U}_2\bm{T}\bm{V}_2^\top,\bm{A}_{\rm{i}}\big\rangle,
\end{align*}
where $\bm{T}\succcurlyeq0$ represents $\bm{T}$ is symmetric positive semidefinite.
Moreover, it yields that
\begin{align*}
&\max\limits_{\substack{\bm{T}\succcurlyeq0,\Vert\bm{T}\Vert_2\leq1,\Vert\bm{T}\Vert_*=t}}\big\langle\bm{U}_2\bm{T}\bm{V}_2^\top,\bm{A}_{\rm{i}}\big\rangle=\max\limits_{\substack{\bm{T}\succcurlyeq0,\Vert\bm{T}\Vert_2\leq1,\Vert\bm{T}\Vert_*=t}}\big\langle\bm{T},{\rm{sym}}(\bm{U}_2^\top\bm{A}_{\rm{i}}\bm{V}_2)\big\rangle\\
=& \max\limits_{\substack{\bm{P}\in\mathcal{O}(t+s)\\0\leq\theta_i\leq1,\sum_{i=1}^{t+s}\theta_i=t}}\big\langle{\rm{diag}}(\theta_{1},\cdots,\theta_{t+s}),\bm{P}^\top\bm{M}\bm{P}\big\rangle=\max\limits_{\bm{P}\in\mathcal{O}(t+s)}\sum_{i=1}^t\big[{\rm{diag}}(\bm{P}^\top\bm{MP})\big]_{i}^{\downarrow},
\end{align*}
where $\bm{M}:={\rm{sym}}(\bm{U}_2^\top\bm{A}_{\rm{i}}\bm{V}_2)$ and $\mathcal{O}(t+s)$ represents the set of $(t+s)$-by-$(t+s)$ orthogonal matrices.
Besides, $\bm{T}=\bm{P}{\rm{diag}}(\theta_{1},\cdots,\theta_{t+s})\bm{P}^\top$ is the eigenvalue decomposition and also the SVD of $\bm{T}$.
Here, the nonincreasing rearrangement vector of the diagonal entries of $\bm{P}^\top\bm{MP}$ is denoted as $\big[{\rm{diag}}(\bm{P}^\top\bm{MP})\big]^\downarrow$, whose list of entries is the same as that of ${\rm{diag}}(\bm{P}^\top\bm{MP})$ (including multiplicities).
In addition, $\big[{\rm{diag}}(\bm{P}^\top\bm{MP})\big]_{i}^{\downarrow}$ denotes the $i$-th element of $\big[{\rm{diag}}(\bm{P}^\top\bm{MP})\big]^{\downarrow}$.

In the light of \cite[Theorem 4.3.45]{horn2012matrix}, it should be noted that 
\begin{align*}
\sum_{i=1}^t\lambda_i(\bm{P}^\top\bm{MP})\geq\sum_{i=1}^t\big[{\rm{diag}}(\bm{P}^\top\bm{MP})\big]_{i}^{\downarrow},
\end{align*}
where $\lambda_1(\bm{P}\!^\top\bm{MP})\!\geq\!\cdots\!\geq\!\lambda_{t+s}(\bm{P}\!^\top\bm{MP})$ are eigenvalues of $\bm{P}\!^\top\bm{MP}$.
Since $\bm{P}$ is orthogonal, $\bm{P}\!^\top\bm{MP}$'s eigenvalues are also $\bm{M}$'s.
Let $\lambda_1(\!\bm{M}\!)\!\geq\!\cdots\!\geq\!\lambda_{t+s}(\!\bm{M}\!)$ be eigenvalues of $\bm{M}$.
Thus, $\sum_{i=1}^t\!\big[{\rm{diag}}(\bm{P}\!^\top\bm{MP})\big]_{i}^{\downarrow}\!\leq\!\sum_{i=1}^t\!\lambda_i(\bm{M})$ for any orthogonal $\bm{P}$.
Moreover, if $\bm{P}$ satisfies $\bm{P}\!^\top\bm{MP}\!=\!{\rm{diag}}\big(\lambda_1(\bm{M}),\!\cdots\!,\lambda_{t+s}(\bm{M})\big)$ is the eigenvalue decomposition, then the inequality mentioned above is in fact an equality.
That is,
\begin{align*}
\max\limits_{\bm{P}\in\mathcal{O}(t+s)}\sum_{i=1}^t\big[{\rm{diag}}(\bm{P}^\top\bm{MP})\big]_{i}^{\downarrow} = \sum_{i=1}^t\lambda_i(\bm{M}).
\end{align*}

Hence, combining the above conclusions, we derive that
\begin{align*}
\max\limits_{\bm{G}\in\partial\Vert\bm{A}_{\rm{s}}\Vert_{(k,p)}}\langle\bm{G},\bm{A}_{\rm{i}}\rangle=\frac{1}{\Vert\bm{A}_{\rm{s}}\Vert_{(k,p)}^{p-1}}\big\langle\bm{U}_1\bm{\Sigma}_1^{p-1}\bm{V}_1^\top,\bm{A}_{\rm{i}}\big\rangle + \frac{\sigma_k^{p-1}}{\Vert\bm{A}_{\rm{s}}\Vert_{(k,p)}^{p-1}} \sum_{i=1}^t\lambda_i(\bm{M}).
\end{align*}
Consequently, the result of \Cref{pro: dual ky fan p-k-norm} follows using \Cref{the: extension dual matrix norm}.
\end{proof}

Subsequently, we introduce the G\^ateaux derivative of the Ky Fan $k$-norm proposed by Watson in 1993 to derive the dual-valued Ky Fan $k$-norm.

\begin{lemma}\cite[Theorem 5]{watson1993matrix}\label{lem: subdiff of ky fan}
Let $\bm{X}\in\mathbb{R}^{m\times n}$ $(m\geq n)$ be a non-zero matrix with singular values $\bm{\sigma}(\bm{X})=[\sigma_1,\cdots,\sigma_n]^\top$ satisfying the formula \cref{eq: singular values of As for k-p-norm}.
Let $\bm{X}=\bm{U\Sigma V}^\top$ be a full SVD with $\bm{U}_1=\bm{U}(:,1:k-t)$, $\bm{U}_2=\bm{U}(:,k-t+1:k+s)$, $\widetilde{\bm{U}}=\bm{U}(:,k-t+1:m)$, $\bm{V}_1=\bm{V}(:,1:k-t)$ and $\bm{V}_2=\bm{V}(:,k-t+1:k+s)$. 
Then the G\^ateaux derivative of the Ky Fan $k$-norm $\Vert\cdot\Vert_{(k)}$ at $\bm{X}$ along the direction $\bm{Y}\in\mathbb{R}^{m\times n}$ fulfills
\begin{align}
{\mathcal{D}}_{\bm{Y}}\Vert\bm{X}\Vert_{(k)} = \left\{\begin{array}{ll}
\langle\bm{U}_1\bm{V}_1^\top,\bm{Y}\rangle+\sum_{\ell=1}^t\lambda_\ell(\bm{M}), & \sigma_k\neq0, \\
\langle\bm{U}_1\bm{V}_1^\top,\bm{Y}\rangle+\sum_{\ell=1}^t\sigma_\ell(\bm{N}), & \sigma_k=0,
\end{array}\right.
\end{align}
where $\bm{M}={\rm{sym}}(\bm{U}_2^\top\bm{Y}\bm{V}_2)$ and $\bm{N}=\widetilde{\bm{U}}_1^\top\bm{Y}\bm{V}_2$.
Besides, $\lambda_1(\bm{M})\geq\cdots\geq\lambda_{t+s}(\bm{M})$ are eigenvalues of $\bm{M}$ and $\sigma_1(\bm{N})\geq\cdots\geq\sigma_{t+s}(\bm{N})$ are singular values of $\bm{N}$.
\end{lemma}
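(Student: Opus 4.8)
The plan is to treat the G\^ateaux derivative of the Ky Fan $k$-norm as the $p\to 1$ analogue of the computation already performed for the Ky Fan $p$-$k$-norm. Since $\Vert\cdot\Vert_{(k)}$ is a genuine matrix norm it is convex, so the Max Formula (\Cref{lem_dire_gra=max}) reduces the whole problem to a support-function computation,
\begin{align*}
{\mathcal{D}}_{\bm{Y}}\Vert\bm{X}\Vert_{(k)}=\max_{\bm{G}\in\partial\Vert\bm{X}\Vert_{(k)}}\langle\bm{G},\bm{Y}\rangle.
\end{align*}
First I would identify the symmetric gauge function of the Ky Fan $k$-norm as $\phi(\bm{\sigma})=\sum_{i=1}^k\sigma_i$ and record its subdifferential, namely the $p=1$ specialization of \Cref{lem: subdiff of phi(sigma) 1<p<infty}: $v_i=1$ for $i\le k-t$, $v_i=\alpha_i\in[0,1]$ with $\sum_{i=k-t+1}^{k+s}\alpha_i=t$ on the equal block, and $v_i=0$ afterwards. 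Substituting this into \Cref{lem:partial of unitarily invariant matrix norm} then produces the matrix subdifferential, from which the two cases $\sigma_k\neq 0$ and $\sigma_k=0$ diverge.

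For the case $\sigma_k\neq 0$ I would reproduce the proof of \Cref{pro: dual ky fan p-k-norm} verbatim with $p=1$. Here the equal block consists of nonzero singular values, so in \Cref{lem:partial of unitarily invariant matrix norm} the left and right orthogonal factors on that block must coincide; hence every subgradient has the shape $\bm{G}=\bm{U}_1\bm{V}_1^\top+\bm{U}_2\bm{T}\bm{V}_2^\top$ with $\bm{T}\succcurlyeq 0$, $\Vert\bm{T}\Vert_2\le 1$ and $\Vert\bm{T}\Vert_*=t$, exactly as in \Cref{lem: subdiff of ky fan p-k} (the prefactor $\Vert\bm{X}\Vert_{(k,p)}^{-(p-1)}$ and the weight $\sigma_k^{p-1}$ both collapse to $1$). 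Maximizing the inner product then splits off the fixed term $\langle\bm{U}_1\bm{V}_1^\top,\bm{Y}\rangle$ and leaves $\max_{\bm{T}}\langle\bm{T},\bm{M}\rangle=\sum_{\ell=1}^t\lambda_\ell(\bm{M})$ with $\bm{M}={\rm{sym}}(\bm{U}_2^\top\bm{Y}\bm{V}_2)$, by the same eigenvalue-majorization argument (via \cite[Theorem 4.3.45]{horn2012matrix}) already used there.

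The main obstacle is the degenerate case $\sigma_k=0$, where $\bm{X}$ has rank $r=k-t$ and the block of singular values equal to $\sigma_k$ extends all the way to index $n$. The crucial structural difference is that a zero singular value carries \emph{independent} left- and right-null-space freedom: in \Cref{lem:partial of unitarily invariant matrix norm} this is precisely the pair of separate blocks $\bm{P}_0\in\mathbb{R}^{(m-r)\times(m-r)}$ and $\bm{Q}_0\in\mathbb{R}^{(n-r)\times(n-r)}$, which need not be equal. I would therefore show that a subgradient takes the form $\bm{G}=\bm{U}_1\bm{V}_1^\top+\widetilde{\bm{U}}_1\bm{S}\bm{V}_2^\top$ where, in contrast to the previous case, $\bm{S}$ ranges over all matrices — with no symmetry or positive-semidefinite constraint — satisfying $\Vert\bm{S}\Vert_2\le 1$ and $\Vert\bm{S}\Vert_*=t$. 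Removing the symmetry and definiteness constraints is exactly what converts the eigenvalue problem into a singular-value problem: the maximization reduces to $\max_{\bm{S}}\langle\bm{S},\bm{N}\rangle=\sum_{\ell=1}^t\sigma_\ell(\bm{N})$ with $\bm{N}=\widetilde{\bm{U}}_1^\top\bm{Y}\bm{V}_2$, which is the von~Neumann/Ky~Fan trace inequality characterizing the dual of the Ky Fan $t$-norm. Carefully extracting the precise feasible set of $\bm{S}$ from the orthogonal-freedom bookkeeping of \Cref{lem:partial of unitarily invariant matrix norm} — in particular verifying that $\bm{P}_0\,{\rm diag}(\alpha)\,\bm{Q}_0^\top$ sweeps out exactly $\{\bm{S}:\Vert\bm{S}\Vert_2\le1,\ \Vert\bm{S}\Vert_*=t\}$ — is the delicate point, and once it is settled the two cases together yield the stated formula.
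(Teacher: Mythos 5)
Your derivation is essentially correct, but it takes a genuinely different route from the paper: the paper does not prove this lemma at all --- it is imported wholesale as \cite[Theorem 5]{watson1993matrix} --- whereas you reconstruct it from the paper's own machinery (\Cref{lem_dire_gra=max}, \Cref{lem:partial of unitarily invariant matrix norm}, and the gauge-function subdifferential), i.e.\ by rerunning the proofs of \Cref{lem: subdiff of ky fan p-k} and \Cref{pro: dual ky fan p-k-norm} at $p=1$. For $\sigma_k\neq0$ this specialization works exactly as you describe. For $\sigma_k=0$ your identification of the independent $\bm{P}_0,\bm{Q}_0$ freedom as what converts the eigenvalue sum into a singular-value sum is the right idea, but two refinements are needed: (a) \Cref{lem: subdiff of phi(sigma) 1<p<infty} is stated only for $1<p<\infty$, and at $p=1$ with $\sigma_k=0$ the components of $\bm{d}\in\partial\phi(\bm{\sigma})$ on the zero block come from the subdifferential of $|\cdot|$ at the origin, so they lie in $[-1,1]$ with $\sum_i|d_i|\le t$ rather than being nonnegative $\alpha_i$ summing exactly to $t$; and (b) consequently the feasible set for $\bm{S}$ is the convex body $\{\bm{S}:\Vert\bm{S}\Vert_2\le1,\ \Vert\bm{S}\Vert_*\le t\}$, not the nonconvex shell $\Vert\bm{S}\Vert_*=t$ you state (a subdifferential must be convex, so the equality constraint cannot survive). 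Neither change affects the conclusion, since $\max\{\langle\bm{S},\bm{N}\rangle:\Vert\bm{S}\Vert_2\le1,\Vert\bm{S}\Vert_*\le t\}=\sum_{\ell=1}^t\sigma_\ell(\bm{N})$ by von Neumann's trace inequality, attained at the top-$t$ singular triple of $\bm{N}$. What your approach buys is a self-contained argument uniform with the paper's $1<p<\infty$ case; what the paper's citation buys is brevity and the avoidance of precisely the degenerate-block bookkeeping you correctly flag as the delicate point.
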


\begin{proof}[Proof of \Cref{pro: dual ky fan}]
The result is based on \Cref{the: extension dual matrix norm} and \Cref{lem: subdiff of ky fan}.
\end{proof}

Finally, the subdifferential of the spectral norm is described and the proof of the dual-valued spectral norm, based on this and \Cref{the: extension dual matrix norm}, is then provided.

\begin{lemma}\label{lem: spectral subdiff}\cite{Watson1992CharacterizationOT}
The subdifferential of the matrix spectral norm $\Vert\cdot\Vert_2$ at the point $\bm{X}\in\mathbb{R}^{m\times n}$ is
\begin{align}
\partial\Vert\bm{X}\Vert_2 &=\left\{\bm{U}_1\bm{H}\bm{V}_1^\top\big\vert  \bm{H}\in\mathbb{R}^{r_1\times r_1}, \bm{H}\succcurlyeq 0, tr(\bm{H})=1\right\}.\label{subdiff spectral}
\end{align}
Let $\bm{X}=\bm{U\Sigma V}^\top$ be a full SVD and the multiplicity of $\sigma_1$ (the largest singular value of $\bm{X}$) be $r_1$, with $\bm{U}_1$ and $\bm{V}_1$ consisting of the first $r_1$ columns of $\bm{U}$ and $\bm{V}$. 
\end{lemma}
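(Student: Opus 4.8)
The plan is to realize the spectral norm as the unitarily invariant norm whose symmetric gauge function is the vector $\infty$-norm, and then to combine the general subdifferential formula for unitarily invariant norms in \Cref{lem:partial of unitarily invariant matrix norm} with the subdifferential of $\Vert\cdot\Vert_\infty$ already recorded in \Cref{lem: sub of vector p norm}. Writing $\bm X=\bm U\bm\Sigma\bm V^\top$ and $\bm\sigma=\mathrm{diag}(\bm\Sigma)$, one has $\Vert\bm X\Vert_2=\sigma_1=\phi(\bm\sigma)$ with the symmetric gauge function $\phi(\bm y)=\Vert\bm y\Vert_\infty$, so \Cref{lem:partial of unitarily invariant matrix norm} applies directly.

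First I would compute $\partial\phi(\bm\sigma)$ from \cref{eq: sub of vector infty}. Since the largest singular value $\sigma_1$ has multiplicity $r_1$, the active index set is $\mathcal I=\{1,\dots,r_1\}$, and because the singular values are nonnegative we have $\hat{\bm e}_k=\bm e_k$ for $k\in\mathcal I$. Hence $\partial\phi(\bm\sigma)$ is precisely the probability simplex carried by the first $r_1$ coordinates, namely the vectors $\bm d$ with $d_k\ge0$, $\sum_{k=1}^{r_1}d_k=1$, and $d_k=0$ for $k>r_1$.

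Next I would feed this into the second (explicit) representation in \Cref{lem:partial of unitarily invariant matrix norm}, the one that exposes the non-uniqueness of the SVD through the orthogonal blocks $\bm P^\ell=\mathrm{diag}(\bm P_1,\dots,\bm P_\rho,\bm P_0)$ and $\bm Q^\ell=\mathrm{diag}(\bm P_1,\dots,\bm P_\rho,\bm Q_0)$. Because every admissible $\bm d_\ell$ is supported on the block belonging to $\sigma_1$, only the leading $r_1\times r_1$ orthogonal factor $\bm P_1^\ell$, which appears identically in $\bm P^\ell$ and in $\bm Q^\ell$, acts nontrivially, so each generator collapses to $\bm U_1\big(\bm P_1^\ell\bm\Lambda_\ell(\bm P_1^\ell)^\top\big)\bm V_1^\top$, where $\bm\Lambda_\ell=\mathrm{diag}(d_{\ell,1},\dots,d_{\ell,r_1})\succcurlyeq0$ has trace $1$. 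Summing over $\ell$ with convex weights $\beta_\ell$ yields $\bm U_1\bm H\bm V_1^\top$ with $\bm H=\sum_\ell\beta_\ell\bm P_1^\ell\bm\Lambda_\ell(\bm P_1^\ell)^\top$. Each summand is symmetric positive semidefinite of unit trace, both properties being preserved under orthogonal conjugation and under convex combination, so $\bm H\succcurlyeq0$ and $\mathrm{tr}(\bm H)=1$; this gives the inclusion ``$\subseteq$'' in \cref{subdiff spectral}. For ``$\supseteq$'', any $\bm H\succcurlyeq0$ with $\mathrm{tr}(\bm H)=1$ has a spectral decomposition $\bm H=\bm P_1\bm\Lambda\bm P_1^\top$ whose eigenvalue vector lies in $\partial\phi(\bm\sigma)$, so $\bm U_1\bm H\bm V_1^\top$ is realized as a single generator.

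The crux I anticipate is exactly the passage from diagonal matrices $\mathrm{diag}(\bm d)$ to arbitrary symmetric positive semidefinite $\bm H$. With a single fixed SVD one would obtain only diagonal $\bm H$, which is too small a set whenever $r_1>1$; it is the freedom to rotate the repeated-$\sigma_1$ singular subspace, carried by the common orthogonal factor $\bm P_1$ occurring on both sides, that conjugates $\mathrm{diag}(\bm d)$ into $\bm P_1\mathrm{diag}(\bm d)\bm P_1^\top$ and, via the spectral theorem, sweeps out the whole set $\{\bm H\succcurlyeq0:\mathrm{tr}(\bm H)=1\}$. Checking that this factor enters as a two-sided conjugation rather than as an unconstrained product, so that $\bm H$ remains symmetric positive semidefinite, is the delicate bookkeeping on which the exactness of \cref{subdiff spectral} rests.
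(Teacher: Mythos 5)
Your proposal is correct. Note first that the paper offers no proof of this lemma at all --- it is imported as a citation to Watson --- so there is no in-paper argument to compare against; what you have written is a self-contained derivation from \Cref{lem:partial of unitarily invariant matrix norm} combined with the $\infty$-norm subdifferential in \Cref{lem: sub of vector p norm} \cref{eq: sub of vector infty}, which is exactly the route Watson's own paper takes and is the natural one given the machinery already assembled here. Your identification of the crux is also right: with a single fixed SVD the convex-hull formula would yield only diagonal $\bm{H}$, and it is the common orthogonal block $\bm{P}_1$, entering as a two-sided conjugation of the repeated-$\sigma_1$ singular subspace, that conjugates $\mathrm{diag}(\bm{d})$ into a general symmetric positive semidefinite matrix of unit trace, with the reverse inclusion supplied by the spectral theorem. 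The only caveat --- shared by the statement as printed --- is that the argument implicitly requires $\bm{X}\neq\bm{O}$, so that $\sigma_1>0$, $\hat{\bm{e}}_k=\bm{e}_k$ on the active set, and $\partial\phi(\bm{\sigma})$ really is the probability simplex on the first $r_1$ coordinates; at $\bm{X}=\bm{O}$ the subdifferential is instead the nuclear-norm unit ball, and consistently the paper only ever invokes the lemma at nonzero $\bm{A}_{\rm{s}}$.
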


\begin{proof}[Proof of \Cref{pro: dual matrix spectral norm}]
It is worth mentioning that any symmetric positive semidefinite matrix $\bm{H}\in\mathbb{R}^{r_1\times r_1}$ can be equivalently written as the product $\bm{P}\bm{P}^\top$, where $\bm{P}\in\mathbb{R}^{r_1\times k}$ for some $k$.
Then we derive form \Cref{lem: spectral subdiff} that
\begin{align*}
\max\limits_{\bm{G}\in\partial\Vert\bm{A}_{\rm{s}}\Vert_2}\langle\bm{G},\bm{A}_{\rm{i}}\rangle&=
\max\limits_{\substack{\bm{H}\in\mathbb{R}^{r_1\times r_1},\\\bm{H}\succcurlyeq0,\;{\rm{tr}}(\bm{H})=1}}\big\langle\bm{U}_1\bm{H}\bm{V}_1^\top,\bm{A}_{\rm{i}}\big\rangle= \max\limits_{\substack{\bm{P}\in\mathbb{R}^{r_1\times k},\;k\leq r_1,\\{\rm{tr}}(\bm{P}\bm{P}^\top)=1}}\big\langle\bm{U}_1\bm{PP}^\top\bm{V}_1^\top,\bm{A}_{\rm{i}}\big\rangle\\
&= \max\limits_{\substack{\bm{O}\neq\bm{P}\in\mathbb{R}^{r_1\times k},\\k\leq r_1}}\frac{{\rm{tr}}\big(\bm{P}^\top(\bm{U}_1^\top\bm{A}_{\rm{i}}\bm{V}_1)\bm{P}\big)}{{\rm{tr}}(\bm{P}^\top\bm{P})}.
\end{align*}
Denote $\bm{R}:={\rm{sym}}\big(\bm{U}_1^\top\bm{A}_{\rm{i}}\bm{V}_1\big)$.
Then ${\rm{tr}}(\bm{P}^\top\bm{R}\bm{P})={\rm{tr}}\big(\bm{P}^\top(\bm{U}_1^\top\bm{A}_{\rm{i}}\bm{V}_1)\bm{P}\big)$.
Since $\bm{R}\in\mathbb{R}^{r_1\times r_1}$ is symmetric, there exists orthogonal $\bm{Q}\in\mathbb{R}^{r_1\times r_1}$ and diagonal $\bm{\Lambda}\in\mathbb{R}^{r_1\times r_1}$ such that $\bm{R} = \bm{Q\Lambda Q}^\top$ with $\bm{\Lambda} = {\rm{diag}}(\lambda_1,\cdots,\lambda_{r_1})$ satisfying $\lambda_1\geq\cdots\geq\lambda_{r_1}$.
Thus, 
\begin{align*}
\max\limits_{\substack{\bm{O}\neq\bm{P}\in\mathbb{R}^{r_1\times k}\\k\leq r_1}}\frac{{\rm{tr}}(\bm{P}^\top\bm{R}\bm{P})}{{\rm{tr}}(\bm{P}^\top\bm{P})}
&\xlongequal{\bm{W}=\bm{Q}^\top\bm{P}}\max\limits_{\substack{\bm{O}\neq\bm{W}\in\mathbb{R}^{r_1\times k}\\k\leq r_1}}\frac{{\rm{tr}}(\bm{W}^\top\bm{\Lambda}\bm{W})}{{\rm{tr}}(\bm{W}^\top\bm{W})}\\
&\xlongequal{\bm{W}=(w_{ij})} \max\limits_{w_{ij}\in\mathbb{R},k\leq r_1}\frac{\sum_{i=1}^{r_1}\sum_{j=1}^{k}\lambda_iw_{ij}^2}{\sum_{i=1}^{r_1}\sum_{j=1}^{k}w_{ij}^2}
\leq\lambda_1.
\end{align*}
Actually, when taking $\bm{W}$ with its other entries equal to zero except $w_{11}\!=\!1$, we have ${\rm{tr}}(\bm{W}^\top\!\bm{\Lambda}\bm{W})\!=\!\lambda_1$ and ${\rm{tr}}(\bm{W}^\top\!\bm{W})\!=\!1$. 
Thus, the final inequality sign can be replaced with the equal sign.
That is, $\max_{\bm{G}\in\partial\Vert\bm{A}_{\rm{s}}\Vert_2}\langle\bm{G},\bm{A}_{\rm{i}}\rangle=\lambda_{\rm{max}}(\bm{R})$ for nonzero $\bm{A}_{\rm{s}}$.

Following \Cref{the: extension dual matrix norm}, the dual-valued spectral norm can thus be established.
\end{proof}

Furthermore, we can deduce other well-known dual-valued matrix norms from the dual-valued Ky Fan $p$-$k$-norm.
These include (i) the dual-valued Schatten $p$-norm $(1<p<\infty)$ by setting $k=n$, (ii) the dual-valued nuclear norm by setting $k=n$ and $p=1$, and (iii) the dual-valued Frobenius norm by setting $k=n$ and $p=2$, as outlined in the following three corollaries.

\begin{corollary}
\label{cor: dual schatten norm}
Given a dual matrix $\bm{A} = \bm{A}_{\rm{s}}+\bm{A}_{\rm{i}}\epsilon\in\mathbb{DR}^{m\times n}$ $(m\geq n)$.
Then the dual-valued Schatten $p$-norm $(1<p<\infty)$ $\Vert\cdot\Vert_{S_p}:\mathbb{DR}^{m\times n}\rightarrow \mathbb{DR}$ of $\bm{A}$ extending from the Schatten $p$-norm $\Vert\cdot\Vert_{S_p}:\mathbb{R}^{m\times n}\rightarrow \mathbb{R}$ is expressed as
\begin{align}
\Vert\bm{A}\Vert_{S_p} =   \left\{\begin{array}{ll}
\Vert \bm{A}_{\rm{s}} \Vert_{S_p}+\frac{1}{\Vert\bm{A}_{\rm{s}}\Vert_{S_p}^{p-1}}\langle \bm{U}_r\bm{\Sigma}_r^{p-1}\bm{V}_r^\top,\bm{A}_{\rm{i}}\rangle\epsilon, & \bm{A}_{\rm{s}}\neq\bm{O},\\
\Vert\bm{A}_{\rm{i}}\Vert_{S_p}\epsilon, & \bm{A}_{\rm{s}}=\bm{O},
\end{array}\right.\label{dual Schatten norm}
\end{align}
where $\bm{A}_{\rm{s}}=\bm{U}_r\bm{\Sigma}_r\bm{V}_r^\top$ is a compact SVD of $\bm{A}_{\rm{s}}$ with $r$ non-zero singular values. 
\end{corollary}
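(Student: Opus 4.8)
The plan is to obtain the Corollary directly from \Cref{pro: dual ky fan p-k-norm} by specializing to $k=n$, since the Schatten $p$-norm is precisely the Ky Fan $p$-$n$-norm, i.e. $\Vert\cdot\Vert_{S_p}=\Vert\cdot\Vert_{(n,p)}$. The zero case $\bm{A}_{\rm{s}}=\bm{O}$ is immediate, as both expressions reduce to $\Vert\bm{A}_{\rm{i}}\Vert_{S_p}\epsilon$. Thus the work lies entirely in reconciling the infinitesimal part of \cref{eq: ky fan k-p-norm} at $k=n$ with the compact-SVD expression in \cref{dual Schatten norm}.

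First I would observe that the index constraint $k+s\le n$ together with $k=n$ forces $s=0$, so the singular-value pattern \cref{eq: singular values of As for k-p-norm} terminates at the repeated block $\sigma_{n-t+1}=\cdots=\sigma_n$ with no trailing $\sigma_{k+s+1}$. Consequently $\bm{M}={\rm{sym}}(\bm{U}_2^\top\bm{A}_{\rm{i}}\bm{V}_2)$ is a $t\times t$ symmetric matrix, and the partial sum of its top $t$ eigenvalues in \cref{eq: ky fan k-p-norm} is the full trace:
\begin{align*}
\sum_{\ell=1}^t\lambda_\ell(\bm{M})={\rm{tr}}(\bm{M})={\rm{tr}}(\bm{U}_2^\top\bm{A}_{\rm{i}}\bm{V}_2)=\langle\bm{U}_2\bm{V}_2^\top,\bm{A}_{\rm{i}}\rangle,
\end{align*}
where we use ${\rm{tr}}({\rm{sym}}(\bm{X}))={\rm{tr}}(\bm{X})$ and the cyclic property of the trace. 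This collapse of the eigenvalue sum into a trace is the crux of the argument.

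Next I would split into two subcases according to whether $\sigma_n$ vanishes. If $\sigma_n\neq0$, then $\bm{A}_{\rm{s}}$ has full column rank $r=n$, the second block satisfies $\bm{\Sigma}_2=\sigma_n\bm{I}_t$ hence $\sigma_n^{p-1}\langle\bm{U}_2\bm{V}_2^\top,\bm{A}_{\rm{i}}\rangle=\langle\bm{U}_2\bm{\Sigma}_2^{p-1}\bm{V}_2^\top,\bm{A}_{\rm{i}}\rangle$, and concatenating $[\bm{U}_1,\bm{U}_2]=\bm{U}_r$, ${\rm{diag}}(\bm{\Sigma}_1,\bm{\Sigma}_2)=\bm{\Sigma}_r$, $[\bm{V}_1,\bm{V}_2]=\bm{V}_r$ reassembles the two inner products into $\langle\bm{U}_r\bm{\Sigma}_r^{p-1}\bm{V}_r^\top,\bm{A}_{\rm{i}}\rangle$. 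If instead $\sigma_n=0$, then $\sigma_k^{p-1}=0$ annihilates the second term of \cref{eq: ky fan k-p-norm}, while the strict inequality $\sigma_{n-t}>\sigma_{n-t+1}=0$ identifies $r=n-t$ and makes $\bm{U}_1,\bm{\Sigma}_1,\bm{V}_1$ coincide with the compact-SVD factors $\bm{U}_r,\bm{\Sigma}_r,\bm{V}_r$, so the surviving first term is already $\langle\bm{U}_r\bm{\Sigma}_r^{p-1}\bm{V}_r^\top,\bm{A}_{\rm{i}}\rangle$. In both subcases the standard part equals $\Vert\bm{A}_{\rm{s}}\Vert_{S_p}$, yielding \cref{dual Schatten norm}.

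The main obstacle I anticipate is purely the index bookkeeping: correctly tracking how the block $\bm{U}_2,\bm{V}_2$ of equal singular values merges with $\bm{U}_1,\bm{V}_1$ into the compact SVD, and verifying that the two degenerate cases ($\sigma_n>0$ versus $\sigma_n=0$) both land on the single unified formula. No new analytic input beyond \Cref{pro: dual ky fan p-k-norm} is required.
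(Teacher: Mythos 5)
Your proposal is correct and follows exactly the route the paper indicates (specializing \Cref{pro: dual ky fan p-k-norm} to $k=n$, for which the paper omits the details): the collapse $\sum_{\ell=1}^t\lambda_\ell(\bm{M})={\rm{tr}}(\bm{U}_2^\top\bm{A}_{\rm{i}}\bm{V}_2)=\langle\bm{U}_2\bm{V}_2^\top,\bm{A}_{\rm{i}}\rangle$ when $s=0$, and the case split on $\sigma_n$ reassembling $\bm{U}_1,\bm{U}_2$ into the compact-SVD factors, are precisely the needed bookkeeping. The paper also notes the alternative of computing $\partial\Vert\cdot\Vert_{S_p}$ directly via \Cref{the: extension dual matrix norm}, but your argument requires no new input and matches the intended derivation.
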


\begin{corollary}\label{cor: dual nuclear norm}
Given a dual matrix $\bm{A} = \bm{A}_{\rm{s}}+\bm{A}_{\rm{i}}\epsilon\in\mathbb{DR}^{m\times n}$ $(m\geq n)$.
Then the dual-valued nuclear norm $\Vert\cdot\Vert_*:\mathbb{DR}^{m\times n}\rightarrow \mathbb{DR}$ of $\bm{A}$ extending from the nuclear norm $\Vert\cdot\Vert_*:\mathbb{R}^{m\times n}\rightarrow \mathbb{R}$ is expressed as
\begin{align}
\Vert\bm{A}\Vert_* = \left\{\begin{array}{ll}
\Vert \bm{A}_{\rm{s}} \Vert_* + \Big(\big\langle\bm{U}_r\bm{V}_r^\top,\bm{A}_{\rm{i}}\big\rangle + \big\Vert\widetilde{\bm{U}}_r^\top\bm{A}_{\rm{i}}\widetilde{\bm{V}}_r\big\Vert_*\Big)\epsilon, &  \bm{A}_{\rm{s}}\neq\bm{O},\\
\Vert \bm{A}_{\rm{i}} \Vert_*\epsilon, &  \bm{A}_{\rm{s}}=\bm{O},
\end{array}\right.\label{dual matrix nuclear norm}
\end{align}
where $\bm{A}_{\rm{s}} = \bm{U\Sigma V}^\top$ is a full SVD and ${\rm{Rank}}(\bm{A}_{\rm{s}})=r$.
In addition, $\bm{U}$ and $\bm{V}$ are partitioned as $\bm{U} =[\bm{U}_r,\widetilde{\bm{U}}_r]$ and $\bm{V} = [\bm{V}_r,\widetilde{\bm{V}}_r]$, with $\bm{U}_r$ and $\bm{V}_r$ both having $r$ columns.
\end{corollary}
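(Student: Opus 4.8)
The plan is to obtain \Cref{cor: dual nuclear norm} as the special case $k=n$ of the dual-valued Ky Fan $k$-norm established in \Cref{pro: dual ky fan}, since the nuclear norm is precisely the Ky Fan $k$-norm with $k=n$. The degenerate case $\bm{A}_{\rm{s}}=\bm{O}$ is immediate from \Cref{the: extension dual matrix norm}, so the work concentrates on $\bm{A}_{\rm{s}}\neq\bm{O}$. First I would observe that taking $k=n$ forces $s=0$ in the singular-value grouping \cref{eq: singular values of As for k-p-norm}, because no singular value carries an index exceeding $n$; hence the tied block $\sigma_{n-t+1}=\cdots=\sigma_n$ is exactly the trailing block, and the remaining task is to reconcile the block notation of \Cref{pro: dual ky fan} (the matrices $\bm{U}_1,\bm{U}_2,\widetilde{\bm{U}}_1,\bm{V}_1,\bm{V}_2$) with the rank-based partition $\bm{U}=[\bm{U}_r,\widetilde{\bm{U}}_r]$, $\bm{V}=[\bm{V}_r,\widetilde{\bm{V}}_r]$ appearing in \cref{dual matrix nuclear norm}.

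For the full-rank case $\sigma_n\neq0$ I would use the first branch of \cref{eq: ky fan k-norm}. Here $\bm{A}_{\rm{s}}$ has rank $r=n$, so $\bm{U}_1\bm{V}_1^\top+\bm{U}_2\bm{V}_2^\top=\bm{U}_r\bm{V}_r^\top$ with $\bm{U}_r=\bm{U}(:,1:n)$, while $\widetilde{\bm{V}}_r$ is empty and the correction term $\bigl\Vert\widetilde{\bm{U}}_r^\top\bm{A}_{\rm{i}}\widetilde{\bm{V}}_r\bigr\Vert_*$ vanishes. The key identity is that $\bm{M}={\rm{sym}}(\bm{U}_2^\top\bm{A}_{\rm{i}}\bm{V}_2)$ is a $t\times t$ matrix, so $\sum_{\ell=1}^t\lambda_\ell(\bm{M})={\rm{tr}}(\bm{M})={\rm{tr}}(\bm{U}_2^\top\bm{A}_{\rm{i}}\bm{V}_2)=\langle\bm{U}_2\bm{V}_2^\top,\bm{A}_{\rm{i}}\rangle$, using ${\rm{tr}}(\bm{B})={\rm{tr}}({\rm{sym}}(\bm{B}))$. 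Adding $\langle\bm{U}_1\bm{V}_1^\top,\bm{A}_{\rm{i}}\rangle$ then collapses the infinitesimal part to $\langle\bm{U}_r\bm{V}_r^\top,\bm{A}_{\rm{i}}\rangle$, matching \cref{dual matrix nuclear norm}.

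For the rank-deficient case $\sigma_n=0$ I would use the second branch of \cref{eq: ky fan k-norm}. Now the trailing tied block consists of the $t$ zero singular values, so $r=n-t$, and the dictionary becomes $\bm{U}_1=\bm{U}_r$, $\bm{V}_1=\bm{V}_r$, $\bm{V}_2=\widetilde{\bm{V}}_r$, and $\widetilde{\bm{U}}_1=\widetilde{\bm{U}}_r$; consequently $\bm{N}=\widetilde{\bm{U}}_1^\top\bm{A}_{\rm{i}}\bm{V}_2=\widetilde{\bm{U}}_r^\top\bm{A}_{\rm{i}}\widetilde{\bm{V}}_r$. Since $\bm{N}$ is $(m-r)\times(n-r)$ with $m\geq n$, it has exactly $n-r=t$ singular values, whence $\sum_{\ell=1}^t\sigma_\ell(\bm{N})=\Vert\bm{N}\Vert_*=\bigl\Vert\widetilde{\bm{U}}_r^\top\bm{A}_{\rm{i}}\widetilde{\bm{V}}_r\bigr\Vert_*$, again reproducing \cref{dual matrix nuclear norm}.

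The main obstacle I anticipate is not analytic but one of bookkeeping: verifying that the two block partitions coincide under $k=n$ in each rank regime, and in particular handling the full-rank subtlety in which $\widetilde{\bm{V}}_r$ degenerates to an empty matrix so that the nuclear-norm correction term disappears. Once the index translation is pinned down and the trace identity ${\rm{tr}}(\bm{B})={\rm{tr}}({\rm{sym}}(\bm{B}))$ together with the observation ``the sum of all singular values equals the nuclear norm'' are invoked, both branches follow directly from \Cref{pro: dual ky fan} and \Cref{the: extension dual matrix norm}.
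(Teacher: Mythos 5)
Your proposal is correct and follows essentially the same route the paper indicates: the paper obtains this corollary precisely as the $k=n$, $p=1$ specialization of the dual-valued Ky Fan norms (deferring to \Cref{pro: dual ky fan} and \Cref{the: extension dual matrix norm}, with the detailed bookkeeping omitted). Your index translation in both rank regimes, the identity $\sum_{\ell=1}^t\lambda_\ell(\bm{M})={\rm{tr}}(\bm{M})=\langle\bm{U}_2\bm{V}_2^\top,\bm{A}_{\rm{i}}\rangle$, and the observation that $\bm{N}$ has exactly $t$ singular values so the sum equals $\Vert\bm{N}\Vert_*$ are all sound and supply exactly the details the paper leaves out.
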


\begin{corollary}\label{cor: dual Frobenius norm}
Given a dual matrix $\bm{A} = \bm{A}_{\rm{s}}+\bm{A}_{\rm{i}}\epsilon\in\mathbb{DR}^{m\times n}$ $(m\geq n)$.
Then the dual-valued Frobenius norm $\Vert\cdot\Vert_F:\mathbb{DR}^{m\times n}\rightarrow \mathbb{DR}$ of $\bm{A}$ extending from the Frobenius norm $\Vert\cdot\Vert_F:\mathbb{R}^{m\times n}\rightarrow \mathbb{R}$ is expressed as
\begin{align}
\Vert\bm{A}\Vert_F =   \left\{\begin{array}{ll}
\Vert \bm{A}_{\rm{s}} \Vert_F+\frac{\langle \bm{A}_{\rm{s}},\bm{A}_{\rm{i}}\rangle}{\Vert\bm{A}_{\rm{s}}\Vert_F}\epsilon, & \bm{A}_{\rm{s}}\neq\bm{O},\\
\Vert\bm{A}_{\rm{i}}\Vert_F\epsilon, & \bm{A}_{\rm{s}}=\bm{O}.
\end{array}\right.\label{eq: forbenius norm}
\end{align}
\end{corollary}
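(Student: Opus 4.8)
The plan is to obtain \Cref{cor: dual Frobenius norm} as the special case $p=2$ of \Cref{cor: dual schatten norm}, since the Frobenius norm is precisely the Schatten $2$-norm, i.e. $\Vert\cdot\Vert_F=\Vert\cdot\Vert_{S_2}$. First I would treat the degenerate case $\bm{A}_{\rm{s}}=\bm{O}$: setting $p=2$ in the second branch of \cref{dual Schatten norm} immediately yields $\Vert\bm{A}\Vert_{S_2}=\Vert\bm{A}_{\rm{i}}\Vert_{S_2}\epsilon=\Vert\bm{A}_{\rm{i}}\Vert_F\epsilon$, which is the second branch of \cref{eq: forbenius norm}.

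For the main case $\bm{A}_{\rm{s}}\neq\bm{O}$, I would substitute $p=2$ into the first branch of \cref{dual Schatten norm} and simplify the two $p$-dependent factors. The scalar prefactor collapses as $\Vert\bm{A}_{\rm{s}}\Vert_{S_p}^{p-1}=\Vert\bm{A}_{\rm{s}}\Vert_{S_2}^{1}=\Vert\bm{A}_{\rm{s}}\Vert_F$, while the matrix appearing in the inner product collapses via the compact SVD $\bm{A}_{\rm{s}}=\bm{U}_r\bm{\Sigma}_r\bm{V}_r^\top$: since $\bm{\Sigma}_r^{p-1}=\bm{\Sigma}_r^{1}=\bm{\Sigma}_r$, we get $\bm{U}_r\bm{\Sigma}_r^{p-1}\bm{V}_r^\top=\bm{U}_r\bm{\Sigma}_r\bm{V}_r^\top=\bm{A}_{\rm{s}}$. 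Feeding both simplifications back in gives the infinitesimal part $\frac{\langle\bm{A}_{\rm{s}},\bm{A}_{\rm{i}}\rangle}{\Vert\bm{A}_{\rm{s}}\Vert_F}\epsilon$, matching the first branch of \cref{eq: forbenius norm} exactly.

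There is essentially no hard step here; the only point demanding a line of care is recognizing that $\bm{U}_r\bm{\Sigma}_r^{p-1}\bm{V}_r^\top$ reduces to $\bm{A}_{\rm{s}}$ rather than to some power, which is exactly what makes $p=2$ special. As an independent sanity check (and an alternative route bypassing the Schatten corollary entirely), one may note that the Frobenius norm is differentiable at any $\bm{A}_{\rm{s}}\neq\bm{O}$ with gradient $\bm{A}_{\rm{s}}/\Vert\bm{A}_{\rm{s}}\Vert_F$, so \Cref{the: extension dual matrix norm} and \Cref{lem_dire_gra=max} give $\mathcal{D}_{\bm{A}_{\rm{i}}}\Vert\bm{A}_{\rm{s}}\Vert_F=\langle\bm{A}_{\rm{s}}/\Vert\bm{A}_{\rm{s}}\Vert_F,\bm{A}_{\rm{i}}\rangle$ directly, confirming the same infinitesimal part. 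Either way, the result of \Cref{cor: dual Frobenius norm} follows.
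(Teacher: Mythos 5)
Your proposal is correct and follows essentially the route the paper indicates: the paper obtains this corollary as the $k=n$, $p=2$ specialization of the dual-valued Ky Fan $p$-$k$-norm (equivalently, the $p=2$ case of \Cref{cor: dual schatten norm}), and explicitly notes that the subdifferential/gradient of the Frobenius norm gives the same expression via \Cref{the: extension dual matrix norm} — both of which are exactly the two arguments you give. The simplifications $\Vert\bm{A}_{\rm{s}}\Vert_{S_2}^{1}=\Vert\bm{A}_{\rm{s}}\Vert_F$ and $\bm{U}_r\bm{\Sigma}_r^{1}\bm{V}_r^\top=\bm{A}_{\rm{s}}$ are the whole content, and you handle them correctly.
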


Indeed, the subdifferentials of the Schatten $p$-norm, the nuclear norm, and the Frobenius norm can also be employed to infer their dual continuations based on \Cref{the: extension dual matrix norm}, which yield expressions that are the same as those in the above three corollaries.
The detailed proofs are omitted.

Moreover, the dual-valued Frobenius norm in \cref{eq: forbenius norm} is consistent with the result in \cite{qi2022low} and G\^ateaux derivative of the nuclear norm is developed in \cite[Theorem 2.1]{kevckic2004orthogonality}.

So far, several dual-valued matrix norms have been proposed. 
The following remark elucidates the equivalence relationship between these norms.

\begin{remark}
Given a dual matrix $\bm{A} = \bm{A}_{\rm{s}}+\bm{A}_{\rm{i}}\epsilon\in\mathbb{DR}^{m\times n}$ $(m\geq n)$.
Then 
\begin{enumerate}
\item[${\rm{(i)}}$] $\Vert\bm{A}\Vert_{(1,p)} = \Vert\bm{A}\Vert_{(k,\infty)} = \Vert\bm{A}\Vert_{(1)} = \Vert\bm{A}\Vert_{S_\infty} = \Vert\bm{A}\Vert_2$;
\item[${\rm{(ii)}}$] $\Vert\bm{A}\Vert_{(n,1)} = \Vert\bm{A}\Vert_{(n)} = \Vert\bm{A}\Vert_{S_1}=\Vert\bm{A}\Vert_*$;
\item[${\rm{(iii)}}$] $\Vert\bm{A}\Vert_{(n,2)} = \Vert\bm{A}\Vert_{S_2}=\Vert\bm{A}\Vert_F$.
\end{enumerate}
\end{remark}

Inspired by that the Ky Fan $p$-$k$-norm of a matrix equals the vector $p$-norm of its first $k$ singular values, we intend to investigate the same property in dual algebra.
The following theorem corroborates this assertion by drawing upon our prior work on the compact dual singular value decomposition (CDSVD) of dual matrices in \cite{wei2024singular}.

\begin{theorem}
If a dual matrix has a CDSVD, then its dual-valued Ky Fan $p$-$k$-norm $(\!1\!\leq\! p\!\leq\!\infty\!)$ equals the dual-valued vector $p$-norm of its first $k$ singular values.
\end{theorem}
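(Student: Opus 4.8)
The plan is to reduce the claim to the three already-established descriptions of the dual-valued Ky Fan norms---\Cref{pro: dual ky fan p-k-norm} for $1<p<\infty$, \Cref{pro: dual ky fan} for $p=1$, and \Cref{pro: dual matrix spectral norm} for $p=\infty$---and to match each against the dual-valued vector $p$-norm of the first $k$ dual singular values computed through \Cref{pro: dual vector p norm}. Assemble the first $k$ dual singular values into a dual vector $\bm{x}=\bm{x}_{\rm{s}}+\bm{x}_{\rm{i}}\epsilon\in\mathbb{DR}^k$, where $\bm{x}_{\rm{s}}=[\sigma_1,\dots,\sigma_k]^\top$ collects the singular values of $\bm{A}_{\rm{s}}$ (obeying the clustering \cref{eq: singular values of As for k-p-norm}) and $\bm{x}_{\rm{i}}$ collects the infinitesimal parts. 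The indispensable input is the CDSVD normal form of \cite{wei2024singular}, which identifies these infinitesimal parts with precisely the quantities occurring in the Ky Fan propositions: at an index carrying a simple standard singular value the entry of $\bm{x}_{\rm{i}}$ is $\bm{u}_i^\top\bm{A}_{\rm{i}}\bm{v}_i$; across the cluster straddling position $k$ the infinitesimal parts are the eigenvalues $\lambda_1(\bm{M})\geq\cdots\geq\lambda_{t+s}(\bm{M})$ of $\bm{M}={\rm{sym}}(\bm{U}_2^\top\bm{A}_{\rm{i}}\bm{V}_2)$; and for a vanishing standard singular value they are the singular values $\sigma_\ell(\bm{N})$ of $\bm{N}=\widetilde{\bm{U}}_1^\top\bm{A}_{\rm{i}}\bm{V}_2$. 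I would record this identification first, as the backbone of every case.

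For $1<p<\infty$, the $\bm{x}_{\rm{s}}\neq\bm{0}$ branch of \cref{eq: dual vector p-norm} gives standard part $\big(\sum_{i=1}^k\sigma_i^p\big)^{1/p}=\Vert\bm{A}_{\rm{s}}\Vert_{(k,p)}$ and infinitesimal part $\Vert\bm{A}_{\rm{s}}\Vert_{(k,p)}^{1-p}\langle|\bm{x}_{\rm{s}}|^{p-2}\odot\bm{x}_{\rm{s}},\bm{x}_{\rm{i}}\rangle$. Splitting this inner product into the first $k-t$ indices and the $t$ cluster indices, using $\bm{U}_1\bm{\Sigma}_1^{p-1}\bm{V}_1^\top=\sum_{i=1}^{k-t}\sigma_i^{p-1}\bm{u}_i\bm{v}_i^\top$ together with the CDSVD identification of $\bm{x}_{\rm{i}}$, reproduces $\langle\bm{U}_1\bm{\Sigma}_1^{p-1}\bm{V}_1^\top,\bm{A}_{\rm{i}}\rangle+\sigma_k^{p-1}\sum_{\ell=1}^t\lambda_\ell(\bm{M})$, i.e. exactly the infinitesimal part of \cref{eq: ky fan k-p-norm}. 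The case $p=\infty$ is quickest: the dual vector $\infty$-norm selects the largest dual singular value under the total order, whose standard part is $\Vert\bm{A}_{\rm{s}}\Vert_2$ and whose infinitesimal part, being the maximum over the top singular-value block, equals $\lambda_{\rm{max}}(\bm{R})$, matching \Cref{pro: dual matrix spectral norm}.

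The case $p=1$ splits on whether $\sigma_k$ vanishes. When $\sigma_k\neq0$ all first $k$ standard singular values are positive, so \cref{dual vector 1-norm} yields infinitesimal part $\sum_{i=1}^k[\bm{x}_{\rm{i}}]_i$, which decomposes as $\langle\bm{U}_1\bm{V}_1^\top,\bm{A}_{\rm{i}}\rangle+\sum_{\ell=1}^t\lambda_\ell(\bm{M})$ and matches the first branch of \cref{eq: ky fan k-norm}. When $\sigma_k=0$ the cluster consists of zero standard parts, so the $\langle{\rm{sign}}(\bm{x}_{\rm{s}}),\bm{x}_{\rm{i}}\rangle$ contribution collapses to $\langle\bm{U}_1\bm{V}_1^\top,\bm{A}_{\rm{i}}\rangle$, while the support-complement term $\sum_{k\in{\rm{supp}}^c(\bm{x}_{\rm{s}})}|x_{\rm{i}}^k|$ gathers the $t$ largest nonnegative cluster infinitesimal parts, which by the CDSVD are $\sigma_1(\bm{N}),\dots,\sigma_t(\bm{N})$, matching the second branch of \cref{eq: ky fan k-norm}. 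The degenerate case $\bm{A}_{\rm{s}}=\bm{O}$ is immediate: then $\bm{A}=\bm{A}_{\rm{i}}\epsilon$ has dual singular values $\sigma_i(\bm{A}_{\rm{i}})\epsilon$, and both sides reduce to $\Vert\bm{A}_{\rm{i}}\Vert_{(k,p)}\epsilon$.

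I expect the main obstacle to be the $\sigma_k=0$ sub-case of $p=1$: there the left-singular-vector freedom forces the infinitesimal parts of the zero-block dual singular values to be the singular values of the rectangular $\bm{N}$ rather than eigenvalues of the symmetric $\bm{M}$, and the argument hinges on invoking the CDSVD normal form of \cite{wei2024singular} so that these nonnegative infinitesimal parts align exactly with the support-complement term of the dual vector $1$-norm. More generally, the delicate bookkeeping underlying all three cases is verifying that the total-order ordering of the dual singular values is compatible with the ``first $k$'' truncation inside each cluster, so that ties are resolved by infinitesimal part in precisely the order in which $\lambda_\ell(\bm{M})$ and $\sigma_\ell(\bm{N})$ are listed.
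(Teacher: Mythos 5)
Your proposal follows essentially the same route as the paper: case-split on $p$, invoke the CDSVD normal form of \cite{wei2024singular} to identify the infinitesimal parts of the first $k$ dual singular values with ${\rm{Diag}}({\rm{sym}}(\bm{U}_1^\top\bm{A}_{\rm{i}}\bm{V}_1))$ and the ordered eigenvalues of $\bm{M}={\rm{sym}}(\bm{U}_2^\top\bm{A}_{\rm{i}}\bm{V}_2)$, and match term by term against \Cref{pro: dual ky fan p-k-norm}, \Cref{pro: dual ky fan}, \Cref{pro: dual matrix spectral norm}, and \Cref{pro: dual vector p norm}. The only difference is that you also treat the $\sigma_k=0$ and $\bm{A}_{\rm{s}}=\bm{O}$ degeneracies, which the paper sidesteps by working with the compact decomposition and restricting to $1\leq k\leq r$ so that $\sigma_{\rm{s}}^k>0$ throughout.
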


\begin{proof}
Suppose that the non-zero dual matrix $\bm{A}=\bm{A}_{\rm{s}}+\bm{A}_{\rm{i}}\epsilon\in\mathbb{DR}^{m\times n}$ $(m\geq n)$ with ${\rm{Rank}}(\bm{A}_{\rm{s}})=r$ has a CDSVD $\bm{A}=\bm{U\Sigma V}^\top$, where $\bm{U}=\bm{U}_{\rm{s}}+\bm{U}_{\rm{i}}\epsilon\in\mathbb{DR}^{m\times r}$, $\bm{V}=\bm{V}_{\rm{s}}+\bm{V}_{\rm{i}}\epsilon\in\mathbb{DR}^{n\times r}$ have orthogonal columns and $\bm{\Sigma} = \bm{\Sigma}_{\rm{s}}+\bm{\Sigma}_{\rm{i}}\epsilon\in\mathbb{DR}^{r\times r}$ is diagonal positive.
Besides, denote 
\begin{align*}
\bm{\Sigma}_{\rm{s}}&= {\rm{diag}}(\Tilde{\sigma}_1\bm{I}_{r_1},\cdots,\Tilde{\sigma}_d\bm{I}_{r_d}) = {\rm{diag}}(\sigma^1_{\rm{s}},\cdots,\sigma_{\rm{s}}
^r),\\
\bm{\Sigma}_{\rm{i}}&= {\rm{diag}}(\sigma_{1,1},\cdots,\sigma_{1,r_1},\cdots,\sigma_{d,1},\cdots,\sigma_{d,r_d})={\rm{diag}}(\sigma^1_{\rm{i}},\cdots,\sigma_{\rm{i}}
^r),
\end{align*}
where $\Tilde{\sigma}_1>\cdots>\Tilde{\sigma}_d>0$, $\sum_{\ell=1}^d r_\ell=r$, and $\sigma_{\ell,1}\geq\cdots\geq\sigma_{\ell,r_\ell}$ for $\ell=1,\cdots,d$.
Let $\bm{\sigma}^k=\bm{\sigma}_{\rm{s}}^k+\bm{\sigma}_{\rm{i}}^k\epsilon\in\mathbb{DR}^k$ represent the first $k$ singular values of $\bm{A}$ with $\bm{\sigma}_{\rm{s}}^k=[\sigma^1_{\rm{s}},\cdots,\sigma_{\rm{s}}
^k]^\top$ and $\bm{\sigma}_{\rm{i}}^k=[\sigma^1_{\rm{i}},\cdots,\sigma_{\rm{i}}
^k]^\top$.

Assume that the $k$-th singular value of $\bm{A}_{\rm{s}}$ satisfies $\sigma_{\rm{s}}^k=\Tilde{\sigma}_\theta$ $(1\leq\theta\leq d)$ with $k-\sum_{\ell=1}^{\theta-1}r_\ell=t$ and $\sum_{\ell=1}^{\theta}r_\ell-k=s$.
Hence, the multiplicity of $\sigma_{\rm{s}}^k$ is $t+s$, which is equal to $r_\theta$.
Moreover, $\bm{U}_{\rm{s}}$ and $\bm{V}_{\rm{s}}$ are partitioned as $\bm{U}_{\rm{s}}=[\bm{U}_1,\bm{U}_2,\bm{U}_3]$ and $\bm{V}_{\rm{s}}=[\bm{V}_1,\bm{V}_2,\bm{V}_3]$, where $\bm{U}_1=\bm{U}_{\rm{s}}(:,1:k-t)$, $\bm{U}_2=\bm{U}_{\rm{s}}(:,k-t+1:k+s)$, $\bm{V}_1=\bm{V}_{\rm{s}}(:,1:k-t)$, $\bm{V}_2=\bm{V}_{\rm{s}}(:,k-t+1:k+s)$ and $\bm{\Sigma}_1=\bm{\Sigma}_{\rm{s}}(1:k-t,1:k-t)$.
Denote $\bm{W}={\rm{sym}}(\bm{U}_1^\top\bm{A}_{\rm{i}}\bm{V}_1)$ and $\bm{M}={\rm{sym}}(\bm{U}_2^\top\bm{A}_{\rm{i}}\bm{V}_2)$.
Then the CDSVD \cite[Theorem 3.1]{wei2024singular} implies that
\begin{align*}
{\rm{Diag}}(\bm{W}) &= {\rm{diag}}(\sigma_{1,1},\cdots,\sigma_{\theta-1,r_{\theta-1}})={\rm{diag}}(\sigma_{\rm{i}}^1,\cdots,\sigma_{\rm{i}}^{k-t}),\\
\bm{M} &= {\rm{diag}}(\sigma_{\theta,1},\cdots,\sigma_{\theta,r_{\theta}})={\rm{diag}}(\sigma_{\rm{i}}^{k-t+1},\cdots,\sigma_{\rm{i}}^{k+s}).
\end{align*}
Besides, $\sigma_{\theta,1}\geq\cdots,\geq\sigma_{\theta,r_{\theta}}$ are eigenvalues of $\bm{M}$.

We then demonstrate that $\Vert\bm{A}\Vert_{(k,p)}=\Vert\bm{\sigma}^k\Vert_p$ for $1\leq k\leq r$ and $1\leq p\leq\infty$, in view of $\Vert\bm{A}_{\rm{s}}\Vert_{(k,p)}=\Vert\bm{\sigma}_{\rm{s}}^k\Vert_p$, $\bm{A}_{\rm{s}}\neq\bm{O}$ and $\sigma^k_{\rm{s}}>0$.

\textbf{Case 1:} $p=1$.
Since $\bm{\sigma}_{\rm{s}}^k\!>\!\bm{0}$, it follows that ${\rm{sign}}(\bm{\sigma}_{\rm{s}}^k)\!=\!\mathds{1}_k$, where $\mathds{1}_k$ denotes an all-one vector of size $k$-by-1.
Besides, the set of indices where $\bm{\sigma}_{\rm{s}}^k$ is zero, written as ${\rm{supp}}^c(\bm{\sigma}_{\rm{s}}^k)$, is empty.
Thus, combining the dual-valued vector 1-norm in \Cref{pro: dual vector p norm} \cref{dual vector 1-norm} with the dual-valued Ky Fan $k$-norm in \Cref{pro: dual ky fan}, we obtain
\begin{align*}
\Vert\bm{\sigma}^k\Vert_1 &= \Vert\bm{\sigma}_{\rm{s}}^k\Vert_1 + \langle\mathds{1}_k,\bm{\sigma}_{\rm{i}}^k\rangle\epsilon = \Vert\bm{A}_{\rm{s}}\Vert_{(k)} + \left(\sum_{\ell=1}^{k-t}\sigma_{\rm{i}}^\ell+\sum_{\ell=k-t+1}^{k}\sigma_{\rm{i}}^\ell\right)\epsilon\\
&= \Vert\bm{A}_{\rm{s}}\Vert_{(k)}+\left({\rm{tr}}(\bm{W})+ \sum_{\ell=1}^t\sigma_{\theta,\ell}\right)\epsilon = \Vert\bm{A}\Vert_{(k)}.
\end{align*}

\textbf{Case 2:} $1<p<\infty$.
According to the dual-valued vector $p$-norm in \Cref{pro: dual vector p norm} \cref{eq: dual vector p-norm} and the dual-valued Ky Fan $p$-$k$-norm in \Cref{pro: dual ky fan p-k-norm}, it yields
\begin{align*}
&\Vert\bm{\sigma}^k\Vert_p = \Vert\bm{\sigma}^k_{\rm{s}}\Vert_p + \frac{\big\langle(\bm{\sigma}_{\rm{s}}^k)^{p-1},\bm{\sigma}_{\rm{i}}\big\rangle}{\Vert\bm{\sigma}_{\rm{s}}^k\Vert_p^{p-1}}\epsilon\\
=& \Vert\bm{A}_{\rm{s}}\Vert_{(k,p)} + \frac{1}{\Vert\bm{A}_{\rm{s}}\Vert_{(k,p)}^{p-1}}\left(\sum_{\ell=1}^{k-t}(\sigma_{\rm{s}}^\ell)^{p-1}(\sigma_{\rm{i}}^\ell)+\sum_{\ell=k-t+1}^{k}(\sigma_{\rm{s}}^\ell)^{p-1}(\sigma_{\rm{i}}^\ell)\right)\epsilon\\
=& \Vert\bm{A}_{\rm{s}}\Vert_{(k,p)} + \frac{1}{\Vert\bm{A}_{\rm{s}}\Vert_{(k,p)}^{p-1}}\left(\big\langle\bm{\Sigma}_1^{p-1},{\rm{Diag}}(\bm{W})\big\rangle + (\sigma_{\rm{s}}^k)^{p-1}\sum_{\ell=k-t+1}^{k}\sigma_{\rm{i}}^\ell\right)\epsilon = \Vert\bm{A}\Vert_{(k,p)}.
\end{align*}

\textbf{Case 3:} $p=\infty$.
It should be noted that $\Vert\bm{\sigma}_{\rm{s}}^k\Vert_\infty=\Vert\bm{A}_{\rm{s}}\Vert_2 = \Tilde{\sigma}_1=\sigma_{\rm{s}}^1=\cdots=\sigma_{\rm{s}}^{r_1}$.
Thus, the set of indices $\mathcal{I}=\{\ell:|\sigma_{\rm{s}}^\ell|=\Vert\bm{\sigma}_{\rm{s}}^k\Vert_\infty\}=\{1,\cdots,\min\{r_1,k\}\}$.
Let $\bm{U}_{r_1}$ and $\bm{V}_{r_1}$ represent the first $r_1$ columns of $\bm{U}_{\rm{s}}$ and $\bm{V}_{\rm{s}}$.
Denote $\bm{R}:={\rm{sym}}(\bm{U}_{r_1}^\top\bm{A}_{\rm{i}}\bm{V}_{r_1})$.
In addition, the CDSVD \cite[Theorem 3.1]{wei2024singular} indicates that 
\begin{align*}
\bm{R}={\rm{diag}}(\sigma_{1,1},\cdots,\sigma_{1,r_1}) = {\rm{diag}}(\sigma_{\rm{i}}^{1},\cdots,\sigma_{\rm{i}}^{r_1})={\rm{Diag}}(\bm{U}_{r_1}^\top\bm{A}_{\rm{i}}\bm{V}_{r_1}),
\end{align*}
and $\sigma_{\rm{i}}^{1}\geq\cdots\geq\sigma_{\rm{i}}^{r_1}$.
Then, the dual-valued vector $\infty$-norm in \Cref{pro: dual vector p norm} \cref{dual vector infty-norm} and the dual-valued spectral norm in $\Cref{pro: dual matrix spectral norm}$ derives that
\begin{align*}
\Vert\bm{\sigma}^k\Vert_\infty = \Vert\bm{\sigma}_{\rm{s}}^k\Vert_\infty + \max\limits_{\ell\in\mathcal{I}}\sigma_{\rm{i}}^\ell\epsilon = \Vert\bm{A}_{\rm{s}}\Vert_2 + \lambda_{\max}(\bm{R})\epsilon = \Vert\bm{A}\Vert_2.
\end{align*}

Hence, we conclude that $\Vert\bm{A}\Vert_{(k,p)}=\Vert\bm{\sigma}^k\Vert_p$ for $1\leq k\leq r$ and $1\leq p\leq \infty$.
\end{proof}

\subsection{Dual-Valued Operator Norms}

In the previous section, we outline a class of unitary invariant dual-valued matrix norms. The following section presents another class, referred to as the dual-valued operator norms. Specifically, the dual-valued operator 1-norm and the dual-valued operator $\infty$-norm are proposed.

\begin{proposition}\label{pro: dual operator 1-norm}
Given a dual matrix $\bm{A}=\bm{A}_{\rm{s}}+\bm{A}_{\rm{i}}\epsilon=[\bm{a}_1,\cdots,\bm{a}_n]\in\mathbb{DR}^{m\times n}$.
Then the dual-valued operator 1-norm $\Vert\cdot\Vert_1:\mathbb{DR}^{m\times n}\rightarrow\mathbb{DR}$ of $\bm{A}$ extending from the matrix operator 1-norm $\Vert\cdot\Vert_1:\mathbb{R}^{m\times n}\rightarrow\mathbb{R}$ is expressed as
\begin{align}
\Vert\bm{A}\Vert_1=\left\{\begin{array}{ll}
\max_{1\leq t\leq n}\Vert\bm{a}_t\Vert_1, & \bm{A}_{\rm{s}}\neq\bm{O},\\
\Vert\bm{A}_{\rm{i}}\Vert_1, & \bm{A}_{\rm{s}}=\bm{O}.
\end{array}\right.\label{eq: dual operator 1-norm}
\end{align}
\end{proposition}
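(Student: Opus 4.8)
The plan is to combine the dual-continuation formula of \Cref{the: extension dual matrix norm}, namely $\Vert\bm{A}\Vert_1=\Vert\bm{A}_{\rm{s}}\Vert_1+{\mathcal{D}}_{\bm{A}_{\rm{i}}}\Vert\bm{A}_{\rm{s}}\Vert_1\epsilon$, with the fact that the matrix operator $1$-norm is a pointwise maximum of column $1$-norms. The case $\bm{A}_{\rm{s}}=\bm{O}$ is read off directly from \Cref{the: extension dual matrix norm}, which gives $\Vert\bm{A}\Vert_1=\Vert\bm{A}_{\rm{i}}\Vert_1\epsilon$ and matches the second branch of \cref{eq: dual operator 1-norm}, so all the work lies in the branch $\bm{A}_{\rm{s}}\neq\bm{O}$.

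For that branch, write $\Vert\bm{X}\Vert_1=\max_{1\leq t\leq n}\psi_t(\bm{X})$, where $\psi_t(\bm{X}):=\Vert\bm{x}_t\Vert_1$ is the (real) vector $1$-norm of the $t$-th column of $\bm{X}$, and let $\bm{a}_{t,\rm{s}}$, $\bm{a}_{t,\rm{i}}$ denote the $t$-th columns of $\bm{A}_{\rm{s}}$ and $\bm{A}_{\rm{i}}$. Put $\mathcal{T}:=\{t:\psi_t(\bm{A}_{\rm{s}})=\Vert\bm{A}_{\rm{s}}\Vert_1\}$, the set of columns attaining the maximum; since $\bm{A}_{\rm{s}}\neq\bm{O}$ we have $\Vert\bm{A}_{\rm{s}}\Vert_1>0$, so $\mathcal{T}\neq\varnothing$ and $\bm{a}_{t,\rm{s}}\neq\bm{0}$ for every $t\in\mathcal{T}$. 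The central step is to evaluate the G\^ateaux derivative of this finite maximum of convex functions as ${\mathcal{D}}_{\bm{A}_{\rm{i}}}\Vert\bm{A}_{\rm{s}}\Vert_1=\max_{t\in\mathcal{T}}{\mathcal{D}}_{\bm{a}_{t,\rm{i}}}\Vert\bm{a}_{t,\rm{s}}\Vert_1$. I would prove this by a difference-quotient argument: for all sufficiently small $s>0$ continuity keeps the inactive columns strictly below the maximum, so $\Vert\bm{A}_{\rm{s}}+s\bm{A}_{\rm{i}}\Vert_1=\max_{t\in\mathcal{T}}\psi_t(\bm{A}_{\rm{s}}+s\bm{A}_{\rm{i}})$; dividing the increment by $s$, using $\psi_t(\bm{A}_{\rm{s}})=\Vert\bm{A}_{\rm{s}}\Vert_1$ for $t\in\mathcal{T}$, and interchanging the limit with the finite maximum yields the claim. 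Equivalently, one may apply \Cref{lem_dire_gra=max} together with the subdifferential of a finite max (the convex hull of the union $\bigcup_{t\in\mathcal{T}}\partial\psi_t(\bm{A}_{\rm{s}})$), over which a linear functional attains its maximum inside a single $\partial\psi_t$. Because $\psi_t$ depends only on the $t$-th column, ${\mathcal{D}}_{\bm{A}_{\rm{i}}}\psi_t(\bm{A}_{\rm{s}})={\mathcal{D}}_{\bm{a}_{t,\rm{i}}}\Vert\bm{a}_{t,\rm{s}}\Vert_1$, which is exactly the infinitesimal part of the dual-valued vector $1$-norm $\Vert\bm{a}_t\Vert_1$ furnished by \Cref{pro: dual vector p norm}.

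It then remains to assemble these pieces under the total order of \Cref{def: total order}. The standard part of $\max_{1\leq t\leq n}\Vert\bm{a}_t\Vert_1$ is $\max_{t}\Vert\bm{a}_{t,\rm{s}}\Vert_1=\Vert\bm{A}_{\rm{s}}\Vert_1$, whose set of maximizing indices is precisely $\mathcal{T}$; among those indices the largest infinitesimal part is $\max_{t\in\mathcal{T}}{\mathcal{D}}_{\bm{a}_{t,\rm{i}}}\Vert\bm{a}_{t,\rm{s}}\Vert_1$, which is exactly ${\mathcal{D}}_{\bm{A}_{\rm{i}}}\Vert\bm{A}_{\rm{s}}\Vert_1$. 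Hence $\max_{1\leq t\leq n}\Vert\bm{a}_t\Vert_1=\Vert\bm{A}_{\rm{s}}\Vert_1+{\mathcal{D}}_{\bm{A}_{\rm{i}}}\Vert\bm{A}_{\rm{s}}\Vert_1\epsilon=\Vert\bm{A}\Vert_1$, as required. The main obstacle I anticipate is the central step: justifying the active-set reduction and the interchange of the limit with the finite maximum, and then verifying that this active set coincides with the index set realizing the total-order maximum over the columns; once that identification is in place, the remainder is bookkeeping.
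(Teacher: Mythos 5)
Your proposal is correct and follows essentially the same route as the paper: the paper likewise reduces to the active column set via the subdifferential of the finite maximum (its Lemma on $\partial\Vert\bm{X}\Vert_1$ is exactly the convex hull of the active columns' vector-$1$-norm subdifferentials), evaluates $\max_{\bm{G}\in\partial\Vert\bm{A}_{\rm{s}}\Vert_1}\langle\bm{G},\bm{A}_{\rm{i}}\rangle$ as a maximum over the active indices of the per-column G\^ateaux derivatives, and then matches this with $\max_t\Vert\bm{a}_t\Vert_1$ under the total order using the dual-valued vector $1$-norm. The difference-quotient justification you sketch for the Danskin-type step is a sound substitute for the paper's citation of the max-of-convex-functions subdifferential rule.
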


\begin{proposition}\label{pro: dual operator infty norm}
Given a dual matrix $\bm{A}\!=\!\bm{A}_{\rm{s}}+\bm{A}_{\rm{i}}\epsilon\!=\![\bm{b}_1^\top,\cdots,\bm{b}_m^\top]^\top\!\in\!\mathbb{DR}^{m\times n}$.
Then the dual-valued operator $\infty$-norm $\Vert\cdot\Vert_\infty:\mathbb{DR}^{m\times n}\rightarrow\mathbb{DR}$ of $\bm{A}$ extending from the matrix operator $\infty$-norm $\Vert\cdot\Vert_\infty:\mathbb{R}^{m\times n}\rightarrow\mathbb{R}$ is expressed as
\begin{align}
\Vert\bm{A}\Vert_\infty=\left\{\begin{array}{ll}
\max_{1\leq t\leq m}\Vert\bm{b}_t\Vert_1, & \bm{A}_{\rm{s}}\neq\bm{O},\\
\Vert\bm{A}_{\rm{i}}\Vert_\infty, & \bm{A}_{\rm{s}}=\bm{O}.
\end{array}\right.
\end{align}
\end{proposition}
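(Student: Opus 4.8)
The plan is to reduce this statement to \Cref{pro: dual operator 1-norm} by transposition, exploiting the classical real-matrix identity $\Vert\bm{X}\Vert_\infty=\Vert\bm{X}^\top\Vert_1$ (the maximum absolute row sum of $\bm{X}$ equals the maximum absolute column sum of $\bm{X}^\top$). First I would observe that this identity holds for \emph{every} real matrix, so in particular it holds along the entire ray $\bm{A}_{\rm{s}}+t\bm{A}_{\rm{i}}$ for $t\geq 0$, not merely at $\bm{A}_{\rm{s}}$ itself.

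Second, I would transfer this to the dual continuation via \Cref{def: dual extension}. Writing $\Vert\bm{A}\Vert_\infty=\Vert\bm{A}_{\rm{s}}\Vert_\infty+\mathcal{D}_{\bm{A}_{\rm{i}}}\Vert\bm{A}_{\rm{s}}\Vert_\infty\epsilon$, the standard part equals $\Vert\bm{A}_{\rm{s}}^\top\Vert_1$ by the real identity, while the infinitesimal part satisfies
\begin{align*}
\mathcal{D}_{\bm{A}_{\rm{i}}}\Vert\bm{A}_{\rm{s}}\Vert_\infty
&=\lim_{t\downarrow 0}\frac{\Vert\bm{A}_{\rm{s}}+t\bm{A}_{\rm{i}}\Vert_\infty-\Vert\bm{A}_{\rm{s}}\Vert_\infty}{t}\\
&=\lim_{t\downarrow 0}\frac{\Vert\bm{A}_{\rm{s}}^\top+t\bm{A}_{\rm{i}}^\top\Vert_1-\Vert\bm{A}_{\rm{s}}^\top\Vert_1}{t}=\mathcal{D}_{\bm{A}_{\rm{i}}^\top}\Vert\bm{A}_{\rm{s}}^\top\Vert_1,
\end{align*}
where the middle equality is the ray-wise real identity applied inside the difference quotient. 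Hence $\Vert\bm{A}\Vert_\infty=\Vert\bm{A}^\top\Vert_1$ as an equality of dual numbers.

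Third, I would apply \Cref{pro: dual operator 1-norm} to the dual matrix $\bm{A}^\top\in\mathbb{DR}^{n\times m}$. Its columns are exactly the rows $\bm{b}_1,\cdots,\bm{b}_m$ of $\bm{A}$, and $(\bm{A}^\top)_{\rm{s}}=\bm{A}_{\rm{s}}^\top=\bm{O}$ if and only if $\bm{A}_{\rm{s}}=\bm{O}$. Thus, when $\bm{A}_{\rm{s}}\neq\bm{O}$, the dual-valued operator $1$-norm of $\bm{A}^\top$ is $\max_{1\leq t\leq m}\Vert\bm{b}_t\Vert_1$, a maximum of dual-valued vector $1$-norms taken under the total order; and when $\bm{A}_{\rm{s}}=\bm{O}$, it is $\Vert\bm{A}_{\rm{i}}^\top\Vert_1$, which equals $\Vert\bm{A}_{\rm{i}}\Vert_\infty$ again by the real identity. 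This yields precisely the claimed expression.

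The argument is essentially a verification, so no step is a serious obstacle; the only point demanding care is confirming that $\Vert\bm{A}\Vert_\infty=\Vert\bm{A}^\top\Vert_1$ is legitimate at the level of dual numbers, that is, that both the standard and the infinitesimal parts transfer under transposition, which is exactly what the ray-wise form of the real identity guarantees. An alternative, more computational route would bypass transposition and instead mimic the proof of \Cref{pro: dual operator 1-norm} directly from \Cref{the: extension dual matrix norm} by computing the G\^ateaux derivative of the maximum-absolute-row-sum function at $\bm{A}_{\rm{s}}$. I would prefer the transposition route, since it reuses an already-established proposition and avoids re-deriving the relevant subdifferential.
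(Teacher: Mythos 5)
Your argument is correct, but it is not the route the paper takes. The paper in fact omits a detailed proof of this proposition, remarking only that ``the subdifferential and the dual continuation of the matrix operator $\infty$-norm can be developed'' in the same way as for the operator $1$-norm; that is, the intended proof mirrors the proof of \Cref{pro: dual operator 1-norm}, computing the subdifferential of the maximum-absolute-row-sum function from \Cref{lem: sub of vector p norm} and then invoking \Cref{the: extension dual matrix norm}. Your transposition argument is a genuine alternative: since $\Vert\bm{X}\Vert_\infty=\Vert\bm{X}^\top\Vert_1$ holds identically on $\mathbb{R}^{m\times n}$, the difference quotients defining the G\^ateaux derivatives coincide term by term along the ray $\bm{A}_{\rm{s}}+t\bm{A}_{\rm{i}}$, so both the standard and infinitesimal parts of the dual continuations agree and $\Vert\bm{A}\Vert_\infty=\Vert\bm{A}^\top\Vert_1$ as dual numbers; applying \Cref{pro: dual operator 1-norm} to $\bm{A}^\top$, whose columns are precisely $\bm{b}_1,\cdots,\bm{b}_m$, then gives the claimed formula in both cases (in the case $\bm{A}_{\rm{s}}=\bm{O}$ one also uses $\Vert\bm{A}_{\rm{i}}^\top\Vert_1=\Vert\bm{A}_{\rm{i}}\Vert_\infty$). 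What your route buys is economy: it reuses an already-established proposition and entirely avoids re-deriving the subdifferential of the row-sum norm; what the paper's route buys is uniformity with the treatment of the other operator norms and an explicit description of $\partial\Vert\bm{X}\Vert_\infty$, which could be of independent use. Both are sound.
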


To demonstrate these two propositions, it is necessary to discuss the subdifferential of the matrix operator 1-norm and that of the matrix operator $\infty$-norm. 

\begin{lemma}\label{lem: sub of operator 1-norm}
Let $\bm{X}=[\bm{x}_1,\cdots,\bm{x}_n]\in\mathbb{R}^{m\times n}$.
Then $\bm{G}\in\partial\Vert\bm{X}\Vert_1$ if and only if $\bm{G}=[\bm{g}_1,\cdots,\bm{g}_n]\in\mathbb{R}^{m\times n}$ satisfies
\begin{align}\left\{
\begin{array}{ll}
\bm{g}_t=\bm{0}, & t\notin\mathcal{I}, \\
\bm{g}_t=\beta_t\big({\rm{sign}}(\bm{x}_t)+\bm{w}_t\big), & t\in\mathcal{I}, 
\end{array}\right.
\end{align}
where $\mathcal{I}\!=\!\{t\!:\!\Vert\bm{X}\Vert_1\! =\! \Vert\bm{x}_t\Vert_1\}$, $\beta_t\!\geq\! 0$, $\sum\limits_{t\in\mathcal{I}}\beta_t\!=\!1$, $\Vert\bm{w}_t\Vert_\infty\!\leq\!1$ and ${\rm{supp}}(\bm{w}_t)\!\subseteq\!{\rm{supp}}^c(\bm{x}_t)$.
\end{lemma}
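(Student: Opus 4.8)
The plan is to exploit the representation of the matrix operator 1-norm as a pointwise maximum of column vector 1-norms, and then apply the subdifferential calculus for a maximum of finitely many convex functions, exactly as was done in the proof of \Cref{lem: subdiff of phi(sigma) 1<p<infty}.

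First I would write $\Vert\bm{X}\Vert_1 = \max_{1\leq t\leq n} f_t(\bm{X})$, where $f_t(\bm{X}) := \Vert\bm{x}_t\Vert_1$ is the vector 1-norm of the $t$-th column of $\bm{X}$, viewed as a function of the whole matrix $\bm{X}\in\mathbb{R}^{m\times n}$. Each $f_t$ is finite, convex, and continuous, so \cite[Lemma 3.1.13]{Nest2018convex} gives $\partial\Vert\bm{X}\Vert_1 = {\rm{conv}}\bigcup_{t\in\mathcal{I}}\partial f_t(\bm{X})$, where $\mathcal{I}=\{t:\Vert\bm{x}_t\Vert_1=\Vert\bm{X}\Vert_1\}$ is the set of norm-attaining columns. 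This already yields the claimed support structure: only the active columns can carry a nonzero block of the subgradient.

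Next I would identify each $\partial f_t(\bm{X})$. Since $f_t$ is the composition of the vector 1-norm with the linear projection onto the $t$-th column, a matrix lies in $\partial f_t(\bm{X})$ precisely when its $t$-th column belongs to the subdifferential $\partial\Vert\bm{x}_t\Vert_1$ and all other columns vanish. By \Cref{lem: sub of vector p norm} \cref{eq: sub of vector 1}, that $t$-th column has the form ${\rm{sign}}(\bm{x}_t)+\bm{w}_t$ with ${\rm{supp}}(\bm{w}_t)\subseteq{\rm{supp}}^c(\bm{x}_t)$ and $\Vert\bm{w}_t\Vert_\infty\leq1$.

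Finally I would unfold the convex hull. A generic element of ${\rm{conv}}\bigcup_{t\in\mathcal{I}}\partial f_t(\bm{X})$ is a convex combination of matrices each supported on a single active column; grouping the combination by the column index $t$ and using the convexity of each $\partial f_t(\bm{X})$ collapses it to $\bm{G}=\sum_{t\in\mathcal{I}}\beta_t\bm{G}_t$ with $\beta_t\geq0$, $\sum_{t\in\mathcal{I}}\beta_t=1$, and $\bm{G}_t\in\partial f_t(\bm{X})$. Reading off columns gives $\bm{g}_t=\beta_t\big({\rm{sign}}(\bm{x}_t)+\bm{w}_t\big)$ for $t\in\mathcal{I}$ and $\bm{g}_t=\bm{0}$ for $t\notin\mathcal{I}$, which is exactly the asserted characterization, and the ``if and only if'' holds because the max formula is an equality. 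The main obstacle is the bookkeeping in this last step: one must justify that an arbitrary convex combination over the union regroups into per-column contributions, which relies on each $\partial f_t(\bm{X})$ being convex (so the inner combination stays inside one subdifferential) and on the absorbed weights combining into a single probability vector $(\beta_t)_{t\in\mathcal{I}}$.
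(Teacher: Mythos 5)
Your proposal is correct and takes essentially the same route as the paper: the paper's own proof is a one-sentence remark that the operator $1$-norm equals $\max_{1\leq t\leq n}\Vert\bm{x}_t\Vert_1$ and that the subdifferential then follows from the vector $1$-norm and $\infty$-norm subdifferentials of \Cref{lem: sub of vector p norm}, which is precisely the max-of-convex-functions argument you spell out. Your version merely makes explicit the convex-hull formula for the subdifferential of a finite maximum, the chain rule for the column-selection map, and the regrouping of the convex combination, but the decomposition and the key facts are identical.
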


\begin{proof}
Since the matrix operator 1-norm is expressed as $\Vert\bm{X}\Vert_1=\max_{1\leq t\leq n}\Vert\bm{x}_t\Vert_1$, the subdifferential of $\Vert\bm{X}\Vert_1$ is derived directly from the subdifferentials of vector 1-norm and vector $\infty$-norm shown in \Cref{lem: sub of vector p norm} \cref{eq: sub of vector 1,eq: sub of vector infty}. 
\end{proof}

\begin{proof}[Proof of \Cref{pro: dual operator 1-norm}]
Based on \Cref{lem: sub of operator 1-norm}, it derives that
\begin{align*}
\max_{\bm{G}\in\partial\Vert\bm{A}_{\rm{s}}\Vert_1}\langle\bm{G},\bm{A}_{\rm{i}}\rangle &= \max_{\substack{t\in\mathcal{I}, \;\Vert\bm{w}_t\Vert_{\infty}\leq 1\\{\rm{supp}}(\bm{w}_t)\subseteq{\rm{supp}}^c(\bm{a}_{\rm{s}}^t)}}\Big(\big\langle{\rm{sign}}(\bm{a}_{\rm{s}}^t),\bm{a}_{\rm{i}}^t\big\rangle+\big\langle\bm{w}_t,\bm{a}_{\rm{i}}^t\big\rangle\Big)\\
&= \max_{t\in\mathcal{I}}\Big(\big\langle{\rm{sign}}(\bm{a}_{\rm{s}}^t),\bm{a}_{\rm{i}}^t\big\rangle+\sum_{k\in{\rm{supp}}^c(\bm{a}_{\rm{s}}^t)}\big|(\bm{a}_{\rm{i}}^t)_k\big|\Big),
\end{align*}
where $\bm{A}_{\rm{s}}\!=\![\bm{a}_{\rm{s}}^1,\cdots,\bm{a}_{\rm{s}}^n]$, $\bm{A}_{\rm{i}}\!=\![\bm{a}_{\rm{i}}^1,\cdots,\bm{a}_{\rm{i}}^n]$ and $\mathcal{I}\!=\!\{t\!:\!\Vert\bm{A}_{\rm{s}}\Vert_1\!=\!\Vert\bm{a}_{\rm{s}}^t\Vert_1\}$.
Besides, $(\bm{a}_{\rm{i}}^t)_k$ represents the $k$-th element of $\bm{a}_{\rm{i}}^t$.
Thus, combining the dual-valued vector 1-norm in \Cref{pro: dual vector p norm} \cref{dual vector 1-norm} and the total order of dual numbers, we obtain that 
\begin{align*}
\max_{1\leq t\leq n}\Vert\bm{a}^t\Vert_1 &= \max_{1\leq t\leq n}\bigg(\Vert\bm{a}_{\rm{s}}^t\Vert_1 + \Big(\big\langle{\rm{sign}}(\bm{a}_{\rm{s}}^t),\bm{a}_{\rm{i}}^t\big\rangle+\sum_{k\in{\rm{supp}}^c(\bm{a}_{\rm{s}}^t)}\big|(\bm{a}_{\rm{i}}^t)_k\big|\Big)\epsilon\bigg)\\
&= \max_{1\leq t\leq n}\Vert\bm{a}_{\rm{s}}^t\Vert_1 + \max_{t\in\mathcal{I}}\Big(\big\langle{\rm{sign}}(\bm{a}_{\rm{s}}^t),\bm{a}_{\rm{i}}^t\big\rangle+\sum_{k\in{\rm{supp}}^c(\bm{a}_{\rm{s}}^t)}\big|(\bm{a}_{\rm{i}}^t)_k\big|\Big)\epsilon\\
&= \Vert\bm{A}_{\rm{s}}\Vert_1 + \max_{\bm{G}\in\partial\Vert\bm{A}_{\rm{s}}\Vert_1}\langle\bm{G},\bm{A}_{\rm{i}}\rangle = \Vert\bm{A}\Vert_1.
\end{align*}

Hence, the expression \cref{eq: dual operator 1-norm} is established in conjunction with \Cref{the: extension dual matrix norm}.
\end{proof}

Similarly, the subdifferential and the dual continuation of the matrix operator $\infty$-norm can be developed, whose detailed proofs are omitted.
In fact, the expressions of the dual-valued operator 1-norm and the dual-valued operator $\infty$-norm are consistent with the results proposed by Miao and Huang in \cite{miao2023norms}.
Moreover, \Cref{the: dual matrix operator alpha beta norm} facilitates the development of other useful dual-valued operator norms.

\subsection{Other Dual-Valued Functions}\label{sec: other dual valued functions}

In addition to the norms, the trace and determinant of a dual matrix are also of significant importance in dual algebra. 

\begin{proposition}\label{pro: trace}
Given $\bm{A}=\bm{A}_{\rm{s}}+\bm{A}_{\rm{i}}\epsilon\in\mathbb{DR}^{n\times n}$.
Then the dual-valued trace ${\rm{tr}}:\mathbb{DR}^{n\times n}\rightarrow \mathbb{DR}$ of $\bm{A}$ extending from the trace ${\rm{tr}}:\mathbb{R}^{n\times n}\rightarrow \mathbb{R}$ is captured by
\begin{align}
{\rm{tr}}(\bm{A}) = {\rm{tr}}(\bm{A}_{\rm{s}}) + {\rm{tr}}(\bm{A}_{\rm{i}})\epsilon.
\end{align}
\end{proposition}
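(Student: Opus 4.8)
The plan is to apply \Cref{def: dual extension} directly, reducing the claim to a single G\^ateaux derivative computation. According to the definition of dual continuation, the dual-valued trace is obtained by attaching the G\^ateaux derivative of the real trace as the infinitesimal part, that is ${\rm{tr}}(\bm{A}) = {\rm{tr}}(\bm{A}_{\rm{s}}) + {\mathcal{D}}_{\bm{A}_{\rm{i}}}{\rm{tr}}(\bm{A}_{\rm{s}})\epsilon$. Thus the entire proof amounts to evaluating ${\mathcal{D}}_{\bm{A}_{\rm{i}}}{\rm{tr}}(\bm{A}_{\rm{s}})$.

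The key observation, which makes this case markedly simpler than the norm cases, is that the trace is a \emph{linear} functional on $\mathbb{R}^{n\times n}$, hence smooth everywhere. Unlike the dual-valued norms, there is no need to invoke \Cref{lem_dire_gra=max} or to compute any subdifferential: the G\^ateaux derivative of a linear map at any point along any direction is just the map applied to that direction. Concretely, I would compute
\begin{align*}
{\mathcal{D}}_{\bm{A}_{\rm{i}}}{\rm{tr}}(\bm{A}_{\rm{s}}) = \lim_{t\downarrow0}\frac{{\rm{tr}}(\bm{A}_{\rm{s}}+t\bm{A}_{\rm{i}})-{\rm{tr}}(\bm{A}_{\rm{s}})}{t} = \lim_{t\downarrow0}\frac{t\,{\rm{tr}}(\bm{A}_{\rm{i}})}{t} = {\rm{tr}}(\bm{A}_{\rm{i}}),
\end{align*}
where the middle equality uses the additivity and homogeneity of the trace.

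Substituting this back into the dual continuation formula yields ${\rm{tr}}(\bm{A}) = {\rm{tr}}(\bm{A}_{\rm{s}}) + {\rm{tr}}(\bm{A}_{\rm{i}})\epsilon$, as claimed. There is essentially no obstacle to overcome here; the only thing worth emphasizing is \emph{why} the derivation is so short, namely that the smoothness and linearity of the trace collapse the general G\^ateaux-derivative machinery of \Cref{def: dual extension} into a triviality. This stands in contrast to the matrix norms treated earlier, where nonsmoothness at the relevant points forced a genuine subdifferential analysis.
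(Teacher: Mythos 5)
Your proof is correct. It differs mildly from the paper's route: the paper dispatches \Cref{pro: trace} and \Cref{pro: det} together in one sentence by quoting the gradient formulas $\nabla_{\bm{X}}{\rm{tr}}(\bm{X})=\bm{I}_n$ and $\nabla_{\bm{X}}{\rm{det}}(\bm{X})=({\rm{adj}}\,\bm{X})^\top$ and then invoking \Cref{lem_dire_gra=max} together with \Cref{def: dual extension}, so that ${\mathcal{D}}_{\bm{A}_{\rm{i}}}{\rm{tr}}(\bm{A}_{\rm{s}})=\langle\bm{I}_n,\bm{A}_{\rm{i}}\rangle={\rm{tr}}(\bm{A}_{\rm{i}})$, whereas you evaluate the difference quotient directly from the linearity of the trace and explicitly bypass the Max Formula and any subdifferential considerations. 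Both arguments are valid and equally short; yours is more self-contained and makes transparent why the trace case is trivial, while the paper's phrasing has the advantage of treating the trace and the determinant uniformly (for the determinant, which is not linear, the gradient formula genuinely carries the computation, so the common template is not redundant there).
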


\begin{proposition}\label{pro: det}
Given $\bm{A}=\bm{A}_{\rm{s}}+\bm{A}_{\rm{i}}\epsilon\in\mathbb{DR}^{n\times n}$.
Then the dual-valued determinant ${\rm{det}}\!:\!\mathbb{DR}^{n\times n}\rightarrow \mathbb{DR}$ of $\bm{A}$ extending from ${\rm{det}}\!:\!\mathbb{R}^{n\times n}\rightarrow \mathbb{R}$ is captured by
\begin{align}
{\rm{det}}(\bm{A}) = {\rm{det}}(\bm{A}_{\rm{s}}) + \big\langle({\rm{adj}}\,\bm{A}_{\rm{s}})^\top,\bm{A}_{\rm{i}}\big\rangle\epsilon,
\end{align}
where ${\rm{adj}}\,\bm{A}_{\rm{s}}$ represents the adjoint matrix of $\bm{A}_{\rm{s}}$. 
\end{proposition}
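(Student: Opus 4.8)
The plan is to apply the dual continuation formula of \Cref{def: dual extension} directly with $\phi = {\rm{det}}$, $a=\bm{A}_{\rm{s}}$, and $b=\bm{A}_{\rm{i}}$, so that ${\rm{det}}(\bm{A}) = {\rm{det}}(\bm{A}_{\rm{s}}) + \mathcal{D}_{\bm{A}_{\rm{i}}}{\rm{det}}(\bm{A}_{\rm{s}})\epsilon$; everything then reduces to evaluating the G\^ateaux derivative $\mathcal{D}_{\bm{A}_{\rm{i}}}{\rm{det}}(\bm{A}_{\rm{s}}) = \lim_{t\downarrow0}\frac{{\rm{det}}(\bm{A}_{\rm{s}}+t\bm{A}_{\rm{i}})-{\rm{det}}(\bm{A}_{\rm{s}})}{t}$ and rewriting it as the inner product $\langle({\rm{adj}}\,\bm{A}_{\rm{s}})^\top,\bm{A}_{\rm{i}}\rangle$. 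Because the determinant is a polynomial in its entries, it is smooth, so this one-sided limit exists and coincides with the ordinary derivative; in particular no separate treatment of the case $\bm{A}_{\rm{s}}=\bm{O}$ is required, in contrast with the norm propositions.

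To compute the derivative I would exploit the multilinearity of the determinant in its columns. Writing $\bm{A}_{\rm{s}} = [\bm{c}_1,\cdots,\bm{c}_n]$ and $\bm{A}_{\rm{i}} = [\bm{d}_1,\cdots,\bm{d}_n]$ columnwise, I expand ${\rm{det}}(\bm{A}_{\rm{s}}+t\bm{A}_{\rm{i}}) = {\rm{det}}([\bm{c}_1+t\bm{d}_1,\cdots,\bm{c}_n+t\bm{d}_n])$ and collect the coefficient of $t$. Multilinearity gives
\begin{align*}
\mathcal{D}_{\bm{A}_{\rm{i}}}{\rm{det}}(\bm{A}_{\rm{s}}) = \sum_{j=1}^n {\rm{det}}\big([\bm{c}_1,\cdots,\bm{c}_{j-1},\bm{d}_j,\bm{c}_{j+1},\cdots,\bm{c}_n]\big),
\end{align*}
i.e.\ each column of $\bm{A}_{\rm{s}}$ is replaced in turn by the corresponding column of $\bm{A}_{\rm{i}}$. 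Performing a cofactor (Laplace) expansion of the $j$-th summand along its $j$-th column, and noting that the relevant cofactors do not depend on that column, expresses it as $\sum_{i=1}^n (\bm{A}_{\rm{i}})_{ij}C_{ij}$, where $C_{ij}$ is the $(i,j)$ cofactor of $\bm{A}_{\rm{s}}$.

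The final step is purely a matter of recognizing the adjugate: using $({\rm{adj}}\,\bm{A}_{\rm{s}})_{ji} = C_{ij}$, the double sum becomes $\sum_{i,j}(\bm{A}_{\rm{i}})_{ij}({\rm{adj}}\,\bm{A}_{\rm{s}})_{ji} = {\rm{tr}}({\rm{adj}}(\bm{A}_{\rm{s}})\bm{A}_{\rm{i}})$, which is exactly the Frobenius inner product $\langle({\rm{adj}}\,\bm{A}_{\rm{s}})^\top,\bm{A}_{\rm{i}}\rangle$. Substituting back into the dual continuation formula yields the claim. An equivalent but quicker route on the open dense set where $\bm{A}_{\rm{s}}$ is nonsingular is to factor ${\rm{det}}(\bm{A}_{\rm{s}}+t\bm{A}_{\rm{i}}) = {\rm{det}}(\bm{A}_{\rm{s}}){\rm{det}}(\bm{I}+t\bm{A}_{\rm{s}}^{-1}\bm{A}_{\rm{i}})$, apply ${\rm{det}}(\bm{I}+t\bm{B}) = 1+t\,{\rm{tr}}(\bm{B})+O(t^2)$ together with ${\rm{adj}}(\bm{A}_{\rm{s}}) = {\rm{det}}(\bm{A}_{\rm{s}})\bm{A}_{\rm{s}}^{-1}$, and then extend to singular $\bm{A}_{\rm{s}}$ by continuity since both sides are polynomials in the entries. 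The only mild obstacle is the index bookkeeping needed to match the cofactor indices against the transposed adjugate so that the trace/inner-product identification is exactly correct; the analysis itself is routine, as the determinant is everywhere differentiable.
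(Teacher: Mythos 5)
Your proposal is correct and follows essentially the same route as the paper: apply \Cref{def: dual extension} and identify the G\^ateaux derivative of the determinant along $\bm{A}_{\rm{i}}$ with $\langle(\operatorname{adj}\bm{A}_{\rm{s}})^\top,\bm{A}_{\rm{i}}\rangle$. The only difference is that the paper simply cites the gradient identity $\nabla_{\bm{X}}\det(\bm{X})=(\operatorname{adj}\bm{X})^\top$ from Horn and Johnson, whereas you derive it from multilinearity and cofactor expansion (and correctly note that smoothness makes the one-sided limit an ordinary derivative, with no special case needed for $\bm{A}_{\rm{s}}=\bm{O}$).
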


Taking advantage of the fact that $\nabla_{\bm{X}}{\rm{tr}}(\bm{X}) = \bm{I}_{n}$ and $\nabla_{\bm{X}}{\rm{det}}(\bm{X}) = ({\rm{adj}}\,\bm{X})^\top$ for any matrix $\bm{X}\in\mathbb{R}^{n\times n}$, as detailed in \cite{horn2012matrix}, we derive the above two propositions based on \Cref{lem_dire_gra=max} and \Cref{def: dual extension}.
In addition, the fundamental properties of the dual-valued determinant are elucidated as follows.

\begin{proposition}
Given $\bm{A},\bm{B}\in\mathbb{DR}^{n\times n}$ and $c\in\mathbb{DR}$. 
Then $({\rm{i}})$ ${\rm{det}}(\bm{AB}) = {\rm{det}}(\bm{A}){\rm{det}}(\bm{B})$; 
$({\rm{ii}})$ ${\rm{det}}(\bm{A}^\top) = {\rm{det}}(\bm{A})$;
and $({\rm{iii}})$ ${\rm{det}}(c\bm{A}) = c^n{\rm{det}}(\bm{A})$.
\end{proposition}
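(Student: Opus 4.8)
The plan is to work directly from the closed form in \Cref{pro: det}. First I would rewrite its infinitesimal part as a trace: since $\langle\bm{X},\bm{Y}\rangle={\rm{tr}}(\bm{X}^\top\bm{Y})$, we have $\langle({\rm{adj}}\,\bm{A}_{\rm{s}})^\top,\bm{A}_{\rm{i}}\rangle={\rm{tr}}({\rm{adj}}(\bm{A}_{\rm{s}})\bm{A}_{\rm{i}})$, so that ${\rm{det}}(\bm{A})={\rm{det}}(\bm{A}_{\rm{s}})+{\rm{tr}}({\rm{adj}}(\bm{A}_{\rm{s}})\bm{A}_{\rm{i}})\epsilon$. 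Each of the three identities then reduces to matching standard and infinitesimal parts, where the standard parts are governed by the corresponding real-matrix identities and the infinitesimal parts are handled with three classical facts: ${\rm{adj}}(\bm{X}\bm{Y})={\rm{adj}}(\bm{Y}){\rm{adj}}(\bm{X})$, the fundamental relation $\bm{X}\,{\rm{adj}}(\bm{X})={\rm{adj}}(\bm{X})\bm{X}={\rm{det}}(\bm{X})\bm{I}$, and the cyclic invariance of the trace.

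For (i) I would expand $\bm{AB}=\bm{A}_{\rm{s}}\bm{B}_{\rm{s}}+(\bm{A}_{\rm{s}}\bm{B}_{\rm{i}}+\bm{A}_{\rm{i}}\bm{B}_{\rm{s}})\epsilon$ and apply the formula. The standard part ${\rm{det}}(\bm{A}_{\rm{s}}\bm{B}_{\rm{s}})={\rm{det}}(\bm{A}_{\rm{s}}){\rm{det}}(\bm{B}_{\rm{s}})$ is immediate. For the infinitesimal part I would substitute ${\rm{adj}}(\bm{A}_{\rm{s}}\bm{B}_{\rm{s}})={\rm{adj}}(\bm{B}_{\rm{s}}){\rm{adj}}(\bm{A}_{\rm{s}})$ and split the trace into two terms; the first collapses via ${\rm{adj}}(\bm{A}_{\rm{s}})\bm{A}_{\rm{s}}={\rm{det}}(\bm{A}_{\rm{s}})\bm{I}$ to ${\rm{det}}(\bm{A}_{\rm{s}}){\rm{tr}}({\rm{adj}}(\bm{B}_{\rm{s}})\bm{B}_{\rm{i}})$, and the second, after a cyclic rotation of the trace so that $\bm{B}_{\rm{s}}{\rm{adj}}(\bm{B}_{\rm{s}})={\rm{det}}(\bm{B}_{\rm{s}})\bm{I}$ appears, collapses to ${\rm{det}}(\bm{B}_{\rm{s}}){\rm{tr}}({\rm{adj}}(\bm{A}_{\rm{s}})\bm{A}_{\rm{i}})$. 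These are exactly the two cross terms obtained by expanding ${\rm{det}}(\bm{A}){\rm{det}}(\bm{B})$ as a product of dual numbers, so the identity follows.

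The remaining two are lighter. For (ii) I use ${\rm{det}}(\bm{A}_{\rm{s}}^\top)={\rm{det}}(\bm{A}_{\rm{s}})$ and ${\rm{adj}}(\bm{A}_{\rm{s}}^\top)=({\rm{adj}}\,\bm{A}_{\rm{s}})^\top$, whereupon ${\rm{tr}}(({\rm{adj}}\,\bm{A}_{\rm{s}})^\top\bm{A}_{\rm{i}}^\top)={\rm{tr}}((\bm{A}_{\rm{i}}\,{\rm{adj}}(\bm{A}_{\rm{s}}))^\top)={\rm{tr}}({\rm{adj}}(\bm{A}_{\rm{s}})\bm{A}_{\rm{i}})$ by transpose- and cyclic-invariance of the trace. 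For (iii) with $c=c_{\rm{s}}+c_{\rm{i}}\epsilon$ I first record $c^n=c_{\rm{s}}^n+nc_{\rm{s}}^{n-1}c_{\rm{i}}\epsilon$ (using $\epsilon^2=0$) and $c\bm{A}=c_{\rm{s}}\bm{A}_{\rm{s}}+(c_{\rm{s}}\bm{A}_{\rm{i}}+c_{\rm{i}}\bm{A}_{\rm{s}})\epsilon$; then ${\rm{det}}(c_{\rm{s}}\bm{A}_{\rm{s}})=c_{\rm{s}}^n{\rm{det}}(\bm{A}_{\rm{s}})$ and ${\rm{adj}}(c_{\rm{s}}\bm{A}_{\rm{s}})=c_{\rm{s}}^{n-1}{\rm{adj}}(\bm{A}_{\rm{s}})$, and the only nonstandard step is ${\rm{tr}}({\rm{adj}}(\bm{A}_{\rm{s}})\bm{A}_{\rm{s}})={\rm{tr}}({\rm{det}}(\bm{A}_{\rm{s}})\bm{I})=n\,{\rm{det}}(\bm{A}_{\rm{s}})$; collecting terms reproduces $c^n{\rm{det}}(\bm{A})$.

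I expect the only real obstacle to be the infinitesimal part of (i): one must apply the reversed-order adjugate-product rule and then perform two separate collapses to a scalar multiple of the identity, with a cyclic trace rotation inserted at the right place for the second term. Conceptually, all three identities are explained at once by the observation that the formula of \Cref{pro: det} is precisely the Leibniz determinant of $\bm{A}$ computed entrywise in the commutative ring $\mathbb{DR}$ (its $\epsilon$-coefficient being the Jacobi-formula derivative ${\rm{tr}}({\rm{adj}}(\bm{A}_{\rm{s}})\bm{A}_{\rm{i}})$); since multiplicativity, transpose invariance, and the scaling law hold over any commutative ring, this viewpoint gives an alternative, computation-free route, which I would record as a remark.
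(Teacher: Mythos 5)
Your proposal is correct and follows essentially the same route as the paper: both rest on the identities $\bm{X}\,{\rm{adj}}(\bm{X})={\rm{adj}}(\bm{X})\bm{X}={\rm{det}}(\bm{X})\bm{I}_n$, ${\rm{adj}}(\bm{X}^\top)=({\rm{adj}}\,\bm{X})^\top$, ${\rm{adj}}(c\bm{X})=c^{n-1}{\rm{adj}}\,\bm{X}$, and ${\rm{adj}}(\bm{XY})=({\rm{adj}}\,\bm{Y})({\rm{adj}}\,\bm{X})$, combined with direct expansion of the formula in \Cref{pro: det}; you merely carry out explicitly the trace manipulations that the paper leaves as ``demonstrated directly.'' Your closing observation that the formula is the Leibniz determinant over the commutative ring $\mathbb{DR}$ is a nice uniform explanation, but it is an optional remark rather than a departure from the paper's argument.
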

\begin{proof}
We have known from \cite{horn2012matrix} that $({\rm{adj}}\,\bm{X})\cdot\bm{X} = \bm{X}\cdot({\rm{adj}}\,\bm{X})={\rm{det}}(\bm{X})\cdot\bm{I}_n$, ${\rm{adj}}(\bm{X}^\top) = ({\rm{adj}}\,\bm{X})^\top$, ${\rm{adj}}(c\bm{X}) = c^{n-1}({\rm{adj}}\,\bm{X})$, and ${\rm{adj}}\,(\bm{XY}) = ({\rm{adj}}\,\bm{Y})({\rm{adj}}\,\bm{X})$ for $\bm{X},\bm{Y}\in\mathbb{R}^{n\times n}$.
Thus, the three properties are demonstrated directly based on the dual-valued power function in formula \cref{eq: (a+b eps)p} and \Cref{pro: det}.
\end{proof}

As for a symmetric positive semidefinite matrix $\bm{X}\in\mathbb{R}^{n\times n}$, its trace equals its nuclear norm. 
This results from the fact that the eigenvalues of $\bm{X}$ are also its singular values and that the sum of the eigenvalues of a matrix equals its trace.
In addition, a symmetric dual matrix $\bm{A}\in\mathbb{DR}^{n\times n}$ is called positive semidefinite if for any $\bm{x}\in\mathbb{DR}^n$, $\bm{x}^\top\bm{A}\bm{x}\geq0$, as defined by Qi \textit{et al.} in \cite{qi2022low}. 
Furthermore, the following proposition claims that this conclusion is also satisfied in dual algebra.

\begin{proposition}\label{pro: trace=nuclear norm}
The trace of a symmetric positive semidefinite dual matrix equals its dual-valued nuclear norm.
\end{proposition}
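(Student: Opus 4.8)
The plan is to reduce the identity ${\rm{tr}}(\bm{A})=\Vert\bm{A}\Vert_*$ to matching the standard and infinitesimal parts separately, using the trace formula of \Cref{pro: trace} and the dual-valued nuclear norm of \Cref{cor: dual nuclear norm}. The essential preliminary step is to extract the structural consequences of positive semidefiniteness. Writing $\bm{A}=\bm{A}_{\rm{s}}+\bm{A}_{\rm{i}}\epsilon$ with $\bm{A}_{\rm{s}},\bm{A}_{\rm{i}}$ both symmetric, I would expand $\bm{x}^\top\bm{A}\bm{x}$ for $\bm{x}=\bm{x}_{\rm{s}}+\bm{x}_{\rm{i}}\epsilon$ and read off its standard part $\bm{x}_{\rm{s}}^\top\bm{A}_{\rm{s}}\bm{x}_{\rm{s}}$ and its infinitesimal part $2\bm{x}_{\rm{s}}^\top\bm{A}_{\rm{s}}\bm{x}_{\rm{i}}+\bm{x}_{\rm{s}}^\top\bm{A}_{\rm{i}}\bm{x}_{\rm{s}}$. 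Under the total order of \Cref{def: total order}, the requirement $\bm{x}^\top\bm{A}\bm{x}\ge 0$ for all $\bm{x}$ forces $\bm{A}_{\rm{s}}\succcurlyeq0$ (take $\bm{x}_{\rm{i}}=\bm{0}$); moreover, for every $\bm{x}_{\rm{s}}\in\ker(\bm{A}_{\rm{s}})$ the cross term vanishes because $\bm{A}_{\rm{s}}\bm{x}_{\rm{s}}=\bm{0}$, leaving the condition $\bm{x}_{\rm{s}}^\top\bm{A}_{\rm{i}}\bm{x}_{\rm{s}}\ge0$. Hence $\bm{A}_{\rm{i}}$ is positive semidefinite when restricted to $\ker(\bm{A}_{\rm{s}})$.

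Next I would fix the decomposition. Since $\bm{A}_{\rm{s}}$ is symmetric positive semidefinite, its SVD coincides with its eigendecomposition, so I may take a full SVD $\bm{A}_{\rm{s}}=\bm{U}\bm{\Sigma}\bm{U}^\top$ with $\bm{V}=\bm{U}$; accordingly $\bm{U}_r=\bm{V}_r$ and $\widetilde{\bm{U}}_r=\widetilde{\bm{V}}_r$, where the columns of $\widetilde{\bm{U}}_r$ span $\ker(\bm{A}_{\rm{s}})$. For the standard parts, ${\rm{tr}}(\bm{A}_{\rm{s}})$ equals the sum of the eigenvalues of $\bm{A}_{\rm{s}}$, which equals the sum of its singular values and hence $\Vert\bm{A}_{\rm{s}}\Vert_*$, so the standard parts agree.

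For the infinitesimal parts I would evaluate the bracket in \cref{dual matrix nuclear norm}. Using the Frobenius inner product together with $\bm{U}_r=\bm{V}_r$, the first term becomes $\langle\bm{U}_r\bm{V}_r^\top,\bm{A}_{\rm{i}}\rangle={\rm{tr}}(\bm{U}_r^\top\bm{A}_{\rm{i}}\bm{U}_r)$. The second term $\Vert\widetilde{\bm{U}}_r^\top\bm{A}_{\rm{i}}\widetilde{\bm{V}}_r\Vert_*$ is the nuclear norm of $\widetilde{\bm{U}}_r^\top\bm{A}_{\rm{i}}\widetilde{\bm{U}}_r$, which is precisely the restriction of $\bm{A}_{\rm{i}}$ to $\ker(\bm{A}_{\rm{s}})$; by the structural fact above this matrix is symmetric positive semidefinite, so its nuclear norm equals its trace ${\rm{tr}}(\widetilde{\bm{U}}_r^\top\bm{A}_{\rm{i}}\widetilde{\bm{U}}_r)$. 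Adding the two traces and invoking orthogonality of $\bm{U}=[\bm{U}_r,\widetilde{\bm{U}}_r]$ yields ${\rm{tr}}(\bm{U}_r^\top\bm{A}_{\rm{i}}\bm{U}_r)+{\rm{tr}}(\widetilde{\bm{U}}_r^\top\bm{A}_{\rm{i}}\widetilde{\bm{U}}_r)={\rm{tr}}(\bm{U}^\top\bm{A}_{\rm{i}}\bm{U})={\rm{tr}}(\bm{A}_{\rm{i}})$, which matches the infinitesimal part of ${\rm{tr}}(\bm{A})$ from \Cref{pro: trace}. The degenerate case $\bm{A}_{\rm{s}}=\bm{O}$ is immediate, since positive semidefiniteness then makes $\bm{A}_{\rm{i}}$ itself symmetric positive semidefinite, whence ${\rm{tr}}(\bm{A})={\rm{tr}}(\bm{A}_{\rm{i}})\epsilon=\Vert\bm{A}_{\rm{i}}\Vert_*\epsilon=\Vert\bm{A}\Vert_*$.

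The main obstacle I anticipate is the first step, namely correctly establishing that positive semidefiniteness of the dual matrix is equivalent to $\bm{A}_{\rm{s}}\succcurlyeq0$ together with positive semidefiniteness of $\bm{A}_{\rm{i}}$ on $\ker(\bm{A}_{\rm{s}})$, because this requires a careful use of the total order and the observation that the cross term $2\bm{x}_{\rm{s}}^\top\bm{A}_{\rm{s}}\bm{x}_{\rm{i}}$ drops out exactly on the kernel. Once this characterization is secured, the remainder is routine bookkeeping with traces and the orthogonality of $\bm{U}$.
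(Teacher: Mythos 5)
Your proof is correct and follows the same overall strategy as the paper: match the standard parts via ${\rm{tr}}(\bm{A}_{\rm{s}})=\Vert\bm{A}_{\rm{s}}\Vert_*$, choose the SVD of $\bm{A}_{\rm{s}}$ to be its eigendecomposition so that $\bm{U}_r=\bm{V}_r$ and $\widetilde{\bm{U}}_r=\widetilde{\bm{V}}_r$, and reduce the infinitesimal part to showing $\Vert\widetilde{\bm{U}}_r^\top\bm{A}_{\rm{i}}\widetilde{\bm{U}}_r\Vert_*={\rm{tr}}(\widetilde{\bm{U}}_r^\top\bm{A}_{\rm{i}}\widetilde{\bm{U}}_r)$. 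The one place you diverge is in how the key fact $\widetilde{\bm{U}}_r^\top\bm{A}_{\rm{i}}\widetilde{\bm{U}}_r\succcurlyeq0$ is established: the paper imports it from two results of Qi \emph{et al.} (that the eigenvalues of $\bm{W}_2^\top\bm{A}_{\rm{i}}\bm{W}_2$ are the infinitesimal parts of the dual eigenvalues of $\bm{A}$ with zero standard part, and that a symmetric dual matrix is positive semidefinite iff all its dual eigenvalues are non-negative), whereas you derive it from first principles by expanding $\bm{x}^\top\bm{A}\bm{x}$ under the total order and observing that the cross term $2\bm{x}_{\rm{s}}^\top\bm{A}_{\rm{s}}\bm{x}_{\rm{i}}$ vanishes for $\bm{x}_{\rm{s}}\in\ker(\bm{A}_{\rm{s}})$. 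Your direct argument is sound (note only that $\bm{x}_{\rm{s}}^\top\bm{A}_{\rm{s}}\bm{x}_{\rm{s}}=0$ with $\bm{A}_{\rm{s}}\succcurlyeq0$ already forces $\bm{A}_{\rm{s}}\bm{x}_{\rm{s}}=\bm{0}$, so the kernel restriction is exactly the right set to test) and has the advantage of being self-contained; the paper's route buys brevity and a connection to the dual eigenvalue theory of positive semidefinite dual matrices. The remaining bookkeeping — $\langle\bm{U}_r\bm{V}_r^\top,\bm{A}_{\rm{i}}\rangle={\rm{tr}}(\bm{U}_r^\top\bm{A}_{\rm{i}}\bm{U}_r)$, summing the two traces via orthogonality of $\bm{U}$, and the degenerate case $\bm{A}_{\rm{s}}=\bm{O}$ — matches the paper's computation exactly.
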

\begin{proof}
Suppose that $\bm{A}=\bm{A}_{\rm{s}}+\bm{A}_{\rm{i}}\epsilon\in\mathbb{DR}^{n\times n}$ is symmetric positive semidefinite.
For the case that $\bm{A}_{\rm{s}}=\bm{O}$, it is obvious that $\bm{A}_{\rm{i}}\succcurlyeq0$ (symmetric positive semidefinite).
Thus, $\Vert\bm{A}\Vert_*=\Vert\bm{A}_{\rm{i}}\Vert_*\epsilon={\rm{tr}}(\bm{A}_{\rm{i}})\epsilon={\rm{tr}}(\bm{A})$.
Consider the other case that $\bm{A}_{\rm{s}}\neq\bm{O}$.
Then $\bm{A}_{\rm{s}}\succcurlyeq0$.
Hence, there exists an orthogonal matrix $\bm{W}=[\bm{W}_1,\bm{W}_2]\in\mathbb{R}^{n\times n}$ with $\bm{W}_1=\bm{W}(:,1:r)$, $\bm{W}_2=\bm{W}(:,r+1:n)$ and a diagonal matrix $\bm{\Lambda}={\rm{diag}}(\lambda_1,\cdots,\lambda_r,0,\cdots,0)$ satisfying $\lambda_1\geq\cdots\geq\lambda_r>0$ such that $\bm{A}_{\rm{s}}=\bm{W\Lambda W}^\top$, where $r={\rm{Rank}}(\bm{A}_{\rm{s}})$.

Moreover, as demonstrated in \cite[Theorem 4.4]{qi2022low}, the eigenvalues of the matrix $\bm{C}:=\bm{W}_2^\top\bm{A}_{\rm{i}}\bm{W}_2$, namely $s_1,\cdots,s_{n-r}$, are the infinitesimal parts of the dual matrix $\bm{A}$'s eigenvalues whose standard parts equal 0. 
In other words, $s_1\epsilon,\cdots,s_{n-r}\epsilon$ are eigenvalues of $\bm{A}$.
Besides, it yields from \cite[Theorem 4.5]{qi2022low} that a symmetric dual matrix is positive semidefinite if and only if all of its eigenvalues are non-negative.
Hence, $s_i\geq0$ for $i=1,\cdots,n-r$. 
This implies that $\bm{C}\succcurlyeq0$ and ${\rm{tr}}(\bm{C})=\Vert\bm{C}\Vert_*$.

Therefore, according to the dual-valued nuclear norm in \Cref{cor: dual nuclear norm} and the dual-valued trace in \Cref{pro: trace}, we have
\begin{align*}
{\rm{tr}}(\bm{A})&={\rm{tr}}(\bm{A}_{\rm{s}})+{\rm{tr}}(\bm{A}_{\rm{i}})\epsilon=\Vert\bm{A}_{\rm{s}}\Vert_*+{\rm{tr}}(\bm{W}^\top\bm{A}_{\rm{i}}\bm{W})\epsilon \\
&=\Vert\bm{A}_{\rm{s}}\Vert_*+\big(\langle\bm{W}_1\bm{W}_1^\top,\bm{A}_{\rm{i}}\rangle+\Vert\bm{W}_2^\top\bm{A}_{\rm{i}}\bm{W}_2\Vert_*\big)\epsilon=\Vert\bm{A}\Vert_*.
\end{align*}
\end{proof}

\section{Causal Emergence}\label{sec5: causal emergence}
Causal emergence (CE), provided by Hoel \textit{et al.} \cite{hoel2017map,hoel2013quantifying}, is defined intuitively as the phenomenon observed in dynamical systems where stronger causal effects are achieved on macro-states by the process of coarse-graining micro-states.
In other words, a macro-scale description may offer more comprehensive insight than a fully detailed micro-scale description of the system.
One of the most essential methods to measure the extent of CE, especially for complex system established by Markov chains, is based on the effective information (EI).
For a Markov chain with a discrete state space and the transitional probability matrix (TPM) $\bm{P}\in\mathbb{R}^{n\times n}$ satisfying $\mathds{1}^\top\bm{P}=\mathds{1}^\top$ and $\bm{P}\geq\bm{O}$, the function ${\rm{EI}}:\mathbb{R}^{n\times n}\rightarrow\mathbb{R}$ is defined
as
\begin{align}
    {\rm{EI}}(\bm{P})=\frac{1}{n}\left\langle\bm{P},\log_2\bm{P}-\log_2\Big(\frac{1}{n}\bm{P}\mathds{11}^\top\Big)\right\rangle,\nonumber
\end{align}
where $\log_2(\bm{P})$ satisfies $[\log_2(\bm{P})]_{ij}=\log_2(p_{ij})$ if $p_{ij}>0$ and $[\log_2(\bm{P})]_{ij}=0$ otherwise.

In order to exhibit CE, some previous work \cite{yang2024finding,yuan2024emergence} obtained coarse-graining methods by maximizing EI, but remained challenges of computational complexity and the uniqueness of the solution.
Recently, Zhang \textit{et al.} \cite{zhang2024dynamical} have measured the proximity of a Markov chain being dynamically reversible by utilizing the Schatten $p$-norm of its corresponding TPM.
Although this quantification is independent on any coarse-graining methods, it requires a significant gap in the singular values of the TPM. 
However, it is not possible to ascertain the classification number when the singular values of the TPM decline in a gradual manner without any sudden changes.

In this section, we propose the concept of the dual transitional probability matrix (DTPM) and its dual-valued effective information (${\rm{EI_d}}$) by extending the TPM and its EI to the dual algebra.
Moreover, we investigate the relationship between the ${\rm{EI_d}}$, the dual-valued Schatten $p$-norm, and the dynamical reversibility of the DTPM.

\subsection{Definitions}
The analysis commences with the definitions of the dual transitional probability matrix and its dual-valued EI. 

\begin{definition}\label{def: DTPM}
An $n$-by-$n$ dual transitional probability matrix ${\rm{(DTPM)}}$ $\bm{P}$ is a non-negative dual matrix with $\mathds{1}_n^\top\bm{P}=\mathds{1}_n^\top$, where $\mathds{1}_n$ is an all-1 vector of size $n$.
\end{definition}

Let $\bm{P}=\bm{P}_{\rm{s}}+\bm{P}_{\rm{i}}\epsilon$ be a DTPM.
Then $\bm{P}_{\rm{s}}\geq\bm{O}$, $\mathds{1}^\top\bm{P}_{\rm{s}}=\mathds{1}^\top$ and $\mathds{1}^\top\bm{P}_{\rm{i}}=\bm{0}^\top$, which implies that $\bm{P}_{\rm{s}}$ is a TPM. 
Besides, the $(j,k)$-element of $\bm{P}_{\rm{i}}$ is non-negative for any $(j,k)$ satisfying the condition that the $(j,k)$-element of $\bm{P}_{\rm{s}}$ is equal to zero.

Inspired by the equivalence between the dual-valued vector $p$-norm and the outcome yielded by the element-wise method, the dual continuation of TPM's EI has been developed similarly from the perspective of elements.

\begin{definition}\label{def: dual EI}
Given an $n$-by-$n$ ${\rm{DTPM}}$ $\bm{P}=\bm{P}_{\rm{s}}+\bm{P}_{\rm{i}}\epsilon$.
Then the ${\rm{DTPM}}$'s dual-valued effective information ${\rm{EI_d}}:\mathbb{DR}^{n\times n}\rightarrow\mathbb{DR}$ extending from ${\rm{EI}}$ is defined as
\begin{align}
{\rm{EI_d}}(\bm{P}) = \frac{1}{n}\left\langle \bm{P},\log_2\bm{P}-\log_2\Big(\frac{1}{n}\bm{P}\mathds{1}\mathds{1}^\top\Big)\right\rangle,\label{eq: DTPM's EI}
\end{align}
where $\log_2(\cdot)$ is an element-wise operator satisfying 
\begin{align}
\big[\log_2\bm{P}\big]_{jk} = \left\{\begin{array}{ll}
\log_2[\bm{P}_{\rm{s}}]_{jk}+\frac{[\bm{P}_{\rm{i}}]_{jk}}{[\bm{P}_{\rm{s}}]_{jk}\ln2}\epsilon, & [\bm{P}_{\rm{s}}]_{jk}>0, \\
\log_2[\bm{P}_{\rm{i}}]_{jk}\epsilon,  & [\bm{P}_{\rm{s}}]_{jk}=0, [\bm{P}_{\rm{i}}]_{jk}>0,\\
0, & [\bm{P}_{\rm{s}}]_{jk}=0, [\bm{P}_{\rm{i}}]_{jk}=0.
\end{array}\right.
\end{align}
\end{definition}

To facilitate the subsequent proof, we simplify the DTPM's ${\rm{EI_d}}$ defined in \cref{eq: DTPM's EI}.
Let $\bm{P}=\bm{P}_{\rm{s}}+\bm{P}_{\rm{i}}\epsilon$ be a DTPM.
We obtain
\begin{align*}
\langle\bm{P},\log_2\bm{P}\rangle &= \sum_{j,k}\big[\bm{P}_{\rm{s}}+\bm{P}_{\rm{i}}\epsilon\big]_{jk}\big[\log_2(\bm{P}_{\rm{s}}+\bm{P}_{\rm{i}}\epsilon)\big]_{jk}\\
&= \sum_{[\bm{P}_{\rm{s}}]_{jk}>0}[\bm{P}_{\rm{s}}]_{jk}\log_2[\bm{P}_{\rm{s}}]_{jk} + \sum_{[\bm{P}_{\rm{s}}]_{jk}>0}\Big(\frac{1}{\ln2}[\bm{P}_{\rm{i}}]_{jk}+[\bm{P}_{\rm{i}}]_{jk}\log_2[\bm{P}_{\rm{s}}]_{jk}\Big)\epsilon,
\end{align*}
where $[\bm{P}_{\rm{s}}]_{jk}$ represents the $(j,k)$-element of $\bm{P}_{\rm{s}}$.
Besides, 
\begin{align*}
\left\langle\bm{P},\log_2\Big(\frac{1}{n}\bm{P}\mathds{1}\mathds{1}^\top\Big)\right\rangle=\sum_{j,k}\big[\bm{P}_{\rm{s}}+\bm{P}_{\rm{i}}\epsilon\big]_{jk}\left[\log_2\Big(\frac{1}{n}\bm{P}_{\rm{s}}\mathds{1}\mathds{1}^\top+\frac{1}{n}\bm{P}_{\rm{i}}\mathds{1}\mathds{1}^\top\epsilon\Big)\right]_{jk}\\
=\sum_{[\bm{P}_{\rm{s}}]_{jk}>0}[\bm{P}_{\rm{s}}]_{jk}\log_2\Big[\frac{1}{n}\bm{P}_{\rm{s}}\mathds{1}\mathds{1}^\top\Big]_{jk} + \left(\sum_{[\bm{P}_{\rm{s}}]_{jk}>0}[\bm{P}_{\rm{s}}]_{jk}\frac{1}{\ln2}\frac{[\bm{P}_{\rm{i}}\mathds{1}\mathds{1}^\top]_{jk}}{[\bm{P}_{\rm{s}}\mathds{1}\mathds{1}^\top]_{jk}}\right.\\
\left.+\sum_{[\bm{P}_{\rm{s}}\mathds{1}\mathds{1}^\top]_{jk}>0}[\bm{P}_{\rm{i}}]_{jk}\log_2\Big[\frac{1}{n}\bm{P}_{\rm{s}}\mathds{1}\mathds{1}^\top\Big]_{jk}\right)\epsilon.
\end{align*}
Then the ${\rm{EI_d}}$ of the DTPM $\bm{P}$ shown in \cref{eq: DTPM's EI} is further expressed as
\begin{align*}
{\rm{EI_d}}(\bm{P})
:= {\rm{EI_{ds}}}(\bm{P})+{\rm{EI_{di}}}(\bm{P})\epsilon= \frac{1}{n}\sum_{[\bm{P}_{\rm{s}}]_{jk}>0}[\bm{P}_{\rm{s}}]_{jk}\log_2\frac{[\bm{P}_{\rm{s}}]_{jk}}{\Big[\frac{1}{n}\bm{P}_{\rm{s}}\mathds{1}\mathds{1}^\top\Big]_{jk}}\\
+\frac{1}{n}\left(\sum_{[\bm{P}_{\rm{s}}]_{jk}>0}\Big(\frac{[\bm{P}_{\rm{i}}]_{jk}}{\ln2}+[\bm{P}_{\rm{i}}]_{jk}\log_2[\bm{P}_{\rm{s}}]_{jk}-\frac{[\bm{P}_{\rm{s}}]_{jk}}{\ln2}\frac{[\bm{P}_{\rm{i}}\mathds{1}\mathds{1}^\top]_{jk}}{[\bm{P}_{\rm{s}}\mathds{1}\mathds{1}^\top]_{jk}}\Big)\right.\\
\left.-\sum_{[\bm{P}_{\rm{s}}\mathds{1}\mathds{1}^\top]_{jk}>0}[\bm{P}_{\rm{i}}]_{jk}\log_2\Big[\frac{1}{n}\bm{P}_{\rm{s}}\mathds{1}\mathds{1}^\top\Big]_{jk}\right)\epsilon.
\end{align*}

\subsection{Properties}

In a recent study, Zhang et al. \cite{zhang2024dynamical} demonstrated the relationship between the EI, the Schatten $p$-norm, and the dynamical reversibility of a TPM.
In this section, we investigate the similar properties in dual algebra.
For the sake of simplicity, a one-hot vector is referred to as a real vector comprising a single element equal to 1 and all other elements equal to 0.
Besides, a DTPM is called a permutation matrix if its standard part is a permutation matrix and its infinitesimal part is zero.
Moreover, we employ the abbreviation ``iff'' to signify ``if and only if''.

We begin by illustrating the maximum and minimum values of a DTPM's ${\rm{EI_d}}$.

\begin{proposition}\label{pro: dual EI}
Let $\bm{P}=\bm{P}_{\rm{s}}+\bm{P}_{\rm{i}}\epsilon$ be an $n$-by-$n$ ${\rm{DTPM}}$.
Then ${\rm{EI_d}}(\bm{P})$
\begin{enumerate}
\item[${\rm{(i)}}$] reaches its maximum value of $\log_2n$ iff $\bm{P}$ is a permutation matrix;
\item[${\rm{(ii)}}$] reaches its minimum value of $0$ iff 
$\bm{P}=\bm{u}\mathds{1}^\top + \bm{P}_{\rm{i}}\epsilon$ with $\mathds{1}^\top\bm{u}=1$.
\end{enumerate}
\end{proposition}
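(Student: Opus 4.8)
The plan is to exploit the decomposition ${\rm{EI_d}}(\bm{P})={\rm{EI_{ds}}}(\bm{P})+{\rm{EI_{di}}}(\bm{P})\epsilon$ derived just before the statement, combined with the total order of dual numbers: optimizing a dual number means first optimizing its standard part and then, among the optimizers, its infinitesimal part. The crucial structural observation is that ${\rm{EI_{ds}}}(\bm{P})$ coincides with the classical ${\rm{EI}}(\bm{P}_{\rm{s}})$ and therefore depends only on $\bm{P}_{\rm{s}}$, whereas ${\rm{EI_{di}}}(\bm{P})$ carries the dependence on $\bm{P}_{\rm{i}}$. Accordingly, I would first settle the standard part with the classical extremal characterization of ${\rm{EI}}$ (either citing Zhang \textit{et al.} or reproving it from Gibbs' inequality), and then perform the infinitesimal-part analysis restricted to those $\bm{P}_{\rm{s}}$ that already make the standard part extremal.

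For the standard part, I would rewrite ${\rm{EI_{ds}}}(\bm{P})=\frac{1}{n}\sum_{j}\rho_j\, D_{\rm{KL}}(\bm{q}_j\,\|\,\tfrac{1}{n}\mathds{1})$, where $\rho_j$ is the $j$-th row sum of $\bm{P}_{\rm{s}}$ and $\bm{q}_j$ is the normalized $j$-th row. Since column-stochasticity gives $\sum_j\rho_j=n$, and each divergence satisfies $0\le D_{\rm{KL}}(\bm{q}_j\,\|\,\tfrac1n\mathds{1})=\log_2 n-H(\bm{q}_j)\le\log_2 n$, I obtain $0\le{\rm{EI_{ds}}}(\bm{P})\le\log_2 n$. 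Equality on the right forces every nonzero row to be one-hot, and a short surjectivity argument (a deterministic column-stochastic map of equal finite sets is a bijection) pins $\bm{P}_{\rm{s}}$ down to a permutation matrix; equality on the left forces every row to be constant, i.e. $\bm{P}_{\rm{s}}=\bm{u}\mathds{1}^\top$ with $\mathds{1}^\top\bm{u}=1$. This disposes of the standard-part requirements for both (i) and (ii).

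The heart of the argument is the infinitesimal part, and this is where the main obstacle lies. For (i), I would substitute a permutation $\bm{P}_{\rm{s}}$ into the simplified formula for ${\rm{EI_{di}}}$: because $\log_2[\bm{P}_{\rm{s}}]_{jk}=0$ and $\tfrac{1}{n}\bm{P}_{\rm{s}}\mathds{1}\mathds{1}^\top=\tfrac{1}{n}\mathds{1}\mathds{1}^\top$, most terms collapse, and the constraint $\mathds{1}^\top\bm{P}_{\rm{i}}=\bm{0}^\top$ annihilates the remaining column-sum terms, leaving ${\rm{EI_{di}}}(\bm{P})=\frac{1}{n\ln 2}\sum_{j}[\bm{P}_{\rm{i}}]_{j,\pi(j)}$, a sum over the permutation support $\pi$. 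The decisive and least obvious step is that this quantity is at most $0$: in each column the single support entry equals minus the sum of the off-support entries, which are non-negative by the non-negativity of the DTPM, so every $[\bm{P}_{\rm{i}}]_{j,\pi(j)}\le 0$, with equality throughout if and only if $\bm{P}_{\rm{i}}=\bm{O}$. This is precisely what forbids ${\rm{EI_d}}$ from exceeding $\log_2 n$ and forces the maximizer to be a genuine permutation matrix. For (ii), I would instead substitute $\bm{P}_{\rm{s}}=\bm{u}\mathds{1}^\top$ and verify that ${\rm{EI_{di}}}$ vanishes identically for \emph{every} admissible $\bm{P}_{\rm{i}}$: writing $\tau_j$ for the row sums of $\bm{P}_{\rm{i}}$, the $\frac{1}{\ln 2}$-terms cancel in pairs and the $\log_2 u_j$-terms cancel against the final sum, using $\sum_j\tau_j=0$. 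The only delicate point here is careful bookkeeping of the support when $\bm{u}$ has zero entries, since those rows must be shown to drop out of all four sums simultaneously. Assembling the standard- and infinitesimal-part conclusions under the total order then yields both extremal values together with their equality cases.
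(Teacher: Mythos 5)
Your proposal is correct and follows essentially the same route as the paper's proof: settle the standard part via the known extremal characterization of ${\rm{EI}}$, then for (i) reduce ${\rm{EI_{di}}}$ on a permutation support to $\frac{1}{n\ln 2}\sum_j[\bm{P}_{\rm{i}}]_{j,\tau(j)}$ and use column sums plus non-negativity off the support to force $\bm{P}_{\rm{i}}=\bm{O}$, and for (ii) verify that ${\rm{EI_{di}}}$ vanishes identically when $\bm{P}_{\rm{s}}=\bm{u}\mathds{1}^\top$. The only differences are cosmetic (your optional KL-divergence reproof of the standard part, and a slight imprecision in (ii) — the cancellations there are row-by-row via $\bm{P}_{\rm{s}}\mathds{1}\mathds{1}^\top=n\bm{P}_{\rm{s}}$ and do not need $\sum_j\tau_j=0$), so no substantive gap remains.
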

\begin{proof}
Based on the total order of dual numbers, the maximization (or minimization) of a dual number is equivalent to the maximization (or minimization) of its standard part initially, followed by that of its infinitesimal part.

To maximize (or minimize) ${\rm{EI_d}}$, it is first necessary to maximize (or minimize) ${\rm{EI}}_{\rm{ds}}$, which has been derived in \cite[A.1 Propositions 1 and 2]{zhang2024dynamical}.
Specifically, ${\rm{EI}}_{\rm{ds}}$ reaches its maximum value of $\log_2n$ if and only if $\bm{P}_{\rm{s}}$ is a permutation matrix, and its minimum value of 0 if and only if $\bm{P}_{\rm{s}}$ has identical columns.
Subsequently, the special $\bm{P}_{\rm{s}}$ is employed to maximize (or minimize) ${\rm{EI}}_{\rm{di}}$.

(i) Given that $\max_{\bm{P}_{\rm{s}}}{\rm{EI}}_{\rm{ds}}=\log_2n$ if and only if $\bm{P}_{\rm{s}}$ is a permutation matrix.
Besides, for the permutation matrix $\bm{P}_{\rm{s}}$, there exists a permutation $\tau:[n]\rightarrow[n]$, such that $[\bm{P}_{\rm{s}}]_{j,\tau(j)}=1$ for $j\in[n]$, where $[n]$ represents the set $\{1,2,\cdots,n\}$.
Moreover, by utilizing the properties $\bm{P}_{\rm{s}}\mathds{1}=\mathds{1}$ an $\mathds{1}^\top\bm{P}_{\rm{i}}=\bm{0}^\top$, the value of ${\rm{EI}}_{\rm{di}}$ is
\begin{align*}
{\rm{EI}}_{\rm{di}}&= \frac{1}{n}\sum_{[\bm{P}_{\rm{s}}]_{jk}>0}\Big(\frac{[\bm{P}_{\rm{i}}]_{jk}}{\ln2}+[\bm{P}_{\rm{i}}]_{jk}\log_2[\bm{P}_{\rm{s}}]_{jk}-\frac{[\bm{P}_{\rm{s}}]_{jk}}{\ln2}\frac{[\bm{P}_{\rm{i}}\mathds{1}\mathds{1}^\top]_{jk}}{[\bm{P}_{\rm{s}}\mathds{1}\mathds{1}^\top]_{jk}}\Big)\\
&\quad\quad\quad\quad\quad\quad\quad\quad\quad-\frac{1}{n}\sum_{[\bm{P}_{\rm{s}}\mathds{1}\mathds{1}^\top]_{jk}>0}[\bm{P}_{\rm{i}}]_{jk}\log_2\Big[\frac{1}{n}\bm{P}_{\rm{s}}\mathds{1}\mathds{1}^\top\Big]_{jk}\\
&= \frac{1}{n\ln2}\sum_{(j,\tau(j))}\Big([\bm{P}_{\rm{i}}]_{j,\tau(j)}-[\bm{P}_{\rm{i}}\mathds{1}\mathds{1}^\top]_{j,\tau(j)}\Big)+\frac{\log_2n}{n}\sum_{j,k}[\bm{P}_{\rm{i}}]_{jk}\\
&\overset{({\rm{I}})}{=} \frac{1}{n\ln2}\sum_{(j,\tau(j))}[\bm{P}_{\rm{i}}]_{j,\tau(j)},
\end{align*}
where (I) results from that $\sum_{(j,\tau(j))}[\bm{P}_{\rm{i}}\mathds{1}\mathds{1}^\top]_{j,\tau(j)}=\sum_{j,k}[\bm{P}_{\rm{i}}]_{jk}=\mathds{1}^\top\bm{P}_{\rm{i}}\mathds{1}=0$.

Then the objective is to maximize ${\rm{EI_{di}}}$, with a corresponding optimization problem presented below.
\begin{align*}
\begin{array}{ll}
\max\limits_{\bm{P}_{\rm{i}}\in\mathbb{R}^{n\times n}} & {\rm{EI}}_{\rm{di}}=\frac{1}{n\ln2}\sum_{(j,\tau(j))}[\bm{P}_{\rm{i}}]_{j,\tau(j)} \\
\quad s.t. & \mathds{1}^\top\bm{P}_{\rm{i}}=\bm{0}^\top, \\ &[\bm{P}_{\rm{i}}]_{jk}\geq0, \forall(j,k)\in\Theta,
\end{array}
\end{align*}
where $\Theta=\{(j,k)\in[n]\times[n]:[\bm{P}_{\rm{s}}]_{jk}=0\}$.
The combination of the equality and non-negative constraints yields the conclusion that the elements $[\bm{P}_{\rm{i}}]_{j,\tau(j)}$ must be non-positive. 
Therefore, the objective function ${\rm{EI}}_{\rm{di}}$ is always non-positive and reaches its maximum value of 0 if and only if $\bm{P}_{\rm{i}}=\bm{O}$.

Therefore, the maximum value of ${\rm{EI}}$ equals $log_2n$, which occurs precisely when the DTPM is a permutation matrix.

(ii) We have known that $\min_{\bm{P}_{\rm{s}}}{\rm{EI_{ds}}} = 0$ if and only if $\bm{P}_{\rm{s}}=\bm{u}\mathds{1}^\top$ with $\bm{u}\geq\bm{0}$ and $\mathds{1}^\top\bm{u}=1$.
Then we derive from $\bm{P}_{\rm{s}}\mathds{1}\mathds{1}^\top=n\bm{P}_{\rm{s}}$ that $[\bm{P}_{\rm{s}}]_{jk}>0\Leftrightarrow[\bm{P}_{\rm{s}}\mathds{1}\mathds{1}^\top]_{jk}>0$.
Thus, the value of ${\rm{EI}}_{\rm{di}}$ satisfies
\begin{align*}
{\rm{EI}}_{\rm{di}}&= \frac{1}{n}\sum_{[\bm{P}_{\rm{s}}]_{jk}>0}\left(\frac{[\bm{P}_{\rm{i}}]_{jk}}{\ln2}+[\bm{P}_{\rm{i}}]_{jk}\log_2[\bm{P}_{\rm{s}}]_{jk}-\frac{[\bm{P}_{\rm{s}}]_{jk}}{\ln2}\frac{[\bm{P}_{\rm{i}}\mathds{1}\mathds{1}^\top]_{jk}}{[\bm{P}_{\rm{s}}\mathds{1}\mathds{1}^\top]_{jk}}\right)\\
&\quad\quad\quad\quad\quad\quad\quad\quad\quad-\frac{1}{n}\sum_{[\bm{P}_{\rm{s}}\mathds{1}\mathds{1}^\top]_{jk}>0}[\bm{P}_{\rm{i}}]_{jk}\log_2\Big[\frac{1}{n}\bm{P}_{\rm{s}}\mathds{1}\mathds{1}^\top\Big]_{jk}\\
&= \frac{1}{n\ln2}\sum_{[\bm{P}_{\rm{s}}]_{jk}>0}\Big([\bm{P}_{\rm{i}}]_{jk}-\frac{1}{n}[\bm{P}_{\rm{i}}\mathds{1}\mathds{1}^\top]_{jk}\Big)\\
&= \frac{1}{n\ln2}\sum_{[\bm{P}_{\rm{s}}]_{j\cdot}>0}\Big([\bm{P}_{\rm{i}}\mathds{1}]_{j}-n\cdot\frac{1}{n}[\bm{P}_{\rm{i}}\mathds{1}]_{j}\Big)=0,
\end{align*}
regardless of what $\bm{P}_{\rm{i}}$ takes.
Hence, ${\rm{EI_d}}(\bm{P})$ achieves its minimum value of 0 for any $\bm{P}_{\rm{i}}$, as long as $\bm{P}_{\rm{s}}$ has identical column vectors. 

Consequently, we complete the proof.
\end{proof}

Then the dynamical reversibility of a DTPM is defined and the corresponding characteristic is elucidated.

\begin{definition}\label{def: dynamically reversible}
A ${\rm{DTPM}}$ $\bm{P}$ is called dynamically reversible iff $\bm{P}$ is invertible and $\bm{P}^{-1}$ is also an effective ${\rm{DTPM}}$.
\end{definition}

\begin{theorem}\label{the: dynamically reversible}
A ${\rm{DTPM}}$ is dynamically reversible iff it is a permutation matrix.
\end{theorem}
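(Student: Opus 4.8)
The plan is to prove both implications, splitting the harder (reverse) direction into its standard and infinitesimal parts. Throughout write $\bm{P}=\bm{P}_{\rm{s}}+\bm{P}_{\rm{i}}\epsilon$, so that $\bm{P}_{\rm{s}}$ is a column-stochastic TPM with $\mathds{1}^\top\bm{P}_{\rm{s}}=\mathds{1}^\top$ and $\bm{P}_{\rm{s}}\geq\bm{O}$, while $\mathds{1}^\top\bm{P}_{\rm{i}}=\bm{0}^\top$ and $[\bm{P}_{\rm{i}}]_{jk}\geq0$ whenever $[\bm{P}_{\rm{s}}]_{jk}=0$. For the easy direction, if $\bm{P}$ is a permutation matrix then $\bm{P}_{\rm{i}}=\bm{O}$ and $\bm{P}=\bm{P}_{\rm{s}}$ with $\bm{P}_{\rm{s}}$ a permutation matrix; the inverse formula from \Cref{sec2: prelimi} gives $\bm{P}^{-1}=\bm{P}_{\rm{s}}^{-1}=\bm{P}_{\rm{s}}^\top$, which is again a permutation matrix and hence an effective DTPM, so $\bm{P}$ is dynamically reversible.

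For the reverse direction, I first note that invertibility of $\bm{P}$ forces $\bm{P}_{\rm{s}}$ to be invertible, whence $\bm{P}^{-1}=\bm{P}_{\rm{s}}^{-1}-\bm{P}_{\rm{s}}^{-1}\bm{P}_{\rm{i}}\bm{P}_{\rm{s}}^{-1}\epsilon$. Right-multiplying $\mathds{1}^\top\bm{P}_{\rm{s}}=\mathds{1}^\top$ by $\bm{P}_{\rm{s}}^{-1}$ yields $\mathds{1}^\top\bm{P}_{\rm{s}}^{-1}=\mathds{1}^\top$, and using $\mathds{1}^\top\bm{P}_{\rm{i}}=\bm{0}^\top$ one checks $\mathds{1}^\top(-\bm{P}_{\rm{s}}^{-1}\bm{P}_{\rm{i}}\bm{P}_{\rm{s}}^{-1})=\bm{0}^\top$. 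Thus the column-sum conditions in \Cref{def: DTPM} hold automatically for $\bm{P}^{-1}$, and the only binding requirements are the non-negativity conditions; in particular $\bm{P}^{-1}$ being an effective DTPM forces $\bm{P}_{\rm{s}}^{-1}\geq\bm{O}$.

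I then settle the standard part. With $\bm{P}_{\rm{s}}\geq\bm{O}$ invertible and $\bm{T}:=\bm{P}_{\rm{s}}^{-1}\geq\bm{O}$, I will invoke the classical fact that a non-negative matrix with a non-negative inverse is monomial: for $i\neq j$ the identity $[\bm{P}_{\rm{s}}\bm{T}]_{ij}=0$ forces $[\bm{P}_{\rm{s}}]_{ik}[\bm{T}]_{kj}=0$ termwise, so if some row of $\bm{P}_{\rm{s}}$ carried two positive entries in columns $k\neq k'$, then rows $k$ and $k'$ of $\bm{T}$ would be supported only in column $i$, hence linearly dependent, contradicting invertibility of $\bm{T}$. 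Therefore each row, and symmetrically each column, of $\bm{P}_{\rm{s}}$ has exactly one positive entry; column-stochasticity forces that entry to be $1$, so $\bm{P}_{\rm{s}}=\bm{\Pi}$ is a permutation matrix, say associated to $\pi$.

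Finally I handle the infinitesimal part. Since $\bm{P}_{\rm{s}}=\bm{\Pi}$, the infinitesimal part of $\bm{P}^{-1}$ is $\bm{Q}_{\rm{i}}=-\bm{\Pi}^\top\bm{P}_{\rm{i}}\bm{\Pi}^\top$, whose entries are, up to sign, a relabelling of the entries of $\bm{P}_{\rm{i}}$ by $\pi$. The crucial computation is that the positions where $\bm{P}_{\rm{s}}^{-1}=\bm{\Pi}^\top$ vanishes correspond, under this relabelling, to exactly the zero positions of $\bm{\Pi}$. Consequently the DTPM condition on $\bm{P}$ gives $[\bm{P}_{\rm{i}}]_{jk}\geq0$ at every off-permutation position, while the DTPM condition on $\bm{P}^{-1}$ gives $[\bm{P}_{\rm{i}}]_{jk}\leq0$ at the very same positions; together they force $\bm{P}_{\rm{i}}$ to vanish off the permutation support. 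Each column of $\bm{P}_{\rm{i}}$ then has a single possibly-nonzero entry, and $\mathds{1}^\top\bm{P}_{\rm{i}}=\bm{0}^\top$ makes each column sum to zero, so that entry vanishes too; hence $\bm{P}_{\rm{i}}=\bm{O}$ and $\bm{P}=\bm{\Pi}$. I expect the main obstacle to be precisely this index bookkeeping: verifying that the non-negativity constraint imposed on $\bm{Q}_{\rm{i}}$ at the zero-positions of $\bm{\Pi}^\top$ conjugates back to a sign constraint on $\bm{P}_{\rm{i}}$ at exactly the off-permutation positions already constrained with the opposite sign, so that the two families of inequalities pinch $\bm{P}_{\rm{i}}$ to zero.
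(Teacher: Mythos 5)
Your proposal is correct and follows essentially the same route as the paper: reduce to the standard part being a permutation matrix, then use the two opposing sign constraints on $\bm{P}_{\rm{i}}$ (from $\bm{P}\geq\bm{O}$ and from $\bm{P}^{-1}=\bm{P}_{\rm{s}}^{-1}-\bm{P}_{\rm{s}}^{-1}\bm{P}_{\rm{i}}\bm{P}_{\rm{s}}^{-1}\epsilon\geq\bm{O}$ at the off-permutation positions, via the conjugation identity $[\bm{P}_{\rm{s}}^\top\bm{P}_{\rm{i}}\bm{P}_{\rm{s}}^\top]_{jk}=[\bm{P}_{\rm{i}}]_{\tau^{-1}(j),\tau(k)}$) together with $\mathds{1}^\top\bm{P}_{\rm{i}}=\bm{0}^\top$ to force $\bm{P}_{\rm{i}}=\bm{O}$. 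The only difference is that where the paper cites \cite{ding2013when} for the fact that a stochastic matrix with a stochastic (nonnegative) inverse is a permutation matrix, you supply a short self-contained monomial-matrix argument, which is valid.
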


\begin{proof}
Let $\bm{P}=\bm{P}_{\rm{s}}+\bm{P}_{\rm{i}}\epsilon\in\mathbb{DR}^{n\times n}$ be a DTPM.
Then $\bm{P}_{\rm{s}}$ is a TPM, $\mathds{1}^\top\bm{P}_{\rm{i}}=\bm{0}$ and $\bm{P}\geq\bm{O}$.

On the one hand, suppose that $\bm{P}$ is dynamically reversible.
Definition \ref{def: dynamically reversible} implies that $\bm{P}$ is invertible and $\bm{P}^{-1} = \bm{P}_{\rm{s}}^{-1}-\bm{P}_{\rm{s}}^{-1}\bm{P}_{\rm{i}}\bm{P}_{\rm{s}}^{-1}\epsilon$ is also a DTPM.
Thus, $\bm{P}_{\rm{s}}^{-1}$ is a TPM and $\bm{P}^{-1}\geq\bm{O}$, as detailed in Definition \ref{def: DTPM}.
Hence, we derive from \cite{ding2013when} that $\bm{P}_{\rm{s}}$ is a permutation matrix.
It follows from this that there exists a permutation $\tau:[n]\rightarrow[n]$ such that $[\bm{P}_{\rm{s}}]_{j,\tau(j)=1}$ for $j\in[n]$ and all other entries of $\bm{P}_{\rm{s}}$ are equal to zero.
Denote $\Delta := \{(j,\tau(j)):j\in[n]\}$.
Then $\bm{P}\geq\bm{O}$ implies that $[\bm{P}_{\rm{i}}]_{jk}\geq0$ for any $(j,k)\in\Theta$, where $\Theta:=\{(j,k)\in[n]\times[n]:[\bm{P}_{\rm{s}}]_{jk}=0\} =[n]\times[n]\backslash\Delta$.
Besides, $\bm{P}_{\rm{s}}^{-1}=\bm{P}_{\rm{s}}^\top$, since $\bm{P}_{\rm{s}}$ is a permutation matrix.
Subsequently, only the $(\tau(j),j)$-entries $(j\in[n])$ of $\bm{P}_{\rm{s}}^\top$ are equal to one, while all other entries are equal to zero. 
It is worth mentioning that
\begin{align*}
\big[\bm{P}_{\rm{s}}^{-1}\bm{P}_{\rm{i}}\bm{P}_{\rm{s}}^{-1}\big]_{jk} &= \big[\bm{P}_{\rm{s}}^\top\bm{P}_{\rm{i}}\bm{P}_{\rm{s}}^\top\big]_{jk} = \sum_{t=1}^n\sum_{\ell=1}^n\big[\bm{P}_{\rm{s}}^\top\big]_{jt}\big[\bm{P}_{\rm{i}}\big]_{t\ell}\big[\bm{P}_{\rm{s}}^\top\big]_{\ell k}\\
&= \big[\bm{P}_{\rm{s}}^\top\big]_{j,\tau^{-1}(j)}\big[\bm{P}_{\rm{i}}\big]_{\tau^{-1}(j),\tau(k)}\big[\bm{P}_{\rm{s}}^\top\big]_{\tau(k),k}=\big[\bm{P}_{\rm{i}}\big]_{\tau^{-1}(j),\tau(k)}.
\end{align*}
Thus $[\bm{P}_{\rm{s}}^{-1}\bm{P}_{\rm{i}}\bm{P}_{\rm{s}}^{-1}]_{\tau(j),j}=[\bm{P}_{\rm{i}}]_{\tau^{-1}(\tau(j)),\tau(j)} = [\bm{P}_{\rm{i}}]_{j,\tau(j)}$.
Additionally, we infer from $\bm{P}^{-1}\geq\bm{O}$ that except for the $(\tau(j),j)$-entries of $-\bm{P}_{\rm{s}}^{-1}\bm{P}_{\rm{i}}\bm{P}_{\rm{s}}^{-1}$, all other entries are non-negative.
It can then be asserted that $-[\bm{P}_{\rm{i}}]_{jk}\geq0$ for $(j,k)\in[n]\times[n]\backslash\Delta$.
Combining the condition $[\bm{P}_{\rm{i}}]_{jk}\geq0$ for $(j,k)\in\Theta$, we obtain $[\bm{P}_{\rm{i}}]_{jk}=0$ for $(j,k)\in[n]\times[n]\backslash\Delta$.
Besides, $\mathds{1}^\top\bm{P}_{\rm{i}} = \big[[\bm{P}_{\rm{i}}]_{\tau^{-1}(1),1},\cdots,[\bm{P}_{\rm{i}}]_{\tau^{-1}(n),n}\big] = \bm{0}^\top$ implies that $\bm{P}_{\rm{i}}=\bm{O}$.
Consequently, $\bm{P}$ is a permutation matrix.

On the other hand, suppose that $\bm{P}$ is a permutation matrix.
Then $\bm{P}$ is invertible and $\bm{P}^{-1} = \bm{P}^\top$ is also a permutation matrix.
It is obvious that $\bm{P}^{-1}$ is non-negative and the sum of each column is equal to one.
That is, $\bm{P}^{-1}$ is a DTPM.
Hence, $\bm{P}$ is dynamically reversible.

Here, we complete the proof.
\end{proof}

The following part reports the maximum and minimum values of the dual-valued Schatten $p$-norm of a DTPM, including when these values are attained.

\begin{proposition}\label{pro: dual Schatten p norm and DTPM}
Let $\bm{P}=\bm{P}_{\rm{s}}+\bm{P}_{\rm{i}}\epsilon$ be an $n$-by-$n$ ${\rm{DTPM}}$. 
Then $\Vert\bm{P}\Vert_{S_p}$ satisfies
\begin{enumerate}
\item[${\rm{(i)}}$] 
$\max_{\bm{P}}\Vert\bm{P}\Vert_{S_p}=n^{\frac{1}{p}}$ for $1\leq p<2$ iff $\bm{P}$ is a permutation matrix;
\item[${\rm{(ii)}}$]
$\max_{\bm{P}}\Vert\bm{P}\Vert_{S_2}=n^{\frac{1}{2}}$ iff each column of $\bm{P}$ is a one-hot vector;
\item[${\rm{(iii)}}$]
$\max_{\bm{P}}\Vert\bm{P}\Vert_{S_p}=n^{\frac{1}{2}}$ for $2<p\leq\infty$ iff $\bm{P}$ consists of identical one-hot vectors;
\item[${\rm{(iv)}}$] 
$\min_{\bm{P}}\Vert\bm{P}\Vert_{S_1}=1$ iff $\bm{P}=\frac{1}{n}\mathds{1}\mathds{1}^\top+\bm{w}\mathds{1}^\top\epsilon$ for any $\bm{w}\in\mathbb{R}^n$ with $\mathds{1}^\top\bm{w}=0$;
\item[${\rm{(v)}}$]
$\min_{\bm{P}}\Vert\bm{P}\Vert_{S_p}=1$ for $1\!<\! p\!\leq\!\infty$ iff $\bm{P}\!=\!\frac{1}{n}\mathds{1}\mathds{1}^\top+\bm{P}_{\rm{i}}\epsilon$ for any $\bm{P}_{\rm{i}}$ with $\mathds{1}^\top\bm{P}_{\rm{i}}=\bm{0}^\top$.
\end{enumerate}
\end{proposition}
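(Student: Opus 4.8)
The plan is to exploit the total order (\Cref{def: total order}): optimizing the dual number $\Vert\bm{P}\Vert_{S_p}=\Vert\bm{P}_{\rm{s}}\Vert_{S_p}+{\mathcal{D}}_{\bm{P}_{\rm{i}}}\Vert\bm{P}_{\rm{s}}\Vert_{S_p}\epsilon$ amounts to a two-stage problem. First I would extremize the standard part $\Vert\bm{P}_{\rm{s}}\Vert_{S_p}$ over all column-stochastic TPMs; these real-field characterizations are precisely the results of Zhang \textit{et al.} \cite{zhang2024dynamical}, supplemented by the elementary facts that $\bm{P}_{\rm{s}}^\top\mathds{1}=\mathds{1}$ forces $\sigma_1\geq1$ and that $\Vert\bm{P}_{\rm{s}}\Vert_F^2=\sum_{j,k}[\bm{P}_{\rm{s}}]_{jk}^2\leq n$ with equality iff every column is one-hot. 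This pins down the admissible $\bm{P}_{\rm{s}}$ in each regime: permutation matrices (all $\sigma_i=1$) when $1\leq p<2$; one-hot-column matrices when $p=2$; rank-one matrices $\bm{e}_i\mathds{1}^\top$ when $2<p\leq\infty$; and $\tfrac{1}{n}\mathds{1}\mathds{1}^\top$ for the minima. The second stage optimizes the infinitesimal part over admissible $\bm{P}_{\rm{i}}$, subject to $\mathds{1}^\top\bm{P}_{\rm{i}}=\bm{0}^\top$ together with $[\bm{P}_{\rm{i}}]_{jk}\geq0$ wherever $[\bm{P}_{\rm{s}}]_{jk}=0$.

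For the three maxima, I would substitute each extremal $\bm{P}_{\rm{s}}$ into the dual Schatten formula (\Cref{cor: dual schatten norm}, with \Cref{cor: dual nuclear norm} for $p=1$ and \Cref{pro: dual matrix spectral norm} for $p=\infty$). Because every surviving singular value of the extremal $\bm{P}_{\rm{s}}$ is equal, the leading term $\langle\bm{U}_r\bm{\Sigma}_r^{p-1}\bm{V}_r^\top,\bm{P}_{\rm{i}}\rangle$ collapses to a scalar multiple of $\langle\bm{P}_{\rm{s}},\bm{P}_{\rm{i}}\rangle$ in cases (i)--(ii), and of a single row-sum $[\bm{P}_{\rm{i}}\mathds{1}]_i$ in case (iii). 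The constraints then force this quantity to be $\leq0$: in each column the unique (or row-$i$) support entry equals minus the sum of the remaining nonnegative entries, hence is nonpositive, and summing gives $\leq0$ with equality iff $\bm{P}_{\rm{i}}=\bm{O}$. For $p=1$ the extra term $\Vert\widetilde{\bm{U}}_r^\top\bm{P}_{\rm{i}}\widetilde{\bm{V}}_r\Vert_*$ vanishes since a permutation matrix has full rank. This yields infinitesimal part $0$, so the stated maxima are attained exactly on the real permutation / one-hot-column / identical-one-hot matrices.

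For the minima I would use that the unique standard-part minimizer for $p=1$ and $1<p<\infty$ is $\bm{P}_{\rm{s}}=\tfrac{1}{n}\mathds{1}\mathds{1}^\top$ (rank one, $\sigma_1=1$), where the entrywise constraint on $\bm{P}_{\rm{i}}$ is vacuous. For $1<p<\infty$ the leading term is proportional to $\langle\tfrac1n\mathds{1}\mathds{1}^\top,\bm{P}_{\rm{i}}\rangle=\tfrac1n\mathds{1}^\top\bm{P}_{\rm{i}}\mathds{1}=0$, so the infinitesimal part is identically zero for every admissible $\bm{P}_{\rm{i}}$, giving (v). For $p=1$ (\Cref{cor: dual nuclear norm}) the first term again vanishes, leaving $\Vert\widetilde{\bm{U}}_r^\top\bm{P}_{\rm{i}}\widetilde{\bm{V}}_r\Vert_*\geq0$; since $\mathds{1}^\top\bm{P}_{\rm{i}}=\bm{0}^\top$ places the range of $\bm{P}_{\rm{i}}$ in $\mathds{1}^\perp$, forcing this residual to $0$ yields $\bm{P}_{\rm{i}}=\bm{w}\mathds{1}^\top$ with $\mathds{1}^\top\bm{w}=0$, which is (iv).

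The main obstacle is the spectral endpoint $p=\infty$, where the standard-part minimizers are not unique. I would first show that a column-stochastic matrix with $\Vert\bm{P}_{\rm{s}}\Vert_2=1$ is necessarily doubly stochastic: from $\bm{P}_{\rm{s}}^\top\mathds{1}=\mathds{1}$ and $\sigma_1=1$ one gets $\Vert\bm{P}_{\rm{s}}\mathds{1}\Vert=\sqrt{n}$, and a Cauchy--Schwarz equality then forces $\bm{P}_{\rm{s}}\mathds{1}=\mathds{1}$, so $\mathds{1}/\sqrt{n}$ is a common top singular vector. The delicate step is to show, via \Cref{pro: dual matrix spectral norm}, that the infinitesimal part $\lambda_{\max}(\bm{R})$ with $\bm{R}=\mathrm{sym}(\bm{U}_{r_1}^\top\bm{P}_{\rm{i}}\bm{V}_{r_1})$ is nonnegative: choosing the SVD so that $\mathds{1}/\sqrt{n}$ is the first column of both $\bm{U}_{r_1}$ and $\bm{V}_{r_1}$ makes $R_{11}=\tfrac1n\mathds{1}^\top\bm{P}_{\rm{i}}\mathds{1}=0$, whence $\lambda_{\max}(\bm{R})\geq R_{11}=0$. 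This gives minimum dual spectral norm $1$, attained whenever $\bm{P}_{\rm{s}}=\tfrac1n\mathds{1}\mathds{1}^\top$. I expect that singling out $\tfrac1n\mathds{1}\mathds{1}^\top$ as the \emph{only} admissible standard part, rather than a general doubly stochastic matrix that can also realize $\lambda_{\max}(\bm{R})=0$, is the crux requiring the most care in the $p=\infty$ case.
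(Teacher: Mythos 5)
Your proposal follows essentially the same route as the paper: use the total order to split the optimization into a standard-part stage (delegated to Zhang \textit{et al.}'s real-field characterizations, plus the $\Vert\bm{P}_{\rm{s}}\Vert_F\leq\sqrt{n}$ observation) and an infinitesimal-part stage, then read off the infinitesimal part from \Cref{cor: dual schatten norm}, \Cref{cor: dual nuclear norm}, \Cref{cor: dual Frobenius norm}, and \Cref{pro: dual matrix spectral norm}, and use the sign constraints $\mathds{1}^\top\bm{P}_{\rm{i}}=\bm{0}^\top$ and $[\bm{P}_{\rm{i}}]_{jk}\geq0$ on the complement of ${\rm{supp}}(\bm{P}_{\rm{s}})$ to force $\langle\bm{P}_{\rm{s}},\bm{P}_{\rm{i}}\rangle\leq0$ with equality iff $\bm{P}_{\rm{i}}=\bm{O}$. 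Your handling of (i)--(iv) and of (v) for $1<p<\infty$ matches the paper's computations step for step; your column-by-column argument for why the support entries of $\bm{P}_{\rm{i}}$ are nonpositive is in fact more explicit than the paper's.

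The one substantive divergence is the point you flag at the end, and your instinct there is right: it is a genuine problem, and the paper does not actually solve it. The paper disposes of the $p=\infty$ case of (v) by citing Zhang \textit{et al.} for the claim that $\min\Vert\bm{P}_{\rm{s}}\Vert_{S_p}=1$ for $1\leq p\leq\infty$ is attained \emph{only} at $\bm{P}_{\rm{s}}=\frac{1}{n}\mathds{1}\mathds{1}^\top$. That uniqueness is correct for $1\leq p<\infty$ (where $\sum_i\sigma_i^p=1$ with $\sigma_1\geq1$ forces rank one, and Cauchy--Schwarz then forces $\bm{u}=\frac{1}{n}\mathds{1}$), but it fails at $p=\infty$: as you observe, every doubly stochastic matrix has spectral norm exactly $1$, so for instance $\bm{P}=\bm{I}_n$ is a DTPM with $\Vert\bm{P}\Vert_{S_\infty}=1+\lambda_{\max}(\bm{O})\epsilon=1$ that is not of the form $\frac{1}{n}\mathds{1}\mathds{1}^\top+\bm{P}_{\rm{i}}\epsilon$. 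So the ``only if'' direction of (v) at $p=\infty$ cannot be rescued by any choice of infinitesimal-part argument, and your observation that $\lambda_{\max}(\bm{R})\geq R_{11}=0$ only shows the minimum value is $1$, not where it is attained. In short: your proof is as complete as the paper's everywhere except the spectral endpoint, and at that endpoint the gap you identify is a flaw in the statement (or in the cited real-field result) rather than in your argument.
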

To prove this, an essential lemma is introduced.

\begin{lemma}\label{lem:properties of norms of matrices and vectors}
Given a vector $\bm{x}\in\mathbb{R}^n$. 
Then the vector $p$-norms satisfy
\begin{align}
\Vert\bm{x}\Vert_{p_2}\leq\Vert\bm{x}\Vert_{p_1}\leq n^{\big(\frac{1}{p_1}-\frac{1}{p_2}\big)}\Vert\bm{x}\Vert_{p_2},    
\end{align}
for $1\leq p_1<p_2<\infty$.
\end{lemma}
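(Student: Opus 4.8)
The plan is to establish the two inequalities separately: the left one, $\Vert\bm{x}\Vert_{p_2}\leq\Vert\bm{x}\Vert_{p_1}$, by a normalization argument exploiting homogeneity of the vector $p$-norm, and the right one by a single application of H\"older's inequality with carefully chosen conjugate exponents. The degenerate case $\bm{x}=\bm{0}$ is immediate since all three quantities vanish, so I would assume $\bm{x}\neq\bm{0}$ throughout.

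For the left inequality I would first reduce to the normalized case $\Vert\bm{x}\Vert_{p_1}=1$ by replacing $\bm{x}$ with $\bm{x}/\Vert\bm{x}\Vert_{p_1}$, which is legitimate because both sides are positively homogeneous of the same degree. Under this normalization $\sum_{i=1}^n|x_i|^{p_1}=1$ forces $|x_i|\leq1$ for every $i$. Since $p_2>p_1$, the componentwise inequality $|x_i|^{p_2}\leq|x_i|^{p_1}$ holds for each $i$; summing over $i$ gives $\Vert\bm{x}\Vert_{p_2}^{p_2}\leq\sum_{i=1}^n|x_i|^{p_1}=1$, hence $\Vert\bm{x}\Vert_{p_2}\leq1=\Vert\bm{x}\Vert_{p_1}$. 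Rescaling recovers the general statement.

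For the right inequality I would write $\Vert\bm{x}\Vert_{p_1}^{p_1}=\sum_{i=1}^n|x_i|^{p_1}\cdot1$ and apply H\"older's inequality with conjugate exponents $r=p_2/p_1$ and $r'=p_2/(p_2-p_1)$, which satisfy $1/r+1/r'=1$ and $r>1$ precisely because $p_2>p_1$. This yields $\sum_{i=1}^n|x_i|^{p_1}\leq\big(\sum_{i=1}^n|x_i|^{p_1 r}\big)^{1/r}\big(\sum_{i=1}^n 1\big)^{1/r'}$. Since $p_1 r=p_2$, the first factor equals $\Vert\bm{x}\Vert_{p_2}^{p_1}$ and the second equals $n^{(p_2-p_1)/p_2}$, so $\Vert\bm{x}\Vert_{p_1}^{p_1}\leq n^{(p_2-p_1)/p_2}\Vert\bm{x}\Vert_{p_2}^{p_1}$. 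Taking $p_1$-th roots and using the identity $\frac{p_2-p_1}{p_1 p_2}=\frac{1}{p_1}-\frac{1}{p_2}$ gives exactly $\Vert\bm{x}\Vert_{p_1}\leq n^{1/p_1-1/p_2}\Vert\bm{x}\Vert_{p_2}$.

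The argument is entirely elementary and presents no genuine obstacle. The only point requiring a little attention is the bookkeeping of the H\"older exponents, namely verifying that $r'=p_2/(p_2-p_1)$ is the correct conjugate of $r=p_2/p_1$ and confirming that the exponent of $n$ collapses to $1/p_1-1/p_2$; this is routine arithmetic rather than a substantive difficulty.
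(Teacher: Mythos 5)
Your proposal is correct and follows essentially the same route as the paper: the upper bound via H\"older's inequality with conjugate exponents $p_2/p_1$ and $p_2/(p_2-p_1)$, and the lower bound via a normalization argument (you normalize by $\Vert\bm{x}\Vert_{p_1}$ while the paper normalizes by $\Vert\bm{x}\Vert_{p_2}$, a cosmetic difference). No gaps.
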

\begin{proof}
Let $\bm{x}=[x_1,\cdots,x_n]^\top$.
It yields from $\frac{p_2}{p_1}>1$ and H$\ddot{o}$lder's inequality that
\begin{align*}
\Vert\bm{x}\Vert_{p_1}^{p_1} &= \sum_{i=1}^n \vert x_i\vert^{p_1} =   \sum_{i=1}^n \vert x_i\vert^{p_1}\cdot 1\leq\left(\sum_{i=1}^n \big(\vert x_i\vert^{p_1}\big)^{\frac{p_2}{p_1}}\right)^{\frac{p_1}{p_2}}\left(\sum_{i=1}^n 1^{\frac{p_2}{p_2-p_1}}\right)^{1-\frac{p_1}{p_2}}\\
&= \Vert\bm{x}\Vert_{p_2}^{p_1}\cdot n^{\frac{p_2-p_1}{p_2}}.
\end{align*}
Hence, $\Vert\bm{x}\Vert_{p_1}\leq n^{\big(\frac{1}{p_1}-\frac{1}{p_2}\big)}\Vert\bm{x}\Vert_{p_2}$ and the equality holds if and only if $\vert x_i\vert$ are the same for any $i$.

On the other hand, we aim to prove that $\Vert\bm{x}\Vert_{p_2}\leq\Vert\bm{x}\Vert_{p_1}$ for $1\leq p_1<p_2$. 
If $\bm{x}=\bm{0}$, it is obvious that $\Vert\bm{x}\Vert_{p_2}=\Vert\bm{x}\Vert_{p_1}$.
Otherwise, let $\bm{y}=\frac{\vert\bm{x}\vert}{\Vert\bm{x}\Vert_{p_2}}:=[y_1,\cdots,y_n]^\top$.
We have $\Vert\bm{x}\Vert_{p_2}=\big(\sum_{i=1}^n|x_i|^{p_2}\big)^{\frac{1}{p_2}}\geq\big(|x_k|^{p_2}\big)^{\frac{1}{p_2}}=|x_k|$ for any $k=1,\cdots,n$.
Then $0\leq y_k\leq1$ and $y_k^{p_1}\geq y_k^{p_2}$ $(k=1,\cdots,n)$ are obtained.
Hence, $\Vert\bm{y}\Vert_{p_1}=\big(\sum_{i=1}^n y_k^{p_1}\big)^{\frac{1}{p_1}}\geq\big(\sum_{i=1}^n y_k^{p_2}\big)^{\frac{1}{p_1}}=\big(\frac{1}{\Vert\bm{x}\Vert_{p_2}^{p_2}}\sum_{i=1}^n|x_k|^{p_2}\big)^{\frac{1}{p_1}}=1$, which implies that $\Vert\bm{x}\Vert_{p_1}\geq\Vert\bm{x}\Vert_{p_2}$.
Therefore, it can be concluded that $\Vert\bm{x}\Vert_{p_1}\geq\Vert\bm{x}\Vert_{p_2}$ and the equality holds if and only if $\bm{x}$ has at least $n-1$ elements equal to zero.
\end{proof}

Subsequently, we intend to verify \Cref{pro: dual Schatten p norm and DTPM}.

\begin{proof}[Proof of \Cref{pro: dual Schatten p norm and DTPM}]

Let $\bm{P}_{\rm{s}} = \bm{U\Sigma V}^\top$ be a full SVD with $r$ non-zero singular values.
Moreover, orthogonal matrices are partitioned so that $\bm{U} =[\bm{U}_r,\widetilde{\bm{U}}_r]$ and $\bm{V} = [\bm{V}_r,\widetilde{\bm{V}}_r]$, with $\bm{U}_r$ and $\bm{V}_r$ both having $r$ columns.
Denote $\bm{P}_{\rm{s}} = [\bm{p}_1,\cdots,\bm{p}_n]$, $\bm{\sigma} = [\sigma_1,\cdots,\sigma_n]^\top$ and $\bm{\Sigma}_r = {\rm{diag}}(\sigma_1,\cdots,\sigma_r)$, where $\sigma_1\geq\cdots\geq\sigma_n\geq0$ are singular values of $\bm{P}_{\rm{s}}$.

Since $\bm{P}_{\rm{s}}$ is a TPM, we obtain the fact that $\bm{p}_i\geq\bm{0}$, $\bm{p}_i\neq\bm{0}$ and $\Vert\bm{p}_i\Vert_1 = \mathds{1}^\top\bm{p}_i=1$.
It then yields that 
\begin{align}
\Vert\bm{P}_{\rm{s}}\Vert_F &= \Vert\bm{U\Sigma V}^\top\Vert_F = \Vert\bm{\Sigma}\Vert_F = \Vert\bm{\sigma}\Vert_2\nonumber\\
&= \left(\sum_{i=1}^n\Vert\bm{p}_i\Vert_2^2\right)^{\frac{1}{2}}\leq\left(\sum_{i=1}^n\Vert\bm{p}_i\Vert_1^2\right)^{\frac{1}{2}}=\sqrt{n}.\label{eq:||Ps||_F^2<=n}
\end{align}
Note that the equality $\Vert\bm{p}_i\Vert_2=\Vert\bm{p}_i\Vert_1$ holds if and only if $\bm{p}_i$ is a one-hot vector (a real vector comprising a single element equal to one and all other elements equal to zero).
Hence, we derive that $\Vert\bm{\sigma}\Vert_2=\sqrt{n}$ if and only if $\bm{P}_{\rm{s}}$ consists of one-hot vectors.

${\rm{(i)}}$ It is demonstrated from \cite[A.2.2 Theorem 2]{zhang2024dynamical} that $\max\Vert\bm{P}_{\rm{s}}\Vert_{S_p} = n^{\frac{1}{p}}$ for $1\leq p<2$ if and only if $\bm{P}_{\rm{s}}$ is a permutation matrix.
Meanwhile, the SVD of $\bm{P}_{\rm{s}}$ satisfies $\bm{U} = \bm{P}_{\rm{s}}$, $\bm{\Sigma} = \bm{I}_n$ and $\bm{V} = \bm{I}_n$.

For the case that $p=1$, we obtain from the dual-valued nuclear norm in \Cref{cor: dual nuclear norm} that
\begin{align*}
\Vert\bm{P}\Vert_{S_1} = \Vert\bm{P}\Vert_* = \Vert \bm{P}_{\rm{s}} \Vert_* + \Big(\big\langle\bm{U}_r\bm{V}_r^\top,\bm{P}_{\rm{i}}\big\rangle + \big\Vert\widetilde{\bm{U}}_r^\top\bm{P}_{\rm{i}}\widetilde{\bm{V}}_r\big\Vert_*\Big)\epsilon= n+\langle\bm{P}_{\rm{s}},\bm{P}_{\rm{i}}\rangle\epsilon.   
\end{align*}
Note that $\langle\bm{P}_{\rm{s}},\bm{P}_{\rm{i}}\rangle\leq0$ for some permutation matrix $\bm{P}_{\rm{s}}$ and any $\bm{P}_{\rm{i}}$ satisfying $\bm{P}$ is a DTPM, which implies that $\langle\bm{P}_{\rm{s}},\bm{P}_{\rm{i}}\rangle=0$ if and only if $\bm{P}_{\rm{i}}=\bm{O}$.
Hence, the maximum value of $\Vert\bm{P}\Vert_{S_1}$ is $n$ if and only if $\bm{P}$ is a permutation matrix.

For the case that $1<p<2$, the dual-valued Schatten $p$-norm in \Cref{cor: dual schatten norm} describes that 
\begin{align*}
\Vert\bm{P}\Vert_{S_p} = \Vert\bm{P}_{\rm{s}}\Vert_{S_p}+\frac{1}{\Vert\bm{P}_{\rm{s}}\Vert_{S_p}^{p-1}}\langle\bm{U}_r\bm{\Sigma}_r^{p-1}\bm{V}_r^\top,\bm{P}_{\rm{i}}\rangle\epsilon= n^{\frac{1}{p}}+n^{\frac{1-p}{p}}\langle\bm{P}_{\rm{s}},\bm{P}_{\rm{i}}\rangle\epsilon. 
\end{align*}
Similarly, $\langle\bm{P}_{\rm{s}},\bm{P}_{\rm{i}}\rangle$ reaches its maximum value of 0 if and only if $\bm{P}_{\rm{i}}=\bm{O}$.
Thus, the maximum value of $\Vert\bm{P}\Vert_{S_p}$ is $n^{\frac{1}{p}}$ if and only if $\bm{P}$ is a permutation matrix.

Hence, $\max\Vert\bm{P}\Vert_{S_p}=n^{\frac{1}{p}}$ for $1\leq p<2$ if and only if $\bm{P}$ is a permutation matrix.

${\rm{(ii)}}$ The dual-valued Frobenius norm \Cref{cor: dual Frobenius norm} claims that
\begin{align*}
\Vert\bm{P}\Vert_{S_2} = \Vert\bm{P}\Vert_F=\Vert\bm{P}_{\rm{s}}\Vert_F+\frac{\langle\bm{P}_{\rm{s}},\bm{P}_{\rm{i}}\rangle}{\Vert\bm{P}_{\rm{s}}\Vert_F}\epsilon.
\end{align*}
It can be concluded from \cref{eq:||Ps||_F^2<=n} that $\Vert\bm{P}_{\rm{s}}\Vert_F\leq\sqrt{n}$ and the equality holds if and only if each column of $\bm{P}_{\rm{s}}$ is a one-hot vector.
Thus, $\Vert\bm{P}\Vert_{S_2} = \sqrt{n}+\frac{\langle\bm{P}_{\rm{s}},\bm{P}_{\rm{i}}\rangle}{\sqrt{n}}\epsilon$.
We still readily demonstrate that in this case, the maximum value of $\langle\bm{P}_{\rm{s}},\bm{P}_{\rm{i}}\rangle$ is zero when $\bm{P}_{\rm{i}}$ is chosen as a zero matrix.
Hence, $\Vert\bm{P}\Vert_{S_2}$ reaches its maximum value of $\sqrt{n}$ if and only if each column of $\bm{P}$ is a one-hot vector.

${\rm{(iii)}}$ Consider the case that $2<p<\infty$.
Then we derive from \Cref{cor: dual schatten norm} that the dual-valued Schatten $p$-norm of the DTPM $\bm{P}$ is expressed as 
\begin{align*}
\Vert\bm{P}\Vert_{S_p} = \Vert\bm{P}_{\rm{s}}\Vert_{S_p}+\frac{1}{\Vert\bm{P}_{\rm{s}}\Vert_{S_p}^{p-1}}\langle\bm{U}_r\bm{\Sigma}_r^{p-1}\bm{V}_r^\top,\bm{P}_{\rm{i}}\rangle\epsilon,    
\end{align*}
where $\Vert\bm{P}_{\rm{s}}\Vert_{S_p}=\Vert\bm{\sigma}\Vert_p$.
\Cref{lem:properties of norms of matrices and vectors} and the formula \cref{eq:||Ps||_F^2<=n} claim that $\Vert\bm{\sigma}\Vert_p\leq \Vert\bm{\sigma}\Vert_2\leq\sqrt{n}$.
It can be seen that the first equality holds if and only if $\bm{\sigma}$ has a single non-zero element and the second equality holds if and only if $\bm{P}_{\rm{s}}$ is made up of one-hot vectors.
Hence, the rank of $\bm{P}_{\rm{s}}$ is 1, whereby a particular row of $\bm{P}_{\rm{s}}$ must therefore be identical to $\mathds{1}^\top$ and the other rows are all equal to $\bm{0}^\top$.
Under the circumstances, $r=1$, $\bm{\sigma} = [\sqrt{n},0,\cdots,0]^\top$ and $\bm{P}_{\rm{s}} = \bm{U}_r\bm{\Sigma}_r\bm{V}_r^\top=\sqrt{n}\bm{U}_r\bm{V}_r^\top$.
Thus, $\bm{U}_r\bm{\Sigma}_r^{p-1}\bm{V}_r^\top=n^{\frac{p-1}{2}}\bm{U}_r\bm{V}_r^\top= n^{\frac{p-2}{2}}\bm{P}_{\rm{s}}$.
We conclude that $\Vert\bm{P}\Vert_{S_p} = \sqrt{n}+\frac{\langle\bm{P}_{\rm{s}},\bm{P}_{\rm{i}}\rangle}{\sqrt{n}}\epsilon$ and $\langle\bm{P}_{\rm{s}},\bm{P}_{\rm{i}}\rangle$ reaches its maximum value of zero if and only if $\bm{P}_{\rm{i}}=\bm{O}$.
Thus, the maximum value of $\Vert\bm{P}\Vert_{S_p}$ is $\sqrt{n}$ if and only $\bm{P}$ consists of identical one-hot vectors. 

Suppose that $p=\infty$.
It yields from the dual-valued spectral norm in \Cref{pro: dual matrix spectral norm} that 
\begin{align*}
\Vert\bm{P}\Vert_{S_\infty} = \Vert\bm{P}\Vert_2 = \Vert\bm{P}_{\rm{s}}\Vert_2+\lambda_{\max}(\bm{R})\epsilon,    
\end{align*}
where $\bm{R}:={\rm{sym}}\big(\bm{U}_1^\top\bm{P}_{\rm{i}}\bm{V}_1\big)$.
Besides, $\bm{U}_1\in\mathbb{R}^{m\times r_1}$ and $\bm{V}_1\in\mathbb{R}^{n\times r_1}$ are the left and right singular vectors of $\bm{P}_{\rm{s}}$ corresponding to the largest singular value with multiplicity $r_1$, respectively. 
It can be known that $\Vert\bm{P}_{\rm{s}}\Vert_2 = \Vert\bm{\sigma}\Vert_\infty\leq\Vert\bm{\sigma}\Vert_2\leq\sqrt{n}$ and $\Vert\bm{P}_{\rm{s}}\Vert_2=\sqrt{n}$ if and only if $\bm{P}_{\rm{s}}$ consists of identical one-hot column vectors.
Meanwhile, $r=1$, $\bm{\sigma} = [\sqrt{n},0,\cdots,0]^\top$ and $\bm{P}_{\rm{s}}=\sqrt{n}\bm{u}_1\bm{v}_1^\top$, where $\bm{u}_1$ and $\bm{v}_1$ represent the first columns of $\bm{U}$ and $\bm{V}$, respectively.
Thus, $\bm{R}=\frac{1}{2}(\bm{u}_1^\top\bm{P}_{\rm{i}}\bm{v}_1+\bm{v}_1^\top\bm{P}_{\rm{i}}^\top\bm{u}_1)=\bm{u}_1^\top\bm{P}_{\rm{i}}\bm{v}_1\in\mathbb{R}$ and $\lambda_{\max}(\bm{R})=\bm{u}_1^\top\bm{P}_{\rm{i}}\bm{v}_1 = \langle\bm{u}_1\bm{v}_1^\top,\bm{P}_{\rm{i}}\rangle=\frac{1}{\sqrt{n}}\langle\bm{P}_{\rm{s}},\bm{P}_{\rm{i}}\rangle$.
Therefore, $\Vert\bm{P}\Vert_{S_\infty}=\sqrt{n}+\frac{\langle\bm{P}_{\rm{s}},\bm{P}_{\rm{i}}\rangle}{\sqrt{n}}\epsilon$ and the maximum value of $\langle\bm{P}_{\rm{s}},\bm{P}_{\rm{i}}\rangle$ is zero if and only if $\bm{P}_{\rm{i}}=\bm{O}$.
In conclusion, $\Vert\bm{P}\Vert_{S_\infty}$ reaches its maximum value of $\sqrt{n}$ if and only if $\bm{P}$ has identical one-hot column vectors.

Hence, $\max\Vert\bm{P}\Vert_{S_p}=n^{\frac{1}{2}}$ for $2< p<\infty$ if and only if $\bm{P}$ consists of identical one-hot column vectors.

${\rm{(iv) (v)}}$ Given that $\min\Vert\bm{P}_{\rm{s}}\Vert_{S_p}=1$ for $1\leq p\leq\infty$ if and only if $\bm{P}_{\rm{s}}=\frac{1}{n}\mathds{1}\mathds{1}^\top$, as demonstrated in \cite[A.2.2 Proposition 5]{zhang2024dynamical}.
Meanwhile, $\bm{P}_{\rm{s}}$ has a compact SVD, $\bm{P}_{\rm{s}}=\bm{u}_1\bm{v}_1^\top$, where $\bm{u}_1=\bm{v}_1=\frac{1}{\sqrt{n}}\mathds{1}$.
In light of the total order of dual numbers, we then aim to minimize the infinitesimal part of $\Vert\bm{P}\Vert_{S_p}$.

$\textbf{Case 1:}$ $p=1$.
We obtain that 
\begin{align*}
\Vert\bm{P}\Vert_{S_1} &= \Vert\bm{P}\Vert_* = \Vert \bm{P}_{\rm{s}} \Vert_* + \big(\langle\bm{U}_r\bm{V}_r^\top,\bm{P}_{\rm{i}}\rangle + \Vert\widetilde{\bm{U}}_r^\top\bm{P}_{\rm{i}}\widetilde{\bm{V}}_r\Vert_*\big)\epsilon\\
&=1+\Big(\frac{1}{n}\langle\mathds{1}\mathds{1}^\top,\bm{P}_{\rm{i}}\rangle+ \Vert\widetilde{\bm{U}}_r^\top\bm{P}_{\rm{i}}\widetilde{\bm{V}}_r\Vert_*\Big)\epsilon=1+\Vert\widetilde{\bm{U}}_r^\top\bm{P}_{\rm{i}}\widetilde{\bm{V}}_r\Vert_*\epsilon,    
\end{align*}
where $\langle\mathds{1}\mathds{1}^\top,\bm{P}_{\rm{i}}\rangle=0$ is due to the fact that $\mathds{1}^\top\bm{P}_{\rm{i}}=\bm{0}^\top$.
Then the optimization problem is as follows,
\begin{align*}
\begin{array}{ll}
\min\limits_{\bm{P}_{\rm{i}}} & \Vert\widetilde{\bm{U}}_r^\top\bm{P}_{\rm{i}}\widetilde{\bm{V}}_r\Vert_* \\
s.t. & \mathds{1}^\top\bm{P}_{\rm{i}}=\bm{0}^\top,
\end{array}
\end{align*}
where $\big[\frac{1}{\sqrt{n}}\mathds{1},\widetilde{\bm{U}}_r\big]$ and $\big[\frac{1}{\sqrt{n}}\mathds{1},\widetilde{\bm{V}}_r\big]$ are orthogonal.
It is worth noting that $\Vert\widetilde{\bm{U}}_r^\top\bm{P}_{\rm{i}}\widetilde{\bm{V}}_r\Vert_*\geq0$ and $\Vert\widetilde{\bm{U}}_r^\top\bm{P}_{\rm{i}}\widetilde{\bm{V}}_r\Vert_*=0$ if and only if $\widetilde{\bm{U}}_r^\top\bm{P}_{\rm{i}}\widetilde{\bm{V}}_r=\bm{O}$.
On the one hand, 
\begin{align*}
\bm{O}=\widetilde{\bm{U}}_r\widetilde{\bm{U}}_r^\top\bm{P}_{\rm{i}}\widetilde{\bm{V}}_r=\Big(\bm{I}-\frac{1}{n}\mathds{1}\mathds{1}^\top\Big)\bm{P}_{\rm{i}}\widetilde{\bm{V}}_r=\bm{P}_{\rm{i}}\widetilde{\bm{V}}_r,    
\end{align*}
which derives that $\bm{P}_{\rm{i}}=\bm{w}\mathds{1}^\top$ for any $\bm{w}$ satisfying $\mathds{1}^\top\bm{w}=0$.
Conversely, such $\bm{P}_{\rm{i}}$ leads to $\widetilde{\bm{U}}_r^\top\bm{P}_{\rm{i}}\widetilde{\bm{V}}_r=\bm{O}$.
Consequently, the objective function $\Vert\widetilde{\bm{U}}_r^\top\bm{P}_{\rm{i}}\widetilde{\bm{V}}_r\Vert_*$ attains its minimum value of 0 if and only if $\bm{P}_{\rm{i}}$ has identical column vectors whose sum is zero.

In conclusion, the minimum value of $\Vert\bm{P}\Vert_{S_1}$ is 1 if and only if $\bm{P}=\frac{1}{n}\mathds{1}\mathds{1}^\top + \bm{w}\mathds{1}^\top\epsilon$ for any $\bm{w}\in\mathbb{R}^n$ satisfying $\mathds{1}^\top\bm{w}=0$.

$\textbf{Case 2:}$ $1<p<\infty$. 
It yields that 
\begin{align*}
\Vert\bm{P}\Vert_{S_p} = \Vert\bm{P}_{\rm{s}}\Vert_{S_p}+\frac{1}{\Vert\bm{P}_{\rm{s}}\Vert_{S_p}^{p-1}}\langle\bm{U}_r\bm{\Sigma}_r^{p-1}\bm{V}_r^\top,\bm{P}_{\rm{i}}\rangle\epsilon=1+\frac{1}{n}\langle\mathds{1}\mathds{1}^\top,\bm{P}_{\rm{i}}\rangle\epsilon=1.  
\end{align*}
Hence, the minimum value of $\Vert\bm{P}\Vert_{S_p}$ $(1<p<\infty)$ is 1 if and only if $\bm{P}=\frac{1}{n}\mathds{1}\mathds{1}^\top + \bm{P}_{\rm{i}}\epsilon$ for any $\bm{P}_{\rm{i}}$ satisfying $\mathds{1}^\top\bm{P}_{\rm{i}}=\bm{0}^\top$.

$\textbf{Case 3:}$ $p=\infty$.
It can be demonstrated that
\begin{align*}
\Vert\bm{P}\Vert_{S_\infty} &= \Vert\bm{P}\Vert_2 = \Vert\bm{P}_{\rm{s}}\Vert_2+\lambda_{\max}\Big(\frac{1}{2}(\bm{U}_1^\top\bm{P}_{\rm{i}}\bm{V}_1+\bm{V}_1^\top\bm{P}_{\rm{i}}^\top\bm{U}_1)\Big)\epsilon\\
&=1+\lambda_{\max}\Big(\frac{1}{n}\mathds{1}^\top\bm{P}_{\rm{i}}\mathds{1}\Big)\epsilon=1.    
\end{align*}
Thus, the minimum value of $\Vert\bm{P}\Vert_{S_\infty}$ is 1 if and only if $\bm{P}=\frac{1}{n}\mathds{1}\mathds{1}^\top + \bm{P}_{\rm{i}}\epsilon$ for any $\bm{P}_{\rm{i}}$ satisfying $\mathds{1}^\top\bm{P}_{\rm{i}}=\bm{0}^\top$.

In conclusion, we complete the proof.
\end{proof}

Accordingly, \Cref{pro: dual EI,pro: dual Schatten p norm and DTPM,the: dynamically reversible} have revealed that a DTPM $\bm{P}$ is dynamically reversible iff both $\Vert\bm{P}\Vert_{S_p}$ $(1\!\leq\! p\!<\!2)$ and ${\rm{EI_d}}(\bm{P})$ reach their maximum values.
Thus, it follows that $\Vert\bm{P}\Vert_{S_p}$ $(1\!\leq\! p\!<\!2)$ characterizes the approximate dynamical reversibility of $\bm{P}$ and ${\rm{EI_d}}(\bm{P})$ quantifies the extent of the causal effect of the corresponding dual Markov chain, defined by Qi and Cui in \cite{qi2024markov}.
Moreover, the validity of the indicator is verified in the following numerical experiments.

\subsection{Numerical Experiments}
In this section, the classical dumbbell Markov chain is employed to elucidate the pivotal role of the infinitesimal part of the dual-valued Ky Fan $p$-$k$-norm in investigating the optimal classification number of the system for the occurrence of CE.
We wrote our codes in Matlab R2024b on a personal computer with Intel(R) Core(TM) i7-10510U CPU @ 1.80GHz 2.30 GHz.
The codes are available at \url{https://github.com/TongWeiviolet/Causal-emergence-analysis.git}.

The prevailing approach to identifying CE involves determining an optimal classification number $k$, whereby the EI of the reduced TPM of size $k$-by-$k$ is significantly larger than that of the original TPM.
As evidenced in \cite{zhang2024dynamical}, this $k$ is selected as the clear cut-off in the singular values of the original TPM.
However, when the singular values decrease steadily, the visual determination of the cut-off becomes unreliable.

The highlight of our findings is the ability to select $k$ in a manner that allows the infinitesimal part of $\Vert\bm{P}\Vert_{(k,p)}$ for the DTPM $\bm{P}$ to reach a discernible maximum value by adjusting $p$ in $(1,2]$. 
The detailed experimental steps are described below.

The initial step is to construct a classical ``dumbbell'' Markov chain containing 25 states in each further ``weight'', 15 in each closer ``weight'', and 5 in the ``bar'', with a total of 85 nodes.
Then the random transition probabilities are specified within each weight and the bar, as well as between weights and the bar.
The dumbbell Markov chain is illustrated in \Cref{fig: dumbbell Markov chain}a, and its corresponding TPM, expressed as $\bm{M}\!\in\!\mathbb{R}^{85\times 85}$, is shown in the first graph of \Cref{fig: dumbbell Markov chain}b.
Besides, $\bm{M}\!\geq\!\bm{O}$ and $\mathds{1}^\top\bm{M}\!=\!\mathds{1}^\top$.

\begin{figure}[htpb]
\centering
\includegraphics[width=0.99\linewidth]{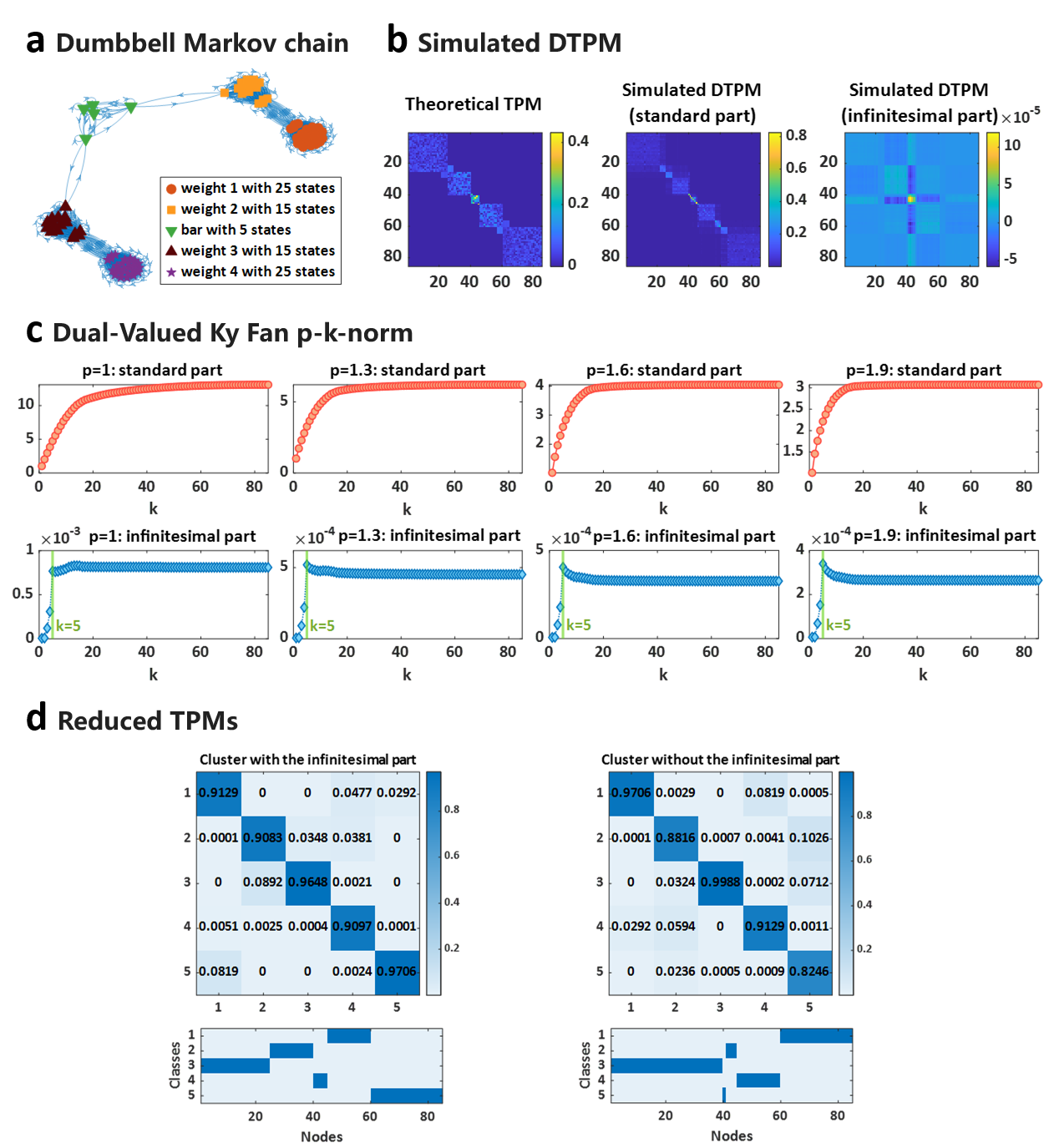}
\caption{Dumbbel Markov Chain}
\label{fig: dumbbell Markov chain}
\end{figure}

Subsequently, we intend to develop a DTPM and begin by constructing time-series data formulated by $\bm{x}_{t+1}\!=\!\bm{Mx}_{t}$, where the initial point of size 85-by-1 is randomly selected satisfying $\mathds{1}^\top\bm{x}_1\!=\!1$.
Thus, the time-series data $\bm{x}$ is obtained and utilized to yield two dual matrices $\bm{X}\!=\!\bm{X}_{\rm{s}}+\bm{X}_{\rm{i}}\epsilon\!\in\!\mathbb{DR}^{85 \times T}$ and $\bm{Y}\!=\!\bm{Y}_{\rm{s}}+\bm{Y}_{\rm{i}}\epsilon\!\in\!\mathbb{DR}^{85 \times T}$, where $\bm{X}_{\rm{s}}\!=\!\bm{x}(1\!:\!T)$, $\bm{X}_{\rm{i}}\!=\!\bm{x}(2\!:\!T+1)-\bm{x}(1\!:\!T-1)$, $\bm{Y}_{\rm{s}}\!=\!\bm{x}(2\!:\!T+1)$ and $\bm{Y}_{\rm{i}}\!=\!\bm{x}(3\!:\!T+2)-\bm{x}(2\!:\!T+1)$. 
Our following objective is to derive a DTPM $\bm{P}\!=\!\bm{P}_{\rm{s}}+\bm{P}_{\rm{i}}\epsilon\in\mathbb{DR}^{85\times85}$ such that $\bm{Y}\!=\!\bm{PX}$.
This entails solving two optimization problems 
in succession according to the total order of dual numbers, the former is
\begin{align*}
\begin{array}{cl}
\min\limits_{\bm{P}_{\rm{s}}\in\mathbb{R}^{85\times 85}} & \Vert\bm{Y}_{\rm{s}}-\bm{P}_{\rm{s}}\bm{X}_{\rm{s}}\Vert_F, \\
s.t. & \mathds{1}^\top\bm{P}_{\rm{s}}=\mathds{1}^\top,\;\bm{P}_{\rm{s}}\geq\bm{O}.
\end{array}
\end{align*}
Then the above optimal solution, designated as $\bm{P}_{\rm{s}}$, is applied to the latter one
\begin{align*}
\begin{array}{cl}
\min\limits_{\bm{P}_{\rm{i}}\in\mathbb{R}^{85\times 85}} & \Vert\bm{Y}_{\rm{i}}-\bm{P}_{\rm{s}}\bm{X}_{\rm{i}}-\bm{P}_{\rm{i}}\bm{X}_{\rm{s}}\Vert_F, \\
s.t. & \mathds{1}^\top\bm{P}_{\rm{i}}=\bm{0}^\top,\;[\bm{P}_{\rm{i}}]_{jk}\geq0,\;\forall (j,k)\in\Theta,
\end{array}  
\end{align*}
where $\Theta:=\{(j,k):[\bm{P}_{\rm{s}}]_{jk}<10^{-13}\}$.
Moreover, the Matlab package ``$\mathtt{cvx}$'' \cite{cvx,gb08} is utilized to solve these two optimization problems.
Hence, these two optimal solutions generate a simulated DTPM $\bm{P}=\bm{P}_{\rm{s}}+\bm{P}_{\rm{i}}\epsilon$ satisfying $\mathds{1}^\top\bm{P}=\mathds{1}^\top$ and $\bm{P}\geq\bm{O}$ from the original dumbbell Markov chain, as presented in the last two graphs of \Cref{fig: dumbbell Markov chain}b.

The further intention is to adjust the parameters $(k,p)$ of $\Vert\bm{P}\Vert_{(k,p)}$ to ascertain the optimal classification number of the system for the occurrence of CE. 
\Cref{fig: dumbbell Markov chain}c shows how $\Vert\bm{P}\Vert_{(k,p)}$ varies with $k$ when $p=1,1.3,1.6,1.9$.
The rationale behind selecting these specific values of $p$ is rooted in the fact that the dual-valued Schatten $p$-norm of a DTPM for $1\leq p<2$ characterizes the approximate dynamical reversibility of the DTPM, supported by the previous section.
Concerning the quantification proposed by Zhang \textit{et al.} in \cite{zhang2024dynamical}, the degree of vague CE for the TPM $\bm{P}_{\rm{s}}$ is expressed as
\begin{align*}
    \Delta\Gamma_p(k) = \frac{1}{k}\Vert\bm{P}_{\rm{s}}\Vert_{(p,k)}^p - \frac{1}{n}\Vert\bm{P}_{\rm{s}}\Vert_{S_p}^p,
\end{align*}
where $n$ equals the size of $\bm{P}_{\rm{s}}$ and $k$ is selected according to the relatively clear cut-off in the spectrum of singular values.
Nevertheless, the red lines in the first row of \Cref{fig: dumbbell Markov chain}c, representing $\Vert\bm{P}_{\rm{s}}\Vert_{(k,p)}$, exhibit a smooth increase as $k$ rises, which is difficult to find a suitable threshold $k$.
Conversely, in the second row of \Cref{fig: dumbbell Markov chain}c, an obvious maximum point is observed at $k=5$ for $p=1.3,1.6,1.9$, consistent with the number of categories of the original dumbbell Markov chain. 
Consequently, we adjust $p$ in the interval $[1,2)$ to identify $k$ corresponding to the maximum value of the infinitesimal part of $\Vert\bm{P}\Vert_{(k,p)}$, which is the optimal classification number.

Ultimately, we intend to develop a coarse-grained description of the original system and obtain a corresponding reduced TPM of size 5-by-5.
One such method is clustering all 85 nodes into 5 distinct groups with the infinitesimal part. 
Based on the CDSVD \cite{wei2024singular} of the DTPM $\bm{P}$, expressed as $\bm{P}\!=\!\bm{U\Sigma V}^\top$ with $\bm{U}\!=\!\bm{U}_{\rm{s}}+\bm{U}_{\rm{i}}\epsilon$, the k-means algorithm \cite{hartigan1979k} is employed to cluster all the columns of the concatenation $\bm{Q}_1\!=\!\big[\big(\bm{U}_{\rm{s}}(:,1\!:\!5)\big)\!^\top\bm{P}_{\rm{s}};\big(\bm{U}_{\rm{s}}(:,1\!:\!5)\big)\!^\top\bm{P}_{\rm{i}}+\big(\bm{U}_{\rm{i}}(:,1\!:\!5)\big)^\top\bm{P}_{\rm{s}}\big]\!\in\!\mathbb{R}^{10\times85}$ into 5 classes, thereby producing a projection matrix $\bm{\Phi}_1\!\in\!\mathbb{R}^{85\times5}$.
Besides, $\bm{\Phi}_1(\alpha,\beta)$ equals 1 if the $\alpha$-th column of $\bm{Q}_1$ belongs to the $\beta$-th class and 0 otherwise.
Thus, the reduced TPM $\bm{\Upsilon}_1\! =\! (\bm{\Phi}_1^\top\bm{P}_{\rm{s}}\bm{\Phi}_1)./{\rm{sum}}(\bm{\Phi}_1^\top\bm{P}_{\rm{s}}\bm{\Phi}_1)$ is developed.
The detailed presentation of $\bm{\Upsilon}_1$ and the corresponding concrete classifications are illustrated in the first column of \Cref{fig: dumbbell Markov chain}d.
Another traditional method is to cluster directly without the infinitesimal part.
Similarly, take advantage of the k-means algorithm to cluster the columns of $\bm{Q}_2\!=\!\big(\bm{U}_{\rm{s}}(:,1\!:\!5)\big)\!^\top\bm{P}_{\rm{s}}$ into 5 groups and to provide the projection matrix $\bm{\Phi}_2$.
Then the reduce TPM $\bm{\Upsilon}_2\!=\!(\bm{\Phi}_2^\top\bm{P}_{\rm{s}}\bm{\Phi}_2)./{\rm{sum}}(\bm{\Phi}_2^\top\bm{P}_{\rm{s}}\bm{\Phi}_2)$ is determined and displayed in the second column of \Cref{fig: dumbbell Markov chain}d, accompanied with the corresponding coarse-graining method from micro-states to macro-states.
A comparative analysis of the two methods has revealed that clustering with the infinitesimal part can augment extra available information, thereby enhancing the precision of the classification.

\section{Conclusions}\label{sec7:conclusions}
This paper provides an innovative method for constructing the dual continuation of real-valued functions concerning real matrices based on the G\^ateaux derivative.
Moreover, the general forms of dual-valued vector and matrix norms are derived and several characteristics are elucidated.
Besides, an intuitive overview of dual-valued functions proposed in this paper is outlined in \Cref{table: dual vector norms,table: dual matrix norms}.

Another core finding of our study is the identification of the optimal classification number for a system that occurs causal emergence based on the dual-valued Ky Fan $p$-$k$-norm.
We first construct a DTPM $\bm{P}$ by solving two optimization problems involving time-series data generated from the original system.
This follows by identifying what value of $k$ achieves the maximum value of the infinitesimal part of $\Vert\bm{P}\Vert_{(k,p)}$ after adjusting the value of $p$ in the interval $[1,2)$.
This $k$ is then regarded as the optimal number of classifications.
Finally, the infinitesimal part is concatenated and the k-means algorithm is employed to cluster the original nodes into $k$ groups, which provides the coarse-graining method and occurs causal emergence.

Although our proposed dual continuation of real-valued functions possesses a variety of prominent properties theoretically and practically, it still leaves room for improvement. 
For example, we can explore the dual continuation of real-valued and complex-valued functions for complex matrices. 
Moreover, our method, based on the dual-valued Ky Fan $p$-$k$-norm, can be applied to other time-series data and further ascertain the optimal classification number of the system.
This will be beneficial for scholars engaged in related research.

\begin{table*}[htpb]
\renewcommand{\arraystretch}{2}
\centering
\caption{Dual-Valued Vector Norms}
\vspace{-10pt}
\vspace{3mm}
\label{table: dual vector norms}
\resizebox{\textwidth}{!}{
\begin{tabular}{|c|c|r@{=}l|}
\hline
\textbf{Dual-Valued Vector Norms} & \textbf{Theorems} & \multicolumn{2}{c|}{\textbf{Formulas}} \\
\hline
Dual-valued vector 1-norm & \Cref{pro: dual vector p norm} \cref{dual vector 1-norm} & $\Vert\bm{x}\Vert_{1}$ & $\Vert\bm{x}_{\rm{s}}\Vert_1+ \Big(\langle {\rm{sign}}(\bm{x}_{\rm{s}}),\bm{x}_{\rm{i}}\rangle+\sum_{k\in{\rm{supp}}^c(\bm{x}_{\rm{s}})}\big|x_{\rm{i}}^k\big|\Big)\epsilon.$\\
\hline
\makecell[c]{Dual-valued vector $p$-norm\\$(1<p<\infty)$} & \Cref{pro: dual vector p norm} \cref{eq: dual vector p-norm} & $\Vert\bm{x}\Vert_{p}$ & $\left\{\begin{array}{ll}
\Vert\bm{x}_{\rm{s}}\Vert_p+ \frac{\langle |\bm{x}_{\rm{s}}|^{p-2}\odot\bm{x}_{\rm{s}},\bm{x}_{\rm{i}}\rangle}{\Vert\bm{x}_{\rm{s}}\Vert_p^{p-1}}\epsilon,  &  \bm{x}_{\rm{s}}\neq\bm{0},\\
\Vert\bm{x}_{\rm{i}}\Vert_p\epsilon, & \bm{x}_{\rm{s}}=\bm{0}.
\end{array}\right.$\\
\hline
Dual-valued vector $\infty$-norm & \Cref{pro: dual vector p norm} \cref{dual vector infty-norm} & $\Vert\bm{x}\Vert_{\infty}$ & $\left\{\begin{array}{ll}
\Vert\bm{x}_{\rm{s}}\Vert_\infty+ \max\limits_{k\in\mathcal{I}}\big({\rm{sign}}(x_{\rm{s}}^k)x_{\rm{i}}^k\big)\epsilon,  & \bm{x}_{\rm{s}}\neq\bm{0}, \\
\Vert\bm{x}_{\rm{i}}\Vert_\infty\epsilon, &  \bm{x}_{\rm{s}}=\bm{0}.
\end{array}\right.$\\
\hline
\end{tabular}}
\end{table*}

\begin{table*}[htpb]
\caption{Dual-Valued Functions of Dual Matrices}
\vspace{-10pt}
\renewcommand{\arraystretch}{2}
\centering
\vspace{3mm}
\label{table: dual matrix norms}
\resizebox{\textwidth}{!}{
\begin{tabular}{|c|c|r@{=}l|}
\hline
\textbf{Dual-valued matrix norms} & \textbf{Theorems} & \multicolumn{2}{c|}{\textbf{Formulas}} \\
\hline
\makecell[c]{Dual-valued Ky Fan $p$-$k$-norm\\$(1<p<\infty)$} & \Cref{pro: dual ky fan p-k-norm} & $\Vert\bm{A}\Vert_{(k,p)}$ & $\left\{\begin{array}{ll}
\Vert\bm{A}_{\rm{s}}\Vert_{(k,p)}+\frac{\langle\bm{U}_1\bm{\Sigma}_1^{p-1}\bm{V}_1^\top,\bm{A}_{\rm{i}}\rangle+\sigma_k^{p-1}\sum_{\ell=1}^t\lambda_\ell(\bm{M})}{\Vert\bm{A}_{\rm{s}}\Vert_{(k,p)}^{p-1}}\epsilon, & \bm{A}_{\rm{s}}\neq\bm{O}, \\
\Vert\bm{A}_{\rm{i}}\Vert_{(k,p)}\epsilon, & \bm{A}_{\rm{s}}=\bm{O}.
\end{array}\right.$\\
\hline
Dual-valued Ky Fan $k$-norm & \Cref{pro: dual ky fan} & $\Vert\bm{A}\Vert_{(k)}$ & $\left\{\begin{array}{ll}
\Vert\bm{A}_{\rm{s}}\Vert_{(k)}+\left(\langle\bm{U}_1\bm{V}_1^\top,\bm{A}_{\rm{i}}\rangle+\sum_{\ell=1}^t\lambda_\ell(\bm{M})\right)\epsilon, & \bm{A}_{\rm{s}}\neq\bm{O},\sigma_k\neq0,\\
\Vert\bm{A}_{\rm{s}}\Vert_{(k)}+\left(\langle\bm{U}_1\bm{V}_1^\top,\bm{A}_{\rm{i}}\rangle+\sum_{\ell=1}^t\sigma_\ell(\bm{N})\right)\epsilon, & \bm{A}_{\rm{s}}\neq\bm{O},\sigma_k=0,\\
\Vert\bm{A}_{\rm{i}}\Vert_{(k)}\epsilon, & \bm{A}_{\rm{s}}=\bm{O}.
\end{array}\right.$\\
\hline
Dual-valued spectral norm & \Cref{pro: dual matrix spectral norm}  & $\Vert\bm{A}\Vert_2$ & $ \left\{\begin{array}{ll}
\Vert \bm{A}_{\rm{s}} \Vert_2 + \lambda_{\rm{max}}(\bm{R})\epsilon, & \bm{A}_{\rm{s}}\neq \bm{O}, \\
\Vert \bm{A}_{\rm{i}} \Vert_2\epsilon, & \bm{A}_{\rm{s}}=\bm{O}.
\end{array}\right.$\\
\hline
\makecell[c]{Dual-valued Schatten $p$-norm\\$(1<p<\infty)$} & \Cref{cor: dual schatten norm} & $\Vert\bm{A}\Vert_{S_p}$ & $\left\{\begin{array}{ll}
\Vert \bm{A}_{\rm{s}} \Vert_{S_p}+\frac{1}{\Vert\bm{A}_{\rm{s}}\Vert_{S_p}^{p-1}}\langle \bm{U}_r\bm{\Sigma}_r^{p-1}\bm{V}_r^\top,\bm{A}_{\rm{i}}\rangle\epsilon, & \bm{A}_{\rm{s}}\neq\bm{O},\\
\Vert\bm{A}_{\rm{i}}\Vert_{S_p}\epsilon, & \bm{A}_{\rm{s}}=\bm{O}.
\end{array}\right.$\\
\hline
Dual-valued nuclear norm & \Cref{cor: dual nuclear norm} & $\Vert\bm{A}\Vert_*$ & $ \left\{\begin{array}{ll}
\Vert \bm{A}_{\rm{s}} \Vert_* + \Big(\big\langle\bm{U}_r\bm{V}_r^\top,\bm{A}_{\rm{i}}\big\rangle + \big\Vert\widetilde{\bm{U}}_r^\top\bm{A}_{\rm{i}}\widetilde{\bm{V}}_r\big\Vert_*\Big)\epsilon, &  \bm{A}_{\rm{s}}\neq\bm{O},\\
\Vert \bm{A}_{\rm{i}} \Vert_*\epsilon, &  \bm{A}_{\rm{s}}=\bm{O}.
\end{array}\right.$\\
\hline
Dual-valued Frobenius norm & \Cref{cor: dual Frobenius norm} & $\Vert\bm{A}\Vert_F$ & $  \left\{\begin{array}{ll}
\Vert \bm{A}_{\rm{s}} \Vert_F+\frac{\langle \bm{A}_{\rm{s}},\bm{A}_{\rm{i}}\rangle}{\Vert\bm{A}_{\rm{s}}\Vert_F}\epsilon, &  \bm{A}_{\rm{s}}\neq\bm{O},\\
\Vert\bm{A}_{\rm{i}}\Vert_F\epsilon, & \bm{A}_{\rm{s}}=\bm{O}.
\end{array}\right.$\\
\hline
Dual-valued operator 1-norm & \Cref{pro: dual operator 1-norm} & $\Vert\bm{A}\Vert_1$ & $\left\{\begin{array}{ll}
\max_{1\leq t\leq n}\Vert\bm{a}_t\Vert_1, & \bm{A}_{\rm{s}}\neq\bm{O},\\
\Vert\bm{A}_{\rm{i}}\Vert_1, & \bm{A}_{\rm{s}}=\bm{O}.
\end{array}\right.$\\
\hline
Dual-valued operator $\infty$-norm & \Cref{pro: dual operator infty norm} & $\Vert\bm{A}\Vert_\infty$ & $\left\{\begin{array}{ll}
\max_{1\leq t\leq m}\Vert\bm{b}_t\Vert_1, & \bm{A}_{\rm{s}}\neq\bm{O},\\
\Vert\bm{A}_{\rm{i}}\Vert_\infty, & \bm{A}_{\rm{s}}=\bm{O}.
\end{array}\right.$\\
\hline
Dual-valued trace & \Cref{pro: trace} & ${\rm{tr}}(\bm{A})$ & ${\rm{tr}}(\bm{A}_{\rm{s}})+{\rm{tr}}(\bm{A}_{\rm{i}})\epsilon$.\\
\hline
Dual-valued determinant & $\Cref{pro: det}$ & ${\rm{det}}(\bm{A})$ & ${\rm{det}}(\bm{A}_{\rm{s}}) + \big\langle({\rm{adj}}\,\bm{A}_{\rm{s}})^\top,\bm{A}_{\rm{i}}\big\rangle\epsilon$.\\
\hline
\end{tabular}}
\end{table*}

\bibliographystyle{siamplain}
\bibliography{reference}
\end{document}